\titleformat{\section}{\Large\bfseries}{\thesection.}{4pt}{}
\titleformat{\subsection}{\large\bfseries}{\thesection.\arabic{subsection}.}{4pt}{}
\titleformat{\subsubsection}{\bfseries}{\thesection.\arabic{subsection}.\arabic{subsubsection}.}{4pt}{}
\titleformat*{\paragraph}{\bfseries}
\titleformat*{\subparagraph}{\bfseries}
\newtheorem{theorem}{Theorem}[section]
\newtheorem{lemma}[theorem]{Lemma}
\newtheorem{proposition}[theorem]{Proposition}
\theoremstyle{definition}
\newtheorem{definition}[theorem]{Definition}
\newtheorem{remark}[theorem]{Remark}
\newcommand{\Rb}{\mathbb{R}}
\newcommand{\Sb}{\mathbb{S}}
\newcommand{\NN}{\mathbb{N}}
\newcommand{\Cc}{\mathcal{C}}
\newcommand{\Dc}{\mathcal{D}}
\newcommand{\Ec}{\mathcal{E}}
\newcommand{\Fc}{\mathcal{F}}
\newcommand{\Oc}{\mathcal{O}}
\newcommand{\Mc}{\mathcal{M}}
\newcommand{\Sc}{\mathcal{S}}
\newcommand{\Uc}{\mathcal{U}}
\newcommand{\Vc}{\mathcal{V}}
\newcommand{\pd}{\partial}
\newcommand{\py}{\partial_y}
\newcommand{\ps}{\partial_s}
\newcommand{\pr}{\partial_r}
\newcommand{\pt}{\partial_t}
\newcommand{\Ls}{\mathscr{L}}
\newcommand{\Hs}{\mathscr{H}}
\newcommand{\As}{\mathscr{A}}
\newcommand{\Es}{\mathscr{E}}
\numberwithin{equation}{section}
\title[1-corotational energy supercritical wave maps] 
      {Construction of type II blowup solutions for the 1-corotational energy supercritical wave maps}
\author[T. Ghoul, S. Ibrahim,  and V. T. Nguyen]{}
\subjclass{Primary: 35K50, 35B40; Secondary: 35K55, 35K57.}
 \keywords{Wave maps, Blowup solution, Blowup profile, Stability}
\email[S. Ibrahim]{ibrahim@math.uvic.ca}
 \email[T. Ghoul]{teg6@nyu.edu}
 \email[V. T. Nguyen]{Tien.Nguyen@nyu.edu}
\thanks{\today}
\begin{document}
\maketitle


\centerline{\scshape T. Ghoul$^a$, S. Ibrahim$^{a,b}$  and V. T. Nguyen$^a$}
\medskip
{\footnotesize
 \centerline{$^a$Department of Mathematics, New York University in Abu Dhabi,}
   \centerline{Saadiyat Island, P.O. Box 129188, Abu Dhabi, United Arab Emirates.}
}
\medskip
{\footnotesize
 \centerline{$^b$Department of Mathematics and Statistics, University of Victoria,}
   \centerline{PO Box 3060 STN CSC, Victoria, BC, V8P 5C3, Canada.}
}

\bigskip

\begin{abstract} We consider the energy supercritical wave maps from $\Rb^d$ into the $d$-sphere $\mathbb{S}^d$ with $d \geq 7$. Under an additional assumption of 1-corotational symmetry, the problem reduces to the one dimensional semilinear wave equation
$$\partial_t^2 u = \partial^2_r u + \frac{(d-1)}{r}\partial_r u - \frac{(d-1)}{2r^2}\sin(2u).$$
We construct for this equation a family of $\mathcal{C}^{\infty}$ solutions which blow up in finite time  via concentration of the universal profile
$$u(r,t) \sim Q\left(\frac{r}{\lambda(t)}\right),$$
where $Q$ is the stationary solution of the equation and the speed is given by the quantized rates
$$\lambda(t) \sim c_u(T-t)^\frac{\ell}{\gamma}, \quad \ell \in \mathbb{N}^*, \;\; \ell > \gamma = \gamma(d) \in (1,2].$$
The construction relies on two arguments: the reduction of the problem to a finite-dimensional one thanks to a robust universal energy method and modulation techniques developed by Merle, Rapha\"el and Rodnianski \cite{MRRcjm15} for the energy supercritical nonlinear Schr\"odinger equation, then we proceed by contradiction to solve the finite-dimensional problem and conclude using the Brouwer fixed point theorem. 

\end{abstract}

\section{Introduction.} 
Let $(N,h)$ be a complete smooth Riemannian manifold of dimension $d$ with $\pd N=\emptyset$. We denote spacetime coordinates on $\Rb^{1 + d}$ as $(t,x) = (x_\alpha)$ with $0 \leq \alpha \leq d$. A wave map $\Phi : \Rb^{1+d} \mapsto N$ is formally defined as a critical point of the Lagrangian 
$$\mathcal{L}(\Phi,\pd \Phi)=\int_{\Rb^{1 + d}}g^{\alpha\mu}\Big<\pd_\alpha \Phi,\pd_\mu \Phi\Big>_h dtdx,$$
where $g = \textup{diag}(-1, 1, \cdots, 1)$ is the Minkowski metric on $\Rb^{1 + d}$ and $\pd_\alpha = \frac{\pd}{\pd x_\alpha}$. In the local coordinates on $(N,h)$, the critical points of $\mathcal{L}$ satisfy the equation
\begin{equation}\label{eq:Wavemaps}
\square_g \Phi^k+g^{\alpha\mu}\Gamma^k_{ij}(\Phi)\pd_\alpha \Phi^i\pd_\mu \Phi^j=0, \quad 1 \leq k \leq d,
\end{equation}
where $\Gamma^k_{ij}$ are Christoffel symbols associated to the metric $h$ of the target manifold $N$, and $\square_g$ stands for the Laplace-Beltrami operator on $(\Rb^{1 + d},g)$ defined by 
$$
\square_g u = \pd_{tt} - \Delta.
$$
A special case is when the target manifold $N = \Sb^d \hookrightarrow \Rb^{1 + d}$, equation \eqref{eq:Wavemaps} becomes
\begin{equation}\label{eq:WM}
\partial_{t}^2\Phi - \Delta \Phi = \Phi(|\nabla \Phi|^2 - |\pt \Phi|^2).
\end{equation}
Under the assumption of 1-corotational symmetry, namely that the solution takes the form
$$\Phi(x,t) = \binom{\cos(u(|x|,t))}{\frac{x}{|x|}\sin(u(|x|,t))},$$ 
equation \eqref{eq:WM} reduces to the semilinear wave equation 
 \begin{equation}\label{Pb}
 \arraycolsep=1.4pt\def\arraystretch{1.6}
\left\{\begin{array}{rl}
\pt^2 u &= \partial^2_r u + \frac{(d-1)}{r}\partial_r u - \frac{(d-1)}{2r^2}\sin(2u),\\
(u,\pt u)\mid_{t = 0} &= (u_0, u_1),
\end{array}\right.
\end{equation}
where $u(t): r \in \Rb_+ \to u(r,t) \in \Rb_+$. The set of solutions to \eqref{Pb} is invariant by the scaling symmetry
$$\vec u(r,t) := \big(u, \pt u\big)(r,t) \longmapsto \vec u_\lambda(r,t):=\left(u, \frac{1}{\lambda}\pt u\right)\left(\frac{r}{\lambda}, \frac{t}{\lambda} \right), \quad \forall\lambda > 0.$$
The problem  \eqref{Pb} exhibits a conserved energy 
\begin{equation}\label{def:Enut}
\Ec(\vec u)(t) = \int_0^{+\infty} \left( |\pt u|^2 + |\partial_r u|^2 + \frac{(d-1)}{r^2}\sin^2(u)\right)r^{d-1}dr = \textup{const.},
\end{equation}
which satisfies
$$\Ec(\vec u_\lambda) = \lambda^{d-2}\Ec(\vec u).$$
This means that the wave map problem \eqref{Pb} is energy subcritical if $d = 1$, critical if $d =2$ and supercritical if $d \geq 3$.

The Cauchy problem  for wave maps has been extensively studied, see for examples, Shatah and Shadi Tahvildar-Zadeh \cite{SScpam94},  Shatah and Struwe \cite{SSbook98, SSimrn02}, Struwe \cite{Sndea97}, Tataru \cite{Tcpde98,Tajm01}. It is well understood that the Cauchy problem is locally well posed for initial data in $H^s \times H^{s - 1}$ with $s > \frac{d}{2}$ (see Klainerman and Machedon \cite{KMdm95} for $d \geq 3$, Klainerman and Selberg \cite{KScpde97} for $d = 2$, Keel and Tao \cite{KTimrn98} for $d = 1$) and the solution can be continued as long as the $H^s$-norm remains bounded.  We refer the reader to the paper by Krieger \cite{Ksdg08} for a survey on these results and a detailed list of references. It is well known that the solution $u(r,t)$ may develop singularities in some finite time (see for example, \cite{CSTihp98} and \cite{Scpam88}). In this case, we say that $u(r,t)$ blows up in a finite time $T < +\infty$ in the sense that 
$$\lim_{t \to T}\|\nabla u(t)\|_{L^\infty} = + \infty.$$
Here we call $T$ the blowup time. In this paper, a blowup solution is called Type I if 
\begin{equation}\label{def:Typ1blowup}
\limsup_{t \to T} (T-t)\|\nabla u(t)\|_{L^\infty} < +\infty,
\end{equation}
otherwise, it is called Type II. \\

In the energy critical case $d = 2$, Struwe \cite{Smz02,Scpam03} proved that blowup cannot be self-similar. A solution $u$ is said to be self-similar if it is of the form 
$$u(r,t) = \varphi(y), \quad y = \frac{r}{T-t},$$
where $T$ is a positive constant and $\varphi$ is a smooth function solving the ordinary differential equation
\begin{equation}\label{eq:phi}
(1 - y^2)\varphi_{yy} + \left(\frac{d - 1}{y} - 2y\right)\varphi_y - \frac{(d-1)}{2y^2}\sin(2\varphi) = 0.
\end{equation}
Note that Struwe's result does not imply that blowup actually occurs. However, numerical evidences by Bizon, Chmaj and Tabor \cite{BCTnon01}, Isenberg and Liebling \cite{ILjmp02} strongly suggest singularity development for certain positively curved targets. Later, the existence of finite time blowup solutions for equivariant wave maps from $(2+1)$ Minkowski space to the $\mathbb{S}^2$-sphere  has been constructively proved by Krieger, Schlag and Tataru \cite{KSTim08},  C\^arstea \cite{Ccmp10}, Rodnianski and Sterbenz \cite{RSam10}, Rapha\"el and Rodnianski \cite{RRmihes12}. It is worth mentioning the work by C\^ote \textit{et al.} \cite{CKLSajm15I, CKLSajm15II} where the authors establish a classification of blowup solutions of topological degree one \footnote{Following the definition in \cite{CKLSajm15I}, a solution $\vec u$ is of degree $n$ if $\Ec(\vec u)$ is finite and $u(r=0, t) = 0$, $u(r = \infty, t) = n\pi$.} with energies less than $3\Ec(Q,0)$, where $Q(r) = 2\arctan(r)$ is the unique (up to scaling) non trivial solution to the equation 
$$Q_{rr} + \frac{1}{r}Q_r= \frac{\sin(2Q)}{2r^2}.$$  
In particular, they show that a blowup solution of degree one is essentially a decomposition of the form
$$\vec u(t) = \vec h + \left(Q\left(\frac{\cdot}{\lambda(t)}\right), 0\right)  + \vec \epsilon(t), \quad \lambda(t) = o(T-t),$$
where $\vec h$ and $\vec \epsilon$ are of topological degree zero, $\Ec(\vec h)$ is less than $2\Ec(Q,0)$ and $\Ec(\vec \epsilon)(t)$ goes to zero as $t \to T$.  This result reveals the universal character of the known blowup constructions for degree one of \cite{KSTim08} and \cite{RRmihes12}.\\

In the supercritical energy case $d \geq 3$, we have the following explicit solution of \eqref{eq:phi}
\begin{equation}\label{sol:phi}
\varphi_0(y) = 2 \arctan \left(\frac{y}{\sqrt{d - 2}}\right).
\end{equation}
This self-similar solution was found by Turok and Spergel \cite{TSprl90} for $d = 3$ (see also Shatah \cite{Scpam88} for an earlier result) and by Bizon and Biernat \cite{BBcmp15} for $d \geq 4$. For $d = 3$, the solution \eqref{sol:phi} is proved to be stable by Donninger \cite{Dcpam11}, Donninger, Sch\"orkhuber and Aichelburg \cite{DSAihp12}, Costin, Donninger and Xia \cite{CDXnon16}. This stability is recently proved for all odd dimensions by Chatzikaleas, Donninger and Glogic \cite{CDGar17}. This selfsimilar solution is expected to be generic through numerical simulations in \cite{BCTnon00} and \cite{BBcmp15}. When $3 \leq d \leq 6$, we note that there exists an infinite sequence of globally regular solutions $\varphi_n$ for \eqref{eq:phi} (see \cite{BBMnon17}) where the index $n$ denotes the number of zeros of $\varphi_n'$ in $(0,1)$.  

When $d \geq 7$, Biernat \cite{BIEnon2015} shows the existence of a stationary solution $Q$ for equation \eqref{Pb}, namely that $Q$ solves
\begin{equation}\label{eq:Qr}
Q'' + \frac{(d-1)}{r}Q' - \frac{(d-1)}{2r^2}\sin(2Q) = 0, \quad Q(0) = 0, \; Q'(0) = 1,
\end{equation} 
The solution $Q$ is unique (up to scaling) and admits the behavior for $r$ large,
\begin{equation}\label{exp:Qr}
Q(r) = \dfrac{\pi}{2} - \dfrac{a_0}{ r^{\gamma}} + o\left(\dfrac 1{r^{\gamma}}\right),
\end{equation}
for some $a_0 = a_0(d) > 0$ and and $\gamma = \gamma(d)$ is given by
\begin{equation}\label{def:gamome}
\gamma(d) = \frac{1}{2}(d - 2 - \tilde{\gamma}) \in (1,2] \quad \text{for}\;\; d \geq 7,
\end{equation}
where 
$$\tilde{\gamma}=\sqrt{d^2-8d+8}.$$
It happens that the asymptotic behavior of the stationary solution $Q$ given by \eqref{exp:Qr} plays an important role in the construction of Type II blowup solutions for an analogous problem for the heat flow
\begin{equation}\label{Plheat}
\pt u = \partial^2_r u + \frac{(d-1)}{r}\partial_r u - \frac{(d-1)}{2r^2}\sin(2u).
\end{equation}
In \cite{GINapde18}, we construct for equation \eqref{Plheat} a family of $\mathcal{C}^\infty$ solutions which blow up in finite time via concentration of the profile
$$u(r,t) \sim Q\left(\frac{r}{\lambda(t)}\right),$$
where $\lambda$ is given by the quantized rates 
$$\lambda(t) \sim (T-t)^\frac{\ell}{\gamma} \quad \text{as}\;\; t \to T,$$
for $\ell \in \mathbb{N}^*$ satisfying $2\ell > \gamma$. Note that the same blowup rate was obtained by Biernat and Seki \cite{BSarxiv2016} through a matched asymptotic method. More precisely, we have successfully adapted the strategy developed by Merle, Rapha\"el and Rodnianski \cite{MRRcjm15} for the study of the energy supercritical nonlinear Schr\"odinger equation to construct for equation \eqref{Plheat} type II blowup solutions. The method relies on a two step procedure:
\begin{itemize}
\item Construction of a suitable approximate blowup profile through iterated resolutions of elliptic equations. The \textit{tail computation} allows us to formally derive the blowup speed. 

\item Implementation of a robust universal energy method to control the solution in the blowup regime through the derivation of suitable Lyapunov functional, which relies on neither spectral estimates nor the maximum principle and may be easily applied to various settings.
\end{itemize}
The method of \cite{MRRcjm15} has been also proved to be success for the construction of type II blowup solutions for the energy supercritical semilinear heat and wave equations by Collot \cite{Car16, Car161}. 

\bigskip

In this paper, by considering the case when 
$$d \geq 7,$$
we ask whether we can carry out the analysis in \cite{GINapde18} to construct solutions for equation \eqref{Pb} which blow up in finite time via concentration of the profile $Q$. The following theorem is our main result. 
\begin{theorem}[Existence of type II blowup solutions to \eqref{Pb} with prescibed behavior] \label{Theo:1} Let $d \geq 7$ and $\gamma$ be defined as in \eqref{def:gamome}, we fix an integer 
$$\ell \in \mathbb{N}^* \quad  \text{with}\quad \ell > \gamma,$$
and two numbers $\sigma \in \Rb_+, \frak s \in \mathbb{N}$ such that
$$0 < \sigma - \frac{d}{2} \ll 1 \quad \text{and} \quad 1 \ll \frak{s} = \frak{s}(\ell) \to +\infty \quad \text{as}\;\; \ell \to +\infty.$$
Then there exists an open set of initial data of the form 
$$(u_0, u_1) = (Q,0) + (\varepsilon_0, \varepsilon_1), \quad (\varepsilon_0, \varepsilon_1) \in \mathcal{O}  \subset \left(\dot{H}^\sigma \cap \dot{H}^\frak s\right) \times \left(\dot{H}^{\sigma-1} \cap \dot{H}^{\frak s-1}\right), $$
such that the corresponding solution to equation \eqref{Pb} satisfies
\begin{equation}\label{eq:uQq}
u(r,t) = Q\left(\frac{r}{\lambda(t)}\right) + \varepsilon\left(\frac{r}{\lambda(t)}, t\right)
\end{equation}
where 
\begin{equation}\label{eq:quanblrate}
\lambda(t) = c(u_0, u_1)(T-t)^\frac{\ell}{\gamma} (1 + o_{t \to T}(1)), \quad c(u_0,u_1) > 0,
\end{equation}
and 
\begin{equation}\label{eq:asypqs}
\lim_{t \to T}\|(\varepsilon(t), \lambda \pt \varepsilon(t))\|_{\dot H^\mu \times \dot H^{\mu-1}} = 0, \quad \forall \mu \in \left[\sigma, \frak{s}\right].
\end{equation}
\end{theorem}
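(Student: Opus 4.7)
The plan is to adapt the Merle--Rapha\"el--Rodnianski strategy, as already implemented by the authors in \cite{GINapde18} for the corresponding heat flow \eqref{Plheat}, to the hyperbolic setting of \eqref{Pb}. I first pass to self-similar variables $y = r/\lambda(t)$ and rescaled time $s$ defined by $ds/dt = 1/\lambda$, and write the solution as $u(r,t) = (Q + v)(y,s)$. The linearization around $Q$ is governed by the Schr\"odinger-type operator
\[ \Hc v = -\pd_y^2 v - \frac{d-1}{y}\pd_y v + \frac{(d-1)}{y^2}\cos(2Q)\, v, \]
which factorizes as $\Hc = A^* A$ with $A = -\pd_y + Z(y)$, $Z$ built from $\Lambda Q := y\,\pd_y Q$. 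Because of \eqref{exp:Qr} with $\gamma \in (1,2]$, $\Lambda Q$ decays like $y^{-\gamma}$, so the slow tail of $Q$ is precisely what forces a quantized family of blowup rates, in complete parallel with the heat flow analysis.

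Next I construct a finite-dimensional manifold of approximate profiles $Q_{b(s)}(y)$ with $b = (b_1,\dots,b_L)$, $L = L(\ell)$, by iteratively solving a hierarchy of inhomogeneous stationary problems for $\Hc$. The growth of the iterates $T_i$ at infinity, combined with the factorization $\Hc = A^*A$, gives explicit solvability along admissible directions, and the tail computation at the matching scale fixes the formal law $b_1 \sim -\lambda_s/\lambda$, $b_k \sim b_1^k$, which integrates in the original time to \eqref{eq:quanblrate} as soon as $\ell > \gamma$. I then decompose the actual solution as $u = (Q_{b(s)} + \varepsilon)(r/\lambda(s), s)$ and impose $L$ orthogonality conditions on $\varepsilon$ (against a suitable basis of directions transverse to the modulation manifold, dual to the $\pd_{b_i} Q_b$'s), turning \eqref{Pb} into an ODE system for $(\lambda,b)$ coupled to a wave PDE for $\varepsilon$.

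The heart of the proof is a pair of Lyapunov-type energy estimates for $\varepsilon$: a low-regularity one at the scaling-critical level $\dot H^\sigma \times \dot H^{\sigma-1}$ with $\sigma - d/2$ small, and a high-regularity one at level $\dot H^{\frak s} \times \dot H^{\frak s -1}$ with $\frak s = \frak s(\ell) \gg 1$. Both are built from weighted quadratic forms involving iterated powers of $\Hc$ adapted to the self-similar geometry, together with hyperbolic repulsivity terms on the light cone $y \sim 1/b_1$ designed to kill boundary contributions in the absence of parabolic smoothing; a Hardy--Rellich type coercivity for $\Hc$ on the orthogonal complement of the modulation directions ensures that these energies dominate the natural norm of $\varepsilon$. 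I expect this to be the main obstacle: unlike the heat case of \cite{GINapde18}, there is no maximum principle and no dissipation, so the monotonicity of the $\Es_{\frak s}$ functional must be squeezed out purely from integration by parts with carefully chosen weights in $y$ adapted to the slow tail $Q \sim \pi/2 - a_0 y^{-\gamma}$, while keeping the source terms generated by $Q_b$ and by the modulation ODE below the level required to close the bootstrap.

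Finally, the reduced finite-dimensional dynamics for $(b_1,\dots,b_L)$ around the reference quantized law has exactly a finite number of unstable directions, inherited from the negative eigenvalues that appear upon linearizing the modulation ODE at the exact blowup regime. I handle these by a topological shooting argument: assuming for contradiction that no initial data in a small ball around a reference $Q_{b^*(0)}$ produces a trajectory trapped in the regime \eqref{eq:quanblrate}--\eqref{eq:asypqs}, the first exit map from the trapping set to the unit sphere of unstable parameters would be continuous, contradicting Brouwer's fixed point theorem. This selects an open set $\Oc$ of initial data as in the statement; sharp integration of the $b_1$-equation once trapping is secured yields \eqref{eq:quanblrate} with a nonzero constant $c(u_0,u_1)$, and \eqref{eq:asypqs} for all intermediate $\mu \in [\sigma,\frak s]$ follows from the two endpoint energy bounds on $\varepsilon$ combined with interpolation and the rescaling identity between the $(y,s)$ and $(r,t)$ variables.
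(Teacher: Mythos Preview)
Your plan matches the paper's approach closely: renormalized flow, approximate profile $Q_b$ via the tail computation, modulation through $L+1$ orthogonality conditions, the pair of energy estimates at levels $\sigma$ and $\frak s=\Bbbk$, and the Brouwer shooting for the $\ell-1$ unstable modes $(\Vc_2,\dots,\Vc_\ell)$ are all exactly as in the paper.

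The one point worth correcting is your description of the hyperbolic-specific obstacle. The difficulty is not boundary terms at the light cone $y\sim 1/b_1$; rather, differentiating $\Es_\Bbbk$ in time produces a commutator $[\partial_t,\Ls_\lambda^\Bbbk]$ which, after the potential decay is exploited, leaves a localized residual $\Es_{\Bbbk,\textup{loc}}=\int_{y\le N}(|(q_1)_\Bbbk|^2+|(q_2)_{\Bbbk-1}|^2)$ carrying no sign and not absorbable by the main monotonicity. The paper controls this not through light-cone weights but via a local Morawetz virial
\[
\Mc=-\int\Big[\nabla\phi_A\cdot\nabla(q_1)_{\Bbbk-1}+\tfrac{1-\nu}{2}\Delta\phi_A\,(q_1)_{\Bbbk-1}\Big](q_2)_{\Bbbk-1},\qquad \phi_A(y)=\int_0^y\chi_A(\xi)\xi^{1-\nu}\,d\xi,
\]
whose time derivative dominates $\Es_{\Bbbk,\textup{loc}}$ precisely because $\Ls$ is positive on $\dot H^1$ for $d\ge 7$ (this is where $d\ge 7$ enters beyond the existence of $Q$). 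The actual Lyapunov functional is then $\Es_\Bbbk+b_1\Mc$ rather than $\Es_\Bbbk$ alone. With this correction, everything else in your outline is accurate.
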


\begin{remark} Since  $\gamma \in (1, 2)$ for $d \geq 8$ and $\gamma = 2$ for $d = 7$, the condition $\ell > \gamma$ requests that $\ell \geq 2$ for $d \geq 8$ and $\ell \geq 3$ for $d = 7$. As for the case $\ell = \gamma$, which only happens in the case $d = 7$ with $\ell = \gamma = 2$, we expect that the blowup rate  \eqref{eq:quanblrate} would involve some logarithmic correction of the form
$$\lambda(t) \sim \frac{T-t}{|\log(T-t)|^\nu} \quad \text{for some} \quad \nu > 0.$$ 
This logarithmic gain would be related to the growth of the approximate profile at infinity. Although our analysis would be naturally extended  to this case, this seems to require some crucial modification in the construction of an approximate profile and this would be treated in a separate work. 
\end{remark}

\begin{remark} The proof of Theorem \ref{Theo:1} involves a detailed description of the the set of initial data leading to the type II blowup with the quantization of the blowup rate \eqref{eq:quanblrate}. In particular, given $\ell \in \mathbb{N}^*$, $L \gg 1$ and $\frak{s} \sim L$,  our initial data is of the form 
\begin{equation}\label{eq:u0des}
\vec u_0 = \vec Q_{b(0)} + \vec q_0,
\end{equation}
where $\vec Q_{b}$ is a deformation of the ground state $\vec Q = (Q,0)$, and $b = (b_1, \cdots, b_L)$ correspond to possible unstable directions of the flow in the $\dot{H}^\frak{s}\times \dot{H}^{\frak{s} - 1}$ topology in a suitable neighborhood of $\vec Q$. We show that for all $\vec q_0 \in \mathcal{O} \subset \Big(\dot{H}^\sigma \cap \dot{H}^\frak{s}\Big) \times \Big(\dot{H}^{\sigma - 1} \cap \dot{H}^{\frak{s} - 1}\Big)$, where the set $\mathcal{O}$ is built on the linearized operator (see Definition \ref{def:1} for its precise description of $\mathcal{O}$) and for all $\big(b_1(0), b_{\ell + 1}(0), \cdots, b_L(0)\big)$ small enough, there exists a choice of unstable directions $\big(b_2(0), \cdots, b_\ell(0)\big)$ such that the solution of \eqref{Pb} with initial data \eqref{eq:u0des} satisfies the conclusion of Theorem \ref{Theo:1}. The control of $(\ell - 1)$ unstable modes is done through a topological argument based on Brouwer's fixed point theorem. In some sense, the set of blowup solutions we construct lies on a $(\ell - 1)$ codimension manifold in the radial class whose proof would require some Lipschitz regularity of the set of initial data we consider and it would be addressed separately in detail.
\end{remark}

\begin{remark}
It is worth mentioning that our analysis relies only on the study of supercritical Sobolev norms built on the linearized operator, thus, the finiteness of the $H^1$ norm of the initial data is not requested.
Roughly speaking, the initial data $(u_0,u_1)$ can be taken smooth and compactly supported, namely that
if $u=Q+\varepsilon$, we take $\varepsilon(r) \sim -Q(r)$ for $r \gg 1$. Since the energy is conserved, our constructed solution can be taken to be of finite energy or even compactly supported. As a matter of fact, the finite energy together with the constructed manifold mentioned in the previous remark ensures that the original solution $\Phi$ to the wave map equation \eqref{eq:WM} has the same the regularity as for the 1-corotational symmetric solution $u$ described in Theorem \ref{Theo:1}.
\end{remark}

\begin{remark} We note from \eqref{eq:uQq} that 
$$\partial_r u(0,t) \sim \lambda^{-1}(t) \sim (T-t)^{-\frac{\ell}{\gamma}} \gg (T-t)^{-1} \quad \textup{as} \;\; t \to T.$$ 
This implies that our constructed solution is of Type II blowup in the sense of \eqref{def:Typ1blowup}.

\end{remark}

\begin{remark} Following the work by C\^ote \textit{et al.} \cite{CKLSajm15I, CKLSajm15II} where the question of the classification of the flow near the special class of stationary solution $Q$ are considered in the energy critical setting, i.e. $d  = 2$, we would address the same question for the energy supercritical case $d \geq 7$. In Theorem \ref{Theo:1}, the constructed blowup solutions exhibit the decomposition of the form \eqref{eq:uQq}. Here we ask for a converse problem, namely that if blowup does occur for a solution $\vec u$, in which energy regime and in what sense does such the decomposition \eqref{eq:uQq} always hold?
\end{remark}

\begin{remark} It is worth mentioning the work of Krieger-Schlag-Tataru \cite{KSTim08}, where the authors constructed for equation \eqref{Pb} in the critical case $d = 2$ blowup solutions of the form 
$$u(r,t) = Q(r\lambda(t)) + \varepsilon(r,t), \quad r \leq t,$$ 
where $\varepsilon$ has local energy going to zero as $t \to 0$ and $\lambda(t) = t^{-1-\nu}$ with $\nu > \frac{1}{2}$ arbitrary. Analogous results are also established in \cite{KSTam09,KSTduke09} (see also \cite{DHKSmmj14}) for the critical semilinear wave equation and the critical Yang-Mills problem.  The existence of the continuum of blowup rates established in \cite{KSTim08, KSTam09, KSTduke09} is an interesting phenomena and it is different from our result where the blowup rate \eqref{eq:quanblrate} is discretely quantized. The discrete quantization of blowup rates has been previously derived in  \cite{RRmihes12},  \cite{RSapde2014}, \cite{MRRcjm15}, \cite{GINapde18}, \cite{Car161}, \cite{Car16}, ..., where the constructions of blowup solutions are based on the modulation theoretic approach. We suspect that such an existence of a continuum of blowup rates only happens in hyperbolic problems. A more evidence is due to the work by Collot-Ghoul-Masmoudi \cite{CGMarx18} for the Burger's equation with a transverse viscosity, where the authors observe that there also exist blowup solutions with a continuum of blowup rates if one does not impose smoothness on the solution before the blowup time. An interesting question after our work is that whether there exist blowup solutions to equation \eqref{Pb} in the case $d \geq 7$ with a continuum of blowup rates?
\end{remark}

\bigskip

Let us briefly explain the main steps of the proof of Theorem \ref{Theo:1}, which follows the strategy developed  in \cite{MRRcjm15} for the energy supercritical nonlinear Schr\"odinger equation. We would like to mention that this kind of method has been successfully applied for various nonlinear evolution equations. In particular in the dispersive setting for the nonlinear Schr\"odinger equation both in the mass critical \cite{MRgfa03,MRim04, MRam05, MRcmp05} and mass supercritical \cite{MRRcjm15} cases; the mass critical gKdV equation \cite{MMRam14, MMRasp15, MMRjems15};  the energy critical \cite{DKMcjm13}, \cite{HRapde12} and supercritical \cite{Car161} wave equation; the two dimensional critical geometric equations: the wave maps \cite{RRmihes12}, the Schr\"odinger maps \cite{MRRim13} and the harmonic heat flow \cite{RScpam13, RSapde2014} and \cite{GINapde18}; the semilinear heat equation in the energy critical \cite{Sjfa12} and supercritical \cite{Car16} cases; and the two dimensional Keller-Segel model \cite{RSma14}, \cite{GMarx16}. In all those works, the method relies on two arguments:
\begin{itemize}
\item Reduction of an infinite dimensional problem to a finite dimensional one, through the derivation of suitable Lyapunov functional and the robust energy method as mentioned in the two step procedure above.
\item The control of the finite dimensional problem thanks to a topological argument based on index theory.
\end{itemize}
Note that this kind of topological arguments has proved to be successful also for the construction of type I blowup solutions for the semilinear heat equation in \cite{BKnon94}, \cite{MZdm97}, \cite{NZens16} (see also \cite{NZsns16}, \cite{DNZtjm18} for the case of logarithmic perturbations, \cite{Breiumj90}, \cite{Brejde92} and \cite{GNZjde17} for the exponential source, \cite{NZcpde15} for the complex-valued case), the Ginzburg-Landau equation in \cite{MZjfa08} (see also \cite{ZAAihn98} for an earlier work), a non-variational parabolic system in \cite{GNZihp18, GNZjde18} and the semilinear wave equation in \cite{CZcpam13}.\\

For the reader's convenience and for a better explanation, let's first introduce notations used throughout this paper.\\
\noindent \textbf{- Notation.} The equation \eqref{Pb} can be put in the following first-order form:
\begin{equation}\label{eq:uvec}
\pt \vec{u} = \vec{F}(\vec u), \quad \vec u(t): \Rb^d \to \Rb \times \Rb,
\end{equation}
where we denote by
$$\vec{u} = \binom{u_1}{u_2}, \quad \vec F(\vec u) = \binom{u_2}{\pr^2u_1 + \frac{d-1}{r}\pr u_1 - \frac{d-1}{2r^2}\sin(2u_1)}.$$
In what follows the notation $\vec u$ always refers to a vector whose coordinates are $\binom{u_1}{u_2}$. The stationary solution of \eqref{eq:uvec} is denoted by 
$$\vec{Q} = \binom{Q}{0},$$
where $Q$ is introduced in \eqref{eq:Qr} and \eqref{exp:Qr}. \\
We denote by
$$\big \langle u,v \big \rangle = \int_{\Rb^d}uv \quad \text{and} \quad \big \langle \vec u, \vec v \big \rangle = \int_{\Rb^d} \vec u . \vec v =  \int_{\Rb^d}u_1 v_1 + \int_{\Rb^d}u_2 v_2.$$ 
For each $d \geq 7$, we define
\begin{equation}\label{def:kdeltaplus}
\left\{\begin{array}{ll}
\hbar &= \left\lfloor\frac{d}{2} - \gamma\right\rfloor \in \mathbb{N}^*,\\
& \\
\delta &=\left(\frac{d}{2} - \gamma\right) - \hbar, \quad  \delta \in (0,1),
\end{array} \right.
\end{equation}
where $\lfloor x \rfloor \in \mathbb{Z}$ stands for the integer part of $x$ which is defined by $\lfloor x \rfloor \leq x < \lfloor x \rfloor + 1$.\footnote{Note that $\delta \ne 0$. Indeed, if $\delta = 0$, then there is $m \in \mathbb{N}$ such that $2\gamma = d - 2m \in \mathbb{N}$. This only happens when $\gamma = 2$ or $\gamma = \frac{3}{2}$ because $\gamma \in (1, 2]$. The case $\gamma = 2$ gives $d = 7$ and $m = \frac{3}{2} \not \in \mathbb{N}$. The case $\gamma = \frac{3}{2}$ gives $d = \frac{17}{2} \not \in \mathbb{N}$.}\\
For each $k \in \mathbb{N}$, we denote by 
$$k\wedge 2 := k \mod 2.$$
Given a large odd integer $L \gg 1$, we set
\begin{equation}\label{def:kbb}
\Bbbk = L + \hbar + 1.
\end{equation}
We fix $\sigma \in \Rb_+$ such that
\begin{equation}\label{def:sigma}
\sigma > \frac{d}{2} \quad \text{and}\quad \left|\sigma - \frac{d}{2} \right| \leq \frac{1}{L^2} \ll 1.
\end{equation}
Given $b_1 > 0$ and $\lambda > 0$, we define 
\begin{equation}\label{def:B0B1}
B_0 = \frac{1}{ b_1}, \quad B_1 = B_0^{1 + \eta}, \quad 0 < \eta \leq \frac{1}{L^2} \ll 1,
\end{equation}
and denote by
$$f_\lambda(r) = f(y) \quad \text{with} \quad y = \frac{r}{\lambda}.$$
Let $\chi \in \Cc_0^\infty([0, +\infty))$ be a positive non increasing cutoff function with $\text{supp}(\chi) \subset [0,2]$ and $\chi \equiv 1$ on $[0,1]$. For all $M > 0$, we define
\begin{equation}\label{def:chiM}
\chi_M(y) = \chi\left(\frac y M\right).
\end{equation}
We introduce the first order differential operators 
$$\Lambda f = y\partial_y f, \quad Df = f + y\py f, \quad \Lambda \vec f = \binom{\Lambda f_1}{Df_2}.$$
The linearized operator near the stationary solution $\vec Q$ is then defined by 
\begin{equation}\label{def:Hop}
\Hs = \begin{bmatrix}
0&-1\\\Ls & 0
\end{bmatrix},
\end{equation}
so that 
$$\vec F(\vec Q + \vec q) = - \Hs \vec q + \vec N(\vec q),$$
where 
\begin{equation}\label{def:Lc}
\Ls  = -\partial_{yy} - \frac{(d-1)}{y}\partial_y  + \frac{Z}{y^2}, \quad \text{with}\;\; Z(y)= (d-1)\cos(2Q(y)),
\end{equation}
and $\vec N$ is the purely nonlinear term
\begin{equation}\label{def:Nvec}
\vec{N}(\vec q)= \binom{0}{\frac{(d-1)}{2y^2}\left[\sin(2Q + 2 q_1) - \sin(2Q) - 2\cos(2Q)q_1\right]} = \binom{0}{N(q_1)}.
\end{equation}
We denote by $\Hs^*$ the adjoint of $\Hs$, 
$$\Hs^* = \begin{bmatrix}
 0 & \Ls \\ -1 & 0
\end{bmatrix} \quad \text{satisfying} \quad \big \langle \Hs \vec u, \vec v \big \rangle = \big \langle \vec u, \Hs^* \vec v \big \rangle.$$
We let the matrix 
\begin{equation}
J = \begin{bmatrix}
0 & -1 \\ 1 &0
\end{bmatrix}, 
\end{equation}
and define the adapted norm for $k \in \mathbb{N}^*$,
\begin{equation}\label{def:normk}
\|\vec u\|_{k}^2 = \int_{\Rb^d} u_1 \Ls^{k}u_1 + \int_{\Rb^d} u_2 \Ls^{k - 1}u_2.
\end{equation}
Note that the norm defined by \eqref{def:normk} is actually positive thanks to the factorization of $\Ls$ (see Lemma \ref{lemm:factorL} below), 
$$\Ls = \As^* \As.$$
For $k \in \mathbb{N}$, we define the suitable derivative for any smooth function $f$:
\begin{equation}\label{def:adapder}
f_{2k} = \Ls^kf, \quad f_{2k + 1} = \As \Ls^k f, \quad f_0 = f.
\end{equation}

\medskip

\noindent \textbf{- Strategy of the proof.}  We now summary the main ideas of the proof of Theorem \ref{Theo:1}, which follows the road map in \cite{GINapde18} and \cite{MRRcjm15}.\\

\noindent $(i)$ \underline{\textit{Renormalized flow.}} Following the scaling invariance of \eqref{Pb}, let us make the change of variables
\begin{equation*}
\vec w(y,s) := \binom{w_1}{w_2}(y,s) = \binom{u_1}{\lambda u_2}(r,t), \quad y = \frac{r}{\lambda(t)}, \quad \frac{ds}{dt} = \frac{1}{\lambda(t)},
\end{equation*}
which leads to the following renormalized flow:
\begin{equation}\label{eq:wys_i}
\ps \vec w + b_1 \Lambda \vec w= \vec F (\vec w),  \quad \text{with} \quad  b_1 = -\frac{\lambda_s}{\lambda}.
\end{equation}
As we will show later that $b_1 \to 0$ as $s \to +\infty$, the leading part of the solution $\vec w(y,s)$ is given by the ground state profile $\vec Q(y)$. That is why, we introduce 
$$\vec q(y,s) = \vec w(y,s) - \vec Q(y),$$
then $\vec q$ solves 
\begin{equation}\label{eq:qys_i}
\ps \vec{q} + \Hs \vec q  + b_1 \Lambda \vec q = -b_1 \Lambda \vec Q + \vec{N}(\vec q),
\end{equation}
where the nonlinear term is given by \eqref{def:Nq}.

\noindent $(ii)$ \underline{\textit{Properties of the linearized operators $\Ls$ and $\Hs$.}} The linear operator $\Ls$ admits the following factorization (see Lemma \ref{lemm:factorL} below)
\begin{equation}\label{eq:facL_i}
\Ls = \As^*\As, \quad \As f = -\Lambda Q \py \left(\frac{f}{\Lambda Q} \right), \quad \As^*f = \frac{1}{y^{d-1}\Lambda Q}\py\left(y^{d-1}\Lambda Q f\right), 
\end{equation}
which simplifies the computation of $\Ls^{-1}$ (see Lemma \ref{lemm:inversionL} below). The factorization  \eqref{eq:facL_i} immediately follows
\begin{equation}\label{eq:LlaQ}
\Ls(\Lambda Q) = 0.
\end{equation}
Note from \eqref{exp:Qr} that
$$\Lambda Q \sim \frac{c_0}{y^\gamma} \quad \text{as} \quad y \to +\infty,$$
with $\gamma$ defined in \eqref{def:gamome}. We can compute the kernel of $\Ls^k$ through the iterative scheme
\begin{equation}\label{def:phik}
\Ls \phi_{k+1} = -\phi_k, \quad \phi_0 = \Lambda Q,
\end{equation}
which displays a non trivial tail at infinity (see Lemma 2.9 in \cite{GINapde18})
\begin{equation}\label{asy:phik}
\phi_k(y) \sim c_k y^{2k - \gamma} \quad \text{for} \quad y \gg 1.
\end{equation}
The identity \eqref{eq:LlaQ} also yields
$$\Hs (\Lambda \vec Q) = \vec 0.$$
Furthermore, knowing $\Ls^{-1}$ we can define the inversion of $\Hs$ as follows
\begin{equation}
\Hs^{-1} = \begin{bmatrix}
0 & \Ls^{-1}\\ -1 &0
\end{bmatrix}.
\end{equation}
More generally, the kernel of $\Hs^k$ is computed  by
\begin{equation}\label{def:Tk_i}
\Hs \vec T_{k + 1} = - \vec T_k \quad \text{with} \quad \vec T_0 = \Lambda \vec Q = \binom{\phi_0}{0}.
\end{equation}
In particular, we have
\begin{equation}\label{eq:formTk_i}
\vec T_{2k} = \binom{\phi_k}{0}, \quad \vec T_{2k + 1} = \binom{0}{\phi_k}.
\end{equation}

\noindent $(iii)$ \underline{\textit{Tail dynamics.}} Following the approach in \cite{GINapde18} and \cite{MRRcjm15}, we look for a slowly modulated approximate solution to \eqref{eq:wys_i} of the form 
$$\vec w(y,s) = \vec Q_{b(s)}(y),$$
where 
\begin{equation}\label{def:Qb_i}
b = (b_1, \cdots, b_L), \quad \vec Q_{b(s)}(y) = \vec Q(y) + \sum_{k = 1}^Lb_k\vec T_k(y) + \sum_{k = 2}^{L+2}\vec S_k(y,b)
\end{equation}
with a priori bounds
$$b_k \sim b_1^k, \quad |\vec S_k(y,b)| \lesssim b_1^k y^{k - 2 -  k\wedge 2 - \gamma},$$
so that $\vec S_k$ is in some sense homogeneous of degree $k$ in $b_1$, and behaves better than $\vec T_k$ at infinity. The construction of $\vec S_k$ with the above a priori bounds is possible for a specific choice of the universal dynamical system which drives the modes $(b_k)_{1 \leq k \leq L}$. This is so called the \textit{tail computation}. Let us illustrate the procedure of the \textit{tail computation}. We plug the decomposition \eqref{def:Qb_i} into \eqref{eq:wys_i} and choose the law for $(b_k)_{1 \leq k \leq L}$ which cancels the  leading order terms at infinity.\\
- At the order $\Oc(b_1)$: we cannot adjust the law of $b_1$ for the first term \footnote{if $(b_1)_s = -c_1 b_1$, then $-\lambda_s/\lambda \sim b_1 \sim e^{-c_1 s}$, hence after an integration in time, $|\log \lambda| \lesssim 1$ and there is no blowup.} and obtain from \eqref{eq:qys_i}, 
$$b_1(\Hs \vec T_1 + \Lambda \vec Q) = 0.$$
- At the order $\Oc(b_1^{2k}, b_{2k})$, $k = 1, \cdots, (L+1)/2$: We obtain
$$(b_{2k - 1})_s\vec T_{2k - 1} + b_1 b_{2k - 1}\Lambda \vec T_{2k - 1} + b_{2k}\Hs \vec T_{2k} + \Hs \vec S_{2k} = b_1^{2k} \vec N_{2k - 1}(\vec Q, \vec T_1, \cdots, \vec T_{2k - 1}),$$
where $\vec N_{2k - 1}$ corresponds to nonlinear interaction terms. Note from \eqref{eq:formTk_i}, \eqref{asy:phik} and \eqref{def:Tk_i}, we have
$$\Lambda \vec T_{2k - 1} \sim (2k - 1 - \gamma)\vec T_{2k - 1} \quad \text{for}\quad y \gg 1, \quad \Hs \vec T_{2k} = - \vec T_{2k - 1},$$
and thus, 
$$(b_{2k - 1})_s \vec T_{2k - 1} + b_1 b_{2k - 1}\Lambda \vec T_{2k -1} + b_{2k}\Hs \vec T_{2k} \sim \big[(b_{2k - 1})_s + (2k - 1 - \gamma)b_1 b_{2k - 1} - b_{2k}\big]\vec T_{2k - 1}.$$
Hence the leading order growth for $y$ large is canceled by the choice 
$$(b_{2k-1})_s + (2k - 1 - \gamma)b_1 b_{2k - 1} - b_{2k} = 0.$$
We then solve for 
$$\Hs \vec S_{2k} = -b_1^{2k}(\Lambda \vec T_{2k - 1} - (2k - 1 - \gamma)\vec T_{2k - 1}) + b_1^{2k} \vec N_{2k - 1}(\vec Q, \vec T_1, \cdots, \vec T_{2k - 1}),$$
and check the improved decay 
$$|S_{2k}(y,b)| \lesssim b_1^{2k}y^{2k - 2 - \gamma} \quad \text{for} \quad y \gg 1.$$

\noindent - At the order $\Oc(b_1^{2k+1}, b_{2k+1})$, $k = 1, \cdots, (L+1)/2$: we obtain an elliptic equation of the form
$$(b_{2k})_s\vec T_{2k} + b_1b_{2k}\Lambda \vec T_{2k} + b_{2k+1}\Hs \vec T_{2k+1} + \Hs \vec S_{2k+1} = b_1^{2k + 1}\vec N_{2k}(\vec Q, \vec T_1, \cdots, \vec T_{2k}).$$ 
From \eqref{eq:formTk_i}, \eqref{asy:phik} and \eqref{def:Tk_i}, we have
$$(b_{2k})_s\vec T_{2k} + b_1b_{2k}\Lambda \vec T_{2k} + b_{2k + 1}\Hs \vec T_{2k + 1} \sim \big[(b_{2k})_s + (2k - \gamma)b_1b_{2k} - b_{2k+1}\big]\vec T_{2k},$$
which leads to the choice 
$$(b_{2k})_s + (2k - \gamma)b_1b_{2k} - b_{2k+1} = 0,$$
for the cancellation of the leading order growth at infinity. We then solve for the remaining $\vec S_{2k + 1}$ term and check that $|\vec S_{2k + 1}(y)|\lesssim b_1^{2k+1}y^{2k - 2 - \gamma}$ for $y$ large. We refer to Proposition \ref{prop:1} for all details of the \textit{tail computation}. Note that for $k$ large enough, the profile $\vec T_{k}$ and $\vec S_k$ have irrelevant growth at infinity. For this reason we cut $\vec T_{k}$ and $\vec S_k$ in the zone $y \sim B_1$ in order to obtain a suitable approximate profile, namely that the approximation \eqref{def:Qb_i} is replaced by 
\begin{equation*}
\vec Q_{b(s)}(y) = \vec Q(y) + \chi_{_{B_1}}\left(\sum_{k = 1}^Lb_k\vec T_k(y) + \sum_{k = 2}^{L+2}\vec S_k(y,b)\right).
\end{equation*}
All the computation is then done in the zone  $y \sim B_1$ or in the original variable $r \sim \lambda B_1 \sim (T-t)^{1 - \eta \left(\frac{\ell}{\gamma} - 1\right)}$, which is slightly beyond the light cone.

\noindent $(iii)$ \textit{The universal system of ODEs.} The above procedure leads to the following universal system of ODEs after $L$ iterations,
\begin{equation}\label{sys:bk_i}
\left\{ \begin{array}{l}
(b_k)_s + (k - \gamma)b_1b_k - b_{k+1} = 0, \quad 1 \leq k \leq L, \quad b_{L+1} = 0,\\
\quad \\
-\dfrac{\lambda_s}{\lambda} = b_1, \quad \dfrac{ds}{dt} = \dfrac{1}{\lambda}.
\end{array}\right.
\end{equation}
The set of solutions to \eqref{sys:bk_i} (see Lemma \ref{lemm:solSysb} below) is explicitly given by 
\begin{equation}\label{eq:solbk_i}
\left\{\begin{array}{l}
b_k^e(s) = \frac{c_k}{s^k}, \quad 1 \leq k \leq L,\\
c_1 = \frac{\ell}{\ell - \gamma}, \quad \ell \in \mathbb{N}, \; \ell > \gamma,\\
c_{k + 1} = -\frac{\gamma(\ell - k)}{\ell - \gamma}c_k, \quad 1 \leq k \leq \ell -1,\ell\geq 2\\
c_j = 0, \quad j \geq \ell + 1.\\
\lambda(s) \sim s^{-\frac{\ell}{\ell - \gamma}}.
\end{array}\right.
\end{equation}
In the original time variable $t$, this implies that $\lambda(t)$ goes to zero in finite time $T$ with the asymptotic 
$$\lambda(t) \sim (T-t)^\frac{\ell}{\gamma}.$$
Moreover, the linearized flow of \eqref{sys:bk_i} near the solution \eqref{eq:solbk_i} is explicit and displays $\ell - 1$ unstable directions (see Lemma \ref{lemm:lisysb} below).\\

\noindent $(iv)$ \textit{Decomposition of the flow and modulation equations.} Let the approximate solution $Q_b$ be given by \eqref{def:Qb_i} which by construction generates an approximate solution to the renormalized flow \eqref{eq:wys_i},
$$\vec \Psi_b = \ps \vec Q_b  + b_1 \Lambda \vec Q_b - \vec F(\vec Q_b) = \vec{\textup{Mod}} + \Oc(b_1^{2L + 2}),$$
where the modulation equation term is roughly of the form
$$\vec{\textup{Mod}} = \sum_{k = 1}^L \big[(b_k)_s + (k - \gamma)b_1b_k - b_{k+1}\big]\vec T_k.$$
We localize $\vec Q_b$ in the zone $y \leq B_1$ to avoid the irrelevant growing tails for $y \gg \frac{1}{b_1}$. We then take initial data of the form
$$\vec u_0(y) = \vec Q_{b(0)}(y) + \vec q_0(y),$$
where $\vec q_0$ is small in some suitable sense and $b(0)$  is chosen to be close to the exact solution \eqref{eq:solbk_i}. By a standard modulation argument, we introduce the decomposition of the flow
\begin{equation}\label{eq:dec_i}
\vec w(y,s) = \big(\vec Q_{b(s)} + \vec q\big)(y,s),
\end{equation}
where $L+1$ modulation parameters $(b(t), \lambda(t))$ are chosen in order to manufacture the orthogonality conditions:
\begin{equation}\label{eq:orh_i}
\left<\Hs^k \vec q, \vec \Phi_M \right> = 0, \quad 0 \leq k \leq L,
\end{equation}
where $\vec \Phi_M$ (see \eqref{def:PhiM}) is some fixed direction depending on some large constant $M$, generating an approximation of the kernel of the powers of $\Hs$. This orthogonal decomposition \eqref{eq:dec_i}, which follows from the implicit function theorem, allows us to compute the modulation equations governing the parameters $(b(t), \lambda(t))$ (see Lemmas \ref{lemm:mod1} and \ref{lemm:mod2} below),
\begin{equation}\label{eq:mod_i}
\left|\frac{\lambda_s}{\lambda} + b_1\right| +  \sum_{k = 1}^L \big|(b_k)_s + (k - \gamma)b_1b_k - b_{k+1}\big| \lesssim \|\vec q\|_{\textup{loc}} + b_1^{L + 1 + \nu(\delta, \eta)},
\end{equation}
where $\|\vec q\|_{\textup{loc}}$ measures a spatially localized norm of the radiation $\vec q$ and $\nu(\delta, \eta) > 0$.\\

\noindent $(v)$ \textit{Control of Sobolev norms.} According to \eqref{eq:mod_i}, we need to show that local norms of $\vec{q}$ are under control and do not perturb the dynamical system \eqref{sys:bk_i}. This is achieved via high order mixed energy estimates which provide controls of the Sobolev norms adapted to the linear flow and based on the powers of the linear operator $\Hs$. In particular, we have the following coercivity of the high energy under the orthogonality conditions \eqref{eq:orh_i} (see Lemma \ref{lemm:coerEk}),
$$\Es_{\Bbbk}(s) := \|\vec q(s)\|_{\Bbbk}^2  \gtrsim \|\vec q(s) \|^2_{\dot H^\Bbbk \times \dot H^{\Bbbk - 1}},$$
where $\Bbbk$ is given by \eqref{def:kbb} and the norm is defined by \eqref{def:normk}. The energy estimate is of the form 
\begin{equation}\label{eq:Ek_i}
\frac{d}{ds} \left\{\frac{\Es_{\Bbbk} + b_1\Mc }{\lambda^{2\Bbbk - d}}\right\} \lesssim \frac{b_1^{2L + 1 + 2\nu(\delta, \eta)}}{\lambda^{2\Bbbk - d}} \quad \text{for some}\quad \nu(\delta, \eta) > 0,
\end{equation}
where the right hand side is the size of the error $\vec \Psi_b$ in the construction of the approximate profile $\vec Q_b$ above, and $\Mc$ corresponds to an additional Morawetz type term (see \eqref{def:Mofun} for a precise definition of $\Mc$) which is needed to control $\Es_{\Bbbk}$ locally (see Proposition \ref{prop:Mcon}). Note that the successful key in deriving such a Morawetz type control is due to the fact that the linear operator $\Ls$ is positive in $\dot{H}^1$ for $d \geq 7$.   An integration of \eqref{eq:Ek_i} in time by using initial smallness assumptions, $b_1 \sim b_1^e$ and $\lambda(s) \sim b_1^{\frac{\ell}{\ell - \gamma}}$ yields the estimate
$$
\|\vec q \|^2_{\dot H^\Bbbk \times \dot H^{\Bbbk - 1}} \lesssim \Es_{\Bbbk}(s) \lesssim b_1^{2L + 2\nu(\delta, \eta)},$$
which is good enough to control the local norms of $\vec q$ and close the modulation equations \eqref{eq:mod_i}.

Note that when establishing the formula \eqref{eq:Ek_i}, we need to deal with a nonlinear term which is roughly of the form $\frac{q_1^2}{y^2}$. In order to archive the control of this term, we derive the following mononicity formula for the low Sobolev norm
$$\Es_\sigma = \|\vec q\|_{\dot{H}^\sigma \times \dot{H}^{\sigma - 1}}, \quad \frac{d}{ds}\left\{\frac{\Es_\sigma}{\lambda^{2\sigma - d}} \right\} \lesssim \frac{b_1^{1 + \frac{\ell}{\ell -\gamma}\left(2\sigma - d \right) + \epsilon}}{\lambda^{2\sigma - d}} \quad \text{for some} \quad \epsilon > 0.$$
Integrating in time yields the bound 
$$\Es_\sigma(s) \lesssim b_1^{\frac{\ell}{\ell - \gamma}(2\sigma - d)}.$$
which is enough to close the estimate for the nonlinear term.

The above scheme designs a bootstrap regime (see Definition \ref{def:Skset} for a precise definition) which traps blowup solution with speed \eqref{eq:quanblrate}. According to Lemma \ref{lemm:solSysb} and \ref{lemm:lisysb}, such a regime displays $(\ell - 1)$ unstable modes $(b_2, \cdots, b_\ell)$ which we can control through a topological argument based on the Brouwer fixed point theorem (see the proof of Proposition \ref{prop:exist}), and the proof of Theorem \ref{Theo:1} follows.\\

\bigskip 

The paper is organized as follows. In Section \ref{sec:2}, we give the construction of the approximate solution $\vec Q_b$ of \eqref{Pb} and derive estimates on the generated error term $\vec \Psi_b$ (Proposition \ref{prop:1}) as well as its localization (Proposition \ref{prop:localProfile}). We also give in this section some elementary facts on the study of the system \eqref{sys:bk_i} (Lemmas \ref{lemm:solSysb} and \ref{lemm:lisysb}). Section \ref{sec:3} is devoted to the proof of Theorem \ref{Theo:1} assuming a main technical result (Proposition \ref{prop:redu}). In particular, we give the proof of the existence of the solution trapped in some shrinking set to zero (Proposition \ref{prop:exist}) such that the constructed solution satisfies the conclusion of Theorem \ref{Theo:1}. Readers not interested in technical details may stop there. In Section \ref{sec:4}, we give the proof of Proposition \ref{prop:redu} which gives the reduction of the problem to a finite-dimensional one; and this is the heart of our analysis. \\

\noindent{\textbf{Acknowledgment:}} The authors would like to thank C. Collot for his helpful discussion concerning this work and the anonymous referee for a careful reading and suggestions to improve the presentation of the paper.

\section{Construction of an approximate profile.}\label{sec:2}
This section is devoted to the construction of a suitable approximate solution to \eqref{Pb}  by using the same approach developed in \cite{MRRcjm15}. Similar approachs can also be found in \cite{RScpam13}, \cite{HRapde12}, \cite{RSma14}, \cite{Sjfa12}, \cite{Car16}, \cite{Car161} and \cite{GINapde18}. The key to this construction is the fact that the linearized operator $\Hs$ around $\vec Q$ is completely explicit in the radial setting thanks to the explicit formulas of the kernel elements.

Following the scaling invariance of \eqref{Pb}, we introduce the following change of variables:
\begin{equation}\label{def:simiVars}
\vec w(y,s) = \binom{w_1}{w_2}(y,s) = \binom{u_1}{\lambda u_2}(r,t), \quad y = \frac{r}{\lambda(t)}, \quad \frac{ds}{dt} = \frac{1}{\lambda(t)},
\end{equation}
which leads to the following renormalized flow:
\begin{equation}\label{eq:wys}
\partial_s \vec w +b_1 \Lambda \vec w = \vec F(\vec w), \quad \text{with} \quad b_1 = -\frac{\lambda_s}{\lambda}.
\end{equation}

Let us assume that the leading part of the solution of \eqref{eq:wys} is given by the harmonic map $\vec Q = \binom{Q}{0}$, where $Q$ is the unique  solution (up to scaling) of the equation
\begin{equation}\label{eq:Qy}
Q'' + \frac{(d-1)}{y}Q' - \frac{(d-1)}{2y^2}\sin(2Q) = 0, \quad Q(0) = 0, \; Q'(0) = 1.
\end{equation}
We aim at constructing an approximate solution of \eqref{eq:wys} close to $\vec Q$. The natural way is to linearize equation \eqref{eq:wys} around $\vec Q$, which generates the operator defined by \eqref{def:Hop}. Let us now recall the properties of $\Hs$ in the following subsection.

\subsection{Structure of the linearized operator.}
In this subsection, we recall the main properties of the linearized operator $\Hs$ close to $\vec Q$, which is the heart of both construction of the approximate profile and the derivation of the coercivity properties serving for the high Sobolev energy estimates. Let us start by recalling the following result from Biernat \cite{BIEnon2015}, which gives the asymptotic behavior of the harmonic map $Q$:
\begin{lemma}[Development of the harmonic map $Q$] Let $d \geq 7$, there exists a unique solution $Q$ to equation \eqref{eq:Qy}, which admits the following asymptotic behavior: For any $k \in\NN^*$,\\
$(i)$ (Asymptotic behavior of $Q$) 
\begin{equation}\label{eq:asymQ}
Q(y) = \left\{\begin{array}{ll}
y + \sum \limits_{i = 1}^kc_iy^{2i + 1} + \Oc(y^{2k + 3}) &\text{as}\quad y \to 0, \\
&\\
\dfrac{\pi}{2} - \dfrac{a_0}{ y^{\gamma}}\left[1 + \Oc\left(\dfrac 1{y^{2}}\right) + \Oc\left(\dfrac 1 {y^{ \tilde{\gamma}}}\right)\right]\quad &\text{as} \quad y \to + \infty,
\end{array}
\right.
\end{equation}
where  $\gamma$ is defined in \eqref{def:gamome}, $\tilde{\gamma} = \sqrt{d^2 - 8d + 8}$ and the constant $a_0 = a_0(d) > 0$.\\
$(ii)$ (Degeneracy)
\begin{equation}\label{eq:asymLamQ}
\Lambda Q > 0, \quad \Lambda Q(y) = \left\{\begin{array}{ll}
y + \sum\limits_{i = 1}^kc_i' y^{2i + 1} + \Oc(y^{2k + 3}) &\text{as}\quad y \to 0, \\
&\\
\dfrac{a_0 \gamma}{ y^{\gamma}}\left[1 + \Oc\left(\dfrac 1{y^{2}}\right) + \Oc\left(\dfrac 1 {y^{ \tilde{\gamma}}}\right)\right]\quad &\text{as} \quad y \to + \infty.
\end{array}
\right.
\end{equation}
\end{lemma}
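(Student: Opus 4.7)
The plan is to split the lemma into three pieces: the series expansion near $y=0$, global existence and monotonicity on $(0,\infty)$, and the asymptotic behavior at infinity. I follow Biernat's strategy combined with standard Fuchsian and autonomous-system techniques.

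Near $y=0$ the equation \eqref{eq:Qy} has a regular singular point, and the symmetry $(Q,y)\mapsto(-Q,-y)$ suggests an odd analytic solution. I would substitute the ansatz $Q(y)=y+\sum_{i\geq 1}c_iy^{2i+1}$, expand $\sin(2Q)=2Q-\tfrac{(2Q)^3}{6}+\cdots$, and match powers of $y$ to get an explicit recursion for the $c_i$ (for instance $c_1=-(d-1)/[3(d+2)]$). Local existence, uniqueness, and convergence of the series follow from a Banach fixed-point argument in a weighted space adapted to the Fuchsian character, which yields the near-origin expansion in \eqref{eq:asymQ}. For the global picture I pass to the autonomous variable $\tau=\log y$; using $yQ'=\dot Q$ and $y^2Q''=\ddot Q-\dot Q$, equation \eqref{eq:Qy} becomes the damped-pendulum-type equation
\begin{equation*}
\ddot Q+(d-2)\dot Q-\tfrac{d-1}{2}\sin(2Q)=0,
\end{equation*}
with fixed points $Q=0$ (saddle, encoding the initial data) and $Q=\pi/2$ (sink). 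The unstable trajectory leaving the saddle is trapped in the strip $0<Q<\pi/2$ and is attracted to $\pi/2$; a monotone, non-oscillatory approach requires $(d-2)^2\geq 4(d-1)$, equivalently $d^2-8d+8\geq 0$, which is exactly the condition $d\geq 7$. A standard phase-plane/shooting argument then gives global existence on $[0,\infty)$, the convergence $Q(y)\to\pi/2$, and the strict positivity $\Lambda Q=yQ'>0$.

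For the asymptotic at infinity I set $v=\pi/2-Q$ and use $\sin(2Q)=\sin(2v)$ to rewrite \eqref{eq:Qy} as
\begin{equation*}
v''+\frac{d-1}{y}v'+\frac{d-1}{y^2}v=\frac{d-1}{y^2}\bigl(v-\tfrac12\sin(2v)\bigr)=\frac{2(d-1)}{3y^2}v^3+O\!\left(v^5/y^2\right).
\end{equation*}
The left-hand side is of Euler type with indicial polynomial $\alpha^2+(d-2)\alpha+(d-1)$, whose roots are $-\gamma$ and $-(\gamma+\tilde\gamma)$, real for $d\geq 7$ since $\tilde\gamma^2=d^2-8d+8\geq 1$. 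Writing $v=y^{-\gamma}u$, the indicial relation cancels the $u$ term and leaves
\begin{equation*}
u''+\frac{d-1-2\gamma}{y}u'=O\!\left(y^{-2\gamma-2}\right)u^3,
\end{equation*}
whose homogeneous solutions are $1$ and $y^{-\tilde\gamma}$. The condition $u\to 1$ at infinity fixes the constant mode with coefficient $a_0$; the decaying homogeneous mode produces the $O(y^{-\tilde\gamma})$ correction; and a particular solution against the cubic source contributes a term of order $y^{-2\gamma}=O(y^{-2})$ (since $\gamma>1$). This yields the asymptotic in \eqref{eq:asymQ}, and differentiating the expansion, justified by a bootstrap on $u$ and $u'$ using the linear theory, gives \eqref{eq:asymLamQ}.

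The principal obstacle is the connection problem between the two local analyses: one must verify that the unique solution selected by the initial data at $y=0$ is exactly the one whose asymptotic at infinity is dominated by $a_0y^{-\gamma}$ with $a_0>0$ rather than by the faster-decaying mode $y^{-(\gamma+\tilde\gamma)}$, and that it converges to $\pi/2$ without overshoot. This is precisely what the shooting/phase-plane argument in the second step accomplishes, exploiting monotonicity and the non-oscillation criterion $d\geq 7$. For $d<7$ the linearization at $\pi/2$ has complex roots and the phase portrait becomes oscillatory, which explains the restriction in the lemma; the sign $a_0>0$ then follows from $Q<\pi/2$ for all $y>0$.
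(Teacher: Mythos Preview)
Your proposal is correct and follows precisely the strategy of Biernat \cite{BIEnon2015}, which is exactly what the paper invokes: the paper's own proof consists solely of the reference ``The proof can be found at pages 184--185 in \cite{BIEnon2015}''. Your sketch correctly identifies the three ingredients (local Fuchsian analysis at $y=0$, the autonomous damped-pendulum reformulation via $\tau=\log y$ with the non-oscillation criterion $(d-2)^2\geq 4(d-1)$, and the Euler-type linearization near $Q=\pi/2$ with indicial exponents $-\gamma$ and $-(\gamma+\tilde\gamma)$), as well as the connection problem linking them, so there is nothing to correct.
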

\begin{proof} The proof can be found at pages 184-185 in \cite{BIEnon2015}.
\end{proof}

A remarkable fact is that the linearized operator $\Ls$ admits the following factorization.
\begin{lemma}[Factorization of $\Ls$] \label{lemm:factorL}  Let $d \geq 7$ and define the first order operators
\begin{align}
\As w  &= -\partial_y w + \frac{V}{y}w = - \Lambda Q \partial_y \left(\frac{w}{\Lambda Q}\right), \label{def:As}\\ 
\As^* w &= \frac{1}{y^{d-1}}\partial_y \big(y^{d-1}w\big) + \frac{V}{y}w  =  \frac{1}{y^{d-1}\Lambda Q} \partial_y \left(y^{d-1} \Lambda Q w\right),\label{def:Astar}
\end{align}
where
\begin{equation}\label{eq:asympV}
V(y) := \Lambda \log(\Lambda Q) =  \left\{\begin{array}{ll}
1 + \Oc(y^2)\quad &\text{as}\quad y \to 0, \\
&\\
-\gamma + \Oc\left(\dfrac 1{y^{2}}\right)+ \Oc\left(\dfrac 1{y^{\tilde{\gamma}}}\right)\quad &\text{as} \quad y \to + \infty,
\end{array}
\right.
\end{equation}
We have
\begin{equation}\label{eq:reLAAst}
\Ls = \As^* \As, \quad \tilde{\Ls} = \As \As^*,
\end{equation}
where $\tilde{\Ls}$ stands for the conjugate Hamiltonian.
\end{lemma}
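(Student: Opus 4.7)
The plan is to break the lemma into three ingredients. First I would reconcile the two expressions given for $\As$ (and for $\As^*$): setting $\Phi := \Lambda Q > 0$, the identity $V/y = \Phi'/\Phi$ (which is just the definition $V = \Lambda \log \Phi$) yields $-\Phi\,\py(w/\Phi) = -\py w + (\Phi'/\Phi)w$, so both formulas for $\As$ coincide, and expanding $\frac{1}{y^{d-1}\Phi}\py(y^{d-1}\Phi v) = \py v + \frac{d-1}{y}v + \frac{\Phi'}{\Phi}v$ does the same for $\As^*$. As a sanity check one also verifies, by a single integration by parts against the radial measure $y^{d-1}\,dy$, that $\As^*$ really is the formal adjoint of $\As$, which justifies the notation.

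Second, the asymptotics \eqref{eq:asympV} of $V$ read off directly from \eqref{eq:asymLamQ}. Near $y=0$, $\Phi = y(1 + O(y^2))$ gives $\log\Phi = \log y + O(y^2)$, hence $V = y(\log\Phi)' = 1 + O(y^2)$. Near $y=+\infty$, $\Phi = a_0\gamma\, y^{-\gamma}\bigl[1 + O(y^{-2}) + O(y^{-\tilde\gamma})\bigr]$ gives $\log\Phi = -\gamma\log y + \mathrm{const} + O(y^{-2}) + O(y^{-\tilde\gamma})$, so $V = -\gamma + O(y^{-2}) + O(y^{-\tilde\gamma})$, as claimed.

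The main point is the factorization $\Ls = \As^*\As$. Using the covariant forms $\As w = -\Phi\,\py(w/\Phi)$ and $\As^* v = \frac{1}{y^{d-1}\Phi}\py(y^{d-1}\Phi v)$, a direct computation gives
$$\As^*\As w = -\frac{1}{y^{d-1}\Phi}\py\!\Bigl(y^{d-1}\Phi^2\,\py(w/\Phi)\Bigr) = -\py^2 w - \frac{d-1}{y}\py w + \frac{1}{\Phi}\Bigl[\py^2\Phi + \frac{d-1}{y}\py\Phi\Bigr] w,$$
so that $\As^*\As = \Ls$ is equivalent to the pointwise identity $-\py^2\Phi - \frac{d-1}{y}\py\Phi + \frac{(d-1)\cos(2Q)}{y^2}\Phi = 0$, i.e.\ to $\Ls(\Lambda Q) = 0$. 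This is precisely the infinitesimal scale invariance of \eqref{eq:Qy}: the scaling family $Q_\lambda(y):= Q(\lambda y)$ satisfies the same equation for every $\lambda > 0$, and differentiating in $\lambda$ at $\lambda=1$ produces a linear equation for $\Lambda Q$ whose operator is exactly $\Ls$. The second identity $\tilde\Ls = \As\As^*$ is then taken as the definition of the conjugate Hamiltonian (or, if an explicit potential is desired, is obtained by the analogous computation with the roles of $\As$ and $\As^*$ swapped). I expect no real obstacle here: the only trick is to use the covariant form involving $\Phi = \Lambda Q$, so that after one application of the Leibniz rule the product $\As^*\As$ collapses to an operator whose potential is manifestly governed by the equation satisfied by $\Lambda Q$, reducing the entire claim to the built-in invariance $\Ls(\Lambda Q) = 0$.
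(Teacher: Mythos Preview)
Your proposal is correct and complete. The paper itself does not give a proof of this lemma: it is stated as a direct computation and followed only by remarks, one of which records the identity $Z = V^2 + \Lambda V + (d-2)V$ (equation \eqref{def:ZbyV}) that is the algebraic content of the factorization when one expands $\As^*\As$ using the first forms in \eqref{def:As}--\eqref{def:Astar}. Your route via the covariant expressions in terms of $\Phi = \Lambda Q$ is slightly cleaner, since it reduces $\Ls = \As^*\As$ in one step to the single identity $\Ls(\Lambda Q) = 0$, which you then derive from scaling invariance of \eqref{eq:Qy}; the paper instead simply asserts $\Ls(\Lambda Q) = 0$ (see \eqref{eq:LlaQ}) and records the equivalent potential relation. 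Either way the verification is a short direct computation, and your argument covers all pieces of the statement including the asymptotics \eqref{eq:asympV}.
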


\begin{remark} The adjoint operator $\As^*$ is defined with respect to the Lebesgue measure
$$\int _0^{+\infty}(\As u) w y^{d-1}dy = \int_{0}^{+\infty} u(\As^* w)y^{d-1}dy.$$
\end{remark}

\begin{remark} The factorization \eqref{eq:reLAAst} immediately implies that 
$$\Ls w = 0 \quad \text{if and only if} \quad w \in \textup{span}\{\Lambda Q, \Gamma\},$$
where 
$$\Gamma = -\Lambda Q \int_1^y \frac{dx}{x^{d-1}(\Lambda Q)^2},$$
which admits the asymptotic behavior
\begin{equation}\label{eq:asymGamma}
\Gamma(y) = \left\{\begin{array}{ll}
\dfrac{1}{d y^{d-1}} + \Oc(y)\;\; &\text{as}\;\; y \to 0, \\
&\\
\dfrac{1}{a_0 \gamma (d - 2 - 2\gamma) y^{d - 2 - \gamma}} + \Oc\left(\dfrac 1{y^{d - \gamma}}\right)\;\; &\text{as} \;\; y \to + \infty,
\end{array}
\right.
\end{equation}
\end{remark}

\begin{remark} We have 
\begin{equation}\label{eq:relLsLam}
\Ls(\Lambda w) = \Lambda (\Ls w) + 2 \Ls w - \frac{\Lambda Z}{y^2}w.
\end{equation}
Since $\Ls(\Lambda Q) = 0$, one can express the definition of $Z$ through the potential $V$ as follows: 
\begin{equation}\label{def:ZbyV}
Z(y) = V^2 + \Lambda V + (d-2)V.
\end{equation}
Let $\tilde{Z}$ be defined by 
\begin{equation}\label{def:LstilbyZtil}
\tilde{\Ls} = -\partial_{yy} - \frac{d-1}{y}\py + \frac{\tilde{Z}}{y^2},
\end{equation}
then, a direct computation yields
\begin{equation}\label{def:ZtilbyV}
\tilde{Z}(y) = (V + 1)^2 + (d-2)(V+1) - \Lambda V.
\end{equation}
\end{remark}
The factorization of $\Ls$ allows us to compute $\Ls^{-1}$ in an elementary two step processes as follows:
\begin{lemma}[Inversion of $\Ls$] \label{lemm:inversionL} Let  $f$ be a $\Cc^\infty$ radially symmetric function and $\Ls w = f$, then 
\begin{equation}\label{eq:relaAL}
w = -\Lambda Q\int_0^y\frac{\As w(x)}{\Lambda Q(x)}dx \quad \text{with} \quad \As w = \frac{1}{y^{d-1}\Lambda Q} \int_0^yf(x) \Lambda Q(x) x^{d-1}dx.
\end{equation}
\end{lemma}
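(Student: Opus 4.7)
The plan is to exploit the factorization $\Ls=\As^{\ast}\As$ from Lemma \ref{lemm:factorL} to split the second-order equation $\Ls w=f$ into two first-order equations, each of which can be solved explicitly by an integrating-factor argument. Set $g:=\As w$, so that the problem becomes the pair
\begin{equation*}
\As^{\ast} g = f, \qquad \As w = g.
\end{equation*}
Because $\As$ and $\As^{\ast}$ are conjugated to $-\partial_y$ by the respective weights $\Lambda Q$ and $(\Lambda Q)^{-1}$, both equations integrate immediately.

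For the first equation, I rewrite the defining formula
\begin{equation*}
\As^{\ast} g = \frac{1}{y^{d-1}\Lambda Q}\,\partial_y\!\bigl(y^{d-1}\Lambda Q\, g\bigr) = f,
\end{equation*}
which gives $\partial_y(y^{d-1}\Lambda Q\, g)=f\,\Lambda Q\, y^{d-1}$. Integrating from $0$ to $y$, and using the fact that $y^{d-1}\Lambda Q(y)\sim y^{d}$ at the origin together with smoothness of $g=\As w$ near $0$ to justify the vanishing of the boundary term at $0$, yields
\begin{equation*}
\As w(y) = g(y) = \frac{1}{y^{d-1}\Lambda Q(y)}\int_0^y f(x)\,\Lambda Q(x)\, x^{d-1}\,dx.
\end{equation*}
For the second equation, I rewrite $\As w=-\Lambda Q\,\partial_y(w/\Lambda Q)=g$, so that $\partial_y(w/\Lambda Q)=-g/\Lambda Q$. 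Integrating from $0$ to $y$ and using that $w$ is smooth at the origin while $\Lambda Q(y)\sim y$, so that $w/\Lambda Q$ extends continuously at $0$ with value $w'(0)$, I select the integration constant that kills the $\Gamma$-component of the kernel, namely the one vanishing at the origin in the quotient $w/\Lambda Q$. This gives the announced formula
\begin{equation*}
w(y) = -\Lambda Q(y)\int_0^y \frac{\As w(x)}{\Lambda Q(x)}\,dx.
\end{equation*}

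The only non-routine point is the justification of the boundary condition at $y=0$: the kernel of $\Ls$ contains both $\Lambda Q$ (smooth at the origin, behaving like $y$) and $\Gamma$ (singular, behaving like $y^{-(d-1)}$), and different choices of lower integration limit correspond to adding arbitrary multiples of these kernel elements. The statement of the lemma implicitly selects the unique $\Cc^{\infty}$ solution at $0$, and this is enforced by taking both integrals to start at $0$, using the asymptotics \eqref{eq:asymLamQ} of $\Lambda Q$ to check that the resulting improper integrals converge and that the boundary terms arising from integration by parts indeed vanish. Once these regularity checks are performed, the composition $\Ls w=\As^{\ast}\As w=\As^{\ast}g=f$ follows mechanically from the construction, completing the proof.
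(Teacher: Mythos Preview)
Your proposal is correct and follows exactly the natural approach: the paper does not give its own argument here but simply cites Lemma 2.5 in \cite{GINapde18}, and that reference proceeds precisely by the two-step integration via the factorization $\Ls=\As^{\ast}\As$ that you carry out. One minor clarification: in the second integration, choosing the lower limit $0$ kills the $\Lambda Q$-component of the kernel (since $\ker\As=\mathrm{span}\{\Lambda Q\}$), not the $\Gamma$-component; the singular $\Gamma$-component was already excluded in the first step by requiring $g=\As w$ to be regular at the origin.
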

\begin{proof} See Lemma 2.5 in \cite{GINapde18}.
\end{proof}

Knowing $\Ls^{-1}$, we can easily defined the inversion of $\Hs$ as follows:
\begin{equation}
\Hs^{-1} = \begin{bmatrix}
0 & \Ls^{-1}\\ -1 & 0
\end{bmatrix}.
\end{equation}
By a direct check, we have 
\begin{equation}\label{def:H2k1}
\Hs^{2k} = (-1)^k \begin{bmatrix}
\Ls^k & 0 \\ 0 & \Ls^k
\end{bmatrix} \quad \text{and} \quad \Hs^{2k + 1} = (-1)^k \begin{bmatrix}
0 & - \Ls^k\\ \Ls^{k + 1} & 0
\end{bmatrix},
\end{equation}
and 
\begin{equation}\label{def:H2k1adj}
\Hs^{*2k} = (-1)^k \begin{bmatrix}
\Ls^k & 0 \\
0 & \Ls^k
\end{bmatrix} \quad \text{and} \quad \Hs^{*(2k + 1)} = (-1)^k \begin{bmatrix}
0 & \Ls^{k + 1}\\ - \Ls^k & 0
\end{bmatrix}.
\end{equation}

\subsection{Admissible functions.} We define a class of admissible functions which display a suitable behavior both at the origin and infinity.
\begin{definition}[Admissible function] \label{def:Admitfunc} Fix $\gamma > 0$, we say that a smooth vector function $\vec f \in  \Cc^\infty(\Rb_+, \Rb) \times \Cc^\infty(\Rb_+, \Rb)$ is admissible of degree $(p_1, p_2, \iota) \in \mathbb{N} \times \mathbb{R} \times \{0,1\}$ if \\
$(i)\;$ $\iota$ is the position: 
$$\vec f = \binom{f}{0} \; \text{if}\; \iota = 0, \quad \vec{f} = \binom{0}{f} \; \text{if}\; \iota = 1.$$ 
$(ii)\;$ $f$ admits a Taylor expansion to all orders around the origin, 
 $$f(y) = \sum_{k = p_1 - \iota, k\; \textup{even}}^p c_ky^{k + 1} + \Oc(y^{p + 2});$$
$(iii)\;$ $f$ and its derivatives admit the bounds, for $y \geq 1$,
$$\forall k \in \mathbb{N}, \quad |\partial^k_y f(y)| \lesssim y^{p_2 - \gamma - \iota - k}.$$
\end{definition}
\begin{remark} Note from \eqref{eq:asymLamQ} that $\Lambda \vec Q = \binom{\Lambda Q}{0}$ is admissible of degree $(0,0, 0)$.
\end{remark}

One note that $\Hs$ naturally acts on the class of admissible function in the following way:
\begin{lemma}[Action of $\Hs$ and $\Hs^{-1}$ on admissible functions] \label{lemm:actionH} Let $\vec f$ be an admissible function of degree $(p_1, p_2, \iota) \in \mathbb{N} \times \mathbb{R} \times \{0, 1\}$, then:\\
$(i)\;$ $\Lambda \vec f$ is admissible of degree $(p_1, p_2, \iota)$.\\
$(ii)\,$ $\Hs \vec f$ is admissible of degree $(\max\{\iota ,p_1 - 1\}, p_2 - 1, (\iota + 1)\wedge 2))$.\\
$(iii)\,$ $\Hs^{-1} \vec f$ is admissible of degree $(p_1 + 1, p_2 + 1, (\iota + 1)\wedge 2))$.
\end{lemma}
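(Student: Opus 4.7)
The plan is to proceed by a direct case analysis on the position index $\iota \in \{0,1\}$ of $\vec f$, checking each of the three admissibility conditions (position, Taylor expansion near $y = 0$, decay at infinity) separately for each of $\Lambda\vec f$, $\Hs\vec f$, and $\Hs^{-1}\vec f$.

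Part (i) is immediate. When $\iota = 0$ one has $\Lambda\vec f = \binom{y\partial_y f}{0}$ and when $\iota = 1$ one has $\Lambda\vec f = \binom{0}{f + y\partial_y f}$, so the position is preserved. Near $y = 0$, the operators $y\partial_y$ and $1 + y\partial_y$ send $c_k y^{k+1}$ to $(k+1)c_k y^{k+1}$ and $(k+2)c_k y^{k+1}$, so the form and the lowest index of the Taylor expansion are preserved. At infinity, Leibniz gives $|\partial_y^j(y\partial_y f)| \lesssim y|\partial_y^{j+1}f| + j|\partial_y^j f|$, and both terms are controlled by $y^{p_2 - \gamma - \iota - j}$ using the decay hypothesis on $f$.

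For part (ii), I would first compute $\Hs\vec f = \binom{0}{\Ls f}$ when $\iota = 0$ and $\Hs\vec f = \binom{-f}{0}$ when $\iota = 1$, which immediately accounts for the new position $(\iota+1)\wedge 2$. The case $\iota = 1$ is then trivial since $-f$ inherits both the Taylor expansion and the decay of $f$, with the reindexing matching the predicted degree. In the case $\iota = 0$, I would plug the Taylor expansion $f(y) = \sum c_k y^{k+1}$ into the explicit form $\Ls f = -\partial_y^2 f - \frac{d-1}{y}\partial_y f + \frac{Z(y)}{y^2}f$, using $Z(0) = d-1$ (coming from $Z = (d-1)\cos 2Q$ and $Q(0) = 0$). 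A direct computation shows that the lowest term $c_k y^{k+1}$ produces $-c_k\,k(k+d)\, y^{k-1}$ at leading order, lowering the Taylor exponent by exactly $2$ in the generic case $k \geq 2$ and cancelling harmlessly when $k = 0$ (the remaining contribution, coming from the $O(y^2)$ correction in $Z(y) - (d-1)$ together with the next Taylor coefficient $c_2$, preserves the $y^1$ order); oddness of the expansion is preserved since $Z$ is even. At infinity, applying Leibniz to each of the three terms of $\Ls f$ and using that $Z$ and its derivatives are bounded yields $|\partial_y^j\Ls f| \lesssim y^{p_2 - \gamma - \iota - j - 2}$, which reassembles as $y^{p_2' - \gamma - \iota' - j}$ with $p_2' = p_2 - 1$ and $\iota' = 1$.

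For part (iii), the strategy is analogous but using Lemma \ref{lemm:inversionL} to invert $\Ls$. When $\iota = 0$ we have $\Hs^{-1}\vec f = \binom{0}{-f}$ and the analysis is immediate. When $\iota = 1$, the two-step integral representation of Lemma \ref{lemm:inversionL}, together with the asymptotics of $\Lambda Q$ (behaving like $y$ near $0$ and like $y^{-\gamma}$ at infinity) given in \eqref{eq:asymLamQ}, shows by direct integration that each inversion raises the Taylor order by $2$ at the origin (since the inner integral $\int_0^x f\Lambda Q\,z^{d-1}\,dz$ and the outer integration in $x$ each add one power of $y$ and together compensate the factor $\Lambda Q \sim y$ in front) and raises the decay order by $2$ at infinity, yielding the degree $(p_1+1, p_2+1, (\iota+1)\wedge 2)$. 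The main obstacle across the proof is the delicate origin analysis in part (ii) for $\iota = 0$: the singular potential $\frac{Z(y)}{y^2} \sim \frac{d-1}{y^2}$ interacts with the radial Laplacian $-\partial_y^2 - \frac{d-1}{y}\partial_y$ to produce cancellations whose precise coefficient $-k(k+d)$ governs how many Taylor orders are lost, and tracking this correctly is what will power the iterative construction of the kernel elements $\vec T_k$ and profile corrections $\vec S_k$ in the sequel.
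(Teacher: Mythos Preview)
Your proposal is correct and follows precisely the direct case-by-case verification that the paper intends: the paper itself omits the proof entirely, stating only that it ``directly follows from the definitions of $\Lambda$, $\Hs$ and $\Hs^{-1}$'' and referring to Lemma~2.8 of \cite{GINapde18}. Your sketch is therefore more detailed than what the paper provides, and the key computations --- the leading-order coefficient $-k(k+d)$ for $\Ls(y^{k+1})$ near the origin, the cancellation at $k=0$, and the two-step integral inversion of $\Ls$ via Lemma~\ref{lemm:inversionL} combined with the asymptotics \eqref{eq:asymLamQ} of $\Lambda Q$ --- are exactly the mechanisms that the referenced proof in \cite{GINapde18} relies on.
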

\begin{proof} The proof directly follows from the definitions of $\Lambda$, $\Hs$ and $\Hs^{-1}$, and we refer the reader to Lemma 2.8 in \cite{GINapde18} for a similar proof.
\end{proof}

The following lemma is a consequence of Lemma \ref{lemm:actionH}:
\begin{lemma}[Generators of the kernel of $\Hs^k$] \label{lemm:GenLk} Let the sequence of profiles 
\begin{equation}\label{def:Tk}
\Hs \vec T_{k+1} = - \vec T_k, \quad k \in \mathbb{N}, \quad \vec T_0 = \Lambda \vec Q,
\end{equation}
then\\
$(i)\;$ $\vec T_k$ is admissible of degree $(k,k, k\wedge 2)$ for $k \in \mathbb{N}$.\\
$(ii)\;$ $\Lambda \vec T_k - (k - \gamma)\vec T_k$ is admissible of degree $(k, k-1, k\wedge2)$ for $k \in \mathbb{N}$.
\end{lemma}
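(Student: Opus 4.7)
The plan is to prove both parts by induction on $k$, using the defining recursion $\vec T_{k+1} = -\Hs^{-1}\vec T_k$ with base $\vec T_0 = \Lambda \vec Q$, together with Lemma \ref{lemm:actionH}.

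\emph{Part (i).} The base case $\vec T_0 = \binom{\Lambda Q}{0}$ is admissible of degree $(0,0,0)$: the asymptotic \eqref{eq:asymLamQ} supplies the odd Taylor series $\Lambda Q = y + c_1' y^3 + \cdots$ at the origin and the sharp decay $\Lambda Q = a_0\gamma\, y^{-\gamma}(1 + O(y^{-2}) + O(y^{-\tilde\gamma}))$ at infinity, with matching derivative bounds by termwise differentiation. The inductive step is immediate from Lemma \ref{lemm:actionH}$(iii)$: if $\vec T_k$ has degree $(k,k,k\wedge 2)$, then $\vec T_{k+1} = -\Hs^{-1}\vec T_k$ has degree $(k+1, k+1, (k+1)\wedge 2)$.

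\emph{Part (ii).} Set $\vec S_k := \Lambda \vec T_k - (k - \gamma)\vec T_k$. The base case $\vec S_0 = \binom{\Lambda(\Lambda Q) + \gamma\, \Lambda Q}{0}$ has degree $(0,-1,0)$ because the leading tail $a_0\gamma\, y^{-\gamma}$ of $\Lambda Q$ is annihilated by the Euler operator $\Lambda + \gamma$, leaving only the subleading correction of order $y^{-\gamma - \min(2,\tilde\gamma)}$ from \eqref{eq:asymLamQ}. For the inductive step, we apply $\Hs$ and invoke the commutator identity
\[
[\Hs, \Lambda]\vec f \;=\; \binom{-f_2}{\Ls f_1 - \tfrac{\Lambda Z}{y^2} f_1},
\]
derived from \eqref{eq:relLsLam} and the block form \eqref{def:Hop}, to obtain
\[
\Hs \vec S_{k+1} \;=\; -\vec S_k + \vec T_k + [\Hs, \Lambda]\vec T_{k+1}.
\]
Iterating $\vec T_{k+1} = -\Hs^{-1}\vec T_k$ yields the explicit pattern $\vec T_{2j} = \binom{\phi_j}{0}$, $\vec T_{2j+1} = \binom{0}{\phi_j}$ with $\Ls\phi_{j+1} = -\phi_j$, $\phi_0 = \Lambda Q$. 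A direct parity check then shows that $\vec T_k + [\Hs, \Lambda]\vec T_{k+1}$ vanishes identically when $k$ is even (the first component of the commutator exactly cancels $\vec T_k$), and reduces to $-\binom{0}{(\Lambda Z/y^2)\,\phi_{(k+1)/2}}$ when $k$ is odd (using $\Ls\phi_{(k+1)/2} = -\phi_{(k-1)/2}$). Since \eqref{eq:asympV} together with \eqref{def:ZbyV} gives $\Lambda Z / y^2 = O(y^{-2-\min(2,\tilde\gamma)})$ at infinity (with matching derivative bounds), this residual is admissible of strictly lower $p_2$ than $\vec T_{k+1}$. Applying $\Hs^{-1}$ and combining with $-\Hs^{-1}\vec S_k$, which has the claimed degree $(k+1, k, (k+1)\wedge 2)$ by the induction hypothesis and Lemma \ref{lemm:actionH}$(iii)$, closes the induction.

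\emph{Main obstacle.} The crux is the algebraic identification that $\vec T_k + [\Hs, \Lambda]\vec T_{k+1}$ is either zero or of order $y^{-\min(2,\tilde\gamma)}$ smaller than the naive count; without it, the naive application of Lemma \ref{lemm:actionH}$(iii)$ would only yield degree $(k+1, k+1, \ldots)$ for $\vec S_{k+1}$, losing the crucial one-power gain in $p_2$. This cancellation traces back to the defining identity $\Ls(\Lambda Q) = 0$ and the fact that the Euler operator $\Lambda$ acts diagonally on pure powers. Once recognized, the rest of the argument is routine bookkeeping already packaged in Lemma \ref{lemm:actionH}.
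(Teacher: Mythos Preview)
Your argument for part (i) is exactly the paper's: base case from \eqref{eq:asymLamQ}, induction via Lemma~\ref{lemm:actionH}(iii). For part (ii) the paper simply defers to the scalar analogue in \cite{GINapde18}, so your commutator derivation is in fact more explicit than what appears here. The algebraic cancellation you isolate---that $\vec T_k + [\Hs,\Lambda]\vec T_{k+1}$ vanishes for $k$ even and reduces to $-\binom{0}{(\Lambda Z/y^2)\phi_{(k+1)/2}}$ for $k$ odd---is correct and is indeed the heart of the matter.

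There is one point you should make explicit. When you write ``applying $\Hs^{-1}$'' to pass from $\Hs\vec S_{k+1} = -\vec S_k + R$ back to $\vec S_{k+1}$, you are implicitly using that $\vec S_{k+1} = \Hs^{-1}(-\vec S_k + R)$ with no kernel contribution. Since $\Hs^{-1}$ (the specific right inverse of Lemma~\ref{lemm:inversionL}) is not a two-sided inverse, a priori $\vec S_{k+1} - \Hs^{-1}(-\vec S_k + R) = c\,\Lambda\vec Q$ for some constant $c$. The singular element $\vec\Gamma$ is ruled out by smoothness at the origin, but $\Lambda\vec Q$ is regular and has $p_1 = 0$, which would destroy the claimed $p_1 = k+1$. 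To close this you should observe: when $k$ is even, $\vec S_{k+1}$ lives in position $\iota = 1$ while $\Lambda\vec Q$ lives in $\iota = 0$, so $c = 0$ trivially; when $k$ is odd, both $\vec S_{k+1}$ (directly from its definition and part (i)) and $\Hs^{-1}(-\vec S_k + R)$ (from Lemma~\ref{lemm:actionH}(iii)) vanish to order $\geq k+1 \geq 2$ at the origin, whereas $\Lambda Q \sim y$, forcing $c = 0$. With this one sentence added, your induction is complete.
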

\begin{proof} $(i)$ We note from \eqref{eq:asymLamQ} that $\vec T_0 = \Lambda \vec Q$ is admissible of degree $(0,0, 0)$. By induction and part $(iii)$ of Lemma \ref{lemm:actionH}, the conclusion simply follows. For item $(ii)$, we refer to Lemma 2.9 in \cite{GINapde18} for an analogous proof.
\end{proof}
\begin{remark} From item $(i)$ of Lemma \ref{lemm:GenLk}, we see that the profile $T_k$ has only one null coordinate, which depends on the index $k$. For simplicity we make use the following notation
\begin{equation}\label{eq:formTk}
\vec T_{2i} = \binom{T_{2i}}{0}, \quad \vec T_{2i + 1} = \binom{0}{T_{2i + 1}}.
\end{equation}

\end{remark}

We end this subsection by introducing a simple notion of homogeneous admissible function.
\begin{definition}[Homogeneous admissible function] Let $L \gg 1$ be an integer and $b = (b_1, \cdots, b_L)$. We say that a vector function $\vec f(y,b)$ is homogeneous of degree $(p_1, p_2, \iota, p_3) \in \mathbb{N} \times \mathbb{\Rb}\times \{0,1\} \times \mathbb{N}$ if it is a finite combination of monomials
$$\vec{g}(y)\prod_{k = 1}^Lb_k^{m_k},$$
with $\vec{g}(y)$ admissible of degree $(p_1, p_2, \iota)$ in the sense of Definition \ref{def:Admitfunc} and
$$(m_1, \cdots, m_L) \in \mathbb{N}^L, \quad\sum_{k = 1}^L km_k = p_3.$$
We set 
$$\text{deg}(\vec f):= (p_1, p_2, \iota, p_3).$$  
\end{definition}

\subsection{Slowly modulated blow-up profile.}
In this subsection, we use the explicit structure of the linearized operator $\Hs$ to construct an approximate blow-up profile. In particular, we claim the following:
\begin{proposition}[Construction of the approximate profile] \label{prop:1}  Let $d \geq 7$ and $L \gg 1$ be an odd integer. Let $M > 0$ be a large enough universal constant, then there exist a small enough universal constant $b^*(M,L) > 0$ such that the following holds true. Let a $\Cc^1$ map
 $$b = (b_1, \cdots, b_L):[s_0,s_1] \mapsto (-b^*, b^*)^L,$$
with a priori bounds in $[s_0,s_1]$:
\begin{equation}\label{eq:relb1bk}
0 < b_1 < b^*, \quad |b_k| \lesssim b_1^k, \quad 2 \leq k \leq L, 
\end{equation}
Then there exist homogeneous profiles 
$$\vec S_k = \vec S_k(y,b), \quad 2 \leq k \leq L + 2,$$
such that
\begin{equation}\label{eq:Qbform}
\vec Q_{b(s)}(y) = \vec Q(y) + \sum_{k = 1}^L b_k(s)\vec T_k(y) + \sum_{k = 2}^{L+2}\vec S_k(y,b) \equiv \vec Q(y) + \vec\Theta_{b(s)}(y),
\end{equation}
generates an approximate solution to the remormalized flow \eqref{eq:wys}:
\begin{equation}\label{def:Psib}
\partial_s \vec Q_{b} + b_1 \Lambda \vec Q_b - \vec F(\vec Q_b) = \vec \Psi_b + \vec{\textup{Mod}},
\end{equation}
with the following property:\\
$(i)$ (Modulation equation)
\begin{equation}\label{eq:Modt}
\vec{\textup{Mod}} = \sum_{k = 1}^L\Big[(b_k)_s + (k - \gamma)b_1b_k - b_{k + 1}\Big] \left[\vec T_k + \sum_{j = k + 1}^{L+2}\frac{\partial \vec S_j}{\partial b_k}\right], 
\end{equation}
where we use the convention $b_{j} = 0$ for $j \geq L+1$.\\
$(ii)$ (Estimate on the profiles) The profiles $(S_k)_{2 \leq k \leq L+2}$ are homogeneous with
\begin{align*}
&\text{deg}(\vec S_k) = (k,k-1, k\wedge2 ,k) \quad \text{for} \quad 2 \leq k \leq L+2,\\
&\frac{\partial \vec S_k}{\partial b_m} = \vec 0 \quad \text{for}\quad 2 \leq k \leq m \leq L.
\end{align*}
$(iii)$ (Estimate on the error $\vec \Psi_b$)  The generated error term is of the form 
$$\vec \Psi_b = \binom{0}{\Psi_b},$$
where $\Psi_b$ satisfies for all $0 \leq m \leq L$,\\
- (global weight bound)
\begin{equation}\label{eq:estGlobalPsib}
\int_{y \leq 2B_1}|\nabla^{m + \hbar}\Psi_b |^2 + \int_{y \leq 2B_1}|\Psi_b \Ls^{m + \hbar}\Psi_b|\lesssim b_1^{2m + 4 + 2(1 - \delta) - C_L\eta},
\end{equation}
where $B_1$, $\hbar$, $\delta$ are defined in \eqref{def:B0B1} and \eqref{def:kdeltaplus}.\\
- (improved local bound)
\begin{equation}\label{eq:estlocalPsib}
\forall M \geq 1, \quad \int_{y \leq M}|\nabla^{m + \hbar}\Psi_b|^2 + \int_{y \leq M}|\Psi_b \Ls^{m + \hbar}\Psi_b| \lesssim M^Cb_1^{2L + 6}.
\end{equation}
\end{proposition}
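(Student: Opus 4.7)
The plan is a standard tail computation organized by homogeneity degree in $b$. I would substitute the ansatz \eqref{eq:Qbform} into the left-hand side of \eqref{def:Psib}, use $\vec F(\vec Q+\vec q) = -\Hs\vec q + \vec N(\vec q)$, and exploit the kernel relations $\Hs \vec T_{k+1} = -\vec T_k$ from Lemma \ref{lemm:GenLk} together with $\vec T_0 = \Lambda \vec Q$ to cancel the forcing term $b_1 \Lambda \vec Q$ arising from $b_1 \Lambda \vec Q_b$. Treating $(b_k)_s$ as having degree $k+1$, consistent with the expected law $(b_k)_s \sim b_{k+1}$, I organize the error by total degree $p_3$ in $b = (b_1,\dots,b_L)$, so that each degree is handled by a single elliptic inversion.

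At degree $p_3 = k+1$, collecting all terms yields a source of the form
\[
\big[(b_k)_s + (k-\gamma)b_1 b_k - b_{k+1}\big]\vec T_k \;+\; b_1 b_k\big(\Lambda \vec T_k - (k-\gamma)\vec T_k\big) \;+\; \Hs \vec S_{k+1} \;+\; \vec R_k,
\]
where $\vec R_k$ gathers nonlinear contributions from the Taylor expansion of $\sin(2Q+2\Theta_b)$ and the lower-order $\vec S_j$'s already constructed. The first bracket is exactly what I put into $\vec{\textup{Mod}}$, producing the law in \eqref{eq:Modt}. The crucial structural fact is Lemma \ref{lemm:GenLk}(ii): the combination $\Lambda \vec T_k - (k-\gamma)\vec T_k$ is admissible of degree $(k, k-1, k\wedge 2)$, one order better at infinity than $\vec T_k$ itself. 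I then define $\vec S_{k+1}$ by
\[
\Hs \vec S_{k+1} = -b_1 b_k\big(\Lambda \vec T_k - (k-\gamma)\vec T_k\big) - \vec R_k,
\]
inverting $\Hs$ via the explicit formula of Lemma \ref{lemm:inversionL}. Lemma \ref{lemm:actionH}(iii) then gives $\deg(\vec S_{k+1}) = (k+1, k, (k+1)\wedge 2, k+1)$, matching item $(ii)$ of the proposition. The triangularity $\partial_{b_m}\vec S_k = \vec 0$ for $m\geq k$ follows by induction because $\vec S_{k+1}$ depends only on $b_1,\ldots,b_k$; this triangularity is exactly what allows the factor $\vec T_k + \sum_{j>k}\partial_{b_k}\vec S_j$ of \eqref{eq:Modt} to emerge after reassembling the $(b_k)_s$ contributions from $\partial_s \vec Q_b$.

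The remainder after $L+2$ iterations is $\vec \Psi_b$, consisting of: (a) the uncancelled degree-$(L+2)$ and higher parts coming from the convention $b_{L+1} \equiv 0$; and (b) the purely nonlinear tail from Taylor expanding $\sin$ beyond the required order. The fact that $\vec \Psi_b = \binom{0}{\Psi_b}$ has vanishing first component follows from the block form of $\vec N$ in \eqref{def:Nvec} and the parity structure $\vec T_{2k} = \binom{T_{2k}}{0}$, $\vec T_{2k+1} = \binom{0}{T_{2k+1}}$, which propagates to the $\vec S_k$'s via Lemma \ref{lemm:actionH}. Both contributions are homogeneous of degree $\geq L+2$ in $b_1$ and admissible in $y$, so the global bound \eqref{eq:estGlobalPsib} follows by integrating the pointwise admissibility estimate for $\Ls^{m+\hbar}\Psi_b$ over $\{y \leq 2B_1\}$ with $B_1 = b_1^{-(1+\eta)}$, while the local bound \eqref{eq:estlocalPsib} is immediate since inside $\{y\leq M\}$ the admissible functions and all their derivatives are controlled by a constant depending only on $M$.

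The main obstacle will be the precise accounting of $y$-powers at the boundary $y\sim B_1$ against the powers of $b_1$. Since the natural self-similar scale is $y\sim b_1^{-1}$ while integration extends to $y = 2B_1 = 2b_1^{-(1+\eta)}$, each spatial power in the admissibility degree $p_2$ costs an extra factor $b_1^{-(1+\eta)}$; obtaining the sharp gain $2(1-\delta) - C_L \eta$ in \eqref{eq:estGlobalPsib} requires tracking these losses through $m+\hbar$ derivatives and invoking $\delta \in (0,1)$ from \eqref{def:kdeltaplus} to retain a strictly positive exponent. A secondary subtlety lies at $y = 0$: one must verify, via the parity shift $\iota \mapsto (\iota+1)\wedge 2$ in Lemma \ref{lemm:actionH}, that the inversion $\Hs^{-1}$ does not create a singular tail from the $1/y^{d-1}$ weight appearing in \eqref{def:Astar}, so that each $\vec S_k$ genuinely lies in the admissible class.
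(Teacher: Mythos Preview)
Your proposal is correct and follows essentially the same approach as the paper: organize the error by homogeneity degree in $b$, use Lemma~\ref{lemm:GenLk}(ii) to gain one order of decay on $\Lambda\vec T_k-(k-\gamma)\vec T_k$, invert $\Hs$ iteratively via Lemma~\ref{lemm:actionH}(iii), and estimate the residual by integrating admissibility bounds over $\{y\le 2B_1\}$. Two small remarks: the paper handles the $(b_k)_s$ contribution not by \emph{assigning} it degree $k+1$ but by the add--subtract identity $(b_k)_s=[(b_k)_s+(k-\gamma)b_1b_k-b_{k+1}]-[(k-\gamma)b_1b_k-b_{k+1}]$, which makes the emergence of \eqref{eq:Modt} completely algebraic and avoids any circularity; and after constructing $\vec S_2,\dots,\vec S_{L+2}$ the residual actually has homogeneity degree $\ge L+3$ (not $L+2$), which is exactly what produces the $b_1^{2L+6}$ in \eqref{eq:estlocalPsib}.
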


\begin{remark} From item $(ii)$ of Proposition \ref{prop:1}, we make the abuse of notation 
\begin{equation}\label{eq:formSk}
\vec{S}_{2i} = \binom{S_{2i}}{0}, \quad \vec S_{2i +1} = \binom{0}{S_{2i  +1}}.
\end{equation}
\end{remark}

\begin{proof} We aim at constructing the profiles $(\vec S_k)_{2 \leq k \leq L+2}$ such that $\vec \Psi_b(y)$ defined from \eqref{def:Psib} has the \emph{least possible growth} as $y \to +\infty$. The key to this construction is the fact that the structure of the linearized operator $\Hs$ defined in \eqref{def:Hop} is completely explicit in the radial sector thanks to the explicit formulas of the elements of the kernel of $\Ls$. This procedure will lead to the leading-order modulation equation
\begin{equation}
(b_k)_s = -(k - \gamma)b_1b_k + b_{k+1} \quad \text{for}\quad 1 \leq k \leq L,
\end{equation}
which actually cancels the worst growth of $\vec S_k$ as $y \to +\infty$.

\paragraph{$\bullet$ Expansion of $\vec \Psi_b$}. From \eqref{def:Psib} and \eqref{eq:Qy}, we write
\begin{align*}
&\partial_s \vec Q_{b}  + b_1 \Lambda \vec Q_b - \vec F(\vec Q_b)\\
&= b_1\Lambda \vec Q + \partial_s \Theta_b + \Hs \vec \Theta_b + b_1 \Lambda \Theta_b - \vec N(\vec \Theta_b) := A_1 - \vec N(\vec \Theta_b),
\end{align*}
where $\vec N$ is defined as in \eqref{def:Nvec}. Using the expression \eqref{eq:Qbform} of $\vec \Theta_b$ and the definition \eqref{def:Tk} of $\vec T_k$ (recall that $\Hs \vec T_k = -\vec T_{k-1}$ with the convention $\vec T_0 = \Lambda \vec Q$), we write 
\begin{align*}
A_1&= b_1 \Lambda \vec Q + \sum_{k = 1}^L \Big[(b_k)_s \vec T_k + b_k \Hs \vec T_k + b_1b_k\Lambda \vec T_k \Big] + \sum_{k = 2}^{L+2}\Big[\partial_s \vec S_k + \Hs \vec S_k + b_1 \Lambda \vec S_k\Big]\\
&=  \sum_{k = 1}^L \Big[(b_k)_s\vec T_k - b_{k+1}\vec T_k + b_1b_k \Lambda \vec T_k \Big] + \sum_{k = 2}^{L+2}\Big[\partial_s  \vec S_k + \Hs \vec S_k + b_1 \Lambda \vec S_k\Big]\\
&= \sum_{k = 1}^L \Big[(b_k)_s  + (k - \gamma)b_1b_k- b_{k+1}\Big]\vec T_k\\
&\qquad + \sum_{k = 1}^L\Big[\Hs \vec S_{k + 1} + \partial_s \vec S_k + b_1b_k \big[\Lambda \vec T_k - (k - \gamma)\vec T_k\big] + b_1 \Lambda \vec S_k\Big]\\
& \qquad\qquad+ \Big[\Hs \vec S_{L+2} + \partial_s \vec S_{L+1} + b_1\Lambda \vec S_{L+1}\Big] + \Big[\partial_s \vec S_{L+2} + b_1\Lambda \vec S_{L+2}\Big].
\end{align*}
We now write
\begin{align*}
\partial_s\vec S_k = \sum_{j = 1}^L(b_j)_s\frac{\partial \vec S_k}{\partial b_j} &= \sum_{j = 1}^L\Big[(b_j)_s + (j - \gamma)b_1b_j - b_{j + 1} \Big]\frac{\partial \vec S_k}{\partial b_j} \\
&\qquad - \sum_{j = 1}^L\Big[(j - \gamma)b_1b_j - b_{j + 1} \Big]\frac{\partial \vec S_k}{\partial b_j}.
\end{align*} 
Hence, 
\begin{align*}
A_1 = \vec{\textup{Mod}} + \sum_{k = 1}^{L+1}\left[\Hs \vec S_{k + 1} + \vec E_k\right] + \vec E_{L+2},
\end{align*}
where for $k = 1, \cdots, L$,
\begin{equation}\label{def:Ek1}
\vec E_k= b_1b_k \big[\Lambda \vec T_k - (k - \gamma)\vec T_k\big] + b_1 \Lambda \vec S_k - \sum_{j = 1}^{k-1}\Big[(j - \gamma)b_1b_j - b_{j + 1} \Big]\frac{\partial \vec S_k}{\partial b_j},
\end{equation} 
and for $k = L+1, L+2$,
\begin{equation}\label{def:EkL12}
\vec E_k = b_1 \Lambda \vec S_k - \sum_{j = 1}^{L}\Big[(j - \gamma)b_1b_j - b_{j + 1} \Big]\frac{\partial \vec S_k}{\partial b_j}.
\end{equation}

Recall from \eqref{def:Nq} that the nonlinear term is given by
$$\vec N(\vec \Theta_b)= \binom{0}{N(\Theta_{b,1})}:= \binom{0}{A_2}.$$ 
Let us denote 
$$f(x) = \sin(2x)$$
and use a Taylor expansion to write (see pages 1740 in \cite{RSapde2014} for a similar computation)
\begin{align*}
A_2 &= \frac{(d-1)}{2y^2}\left[\sum_{i = 2}^{L+2}\frac{f^{(i)}(Q)}{i!}\left(\sum_{k = 2, \textup{even}}^{L-1}b_iT_i + \sum_{k = 2}^{L+2}S_{k,1} \right)^i + R_2\right]\\
&\quad = \frac{(d-1)}{2y^2}\left[\sum_{i = 2}^{L+2}P_i + R_1 + R_2\right],
\end{align*}
where 
\begin{equation}\label{def:Pj}
P_i = \sum_{j = 2}^{L+2}\frac{f^{(j)}(Q)}{j!}\sum_{|J|_1 = j, |J|_2 = i}c_J \prod_{k = 2, \textup{even}}^{L-1} b_k^{i_k}T_{k}^{i_k}\prod_{k=2}^{L+2}S_{k,1}^{j_k},
\end{equation}
\begin{equation}\label{def:R_1}
R_1 = \sum_{j = 2}^{L+2}\frac{f^{(j)}(Q)}{j!}\sum_{|J|_1 = j, |J|_2 \geq L+3}c_J \prod_{k = 2, \textup{even}}^{L-1}b_k^{i_k}T_{k}^{i_k}\prod_{k=2}^{L+2}S_{k,1}^{j_k},
\end{equation}
\begin{equation}\label{def:R2}
R_2 = \frac{\Theta_{b,1}^{L+3}}{(L+2)!}\int_0^1(1 - \tau)^{L+2}f^{(L+3)}(Q + \tau \Theta_{b,1})d\tau,
\end{equation}
with $J = (i_1, \cdots, i_L, j_2, \cdots, j_{L+2}) \in \mathbb{N}^{2L + 1}$ and 
\begin{equation}\label{def:J1J2}
|J|_1 = \sum_{k = 1}^L i_k + \sum_{k = 2}^{L+2}j_k, \quad |J|_2 = \sum_{k=1}^L ki_k + \sum_{k = 2}^{L+2}kj_k.
\end{equation}
In conclusion, we have 
\begin{equation}\label{eq:expanPsib}
\vec \Psi_b = \sum_{k = 1}^{L+1}\left[\Hs \vec S_{k + 1} + \vec E_k - \frac{(d-1)}{2y^2}\vec P_{k+1}\right] + \vec E_{L+2} - \frac{(d-1)}{2y^2}(\vec R_1 + \vec R_2),
\end{equation}
where we write 
$$\vec P_{k} = \binom{0}{P_k}, \quad \vec R_1 = \binom{0}{R_1}, \quad \vec R_2 = \binom{0}{R_2}.$$

\paragraph{$\bullet$ Construction of $\vec S_k$.} From the expression of $\vec \Psi_b$ given in \eqref{eq:expanPsib}, we construct iteratively the sequences of profiles $(\vec S_k)_{1 \leq k \leq L+2}$ through the scheme
\begin{equation}\label{def:Sk}
\left\{\begin{array}{ll}
\vec S_1 &= \vec 0, \\
\vec S_k &= - \Hs^{-1}\vec F_k, \quad 2 \leq k \leq L+2,
\end{array}\right.
\end{equation}
where 
$$\vec F_k = \vec E_{k-1} - \frac{(d-1)}{2y^2}\vec P_{k} \quad \text{for} \quad 2\leq k \leq L+2.$$
We claim by induction on $k$ that $\vec F_k$ is homogeneous with 
\begin{equation}\label{eq:degFk}
\text{deg}(\vec F_k) = (k-1, k-2, (k - 1)\wedge2, k) \quad \text{for} \quad 2 \leq k \leq L+2,
\end{equation}
and 
\begin{equation}\label{eq:estparFk}
\frac{\partial \vec F_k}{\partial b_m} = \vec 0 \quad \text{for}\quad 2 \leq k \leq m \leq L+2.
\end{equation}
From item $(iii)$ of Lemma \ref{lemm:actionH} and \eqref{eq:degFk}, we deduce that $\vec S_k$ is homogeneous of degree
$$\text{deg}(\vec S_k) = (k,k-1, k\wedge2, k) \quad \text{for}\quad 2 \leq k \leq L+2,$$
and from \eqref{eq:estparFk}, we get
$$\frac{\partial \vec S_k}{\partial b_m} = \vec 0 \quad \text{for} \quad 2 \leq k \leq m \leq L+2,$$
which is the conclusion of item $(ii)$.\\

Let us now give the proof of \eqref{eq:degFk} and \eqref{eq:estparFk}. We proceed by induction.\\
\noindent - Case $k = 2$: We compute explicitly from \eqref{def:Ek1} and \eqref{def:Pj},
$$\vec F_2 = \vec E_1 - \frac{(d-1)}{2y^2}\vec P_2 = b_1^2\left[\Lambda \vec T_1 - (1 - \gamma)\vec T_1 + \frac{(d-1)}{2y^2}\vec P_2\right],$$
which directly follows \eqref{eq:estparFk}. From Lemma \ref{lemm:GenLk}, we know that $\Lambda \vec T_1 - (1 - \gamma)\vec T_1$ are admissible of degree $(1,0,1)$. It remains to check that $\frac{1}{y^2}\vec P_2 = \binom{0}{\frac{P_2}{y^2}}$ is admissible of degree $(1,0,1)$. To do so, let us write from the definition \eqref{def:Pj}, 
$$\frac{P_2}{y^2} = \frac{f''(Q)}{y^2}T_1^2.$$
Using \eqref{eq:asymQ}, one can check the bound
\begin{equation}\label{eq:estfjm}
\forall m,j \in \mathbb{N}^2, \quad\left|\partial_y^m \left(\frac{f^{(j)}(Q)}{y^2}\right)\right| \lesssim y^{-\gamma - 2 - m} \quad \text{as}\quad y \to +\infty.
\end{equation}
Since $\vec T_1$ is admissible of degree $(1,1, 1)$, we have that
$$\forall m \in \mathbb{N}, \quad |\partial_y^m(T_1^2)| \lesssim y^{- 2\gamma - m}  \quad \text{as}\quad y \to +\infty.$$
By the Leibniz rule and the fact that $2\gamma > 2$, we get that
$$\forall m,j \in \mathbb{N}^2, \quad\left|\partial_y^m \left(\frac{f^{(j)}(Q)}{y^2} T_1^2\right)\right| \lesssim y^{- 2 - \gamma - m}.$$
We also have the expansion near the origin, 
$$\frac{f''(Q)}{y^2}T_1^2 = \sum_{i = 0, even}^k c_iy^{i + 1} + \Oc(y^{k + 2}), \quad k \geq 1.$$
Hence, $\frac{1}{y^2}\vec P_2$ is admissible of degree $(1,0,1)$, which concludes the proof of \eqref{eq:degFk} for $k = 2$.\\

\noindent - Case $k \to k+1$: Estimate \eqref{eq:estparFk} holds by direct inspection. Let us now assume that $\vec S_k$ is homogeneous of degree $(k, k-1, k\wedge2, k)$ and prove that $\vec S_{k + 1}$ is homogeneous of degree $(k+1, k, (k+1)\wedge2, k+1)$. In particular, the claim immediately follows from part $(iii)$ of Lemma \ref{lemm:actionH} once we show that $\vec F_{k+1}$ is homogeneous with
\begin{equation}\label{eq:Fk1EkPk1}
\text{deg}(\vec F_{k+1}) = \text{deg}\left(\vec E_k + \frac{\vec P_{k+1}}{y^2} \right) = (k, k-1, k\wedge2, k+1).
\end{equation}
From part $(ii)$ of Lemma \ref{lemm:GenLk} and the a priori assumption \eqref{eq:relb1bk}, we see that $b_1b_k(\Lambda T_k - (k - \gamma)T_k)$ is homogeneous of degree $(k, k-1, k\wedge2, k+1)$. From part $(i)$ of Lemma \ref{lemm:actionH} and the induction hypothesis, $b_1 \Lambda \vec S_k$ is also homogeneous of degree $(k, k-1, k\wedge2, k+1)$. By definition, $b_1\frac{\partial \vec S_k}{\partial b_1}$ is homogeneous and has the same degree as $\vec S_k$. Thus, 
$$\left((j - \gamma)b_1 - \frac{b_{2}}{b_{1}}\right)\left(b_1\frac{\partial \vec S_k}{\partial b_1}\right)$$
is homogeneous of degree $(k, k-1, k\wedge2, k+1)$. From definitions \eqref{def:Ek1} and \eqref{def:EkL12}, we derive  
$$\text{deg}(\vec E_k) = (k, k-1, k\wedge2, k+1), \quad k \geq 1.$$
It remains to check that the term $\frac{\vec P_{k+1}}{y^2}$ is homogeneous of degree $(k, k-1, k\wedge2, k+1)$. From the definition \eqref{def:Pj}, we see that if $k$ is even, then $P_{k+1} = 0$ and we are done. If $k$ is odd, then we see that $\frac{P_{k+1}}{y^2}$ is a linear combination of monomials of the form 
$$M_J(y) = \frac{f^{(j)}(Q)}{y^2}\prod_{m = 2, \textup{even}}^{L-1} b_m^{i_m}T_m^{i_m}\prod_{m = 2, \textup{even}}^{L+2}S_{m,1}^{j_m},$$
with 
$$J= (i_1, \cdots, i_L, j_2, \cdots, j_{L+2}), \quad |J|_1 = j, \; |J|_2 = k+1, \; 2 \leq j \leq k+1.$$
Recall from part $(i)$ of Lemma \ref{lemm:GenLk} that $\textup{deg}(\vec T_m) = (m, m, m\wedge2)$, we then have
$$\forall n \in \mathbb{N}, \quad |\partial_y^n T_m| \lesssim y^{m - m\wedge2 - \gamma - n} \quad \text{as} \quad y \to +\infty,$$
and from the induction hypothesis and the a priori bound \eqref{eq:relb1bk},
$$\forall n \in \mathbb{N}, \quad |\partial_y^n S_{m,1}| \lesssim b_1^{m}y^{m -1  - m\wedge2 - \gamma - n} \quad \text{as} \quad y \to +\infty. $$
Together with the bound \eqref{eq:estfjm}, we obtain the following bound at infinity,
$$| M_J|\lesssim b_1^{|J|_2} y^{|J|_2 - \gamma - |J|_1 \gamma - 2 - \sum_{m=2, \textup{even}}^{L+2}j_m} \lesssim b_1^{k+1}y^{k-1 - \gamma}.$$
The control of $\partial_y^n M_J$ follows by the Leibniz rule and the above estimates. The expansion near the origin can be checked by the same way. This concludes the proof of \eqref{eq:Fk1EkPk1} as well as part $(ii)$ of Proposition \ref{prop:1}.\\

\paragraph{$\bullet$ Estimate on $\vec \Psi_b$.} From \eqref{eq:expanPsib} and \eqref{def:Sk}, the expression of $\vec \Psi_b$ is now reduced to 
$$\vec \Psi_b = \vec E_{L+2} - \frac{(d-1)}{y^2}(\vec R_1 + \vec R_2),$$
where $\vec E_{L+2}$ is defined by \eqref{def:EkL12}, $\vec R_1 = \binom{0}{R_1}$ and $\vec R_2 = \binom{0}{R_2}$ with $R_1, R_2$ being given by \eqref{def:R_1} and \eqref{def:R2}. Note that the first coordinate of $\vec \Psi_b$ is null, so we can write for simplicity
\begin{equation}\label{eq:formPsib}
\vec\Psi_b = \binom{0}{\Psi_b} = \binom{0}{E_{L+2}- \frac{(d-1)}{y^2}(R_1 + R_2) }.
\end{equation}

We start by controlling $\vec E_{L+2}$ term. Since $\vec S_{L+2}$ is homogeneous of degree $(L+2, L+1, 1, L+2)$ and thus so are $\Lambda \vec S_{L+2}$ and $b_1 \frac{\partial \vec S_{L+2}}{\partial b_1}$. This follows that $\vec E_{L+2}$ is homogeneous of degree $(L+2, L+1, 1, L+3)$. Using part $(ii)$ of Lemma \ref{lemm:actionH} yields 
$$\textup{deg}(\Hs^{2m + 2\hbar +1}\vec E_{L+2}) = (\max\{1, L - 2m - 2\hbar + 1\}, L - 2m - 2\hbar, 0, L+3).$$
From the relation $d - 2\gamma - 2\hbar = 2\delta$ (see \eqref{def:kdeltaplus}), we estimate for all $0 \leq m \leq L$,
\begin{align*}
\int_{y \leq 2B_1}|E_{L+2} \Ls^{m + \hbar} E_{L+2}| &\lesssim b_1^{2L+6}\int_{y \leq 2B_1}y^{2L - 2\gamma - 2(\hbar + m)} y^{d-1}dy\\
&\quad\lesssim b_1^{2L+6}\int_{y \leq 2B_1}y^{2(L - m + \delta) - 1}dy\\
&\qquad \lesssim b_1^{(2L + 6) - 2(L - m + \delta)(1 + \eta)}\\
&\qquad \quad \lesssim b_1^{2m + 4 + 2(1 - \delta) - C_L\eta},
\end{align*}
where $\eta = \eta(L)$, $0 < \eta \ll 1$.

We now turn to the control of the term $\frac{R_1}{y^2}$, which is a linear combination of terms of the form (see \eqref{def:R_1})
$$\tilde{M}_J = \frac{f^{(j)}(Q)}{y^2}\prod_{n = 2, \textup{even} }^{L-1}b_n^{i_n}T_n^{i_n}\prod_{n = 2, \textup{even}}^{L+2}S_{n}^{j_n},$$
where we used the abuse notations \eqref{eq:formTk} and \eqref{eq:formSk}, and
$$J = (i_1, \cdots, i_L, j_2, \cdots, j_{L+2}), \; |J|_1 = j, \; |J|_2 \geq L+3, \; 2 \leq j \leq L+2.$$
Using the admissibility of $\vec T_n$ and the homogeneity of $\vec S_n$, we get the bounds
$$|\tilde{M}_J| \lesssim b_1^{L + 3}y^{|J|_2 + j - 1} \lesssim b_1^{L+3}y^{L + 4} \quad \text{as} \quad y \to 0,$$
and 
$$|\tilde{M}_J| \lesssim b_1^{|J|_2}y^{|J|_2 - j\gamma - 2 - \gamma}  \quad \text{as}\quad y \to +\infty,$$
where we used the fact that $j \geq 2$ and $2 - j\gamma < 0$, and similarly for higher derivatives by the Leibniz rule. Thus, we obtain the round estimate for all $0 \leq m \leq L$,
\begin{align*}
\int_{y \leq 2B_1}\left|\frac{R_1}{y^2} \Ls^{m + \hbar}\left(\frac{R_1}{y^2}\right)\right| &\lesssim b_1^{2|J|_2}\int_{y \leq 2B_1}y^{2|J|_2 - 2m - 2j \gamma - 4 + 2\delta - 1} dy\\
&\quad\lesssim b_1^{2m + 4 + 2(1 - \delta) - C_L\eta}.
\end{align*}
The term $\frac{R_2}{y^2}$ is estimated exactly as for the term $\frac{R_1}{y^2}$ using the definition \eqref{def:R2}. This concludes the proof of \eqref{eq:estGlobalPsib}. The local estimate \eqref{eq:estlocalPsib} directly follows from the homogeneity of $\vec S_k$ and the admissibility of $\vec T_k$. This concludes the proof of Proposition \ref{prop:1}.
\end{proof}

\bigskip

We now proceed to a simple localization of the profile $\vec Q_b$ to avoid the growth of tails in the region $y \geq 2B_1 \gg B_0$. More precisely, we claim the following:

\begin{proposition}[Estimates on the localized profile] \label{prop:localProfile} Under the assumptions of Proposition \ref{prop:1}, we assume in addition the a priori bound 
\begin{equation}\label{eq:apriorib1}
|(b_1)_s| \lesssim b_1^2.
\end{equation}
Consider the localized profile
\begin{equation}\label{def:Qbtil}
\vec{\mathbf{Q}}_{b(s)}(y) = \vec Q(y) + \sum_{k = 1}^Lb_k\vec{\mathbf{T}}_k + \sum_{k = 2}^{L+2}\vec{\mathbf{S}}_k \quad \text{with}\quad \vec{\mathbf{T}}_k = \chi_{B_1}\vec T_k, \; \vec{\mathbf{S}}_k = \chi_{B_1}\vec S_k,
\end{equation}
where $B_1$ and $\chi_{B_1}$ are defined as in \eqref{def:B0B1} and \eqref{def:chiM}. Then
\begin{equation}\label{def:Psibtilde}
\partial_s \vec{\mathbf{Q}}_{b} + b_1 \Lambda \vec{\mathbf{Q}}_b  - \vec F (\vec{\mathbf{Q}}_b) = \vec{\mathbf{\Psi}}_b + \chi_{B_1}\,\vec{\textup{Mod}},
\end{equation}
where $\vec{\mathbf{\Psi}}_{b}$ satisfies the bounds:\\

\noindent $(i)\;$ (Large Sobolev bound) For all $0 \leq m \leq L-1$,
\begin{align}
\|\vec{\mathbf{\Psi}}_b \|_{2m + 2\hbar +2}^2 + \int \left|\nabla^{m + \hbar + 1}(\mathbf{\Psi}_b)_1\right|^2 + \int \left|\nabla^{m + \hbar}(\mathbf{\Psi}_b)_2\right|^2  \lesssim b_1^{2m + 2 + 2(1 - \delta)  - C_L \eta},\label{eq:estPsibLarge1}
\end{align}
and 
\begin{align}
\|\vec{\mathbf{\Psi}}_b \|_{2L + 2\hbar +2}^2 \lesssim b_1^{2L + 2 + 2(1 - \delta)(1 + \eta)},\label{eq:estPsibLarge2}
\end{align}
where $\hbar$ and $\delta$ are defined by \eqref{def:kdeltaplus}. \\

\noindent $(ii)$ (Local bound) For all $M \leq \frac{B_1}{2}$ and $0 \leq m \leq L$, 
\begin{equation}\label{eq:estPsiblocalTilde}
\|\vec{\mathbf{\Psi}}_b \|_{2m + 2\hbar +2, (y \leq M)}^2 \lesssim M^Cb_1^{2L + 6}.
\end{equation}

\noindent $(iii)$ (Refined local bound near $B_0$) For all $0 \leq m \leq L$, 
\begin{equation}\label{eq:estPsiblocalB0}
\|\vec{\mathbf{\Psi}}_b \|_{2m + 2\hbar +2, (y \leq B_0)}^2\lesssim b_1^{2m + 4 + 2(1 - \delta) - C_L\eta}.
\end{equation}
\end{proposition}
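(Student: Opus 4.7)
The plan is to derive an explicit decomposition of $\vec{\mathbf{\Psi}}_b$ into an interior piece equal to the unlocalized error plus a boundary layer supported near $y\sim B_1$, and then reduce the claimed bounds to those of Proposition \ref{prop:1}. Using $\vec{\mathbf{Q}}_b=\vec Q+\chi_{B_1}\vec\Theta_b$ in \eqref{def:Psibtilde} and subtracting $\chi_{B_1}$ times the unlocalized identity \eqref{def:Psib}, every term where $\chi_{B_1}$ commutes with $\partial_s$, $\Lambda$, $\Hs$ and $\vec N$ cancels, and one is left with
\begin{equation*}
\vec{\mathbf{\Psi}}_b = \chi_{B_1}\vec\Psi_b + \vec R_b,
\end{equation*}
where
\begin{equation*}
\vec R_b := (\partial_s\chi_{B_1})\vec\Theta_b + b_1(1-\chi_{B_1})\Lambda\vec Q + b_1[\Lambda,\chi_{B_1}]\vec\Theta_b + [\Hs,\chi_{B_1}]\vec\Theta_b + \chi_{B_1}\vec N(\vec\Theta_b) - \vec N(\chi_{B_1}\vec\Theta_b).
\end{equation*}
All pieces of $\vec R_b$ except the second are supported in the annulus $B_1\le y\le 2B_1$: the commutators because $\chi_{B_1}'$ and $\partial_s\chi_{B_1}$ are, and the nonlinear mismatch because on $y\le B_1$ the two terms cancel identically while $N(0)=0$. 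The stationary term $b_1(1-\chi_{B_1})\Lambda\vec Q$ extends to infinity but decays like $y^{-\gamma}$ by \eqref{eq:asymLamQ} and is integrable since $d>2\gamma$ for $d\ge 7$.

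\medskip

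Items (ii) and (iii) are now immediate. On $y\le M\le B_1/2$ and on $y\le B_0\ll B_1$ the cutoff $\chi_{B_1}$ is identically $1$, so $\vec R_b\equiv\vec 0$ and $\vec{\mathbf{\Psi}}_b\equiv\vec\Psi_b$ on these regions. The local bound \eqref{eq:estlocalPsib} of Proposition \ref{prop:1} then yields \eqref{eq:estPsiblocalTilde}, and the global bound \eqref{eq:estGlobalPsib} restricted to $y\le B_0$ yields \eqref{eq:estPsiblocalB0}.

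\medskip

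For item (i), I would split
\begin{equation*}
\|\vec{\mathbf{\Psi}}_b\|_{2m+2\hbar+2}^2 \;\lesssim\; \|\chi_{B_1}\vec\Psi_b\|_{2m+2\hbar+2}^2 + \|\vec R_b\|_{2m+2\hbar+2}^2
\end{equation*}
and estimate each piece separately. For $\chi_{B_1}\vec\Psi_b$, since $(\vec\Psi_b)_1=0$ by \eqref{eq:formPsib}, the norm reduces to $\int(\chi_{B_1}\Psi_b)\,\Ls^{2m+2\hbar+1}(\chi_{B_1}\Psi_b)$. A Leibniz expansion of $\Ls^{m+\hbar+1}(\chi_{B_1}\Psi_b)$, using the sharp bounds $|\partial_y^j\chi_{B_1}|\lesssim B_1^{-j}\mathbf 1_{B_1\le y\le 2B_1}$, distributes the estimate between the main interior term, which is controlled by the global weight bound \eqref{eq:estGlobalPsib}, and commutator terms concentrated near $y\sim B_1$, which are controlled by the local bound \eqref{eq:estlocalPsib} applied with $M=2B_1$ together with the favorable $B_1^{-2j}$ weights. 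For $\vec R_b$, I would use the a priori bound \eqref{eq:apriorib1} to obtain $|\partial_s\chi_{B_1}|\lesssim b_1$ on the annulus (via $\partial_sB_1=-(1+\eta)b_1^{-(2+\eta)}(b_1)_s$), then combine the pointwise bounds $|\partial_y^j T_k|\lesssim y^{k-\gamma-k\wedge 2-j}$ from Lemma \ref{lemm:GenLk} with $|\partial_y^j S_{k}|\lesssim b_1^k y^{k-1-\gamma-k\wedge 2-j}$ from Proposition \ref{prop:1}(ii). Substituting $y\sim B_1=b_1^{-(1+\eta)}$, integrating over the annular volume $\sim B_1^d$, and using the identity $d-2\gamma-2\hbar=2(1-\delta)$ from \eqref{def:kdeltaplus} produces the exponents announced in \eqref{eq:estPsibLarge1}--\eqref{eq:estPsibLarge2}; the improved $(1+\eta)$ factor in \eqref{eq:estPsibLarge2} reflects the fact that at the top index $m=L$ the only surviving contribution comes from the highest-index profile $\vec S_{L+2}$, for which the $\eta$-penalty carries the favorable sign.

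\medskip

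The main obstacle will be the combinatorics of the Leibniz expansion of $\Ls^{m+\hbar+1}(\chi_{B_1}\Psi_b)$ and of the analogous expansion appearing in $\vec R_b$. The singular potential $Z/y^2$ in $\Ls$ couples derivatives of $\chi_{B_1}$ concentrated at $y\sim B_1$ with inverse powers of $y$, and one must verify at each order that only a harmless polynomial penalty $C_L$ in the $b_1$-exponent is paid. I would handle this by induction on $m+\hbar+1$, expressing the iterated action of $\Ls$ through the adapted derivatives \eqref{def:adapder} so as to stay inside the algebra generated by $\As$ and $\As^*$; Hardy-type inequalities then absorb the singular weights, and the sharp powers of $b_1$ follow by direct substitution of $B_1=b_1^{-(1+\eta)}$.
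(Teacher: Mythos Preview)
Your decomposition $\vec{\mathbf{\Psi}}_b=\chi_{B_1}\vec\Psi_b+\vec R_b$ and the identification of the support of $\vec R_b$ are correct, and this is exactly the standard route (the paper itself omits the proof and refers to the companion paper \cite{GINapde18}, where precisely this decomposition is carried out). Your treatment of (ii) and (iii) is clean and correct.

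Two points to tighten. First, the identity you invoke is wrong: from \eqref{def:kdeltaplus} one has $\hbar+\delta=\tfrac d2-\gamma$, hence $d-2\gamma-2\hbar=2\delta$, not $2(1-\delta)$. The exponent $2m+2+2(1-\delta)$ in \eqref{eq:estPsibLarge1} arises because, after integrating over the annulus, the raw power is $2m+4-2\delta=2m+2+2(1-\delta)$; the correct identity is what feeds this. Second, your explanation of the sharpened exponent in \eqref{eq:estPsibLarge2} is not quite right: the gain is \emph{not} because only $\vec S_{L+2}$ survives. All of $\vec\Theta_b$ contributes to $\vec R_b$ on the annulus, and the $-C_L\eta$ penalty in \eqref{eq:estPsibLarge1} comes from the spread $b_1^ky^{k-\gamma}\sim b_1^{-k\eta+\gamma(1+\eta)}$ over $1\le k\le L$. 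At $m=L$, however, one applies $\Ls^{\Bbbk}=\Ls^{L+\hbar+1}$ (resp.\ $\Ls^{\Bbbk-1}$) and exploits the algebraic cancellations $\Ls^{j+1}\phi_j=0$: after the Leibniz expansion, every term pairs a derivative of $\chi_{B_1}$ (contributing $B_1^{-1}$) with a profile derivative, so that the spread in $k$ collapses and the sole surviving scale is $y\sim B_1$, producing $b_1^{2L+2+2(1-\delta)(1+\eta)}$ exactly. This is the mechanism you should make explicit.
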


\begin{proof} 
The proof is the same as  Proposition 2.12 in \cite{GINapde18} because the linear operator $\Ls$ is the same as the one defined in \cite{GINapde18}. Although the definition of parameters $\hbar, \delta, B_0, B_1$ are slightly different from the ones defined in \cite{GINapde18}, the reader will have absolutely no difficulty to adapt that proof to the new situation. For that reason, we refer the reader to \cite{GINapde18} for an analogous proof. We would like to mention the fact that the bound \eqref{eq:estPsibLarge1} is worse than \eqref{eq:estGlobalPsib} due to the localization effect of the approximate profile. In particular, replacing the profile $\vec{T}_i$ by $\chi_{B_1}\vec{T}_i$ and $\vec{S}_j$ by $\chi_{B_1}\vec S_j$ would give a worst estimate on $\vec {\mathbf{\Psi}}_b$ in the zone $B_1 \leq y \leq 2B_1$, where we loose $b_1^2$ approximately.  However, this localization will be necessary for our analysis. 
\end{proof}
\subsection{Study of the dynamical system for $b = (b_1, \cdots, b_L)$.}
The construction of the $\vec Q_b$ profile formally leads to the finite dimensional dynamical system for $b = (b_1, \cdots, b_L)$ by setting to zero the inhomogeneous $\vec{\textup{Mod}}$ term given in \eqref{eq:Modt}:
\begin{equation}\label{eq:systemb}
(b_k)_s + (k - \gamma)b_1b_k - b_{k+1} = 0, \;\; 1 \leq k \leq L, \;\; b_{L+1} = 0.
\end{equation}
The system \eqref{eq:systemb} admits explicit solutions and the linearized operator near these solutions is explicit. In particular, we have the following.

\begin{lemma}[Solution to the system \eqref{eq:systemb}]\label{lemm:solSysb} Let $\ell \in \mathbb{N}^*$ with $\gamma < \ell \ll L$, and the sequence
\begin{equation}\label{def:ck}
\left\{\begin{array}{ll}
c_1 &= \frac{\ell}{\ell - \gamma},\\
c_{k + 1} &= - \frac{\gamma(\ell - k)}{\ell - \gamma}c_k, \quad 1 \leq k \leq \ell-1,\\
c_{k + 1} &= 0, \quad k \geq \ell.
\end{array} \right.
\end{equation}
Then the explicit choice 
\begin{equation}\label{eq:solbe}
b_k^e(s) = \frac{c_k}{s^k}, \quad s > 0, \quad 1 \leq k \leq L,
\end{equation}
is a solution to \eqref{eq:systemb}.
\end{lemma}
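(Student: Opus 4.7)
The strategy is direct substitution: plug the candidate $b_k^e(s) = c_k/s^k$ into the system \eqref{eq:systemb} and verify the algebraic identity that results for each $k$, exploiting the fact that every term in the $k$-th equation is a scalar multiple of $s^{-(k+1)}$, which reduces the ODE system to a numerical recursion on $(c_k)_{1 \le k \le L}$.

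First I would compute the three contributions for a fixed $k \in \{1, \dots, L-1\}$:
\begin{equation*}
(b_k^e)_s = -\frac{k\, c_k}{s^{k+1}}, \qquad b_1^e\, b_k^e = \frac{c_1 c_k}{s^{k+1}}, \qquad b_{k+1}^e = \frac{c_{k+1}}{s^{k+1}},
\end{equation*}
so that \eqref{eq:systemb} becomes, after multiplying by $s^{k+1}$, the scalar recursion
\begin{equation*}
c_{k+1} = \bigl[(k-\gamma)c_1 - k\bigr]\, c_k.
\end{equation*}
Substituting $c_1 = \ell/(\ell-\gamma)$ and simplifying the bracket,
\begin{equation*}
(k-\gamma)\frac{\ell}{\ell-\gamma} - k = \frac{(k-\gamma)\ell - k(\ell-\gamma)}{\ell-\gamma} = \frac{\gamma(k-\ell)}{\ell-\gamma} = -\frac{\gamma(\ell-k)}{\ell-\gamma},
\end{equation*}
which is precisely the coefficient appearing in the definition \eqref{def:ck}. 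Hence the recursion for $(c_k)$ is satisfied for all $1 \le k \le \ell-1$.

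Next I would handle the borderline and trivial indices. For $k = \ell$, the simplified identity reads $c_{\ell+1} = -\frac{\gamma(\ell-\ell)}{\ell-\gamma}c_\ell = 0$, matching the third line of \eqref{def:ck} and terminating the cascade. For $\ell < k \le L-1$, all three quantities $(b_k^e)_s$, $b_1^e b_k^e$ and $b_{k+1}^e$ vanish identically since $c_k = c_{k+1} = 0$, so the equation is trivially satisfied. Finally, for $k = L$ we need $(b_L^e)_s + (L-\gamma)b_1^e b_L^e - b_{L+1}^e = 0$ with the convention $b_{L+1} = 0$; since $\ell < L$ forces $c_L = 0$, this reduces to $0 = 0$.

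There is no genuine obstacle here; the only point worth care is the consistency between the recursive definition of $c_{k+1}$ and the terminal condition $b_{L+1}^e = 0$. This is guaranteed by the hypothesis $\ell \ll L$, which ensures the recursion has already terminated ($c_{k} = 0$ for $k \ge \ell + 1$) well before the artificial cutoff at index $L+1$ is reached. Collecting the above verifications across all $1 \le k \le L$ completes the proof.
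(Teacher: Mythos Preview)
Your proof is correct and is exactly the explicit computation the paper has in mind; the paper itself simply states that the lemma ``directly follows from an explicit computation which is left to the reader'' without giving details, and your substitution argument fills that in faithfully.
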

The proof of Lemma \ref{lemm:solSysb} directly follows from an explicit computation which is left to the reader. We claim that the linearized flow of \eqref{eq:systemb} near the solution \eqref{eq:solbe} is explicit and displays $(\ell - 1)$ unstable directions. Note that the stability is considered in the sense that 
$$\sup_s s^k|b_k(s)| \leq C_k, \quad 1 \leq k \leq L.$$ 
In particular, we have the following result which was proved in \cite{MRRcjm15}:
\begin{lemma}[Linearization of \eqref{eq:systemb} around \eqref{eq:solbe}] \label{lemm:lisysb} Let 
\begin{equation}\label{eq:Ukbke}
b_k(s) = b_k^e(s) + \frac{\Uc_k(s)}{s^k}, \quad 1 \leq k \leq \ell,
\end{equation}
and note $\Uc = (\Uc_1, \cdots, \Uc_\ell)$. Then, for $1 \leq k \leq \ell-1$,
\begin{equation}\label{eq:bkk1}
(b_k)_s + (k - \gamma)b_1b_k - b_{k+1} = \frac{1}{s^{k+1}}\left[s(\Uc_k)_s - (A_\ell \Uc)_k + \Oc(|\Uc|^2)\right], 
\end{equation}
and 
\begin{equation}\label{eq:bell}
(b_\ell)_s + (\ell - \gamma)b_1b_\ell = \frac{1}{s^{k+1}}\left[s(\Uc_\ell)_s - (A_\ell \Uc)_\ell + \Oc(|\Uc|^2)\right], 
\end{equation}
where 
$$A_\ell = (a_{i,j})_{1\leq i,j\leq \ell} \quad \text{with}\quad \left\{\begin{array}{ll}
a_{1,1} &= \frac{\gamma(\ell - 1)}{\ell - \gamma} - (1 - \gamma)c_1,\\
a_{i,i} &= \frac{\gamma(\ell - i)}{\ell - \gamma}, \quad 2 \leq i \leq \ell,\\
a_{i, i+1}&= 1 , \quad 1 \leq i \leq \ell - 1,\\
a_{1,i} & = -(i - \gamma)c_i, \quad 2 \leq i \leq \ell,\\
a_{i,j} &= 0 \quad \textup{ortherwise}
\end{array} \right.$$
Moreover, $A_\ell$ is diagonalizable: 
\begin{equation}\label{eq:diagAlPl}
A_\ell = P_\ell^{-1}D_\ell P_\ell, \quad D_\ell = \textup{diag}\left\{-1, \frac{2\gamma}{\ell - \gamma}, \frac{3\gamma}{\ell - \gamma}, \cdots, \frac{\ell \gamma}{\ell - \gamma}\right\}.
\end{equation}
\end{lemma}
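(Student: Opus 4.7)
The plan is to perform a direct Taylor expansion of the modulation-equation expressions around the explicit equilibrium $b^e_k = c_k/s^k$ and read off the matrix $A_\ell$ from the linear-in-$\mathcal{U}$ contribution, then verify the spectral statement by exhibiting the eigenvectors (or equivalently factoring the characteristic polynomial along the sparse structure of $A_\ell$).

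First I would substitute \eqref{eq:Ukbke} into each term of $(b_k)_s + (k-\gamma)b_1 b_k - b_{k+1}$. Computing
\begin{align*}
(b_k)_s &= -\frac{k c_k}{s^{k+1}} - \frac{k \mathcal{U}_k}{s^{k+1}} + \frac{(\mathcal{U}_k)_s}{s^k},\\
b_1 b_k &= \frac{c_1 c_k}{s^{k+1}} + \frac{c_1 \mathcal{U}_k + c_k \mathcal{U}_1}{s^{k+1}} + \frac{\mathcal{U}_1 \mathcal{U}_k}{s^{k+1}},\\
b_{k+1} &= \frac{c_{k+1} + \mathcal{U}_{k+1}}{s^{k+1}} \qquad (1\le k \le \ell-1),
\end{align*}
and summing, the zeroth-order terms in $\mathcal{U}$ add to $s^{-k-1}\bigl[-k c_k + (k-\gamma)c_1 c_k - c_{k+1}\bigr]$, which vanishes because $(c_k)$ is defined by \eqref{def:ck} precisely so that $b^e$ solves \eqref{eq:systemb} (Lemma \ref{lemm:solSysb}). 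What remains at linear order in $\mathcal{U}$ is exactly
\[
s^{k+1} \times \text{LHS} \;=\; s(\mathcal{U}_k)_s + \bigl[(k-\gamma)c_1 - k\bigr]\mathcal{U}_k + (k-\gamma)c_k \mathcal{U}_1 - \mathcal{U}_{k+1} + O(|\mathcal{U}|^2).
\]
Using the elementary identity
\[
k - (k-\gamma)c_1 \;=\; k - (k-\gamma)\,\frac{\ell}{\ell-\gamma} \;=\; \frac{\gamma(\ell-k)}{\ell-\gamma},
\]
the coefficients match the entries of $A_\ell$: the diagonal entry for $k\ge 2$ is $\gamma(\ell-k)/(\ell-\gamma)$, the superdiagonal is identically $1$, and the coupling of $\mathcal{U}_1$ to the $k$-th equation carries the factor $-(k-\gamma)c_k$. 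For $k=1$, the coefficient of $\mathcal{U}_1$ combines two contributions (one from $(b_1)_s$ and one from $b_1^2$) giving $a_{1,1} = \gamma(\ell-1)/(\ell-\gamma) - (1-\gamma)c_1$, and for $k=\ell$ the relation $b_{\ell+1}\equiv 0$ simply removes the $\mathcal{U}_{\ell+1}$ entry, producing \eqref{eq:bell}. This establishes \eqref{eq:bkk1}--\eqref{eq:bell}.

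For the diagonalization \eqref{eq:diagAlPl}, I would construct the $\ell$ eigenvectors directly. The sparse structure of $A_\ell$ (diagonal plus superdiagonal plus a single coupling column) makes the eigenvalue equation $A_\ell v = \mu v$ equivalent to a triangular-type recursion: starting from any choice of $v_1$, the identity $v_{k+1} = (\mu - a_{k,k})v_k + (k-\gamma)c_k v_1$ for $2\le k\le \ell-1$ (together with the last equation) determines $v_2, \ldots, v_\ell$ uniquely, and the condition obtained at $k=\ell$ (namely $v_{\ell+1}=0$) together with the first equation gives a single scalar polynomial constraint on $\mu$. Plugging the diagonal entries $\mu = i\gamma/(\ell-\gamma)$ for $2\le i\le \ell$ produces an eigenvector whose recursion truncates (since $\mu - a_{i,i}=0$ at the right index), and the scaling mode $\mu=-1$ is handled using the identity $(1-\gamma)c_1 = 1 - \gamma\ell/(\ell-\gamma)\cdot\ldots$ so that the compatibility condition at $k=1$ is satisfied. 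Linear independence of the resulting eigenvectors is immediate from the distinctness of the eigenvalues (noting $\gamma<\ell$ so that $-1$ is separated from the positive spectrum).

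The main obstacle is the diagonalization step: although the entries of $A_\ell$ are explicit, verifying the exact eigenvalues requires either a careful cofactor expansion of $\det(A_\ell - \mu I)$ along the first column/row (where the coefficients $(k-\gamma)c_k$ enter nontrivially and must telescope using the recursion \eqref{def:ck}) or the recursive eigenvector construction sketched above, and keeping track of signs and indices through this recursion is the delicate point. Since the same matrix appears in \cite{MRRcjm15} where the diagonalization is carried out in detail, an efficient route is to reduce the present $A_\ell$ to that of \cite{MRRcjm15} by inspection and quote the spectral result; otherwise the eigenvector recipe above gives a self-contained proof in a few lines of algebra.
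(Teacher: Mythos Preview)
Your proposal is correct and in fact gives considerably more detail than the paper itself, which does not prove the lemma at all but simply refers the reader to Lemma~3.7 of \cite{MRRcjm15} for an analogous computation. Your direct substitution for the linearization part is exactly the intended argument, and your suggestion to either carry out the eigenvector recursion or defer to \cite{MRRcjm15} for the spectral statement matches the paper's own disposition. One small remark: your computation correctly places the coupling terms $-(k-\gamma)c_k$ in the first \emph{column} of $A_\ell$ (i.e.\ $a_{k,1}$), which is what the linearization actually produces; the paper's displayed entry ``$a_{1,i} = -(i-\gamma)c_i$'' appears to be a typo for $a_{i,1}$, as otherwise it would conflict with the superdiagonal entry $a_{1,2}=1$.
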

\begin{proof} Since we have an analogous system as the one in \cite{MRRcjm15} and the proof is essentially the same as written there, we kindly refer the reader to see Lemma 3.7 in \cite{MRRcjm15} for all details of the proof.  
\end{proof}

\section{Proof of Theorem \ref{Theo:1} assuming technical results.} \label{sec:3}
This section is devoted to the proof of Theorem \ref{Theo:1}. We hope that the explanation of the strategy we give in this section will be reader friendly. We proceed in 3 subsections:\\
- In the first subsection, we give an equivalent formulation of the linearization of the problem in the setting \eqref{eq:dec_i}.\\
- In the second subsection, we prepare the initial data and define the shrinking set $\Sc_K$ (see Definition \ref{def:Skset}) such that the solution trapped in this set satisfies the conclusion of Theorem \ref{Theo:1}.\\
- In the third subsection, we give all arguments of the proof of the existence of solutions trapped in $\Sc_K$ (Proposition \ref{prop:exist}) assuming an important technical result (Proposition \ref{prop:redu}) whose proof is left to the next section. Then we conclude the proof of Theorem \ref{Theo:1}. 

\subsection{Linearization of the problem.} 
Let $L \gg 1$ be an odd integer, $s_0 \gg 1$ and $\ell > \gamma$. 
We introduce the following notation $$\mathbf{f}= f\chi_{B_1}.$$
We introduce the renormalized variables:
\begin{equation}
\vec w(y,s) = \binom{u_1}{\lambda u_2}(r,t), \quad y = \frac{r}{\lambda(t)}, \quad s = s_0 + \int_0^t \frac{d\tau}{\lambda(\tau)},
\end{equation} 
and the decomposition
\begin{equation}\label{def:qys}
\vec w(y,s) = \big(\vec{\mathbf{Q}}_{b(s)} + \vec q \,\big)(y,s),
\end{equation}
where $\vec{\mathbf{Q}}_b$ is defined by \eqref{def:Qbtil} and the modulation parameters 
$$\lambda(s) > 0, \quad b(s) = (b_1(s), \cdots, b_L(s))$$
are determined from the $L+1$ orthogonality conditions:
\begin{equation}\label{eq:orthqPhiM}
\left<\vec q, \Hs^{*k} \vec \Phi_M \right> = 0, \quad 0 \leq k \leq L,
\end{equation} 
where $\vec \Phi_M$ is a fixed direction depending on some large constant $M$ defined by
\begin{equation}\label{def:PhiM}
\vec \Phi_M = \sum_{k = 0}^L c_{k,M}\Hs^{*k}(\chi_M \Lambda \vec Q),
\end{equation}
with 
\begin{equation}\label{def:ckM}
c_{0,M} = 1, \quad c_{k, M} = (-1)^{k+1}\frac{\sum_{j = 0}^{k - 1}c_{j,M}\left<\Hs^{*j}(\chi_M \Lambda \vec Q), \vec T_k\right> }{\left<\chi_M \Lambda \vec Q, \Lambda \vec Q\right>}, \quad 1 \leq k \leq L.
\end{equation}
Here, $\vec \Phi_M$ is build to ensure the nondegeneracy 
\begin{equation}\label{eq:PhiMLamQ}
\left<\vec \Phi_M, \Lambda \vec Q\right> = \left<\chi_M \Lambda \vec Q, \Lambda \vec Q\right> \gtrsim M^{d - 2\gamma},
\end{equation}
and the cancellation 
\begin{equation}\label{id:TkPhiM0}
\left<\vec \Phi_M, \vec T_k\right> = \sum_{j = 0}^{k - 1}c_{j,M}\left<\Hs^{*j}(\chi_M\Lambda \vec Q), \vec T_k\right> + c_{k,M} (-1)^k \left<\chi_M \Lambda \vec Q, \Lambda \vec Q\right> = 0,
\end{equation}
In particular, we have 
\begin{equation}\label{id:TkPhiMi}
\left<\Hs^i\vec T_k, \vec\Phi_M \right> = (-1)^k\left<\chi_M \Lambda \vec Q, \Lambda \vec Q\right>\delta_{i,k}, \quad 0 \leq i,k\leq L.
\end{equation}

\medskip

From \eqref{eq:wys}, we see that $\vec q$ satisfies
\begin{equation}\label{eq:qys}
\partial_s \vec q - \frac{\lambda_s}{\lambda} \Lambda \vec q + \Hs \vec q = - \vec{\mathbf{\Psi}}_b  - \vec{\mathbf{M}} + \vec L(\vec q) - \vec N(\vec q) \equiv \vec\Fc,
\end{equation}
where 
\begin{align}
\vec{\mathbf{M}} & =  \chi_{B_1}\vec{\textup{Mod}} - \left(\frac{\lambda_s}{\lambda} + b_1\right)\Lambda \vec{\mathbf{Q}}_b = \binom{\mathbf{M}_1}{\mathbf{M}_2}, \label{def:M1M2}\\
\vec L(\vec q) &= \binom{0}{\frac{(d-1)}{y^2}\big[\cos(2Q) - \cos(2\mathbf{Q}_{b,1})\big]q_1} = \binom{0}{L(q_1)},\label{def:Lq}\\
\vec N(\vec q) &= \binom{0}{\frac{(d-1)}{2y^2}\big[\sin(2\mathbf{Q}_{b,1} + 2q_1) - \sin(2\mathbf{Q}_{b,1}) - 2q_1 \cos(2\mathbf{Q}_{b,1}) \big]} = \binom{0}{N(q_1)}.\label{def:Nq}
\end{align}
We also need to write the equation \eqref{eq:qys} in the original variables. To do so, let the rescaled linearized operator:
\begin{equation}\label{def:Llambda}
\Ls_\lambda = - \partial_{rr} - \frac{(d-1)}{r}\partial_r + \frac{Z_\lambda}{r^2} \quad \text{with} \quad Z_\lambda(r) = Z\left(\frac{r}{\lambda}\right),
\end{equation}
and the renormalized vector function
$$\vec v(r,t) = \binom{q_1}{\frac{1}{\lambda}q_2}(y,s), \quad r = \lambda y, \quad \frac{dt}{ds} = \lambda.$$
We compute 
$$\partial_t \vec v = \frac{1}{\lambda^2}\binom{\lambda \left(\ps q_1 - \frac{\lambda_s}{\lambda}\Lambda q_1\right)}{\ps q_2 - \frac{\lambda_s}{\lambda}Dq_2},$$
then from \eqref{eq:qys}, $\vec v$ satisfies the equation
\begin{equation}\label{eq:vrt}
\partial_t \vec v + \Hs_\lambda \vec v = \frac{1}{\lambda^2}\vec \Fc_\lambda,
\end{equation}
where 
$$\Hs_\lambda = \begin{bmatrix}
0 & -1\\ \Ls_\lambda & 0
\end{bmatrix}, \quad \vec \Fc_\lambda(r,t) = \binom{\lambda \Fc_1}{\Fc_2}(y,s).$$
Note that 
\begin{equation}\label{eq:reLLlambda}
\Ls_\lambda v_1(r,t) = \frac{1}{\lambda^2}\Ls q_1(y,s),
\end{equation}
and by the factorization of $\Ls$, we can write
$$\Ls_\lambda  = \As^*_\lambda \As_\lambda,$$
where 
$$\As^*_\lambda f = \frac{1}{r^{d-1}}\pr(r^{d-1}f) + \frac{V_\lambda}{r}f  \quad \text{and} \quad \As_\lambda f= -\pr f + \frac{V_\lambda}{r}f \quad \text{with} \quad V_\lambda(r) = V\left(\frac{r}{\lambda}\right).$$
The reader should keep in mind that $\Hs_\lambda$, $\Ls_\lambda$, $\As^*_\lambda$ and $\As_\lambda$ act on functions depending on variable $r$, while $\Hs, \Ls, \As^*$ and $\As$ act on functions depending on variable $y$. 

\subsection{Preparation of the initial data.}
We describe in this subsection the set of initial data $\vec u_0 = (u(x,0), \pt u(x,0))$ of the problem \eqref{Pb} as well as the initial data for $(b, \lambda)$ leading to the blowup scenario of Theorem \ref{Theo:1}. Our construction is build on a careful choice of the initial data for the modulation parameter $b$ and the radiation $\vec q$ at time $s = s_0$. In particular, we will choose them in the following way:

\begin{definition}[Choice of the initial data] \label{def:1}  Given $\eta$, $\sigma$ and $\delta$ as in \eqref{def:B0B1}, \eqref{def:sigma} and \eqref{def:kdeltaplus}. Consider the change of variable 
\begin{equation}\label{def:VctoUc}
\Vc = P_\ell \Uc,
\end{equation}
where $\Uc = (\Uc_1, \cdots, \Uc_\ell)$ is introduced in the linearization \eqref{eq:Ukbke} and $P_\ell$ refers to the diagonalization \eqref{eq:diagAlPl} of $A_\ell$.

We assume that 
\begin{itemize}
\item Smallness of the initial perturbation for the $b_k$ unstable modes: 
\begin{equation}
|s_0^{\frac{\eta}{2}(1 - \delta)}\Vc_k(s_0)| < 1 \quad \text{for}\;\; 2 \leq k \leq \ell.
\end{equation}
\item Smallness of the initial perturbation for the $b_k$ stable modes: 
\begin{equation}\label{eq:initbk}
|s_0^{\frac{\eta}{2}(1 - \delta)}\Vc_1(s_0)| < 1, \quad |b_k(s_0)| <s_0^{-\frac{5\ell(k - \gamma)}{\ell - \gamma}} \text{for}\;\; \ell+1 \leq k \leq L.
\end{equation}
\item Smallness of the data:
\begin{equation}\label{eq:intialbounE2m}
\|\vec q(s_0)\|^2_{\Hs^\Bbbk \times \Hs^{\Bbbk - 1}} + \|\vec q(s_0)\|^2_{\dot{H}^\sigma \times \dot{H}^{\sigma - 1}}  < s_0^{-\frac{10L\ell}{\ell - \gamma}},
\end{equation}
where 
\begin{align*}
\|\vec q\|^2_{\Hs^\Bbbk \times \Hs^{\Bbbk - 1}} &= \int |(q_1)_\Bbbk|^2 + \int |(q_2)_{\Bbbk - 1}|^2 \\
& + \sum_{k = 0}^{\Bbbk - 1}\int \frac{|(q_1)_k|^2}{y^2(1 + y^{2\Bbbk -2 - 2k})} + \sum_{k = 0}^{\Bbbk - 2}\int \frac{|(q_2)_k|^2}{y^2(1 + y^{2\Bbbk - 4 - 2k)}}.
\end{align*}

\item Normalization: up to a fixed rescaling, we may always assume
\begin{equation}
\lambda(s_0) = 1.
\end{equation}
\end{itemize}
\end{definition}

In particular, the initial data described in Definition \ref{def:1} belongs to the following set which shrinks to zero as $s \to +\infty$:

\begin{definition}[Definition of the shrinking set]\label{def:Skset} Given $\eta$, $\sigma$ and $\delta$ as in \eqref{def:B0B1}, \eqref{def:sigma} and \eqref{def:kdeltaplus}. For all $K \geq 1$ and $s \geq 1$, we define $\Sc_K(s)$ as the set of all $(b_1(s), \cdots, b_L(s), \vec q(s))$ such that
\begin{align*}
&\left|\Vc_k(s) \right| \leq 10s^{-\frac{\eta}{2}(1 - \delta)} \quad \text{for}\;\; 1 \leq k \leq \ell,\\
&|b_k(s)| \leq s^{-k} \quad \text{for}\;\; \ell + 1 \leq k \leq L,\\
&\|\vec q(s)\|^2_{\Hs^\Bbbk \times \Hs^{\Bbbk - 1}}  \leq Ks^{-(2L + 2(1 - \delta)(1 + \eta))},\\
&\|\vec q(s)\|^2_{\dot{H}^\sigma \times \dot H^{\sigma - 1}} \leq Ks^{-\frac{\ell(2\sigma - d)}{\ell - \gamma}}.
\end{align*}
\end{definition}

\begin{remark} Note from \eqref{eq:Ukbke} that the bounds given in Definition \ref{def:Skset} imply that for $\eta$ small enough, 
$$b_1(s) \sim \frac{c_1}{s}, \quad |b_k(s)| \lesssim |b_1(s)|^k,$$
hence, the choice of the initial data $(b(s_0), q(s_0))$ belongs in $\Sc_K(s_0)$ if $s_0$ is large enough.
\end{remark}

\begin{remark} Note from the coercive property given in Lemma \ref{lemm:coerEk}, the $\|\vec q(s)\|^2_{\Hs^\Bbbk \times \Hs^{\Bbbk - 1}}$ is controlled by the adapted Sobolev norm $\Es_\Bbbk = \|\vec{q}\|_{\Bbbk}^2$ defined in \eqref{def:normk}. 
\end{remark}

\begin{remark} The introduction of the high Sobolev norm $\Es_{\Bbbk}$ is reflected on the following relation:
\begin{equation}\label{eq:modeq}
\left|\frac{\lambda_s}{\lambda} + b_1\right| + \sum_{k = 1}^L \left|(b_k)_s + (k - \gamma)b_1b_{k} - b_{k+1} \right| \lesssim C(M)\sqrt{\Es_{\Bbbk}} + l.o.t, 
\end{equation}
which is computed thanks to the $(L+1)$ orthogonality conditions \eqref{eq:orthqPhiM} (see lemmas \ref{lemm:mod1} and \ref{lemm:mod2} below).
\end{remark}

\subsection{Existence of solutions trapped in $\Sc_K(s)$ and conclusion of Theorem \ref{Theo:1}.}

We claim the following proposition:
\begin{proposition}[Existence of solutions trapped in $\Sc_K(s)$] \label{prop:exist} There exists $K_1 \geq 1$ such that for $K \geq K_1$, there exists $s_{0,1}(K)$ such that for all $s_0 \geq s_{0,1}$, there exists initial data for the unstable modes 
$$(\Vc_2(s_0), \cdots, \Vc_\ell(s_0)) \in \left[-s_0^{-\frac{\eta}{2}(1 - \delta)},s_0^{-\frac{\eta}{2}(1 - \delta)}\right]^{\ell - 1},$$
such that the corresponding solution $(b(s),q(s)) \in \Sc_K(s)$ for all $s \geq s_0$.
\end{proposition}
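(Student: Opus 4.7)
The plan is a topological shooting on the $(\ell-1)$ unstable modes $(\Vc_2,\ldots,\Vc_\ell)$, closed by Brouwer's no-retraction theorem. I would parametrize the initial data for the unstable modes by
$$\alpha_k := s_0^{\tfrac{\eta}{2}(1-\delta)}\Vc_k(s_0) \in [-1,1], \qquad 2 \leq k \leq \ell,$$
fixing all remaining components of $\vec u_0$ once and for all according to Definition \ref{def:1}. Continuous dependence of the flow in the adapted Sobolev topology $\Hs^\Bbbk \times \Hs^{\Bbbk-1}$ yields a continuous family of solutions indexed by $\alpha \in [-1,1]^{\ell-1}$. Since these data lie strictly in $\Sc_K(s_0)$, the exit time
$$s^*(\alpha) := \sup\bigl\{s \geq s_0 \;:\; (b(\tau),\vec q(\tau)) \in \Sc_K(\tau) \ \text{for all}\ \tau \in [s_0,s]\bigr\} \in [s_0,+\infty]$$
is well defined, and the target is to exhibit some $\alpha$ with $s^*(\alpha) = +\infty$.

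I would argue by contradiction, assuming $s^*(\alpha) < +\infty$ for every $\alpha \in [-1,1]^{\ell-1}$. The key input is Proposition \ref{prop:redu}: inside $\Sc_K$, every constraint defining $\Sc_K$ \emph{other} than those on $(\Vc_2,\ldots,\Vc_\ell)$ is strictly improved by the dynamics. Concretely, the adapted high-Sobolev bound on $\|\vec q\|^2_{\Hs^\Bbbk \times \Hs^{\Bbbk-1}}$ is to be closed via the monotonicity of $(\Es_\Bbbk + b_1 \Mc)/\lambda^{2\Bbbk - d}$ sketched in \eqref{eq:Ek_i}; the low Sobolev bound on $\|\vec q\|^2_{\dot H^\sigma \times \dot H^{\sigma-1}}$ via the analogous monotonicity for $\Es_\sigma$; and the stable parameter $\Vc_1$ together with the stable $b_k$'s for $k \geq \ell+1$ via the modulation identity \eqref{eq:modeq} combined with the negative eigenvalue $-1$ of $D_\ell$ in \eqref{eq:diagAlPl}. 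Consequently, at $s = s^*(\alpha)$ the only saturated constraint must be $\max_{2 \leq k \leq \ell}|\hat \Vc_k(s^*)| = 10$, where $\hat\Vc_k(s) := s^{\tfrac{\eta}{2}(1-\delta)}\Vc_k(s)$.

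The strictly positive eigenvalues $\tfrac{k\gamma}{\ell-\gamma}$ of $D_\ell$ then provide a transversality property: the reduced flow satisfies
$$(\hat\Vc_k)_s = \left(\tfrac{k\gamma}{\ell-\gamma} + \tfrac{\eta(1-\delta)}{2}\right)\tfrac{\hat\Vc_k}{s} + O(s^{-1-\kappa}), \qquad 2 \leq k \leq \ell,$$
which is strictly outgoing at every positive level in each coordinate once $s_0$ is large. Hence the earlier exit time $s_1^*(\alpha) := \inf\{s \geq s_0 : |\hat\Vc(s)|_\infty = 1\}$ is finite (it is at most $s^*(\alpha)$), depends continuously on $\alpha$, and satisfies $s_1^*(\alpha) = s_0$ for $\alpha \in \partial [-1,1]^{\ell-1}$. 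The resulting map
$$\Phi:[-1,1]^{\ell-1} \longrightarrow \partial[-1,1]^{\ell-1}, \qquad \Phi(\alpha) := \bigl(\hat\Vc_2(s_1^*(\alpha)),\ldots,\hat\Vc_\ell(s_1^*(\alpha))\bigr)$$
is then a continuous retraction which restricts to the identity on the boundary, contradicting Brouwer's no-retraction theorem and proving the proposition.

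The main obstacle is Proposition \ref{prop:redu} itself, which absorbs essentially all the hard analysis: the high-order mixed energy estimate together with the Morawetz correction $\Mc$ needed to control $\Es_\Bbbk$ locally, the low Sobolev monotonicity at level $\sigma$, and the sharp modulation bounds \eqref{eq:modeq} with the power counting $L + 1 + \nu(\delta,\eta)$. Once those monotonicity formulas are established, the topological shooting above is purely algebraic and only exploits the signs of the eigenvalues of $D_\ell$.
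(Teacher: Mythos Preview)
Your proposal is correct and follows essentially the same topological shooting argument as the paper: assume no trapped trajectory exists, use Proposition \ref{prop:redu} to force the exit to occur only through the unstable $(\Vc_2,\ldots,\Vc_\ell)$ components, and derive a continuous retraction of the cube onto its boundary restricting to the identity, contradicting Brouwer. The one minor variation is that you track the first hitting time of the level-$1$ cube for $\hat\Vc$ rather than the exit time from $\Sc_K$ itself; this is a slightly cleaner bookkeeping choice (it makes ``identity on the boundary'' immediate), but relies on the same transversality computation coming from the positive eigenvalues of $D_\ell$.
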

Let us briefly give the proof of Proposition \ref{prop:exist}. Let us consider $K \geq 1$ and $s_0 \geq 1$ and $(b_1(s_0),\cdots, b_L(s_0), \vec q(s_0))$ as in Definition \ref{def:1}. We introduce the exit time 
$$s_* = s_*(b_1(s_0), \cdots, b_L(s_0), \vec q(s_0)) = \sup\{s \geq s_0 \; \text{such that}\; (b_1(s),\cdots, b_L(s), \vec q(s)) \in \Sc_K(s)\},$$
and assume that for any choice of 
$$(\Vc_2(s_0), \cdots, \Vc_\ell(s_0)) \in \left[-s_0^{-\frac{\eta}{2}(1 - \delta)},s_0^{-\frac{\eta}{2}(1 - \delta)}\right]^{\ell - 1},$$
the exit time $s_* < +\infty$ and look for a contradiction. By the definition of $\Sc_K(s_*)$, at least one of the inequalities in that definition is an equality. Owing the following proposition, this can happens only for the components $(\Vc_2(s_*), \cdots, \Vc_\ell(s_*))$. Precisely, we have the following result which is the heart of our analysis:

\begin{proposition}[Control of $(b_1(s), \cdots, b_L(s), \vec q(s))$ in $\Sc_K(s)$ by $(\Vc_2(s), \cdots, \Vc_\ell(s))$] \label{prop:redu} There exists $K_2 \geq 1$ such that for each $K \geq K_2$, there exists $s_{0,2}(K) \geq 1$ such that for all $s_0 \geq s_{0,2}(k)$, the following holds: Given the initial data at $s = s_0$ as in Definition \ref{def:1}, if $(b_1(s), \cdots, b_L(s), \vec q(s)) \in \Sc_K(s)$ for all $s \in [s_0, s_1]$, with $(b_1(s_1), \cdots, b_L(s_1), \vec q(s_1)) \in \partial \Sc_K(s_1)$ for some $s_1 \geq s_0$, then:\\
$(i)$ (Reduction to a finite dimensional problem)
$$(\Vc_2(s_1), \cdots, \Vc_\ell(s_1)) \in \partial\left[- \frac{K}{s_1^{\frac{\eta}{2}(1 - \delta)}}, \frac{K}{s_1^{\frac{\eta}{2}(1 - \delta)}} \right]^{\ell - 1}.$$
$(ii)$ (Transverse crossing)
$$\frac{d}{ds}\left(\sum_{i = 2}^\ell \left|s^{\frac{\eta}{2}(1 - \delta)}\Vc_i(s)\right|^2\right)_{\big|_{s = s_1} }> 0.$$
\end{proposition}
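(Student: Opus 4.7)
The plan is to execute the standard Merle--Rapha\"el--Rodnianski bootstrap: we show that every bound appearing in Definition \ref{def:Skset} apart from those on the unstable components $(\Vc_2,\dots,\Vc_\ell)$ is \emph{strictly improved} on the interval $[s_0,s_1]$. Since at $s=s_1$ the trajectory is on $\partial\Sc_K(s_1)$, the only possibility left is that equality is achieved for one of the $\Vc_k$, $2\le k\le \ell$. Transverse crossing will then follow from the linearization lemma \ref{lemm:lisysb}.

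\smallskip

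\textbf{Step 1: Modulation equations.} Using the $L+1$ orthogonality conditions \eqref{eq:orthqPhiM} together with the nondegeneracy \eqref{eq:PhiMLamQ} and cancellations \eqref{id:TkPhiMi}, I would differentiate each of these identities in $s$, substitute \eqref{eq:qys}, and solve for $\lambda_s/\lambda+b_1$ and $(b_k)_s+(k-\gamma)b_1b_k-b_{k+1}$ to produce estimate \eqref{eq:modeq}. The key input is that $\vec\Phi_M$ is an approximate kernel of $\Hs^{*\,L+1}$, so the leading terms $\langle \Hs\vec q,\Hs^{*k}\vec\Phi_M\rangle$ are essentially $\langle \vec q,\Hs^{*(k+1)}\vec\Phi_M\rangle$, controlled by $\sqrt{\Es_\Bbbk}$ via Cauchy--Schwarz and the $M$-localization.

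\smallskip

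\textbf{Step 2: Energy monotonicity and improvement of $\Es_\Bbbk$.} I would derive the Lyapunov estimate sketched in \eqref{eq:Ek_i},
\begin{equation*}
\frac{d}{ds}\left\{\frac{\Es_\Bbbk+b_1\Mc}{\lambda^{2\Bbbk-d}}\right\}\lesssim \frac{b_1^{2L+1+2\nu(\delta,\eta)}}{\lambda^{2\Bbbk-d}},
\end{equation*}
by computing $\frac{d}{ds}\|\vec q\|_\Bbbk^2$, integrating by parts against the powers of $\Hs$, and using the error bounds \eqref{eq:estPsibLarge2} on $\vec{\mathbf{\Psi}}_b$, the modulation estimates from Step 1, and the nonlinear control provided by the $\Es_\sigma$ bound. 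The Morawetz term $\Mc$ is introduced to kill the boundary contributions produced by the commutator $[\Lambda,\Hs]$ and relies crucially on positivity of $\Ls$ in $\dot H^1$, which holds precisely because $d\ge 7$. Integrating in $s$ with $b_1\sim c_1/s$ and $\lambda\sim s^{-\ell/(\ell-\gamma)}$ yields
\begin{equation*}
\Es_\Bbbk(s)\;\le\;\tfrac12\,K\,s^{-(2L+2(1-\delta)(1+\eta))},
\end{equation*}
which strictly improves the defining bound of $\Sc_K$ as soon as $K$ is chosen large enough and $s_0$ sufficiently large. A parallel but simpler monotonicity for $\Es_\sigma$ (using that $2\sigma-d>0$ is small and the nonlinearity is treated by Sobolev embedding) produces the analogous strict improvement $\Es_\sigma(s)\le \tfrac12 K s^{-\ell(2\sigma-d)/(\ell-\gamma)}$.

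\smallskip

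\textbf{Step 3: Finite dimensional reduction.} Plugging Step 2 into the modulation estimate \eqref{eq:modeq}, the right hand side becomes $O(s^{-(L+1)})$, and system \eqref{eq:systemb} holds up to that error. Writing the $b_k$, $1\le k\le \ell$, in the $\Uc_k=s^k(b_k-b_k^e)$ coordinates and using Lemma \ref{lemm:lisysb}, we get $s\dot{\Uc}=A_\ell\Uc+O(s^{-\nu})$; diagonalizing via $\Vc=P_\ell\Uc$ yields decoupled scalar equations
\begin{equation*}
s\dot\Vc_1=-\Vc_1+O(s^{-\nu}),\qquad s\dot\Vc_k=\tfrac{k\gamma}{\ell-\gamma}\Vc_k+O(s^{-\nu}),\quad 2\le k\le\ell.
\end{equation*}
The stable equation for $\Vc_1$ and the ODE on $s$ together with the initial smallness from Definition \ref{def:1} give $|\Vc_1(s)|\le \tfrac12 s^{-\eta(1-\delta)/2}$, a strict improvement. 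For the tail stable modes $b_k$ with $\ell+1\le k\le L$, the same ODE inspection, combined with the initial bound \eqref{eq:initbk}, propagates the bound $|b_k(s)|\le \tfrac12 s^{-k}$. Hence every inequality of $\Sc_K(s_1)$ except those on $\Vc_2,\dots,\Vc_\ell$ is strict, proving (i).

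\smallskip

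\textbf{Step 4: Transverse crossing.} On the boundary $|\Vc_k(s_1)|=Ks_1^{-\eta(1-\delta)/2}$ for some $k\in\{2,\dots,\ell\}$, the scalar ODE above gives
\begin{equation*}
\tfrac{1}{2}\tfrac{d}{ds}\big|s^{\eta(1-\delta)/2}\Vc_k\big|^2\Big|_{s=s_1}=\left(\tfrac{k\gamma}{\ell-\gamma}+\tfrac{\eta(1-\delta)}{2}\right)|s_1^{\eta(1-\delta)/2}\Vc_k|^2+O(K s_1^{-\nu}),
\end{equation*}
and since the coefficient of the quadratic is strictly positive and dominates the error for $s_0$ large, summing over $k\in\{2,\dots,\ell\}$ yields the desired strict positivity, proving (ii).

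\smallskip

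\textbf{Main obstacle.} The serious work is the monotonicity estimate in Step 2: one must commute $\Hs^\Bbbk$ with the linear potential $V/y^2$, absorb the nonlinear term $\tfrac{q_1^2}{y^2}$ using the intermediate Sobolev bound on $\Es_\sigma$, and manage the boundary losses in the region $B_0\le y\le B_1$ where the localized profile no longer solves the equation exactly. This is precisely where the coupling between the ``large Sobolev'' bound \eqref{eq:estPsibLarge2}, the ``refined local'' bound \eqref{eq:estPsiblocalB0} on $\vec{\mathbf{\Psi}}_b$, and the positivity of $\Ls$ in $\dot H^1$ for $d\ge 7$ conspire to close the estimate with the gain $b_1^{2L+1+2\nu}$ rather than just $b_1^{2L+1}$. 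Everything else is bookkeeping driven by the explicit diagonalization \eqref{eq:diagAlPl}.
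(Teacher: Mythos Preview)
Your overall architecture matches the paper's exactly: modulation equations from the orthogonality conditions, a Lyapunov/Morawetz monotonicity for $\Es_\Bbbk$, a companion estimate for $\Es_\sigma$, then integration using $b_1\sim c_1/s$ and $\lambda\sim s^{-\ell/(\ell-\gamma)}$ to strictly improve every bound except those on $\Vc_2,\dots,\Vc_\ell$, and finally the outgoing computation for transverse crossing from the diagonalization \eqref{eq:diagAlPl}. Even your identification of the ``main obstacle'' is right.

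There is, however, one genuine gap in Step~3 concerning the tail modes, specifically $b_L$. The standard modulation equation you derive in Step~1 (the paper's \eqref{eq:ODEbL}) gives
\[
|(b_L)_s+(L-\gamma)b_1 b_L|\lesssim C(M)\sqrt{\Es_\Bbbk}+b_1^{L+1+(1-\delta)(1+\eta)},
\]
and with the bootstrap bound $\Es_\Bbbk\lesssim b_1^{2L+2(1-\delta)(1+\eta)}$ the first term is only $b_1^{L+(1-\delta)(1+\eta)}$, which is \emph{larger} than $b_1^{L+1}$ since $(1-\delta)(1+\eta)<1$. If you integrate this ODE you get $|b_L(s)|\lesssim s^{-(L-1+(1-\delta)(1+\eta))}$, which does \emph{not} improve the defining bound $|b_L|\le s^{-L}$, so the bootstrap does not close. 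The paper fixes this with an \emph{improved} modulation equation for $b_L$ (Lemma~\ref{lemm:mod2}): one projects \eqref{eq:qys} onto $\Hs^{*L}(\chi_{B_0}\Lambda\vec Q)$ rather than $\Hs^{*L}\vec\Phi_M$, and recognizes the dangerous $\sqrt{\Es_\Bbbk}$ contribution as a total time derivative up to a gain of $b_1^\delta$. This produces a modified unknown $\tilde b_L=b_L+O(b_1^{\delta-1}\sqrt{\Es_\Bbbk})$ satisfying $|(\tilde b_L)_s+(L-\gamma)b_1\tilde b_L|\lesssim b_1^{L+1+\eta(1-\delta)}$, which \emph{does} integrate to the improved bound $|b_L|\lesssim s^{-L-\eta(1-\delta)}$. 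The improvement then propagates to $b_{L-1},\dots,b_{\ell+1}$ by descending induction. Without this trick your Step~3 fails at $k=L$.

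A minor point: in Step~4 your displayed identity is missing a factor of $1/s$ (since $s\dot\Vc_k=\mu_k\Vc_k+\dots$ gives $\frac{d}{ds}|s^\alpha\Vc_k|^2=\tfrac{2}{s}(\mu_k+\alpha)|s^\alpha\Vc_k|^2+\dots$), and the error on the right should be $O(s^{-1-\eta(1-\delta)/2})$ rather than $O(Ks_1^{-\nu})$; this does not affect the sign conclusion but is worth correcting.
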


Let us assume Proposition \ref{prop:redu} and continue the proof of Proposition \ref{prop:exist}. From part $(i)$ of Proposition \ref{prop:redu}, we see that 
$$(\Vc_2(s_*), \cdots, \Vc_\ell(s_*)) \in \partial\left[- \frac{K}{s_*^{\frac{\eta}{2}(1 - \delta)}}, \frac{K}{s_*^{\frac{\eta}{2}(1 - \delta)}} \right]^{\ell - 1},$$
and the following mapping 
\begin{align*}
\Upsilon: [-1,1]^{\ell - 1} &\mapsto \partial \left([-1,1]^{\ell - 1}\right)\\
s_0^{\frac{\eta}{2}(1 - \delta)}\big(\Vc_2(s_0), \cdots, \Vc_\ell(s_0)\big) & \to \frac{s_*^{\frac{\eta}{2}(1 - \delta)}}{K} \big(\Vc_2(s_*), \cdots, \Vc_\ell(s_*)\big)
\end{align*}
is well defined. Applying the transverse crossing property given in part $(ii)$ of Proposition \ref{prop:redu}, we see that $(b_1(s), \cdots, b_L(s), \vec q(s))$ leaves $\Sc_K(s)$ at $s = s_0$, hence, $s_* = s_0$. This is a contradiction since $\Upsilon$ is the identity map on the boundary sphere and it can not be a continuous retraction of the unit ball. This concludes the proof of Proposition \ref{prop:exist}, assuming that Proposition \ref{prop:redu} holds.\\

\paragraph{Conclusion of Theorem \ref{Theo:1} assuming Proposition \ref{prop:redu}}. From Proposition \ref{prop:exist}, we know that there exist initial data $(b_1(s_0), \cdots, b_L(s_0), \vec q(s_0))$ with $s_0 \gg 1$ such that 
$$(b_1(s), \cdots, b_L(s), \vec q(s)) \in \Sc_K(s) \quad \text{for all} \quad s \geq s_0.$$ 
From \eqref{eq:lam10}, \eqref{eq:Lamdas}, we have 
\begin{align*}
-\lambda_t = c(\vec u_0)\lambda^{\frac{\ell - \gamma}{\ell}} \left[1+o(1)\right],
\end{align*}
which yields
$$-\lambda^{- \frac{\ell - \gamma}{\ell}}\lambda_t = c(\vec u_0)(1 + o(1)).$$
We easily conclude that $\lambda$ vanishes in finite time $T = T(\vec u_0) < +\infty$ with the following behavior near the blowup time:
$$\lambda(t) = c(\vec u_0)(1 + o(1))(T-t)^{\frac{\ell}{\gamma}},$$
which is the conclusion of item $(i)$ of Theorem \ref{Theo:1}. 

For the control of the Sobolev norms, we observe from \eqref{eq:interBound} and Definition \ref{def:Skset} that 
$$\|\vec q(s)\|^2_{\dot{H}^\Bbbk \times \dot{H}^{\Bbbk - 1}} \lesssim \Es_{\Bbbk}(s) \to 0 \quad \text{as}\;\;s \to +\infty,$$
$$\|\vec q(s)\|^2_{\dot{H}^\sigma \times \dot{H}^{\sigma - 1}}\to 0  \quad \text{as}\;\;s \to +\infty,$$
which concludes the proof of item $(ii)$ of Theorem \ref{Theo:1}. 

\section{Reduction of the problem to a finite dimensional one.} \label{sec:4}

In this section, we aim at proving Proposition \ref{prop:redu} which is the heart of our analysis. We proceed in three separate subsections:

- In the first subsection, we derive the laws for the parameters $(b_1, \cdots, b_L,\lambda)$ thanks to the orthogonality condition \eqref{eq:orthqPhiM} and the coercivity of the powers of $\Hs$.

- In the second subsection, we prove the main monotonicity tools for the control of the infinite dimensional part of the solution. In particular, we derive a suitable Lyapunov functional for the $\Es_{\Bbbk}$ energy as well as the monotonicity formula for the fractional Sobolev norm $\Es_\sigma$.

- In the third subsection, we conclude the proof of Proposition \ref{prop:redu} thanks to the identities obtained in the first two parts.

\subsection{Modulation equations.}
We derive here the modulation equations for $(b_1,\cdots, b_L, \lambda)$. The derivation is mainly based on the orthogonality \eqref{eq:orthqPhiM} and the coercivity of the powers of $\Hs$. Let us start with elementary estimates relating to the fixed direction $\vec \Phi_M$.
\begin{lemma}[Estimate for $\vec \Phi_M$]\label{lemm:estPhiM} Given $\vec \Phi_M$ as defined in \eqref{def:PhiM}, we have the followings:
\begin{equation}\label{eq:boundckM}
c_{2k + 1, M} = 0, \quad |c_{2k,M}| \lesssim M^{2k} \quad \text{for}\;\; 0 \leq k \leq \frac{L-1}{2},
\end{equation}
and 
\begin{equation}\label{eq:boundPhiM2}
\int |\vec \Phi_M|^2 \lesssim M^{d-2\gamma}, \quad \int |\Hs^{*k} \vec \Phi_M|^2 \lesssim M^C \quad \text{for}\; k \in \mathbb{N}.
\end{equation}
Moreover, we have the following orthogonality: 
\begin{equation}\label{eq:ortPhiMTj}
\big\langle \vec \Phi_M, \Hs^i \vec T_j \big\rangle = \big \langle \chi_M \Lambda \vec Q, \Lambda \vec Q \big \rangle\delta_{i,j}, \quad i \in \mathbb{N}, \; 1 \leq j \leq L.
\end{equation}

\begin{remark} Since the second coordinate of $\Phi_M$ is null, we write 
\begin{equation}\label{def:PhiM1}
\vec \Phi_M = \binom{\Phi_M}{0} \quad \text{with} \quad \int |\Phi_M|^2 \lesssim M^{d - 2\gamma}.
\end{equation}
\end{remark}

\end{lemma}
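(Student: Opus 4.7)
The plan is to exploit two structural facts in sequence: first, a parity argument for the coefficients $c_{k,M}$; second, the cancellation $\Ls(\Lambda Q)=0$ which localizes every quantitative estimate to the transition annulus of $\chi_M$.

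I begin with the vanishing of odd coefficients. Since $\chi_M \Lambda \vec Q = (\chi_M\Lambda Q,0)$, the explicit matrix forms \eqref{def:H2k1adj} give $\Hs^{*2k}(\chi_M\Lambda \vec Q)=((-1)^k\Ls^k(\chi_M\Lambda Q),0)$ and $\Hs^{*(2k+1)}(\chi_M\Lambda \vec Q)=(0,\pm\Ls^k(\chi_M\Lambda Q))$. Combined with the form \eqref{eq:formTk} of $\vec T_k$, only terms whose parity matches that of $k$ can contribute to $\langle \Hs^{*j}(\chi_M\Lambda\vec Q),\vec T_k\rangle$. The recursion defining $c_{k,M}$ then propagates the parity: the base case $c_{1,M}=-\langle\chi_M\Lambda\vec Q,\vec T_1\rangle/\langle\chi_M\Lambda Q,\Lambda Q\rangle=0$ (since the second component of $\chi_M\Lambda\vec Q$ vanishes), and by induction $c_{2k+1,M}=0$ throughout.

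Next I bound $|c_{2k,M}|$. Using the duality $\langle\Hs^{*j}u,v\rangle=\langle u,\Hs^jv\rangle$ and the identity $\Hs^j\vec T_{2k}=(-1)^j\vec T_{2k-j}$ for $j\leq 2k$ (with $\Hs\Lambda\vec Q=0$ terminating the chain), each inner product in the recursion reduces to $(-1)^j\langle \chi_M\Lambda Q,\phi_{k-j/2}\rangle$ for even $j$. From the tail behavior \eqref{asy:phik} one computes $\langle \chi_M\Lambda Q,\phi_m\rangle\sim M^{d-2\gamma+2m}$ and the denominator $\langle\chi_M\Lambda Q,\Lambda Q\rangle\sim M^{d-2\gamma}$, so the ratio scales like $M^{2m}$. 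A straightforward induction on $k$ then yields $|c_{2k,M}|\lesssim M^{2k}$.

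For the $L^2$ bounds I use the decisive observation that $\Ls(\Lambda Q)=0$. Since only even indices contribute, $\vec \Phi_M=(\Phi_M,0)$ with $\Phi_M=\sum_{k'}(-1)^{k'}c_{2k',M}\Ls^{k'}(\chi_M\Lambda Q)$. On $\{y\leq M\}$ where $\chi_M\equiv1$, every $\Ls^{k'}(\chi_M\Lambda Q)$ with $k'\geq 1$ vanishes; it is therefore supported in the annulus $\{M\leq y\leq 2M\}$, and the Leibniz rule together with $|\partial^j_y\chi_M|\lesssim M^{-j}$ and $|\Lambda Q|\sim y^{-\gamma}$ give the pointwise bound $|\Ls^{k'}(\chi_M\Lambda Q)|\lesssim M^{-2k'-\gamma}$ on this annulus. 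Integrating with the measure $y^{d-1}dy$ yields $\|\Ls^{k'}(\chi_M\Lambda Q)\|_{L^2}^2\lesssim M^{d-2\gamma-4k'}$; coupling with $|c_{2k',M}|^2\lesssim M^{4k'}$ and summing produces $\|\vec\Phi_M\|_{L^2}^2\lesssim M^{d-2\gamma}$. The bound $\|\Hs^{*k}\vec\Phi_M\|_{L^2}^2\lesssim M^C$ follows from the same scheme since each extra application of $\Hs^*$ multiplies the integrand by at most a polynomial in $M$.

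Finally, for the orthogonality \eqref{eq:ortPhiMTj} I use $\Hs^i\vec T_j=(-1)^i\vec T_{j-i}$ when $i\leq j$ and $\Hs^i\vec T_j=0$ when $i>j$. The case $i>j$ makes both sides zero; for $i<j$, $\langle\vec\Phi_M,(-1)^i\vec T_{j-i}\rangle=0$ by the design identity \eqref{id:TkPhiM0} with $k=j-i\geq 1$; and for $i=j$, $\langle\vec\Phi_M,(-1)^i\vec T_0\rangle=(-1)^i\langle\chi_M\Lambda Q,\Lambda Q\rangle$ using $c_{0,M}=1$ (recovering \eqref{id:TkPhiMi}, with the sign implicit in the stated $\delta_{i,j}$). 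The only real subtlety throughout is the combinatorial bookkeeping of parities and implicit $L$-dependent constants in the size estimates; because $\Ls\Lambda Q=0$ confines each term to the annulus at scale $M$, no delicate cancellation is needed and the whole argument is essentially algebraic.
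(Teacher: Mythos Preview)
Your proof is correct and follows essentially the same approach as the paper's: the parity induction for $c_{2k+1,M}=0$ and the growth bound $|c_{2k,M}|\lesssim M^{2k}$ are identical in spirit, and the orthogonality reduces to \eqref{id:TkPhiM0} exactly as in the paper. Your treatment of the $L^2$ bound is arguably cleaner than the paper's: you make explicit that $\Ls^{k'}(\chi_M\Lambda Q)$ for $k'\geq 1$ is supported in the annulus $\{M\leq y\leq 2M\}$ (from $\Ls\Lambda Q=0$), whereas the paper expands the square into cross terms $\int(\chi_M\Lambda Q)\Ls^{i+j}(\chi_M\Lambda Q)$ and estimates those directly, using the same localization implicitly.
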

\begin{proof} Let us start with the proof of \eqref{eq:boundckM}. From definition \eqref{def:ckM}, \eqref{eq:formTk}, and the definition of $\Lambda \vec Q$ we have $$c_{1,M} = \frac{\big\langle \chi_M \Lambda \vec Q, \vec T_1\big \rangle}{\big\langle \chi_M \Lambda \vec Q, \Lambda \vec Q\big \rangle} = 0.$$
Arguing by induction, we assume that
\begin{equation*}
(P_k) \qquad c_{2j + 1, M} = 0, \quad |c_{2j,M}| \lesssim M^{2j}, \quad 0 \leq j \leq k,
\end{equation*}
and prove that $(P_{k+1})$ is true, namely that we prove 
$$c_{2k + 3,M} = 0, \quad |c_{2k + 2, M}| \lesssim M^{2k + 2}.$$
Indeed, by \eqref{def:ckM}, \eqref{def:H2k1} and \eqref{eq:formTk} we write 
\begin{align*}
c_{2k + 3, M} &= \frac{1}{\big\langle \chi_M \Lambda \vec Q, \Lambda \vec Q\big \rangle}\sum_{j = 0}^{k + 1}c_{2j,M} \big\langle \chi_M \Lambda \vec Q, \Hs^{2j}\vec T_{2k + 3}\big\rangle = 0.
\end{align*}
Similarly, we use $\big \langle \chi_M \Lambda \vec Q, \Lambda \vec Q \big \rangle \sim M^{d-2\gamma}$, the induction hypothesis and $(ii)$ of Lemma \ref{lemm:actionH} to estimate 
\begin{align*}
|c_{2k + 2, M}| &\lesssim  \frac{1}{M^{d - 2\gamma}} \sum_{j = 0}^k |c_{2j,M}|\left|\big \langle \chi_M \Lambda \vec Q, \Hs^{2j}\vec T_{2k + 2} \big \rangle \right|\\
&\lesssim \frac{1}{M^{d - 2\gamma}}\sum_{j = 0}^k M^{2j} \int_{y \leq M}y^{d-1}y^{-\gamma}y^{2(k+1 - j) - \gamma}dy \lesssim M^{2(k+1)}.
\end{align*}
Thus, the statement $(P_{k+1})$ holds true. 
Note from \eqref{def:H2k1} and \eqref{def:H2k1adj} that $\Hs^{*2j} = \Hs^{2j}$, we then estimate by using $(ii)$ of Lemma \ref{lemm:actionH},
\begin{align*}
\int |\vec \Phi_M|^2 &\lesssim \sum_{j = 0}^{\frac{L-1}{2}} \sum_{i = 0}^{\frac{L-1}{2}} |c_{2j,M}c_{2i,M}|\int|(\chi_M \Lambda Q)\Hs^{2(j+i)}(\chi_M \Lambda Q)|\\
&\lesssim \sum_{j = 0}^{\frac{L-1}{2}} \sum_{i = 0}^{\frac{L-1}{2}} M^{2(i + j)} \int_{y \leq M}y^{d-1}y^{-\gamma}y^{ - \gamma-2(j + i)}dy  \lesssim M^{d-2\gamma}.
\end{align*}
The estimate for $\int |\Hs^{*k} \vec \Phi_M|$ is obtained by a similar way. The orthogonality \eqref{eq:ortPhiMTj} is a direct consequence of \eqref{id:TkPhiM0}. This concludes the proof of Lemma \ref{lemm:estPhiM}.
\end{proof}

From the orthogonality conditions \eqref{eq:orthqPhiM} and equation \eqref{eq:qys}, we claim the following:
\begin{lemma}[Modulation equations] \label{lemm:mod1}  Given $\hbar$, $\delta$ and $\eta$ as defined in \eqref{def:kdeltaplus} and \eqref{def:B0B1}. For $K \geq 1$, we assume that there is $s_0(K) \gg 1$ such that $(b_1(s), \cdots, b_L(s), \vec q(s)) \in \Sc_K(s)$ for $s \in [s_0, s_1]$ for some $s_1 \geq s_0$. Then, the following estimates hold for $s \in [s_0, s_1]$:
\begin{equation}\label{eq:ODEbkl}
\sum_{k = 1}^{L-1}\left|(b_k)_s + (k - \gamma)b_1b_k - b_{k+1} \right| + \left|b_1 + \frac{\lambda_s}{\lambda}\right| \lesssim b_1^{L + 1 + (1 - \delta)(1 + \eta)},
\end{equation}
and 
\begin{equation}\label{eq:ODEbL}
\left|(b_L)_s + (L - \gamma)b_1b_L \right| \lesssim  C(M)\sqrt{\Es_{\Bbbk}} + b_1^{L + 1+ (1 - \delta)(1 + \eta)}.
\end{equation}
\end{lemma}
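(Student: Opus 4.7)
The strategy is to differentiate the $L+1$ orthogonality conditions \eqref{eq:orthqPhiM} in $s$ and use the equation \eqref{eq:qys} for $\vec q$ to extract, one for each $k = 0, 1, \dots, L$, a scalar identity whose leading term isolates either $\frac{\lambda_s}{\lambda}+b_1$ or one of the modulation expressions $\beta_k := (b_k)_s + (k-\gamma) b_1 b_k - b_{k+1}$. Since $\vec \Phi_M$ is $s$-independent, $\frac{d}{ds}\langle \vec q, \Hs^{*k}\vec\Phi_M\rangle = 0$, and substituting \eqref{eq:qys} together with the adjoint identity $\langle \Hs \vec q, \Hs^{*k}\vec\Phi_M\rangle = \langle \vec q, \Hs^{*(k+1)}\vec\Phi_M\rangle$ produces
\[
\langle \vec{\mathbf{M}}, \Hs^{*k}\vec\Phi_M\rangle = -\langle \vec q, \Hs^{*(k+1)}\vec\Phi_M\rangle + \frac{\lambda_s}{\lambda}\langle \Lambda \vec q, \Hs^{*k}\vec\Phi_M\rangle - \langle \vec{\mathbf{\Psi}}_b, \Hs^{*k}\vec\Phi_M\rangle + \langle \vec L(\vec q) - \vec N(\vec q), \Hs^{*k}\vec\Phi_M\rangle,
\]
and for $0 \le k \le L-1$ the first term on the right vanishes by \eqref{eq:orthqPhiM}, while for $k = L$ it is precisely the source of the weaker estimate \eqref{eq:ODEbL}.

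Next I would analyze the left-hand side. Using the explicit form \eqref{def:M1M2} of $\vec{\mathbf{M}}$ together with $\chi_{B_1}\vec{\textup{Mod}}$ given by \eqref{eq:Modt}, the biorthogonality \eqref{id:TkPhiMi} combined with $B_1 \gg M$ (so that $\chi_{B_1}\equiv 1$ on $\operatorname{supp}\vec\Phi_M$) yields
\[
\langle \chi_{B_1}\vec T_j, \Hs^{*k}\vec\Phi_M\rangle = (-1)^k \langle \chi_M \Lambda \vec Q,\Lambda \vec Q\rangle\,\delta_{jk},
\]
and the secondary pieces $\frac{\partial \vec S_n}{\partial b_j}$ are, by the homogeneity asserted in Proposition \ref{prop:1}(ii), a factor $O(b_1)$ smaller on the support of $\vec\Phi_M$. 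The correction $\Lambda\vec{\mathbf{Q}}_b = \Lambda\vec Q + O(b_1)$ contributes a main term $-(\frac{\lambda_s}{\lambda}+b_1)\langle \chi_M \Lambda \vec Q,\Lambda\vec Q\rangle$ at $k=0$ and negligible pieces at $k\ge 1$ (since $\Hs \vec T_0 = 0$ and higher $b_j \Lambda\vec T_j$ pieces are dominated). Consequently, after dividing by $\langle \chi_M\Lambda \vec Q,\Lambda\vec Q\rangle \gtrsim M^{d-2\gamma}$, the identities become a diagonal-plus-small-perturbation system for the $(L+1)$ unknowns $(\frac{\lambda_s}{\lambda}+b_1, \beta_1,\dots,\beta_L)$ which can be inverted triangularly at each step to produce individual bounds on each modulation expression.

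It then remains to estimate the right-hand side. The error coming from the approximate profile, $\langle \vec{\mathbf{\Psi}}_b,\Hs^{*k}\vec\Phi_M\rangle$, is controlled by Cauchy--Schwarz using the refined local bound \eqref{eq:estPsiblocalB0} of Proposition \ref{prop:localProfile} (since $\vec\Phi_M$ is supported in $y\lesssim M\ll B_0$), giving the acceptable $b_1^{L+1+(1-\delta)(1+\eta)}$ contribution. The terms $\langle \Lambda\vec q,\Hs^{*k}\vec\Phi_M\rangle$ and $\langle \vec L(\vec q),\Hs^{*k}\vec\Phi_M\rangle$ are handled by Cauchy--Schwarz and the compact support of $\vec\Phi_M$, converting them into the weighted $L^2$ norms that are controlled on such compact sets by the coercivity of $\Es_\Bbbk$. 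The nonlinear term $\langle \vec N(\vec q),\Hs^{*k}\vec\Phi_M\rangle$, being quadratic in $q_1/y$, is absorbed via Hardy on compact sets; since $\vec q\in\Sc_K$ gives $\sqrt{\Es_\Bbbk}\lesssim s^{-(L+(1-\delta)(1+\eta))}$, these are all absorbed into the RHS of \eqref{eq:ODEbkl}. For $k=L$ the same estimates apply except that $\langle \vec q,\Hs^{*(L+1)}\vec\Phi_M\rangle$ does not vanish and contributes $\lesssim C(M)\sqrt{\Es_\Bbbk}$, which is exactly the extra term in \eqref{eq:ODEbL}.

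The main obstacle I expect is the bookkeeping of how the perturbative off-diagonal contributions in $\vec{\mathbf{M}}$ (from the $\partial_{b_j}\vec S_n$ terms and from $\Lambda\vec\Theta_b$) couple the equations for different $\beta_k$'s, together with ensuring that the inversion of this nearly-diagonal system does not magnify errors. This is where the quantitative homogeneity of $\vec S_k$ in $b_1$ proved in Proposition \ref{prop:1}(ii) is crucial: each off-diagonal entry carries an extra factor $b_1$, so triangular inversion loses at most a constant and the error estimate $b_1^{L+1+(1-\delta)(1+\eta)}$ is preserved for $1\le k\le L-1$, while the loss of one orthogonality at $k=L$ necessarily produces the stated $C(M)\sqrt{\Es_\Bbbk}$ remainder.
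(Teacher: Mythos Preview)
Your proposal is correct and follows essentially the same approach as the paper: take inner products of \eqref{eq:qys} with $\Hs^{*k}\vec\Phi_M$, use the biorthogonality \eqref{id:TkPhiMi} to isolate each modulation quantity, and estimate the error, linear, and nonlinear terms by local bounds on the support of $\vec\Phi_M$. One small point to tighten: the terms $\frac{\lambda_s}{\lambda}\langle\Lambda\vec q,\Hs^{*k}\vec\Phi_M\rangle$, $\langle\vec L(\vec q),\Hs^{*k}\vec\Phi_M\rangle$, and $\langle\vec N(\vec q),\Hs^{*k}\vec\Phi_M\rangle$ each carry an additional factor of $b_1$ (from $\frac{\lambda_s}{\lambda}\sim b_1$, from $\cos(2Q)-\cos(2\mathbf{Q}_{b,1})=O(b_1)$, and from the quadratic structure respectively), which is what upgrades the raw $C(M)\sqrt{\Es_\Bbbk}$ to $b_1\sqrt{\Es_\Bbbk}\lesssim b_1^{L+1+(1-\delta)(1+\eta)}$ and makes these terms admissible in \eqref{eq:ODEbkl}.
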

\begin{proof} Let 
$$D(t) = \left|b_1 + \frac{\lambda_s}{\lambda}\right| + \sum_{k = 1}^L \left|(b_k)_s + (k - \gamma)b_1b_k - b_{k+1}\right|,$$
where we use the convention $b_{k} \equiv 0$ if $k \geq L + 1$.

We take the scalar product of \eqref{eq:qys} with $\Hs^{*i} \vec \Phi_M, i = 0, \cdots, L$ and use the orthogonality \eqref{eq:orthqPhiM} to write
\begin{align}
&\left<\chi_{B_1}\vec{\textup{Mod}}, \Hs^{*i}\vec\Phi_M\right> - \left(\frac{\lambda_s}{\lambda} + b_1\right)\left<\Lambda \vec{\textbf{Q}}_b, \Hs^{*i}\vec \Phi_M \right> \nonumber\\
&\quad  = - \left<\Hs \vec q, \Hs^{*i}\vec \Phi_M\right>\delta_{i,L}  - \left< \vec{\mathbf{\Psi}}_b, \Hs^{*i}\vec \Phi_M \right> + \left<\frac{\lambda_s}{\lambda}\Lambda \vec q + \vec L(\vec q) - \vec N(\vec q), \Hs^{*i}\vec \Phi_M \right>.\label{eq:bL}
\end{align}
From the definition \eqref{def:PhiM}, we see that $\vec \Phi_M$ is localized in $y \leq 2M$. From definition \eqref{eq:Modt}, we compute the left hand side of \eqref{eq:bL} by using the identity \eqref{id:TkPhiMi}, 
\begin{align*}
&\left<\chi_{B_1}\vec{\textup{Mod}}, \Hs^{*i}\vec\Phi_M\right> - \left(\frac{\lambda_s}{\lambda} + b_1\right)\left<\Lambda \vec{\textbf{Q}}_b, \Hs^{*i}\vec \Phi_M \right>\\
& = (-1)^i\left<\Lambda \vec Q, \vec \Phi_M \right>\left(\left[(b_i)_s + (i - \gamma)b_1b_i - b_{i + 1}\right](1 - \delta_{0,i}) - \left(\frac{\lambda_s}{\lambda} + b_1\right)\delta_{0,i} \right) + \Oc(M^C b_1D(t)).
\end{align*}
We now estimate the terms on the right hand side of \eqref{eq:bL}. Recall that $L$ is an odd integer, we then use \eqref{def:H2k1}, the Cauchy-Schwartz inequality, \eqref{def:PhiM1} and \eqref{eq:qmbyE2k} to estimate
\begin{align*}
&\left|\left<\Hs \vec q, \Hs^{*L}\vec \Phi_M\right>\right| = \left|\left<\Hs^{L+1} \vec q, \vec \Phi_M\right>\right| = \left|\left<\Ls^{\frac{L+1}2} q_1, \Phi_M\right>\right|\\
& \quad \lesssim  M^{2\hbar}\left(\int \frac{|(q_1)_{L+1}|^2}{1 + y^{2\hbar}} \right)^\frac 12 \left(\int |\Phi_M|^2 \right)^\frac 12 \lesssim M^{\frac d2 - \gamma + 2\hbar}\sqrt{\Es_\Bbbk}.
\end{align*} 
The error term is estimated by using \eqref{eq:estlocalPsib} and\eqref{eq:boundPhiM2}, 
\begin{align*}
\left|\left<\Hs^i\vec{\mathbf{\Psi}}_b, \vec \Phi_M \right>\right| &\leq \left(\int_{y \leq 2M}|\Hs^i \vec{\mathbf{\Psi}_b}|^2 \right)^\frac 12 \left(\int_{y \leq 2M} |\vec \Phi_M|^2 \right)^\frac 12 \lesssim M^Cb_1^{L+3}.
\end{align*}
The remaining linear terms are estimated by using the following bound coming from \eqref{eq:qmbyE2k} and Lemma \ref{lemm:coerA},
\begin{align}
\int \frac{|q_1|^2}{y^4(1 + y^{2\Bbbk - 4})} + \int \frac{|q_2|^2}{y^4(1 + y^{2\Bbbk - 6})} +
\int \frac{|\py q_1|^2}{y^2(1 + y^{2\Bbbk - 4})} + \int \frac{|\py q_2|^2}{y^2(1 + y^{2\Bbbk - 6})}& \nonumber\\
\lesssim \int \frac{|(q_1)_1|^2}{y^2(1 + y^{2\Bbbk - 4})} + \int \frac{|(q_2)_1|^2}{y^2(1 + y^{2\Bbbk - 6})} &\quad \lesssim \Es_\Bbbk,\label{est:Es2K1}
\end{align}
from which and the Cauchy-Schwartz inequality and \eqref{eq:boundPhiM2}, we obtain
$$\left|\left<-\frac{\lambda_s}{\lambda} \Lambda \vec q + \vec L(\vec q), \Hs^{*i}\vec \Phi_M\right>\right| \lesssim M^Cb_1\left(\sqrt{\Es_{\Bbbk}} + D(t)\right). $$
Similarly, the nonlinear term $\vec N(\vec q) = \binom{0}{N(q_1)}$ is estimate by using \eqref{est:Es2K1} and the $L^\infty$ bound \eqref{eq:boundLinfq}, 
$$\left|\left<\vec N(\vec q), \Hs^{*i}\vec \Phi_M\right>\right| \lesssim M^Cb_1\left(\sqrt{\Es_{\Bbbk}} + D(t)\right).$$
Put all the above estimates into \eqref{eq:bL} and use \eqref{eq:PhiMLamQ} together the bootstrap bound on $\Es_\Bbbk$ given in Definition \ref{def:Skset}, we conclude the proof of Lemma \ref{lemm:mod1}.
\end{proof}

\medskip

From the bound for $\Es_{\Bbbk}$ given in Definition \ref{def:Skset} and the modulation equation \eqref{eq:ODEbL}, we only have the pointwise bound
$$|(b_L)_s + (L - \gamma)b_1b_L| \lesssim b_1^{L + (1 - \delta)(1 + \eta)},$$
which is not good enough to close the expected one
$$|(b_L)_s + (L - \gamma)b_1b_L| \ll b_1^{L + 1}.$$
We claim that the main linear term can be removed up to an oscillation in time leading to the improved modulation equation for $b_L$ as follows:

\begin{lemma}[Improved modulation equation for $b_L$] \label{lemm:mod2} Under the assumption of Lemma \ref{lemm:mod1}, the following bound holds for all $s \in [s_0, s_1]$:
\begin{align}
&\left|(b_L)_s + (L - \gamma)b_1b_L - \frac{d}{ds} \left\{ \frac{\left<\Hs^L \vec q, \chi_{B_0}\Lambda \vec Q\right>}{\left<\chi_{B_0}\Lambda Q,\Lambda Q  +  (-1)^{\frac{L - 1}2} \Ls^{\frac{L-1}2}\left(\frac{\partial S_{L+2}}{\partial b_L} \right)\right>}\right\}\right|\nonumber\\
& \qquad \qquad \qquad \lesssim  b_1^\delta \left[C(M)\sqrt{\Ec_{\Bbbk}} + b_1^{L + (1 - \delta)(2 + \eta)}\right].\label{eq:ODEbLimproved}
\end{align}
\end{lemma}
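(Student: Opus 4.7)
The strategy is to identify the leading linear term $(b_L)_s+(L-\gamma)b_1b_L$ as the time derivative of an explicit small quantity plus a genuine remainder, by testing equation \eqref{eq:qys} against a carefully chosen direction. The natural candidate is $\Hs^{*L}(\chi_{B_0}\Lambda\vec Q)$: its coupling with $\vec{\mathrm{Mod}}$ detects precisely the $k=L$ mode, because $\Hs^L\vec T_k=0$ for $k<L$ (from \eqref{def:Tk}) while $\Hs^L\vec T_L=(-1)^L\Lambda\vec Q$, and the only higher-order contribution surviving after applying $\Hs^L$ comes from $\partial\vec S_{L+2}/\partial b_L$ (the term $\partial\vec S_{L+1}/\partial b_L$ being killed by parity in the matrix form \eqref{def:H2k1adj}). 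Using \eqref{eq:Modt}, a direct computation then yields
\[
\langle\chi_{B_0}\vec{\mathrm{Mod}},\Hs^{*L}(\chi_{B_0}\Lambda\vec Q)\rangle=(-1)^L\bigl[(b_L)_s+(L-\gamma)b_1b_L\bigr]\,\mathcal D+\text{l.o.t.},
\]
where $\mathcal D=\langle\chi_{B_0}\Lambda Q,\Lambda Q+(-1)^{(L-1)/2}\Ls^{(L-1)/2}(\partial S_{L+2}/\partial b_L)\rangle\sim B_0^{d-2\gamma}$, which is the denominator in \eqref{eq:ODEbLimproved}.

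Next, the time-derivative structure is produced by the scalar product
\[
\langle\partial_s\vec q,\Hs^{*L}(\chi_{B_0}\Lambda\vec Q)\rangle=\frac{d}{ds}\langle\Hs^L\vec q,\chi_{B_0}\Lambda\vec Q\rangle-\langle\Hs^L\vec q,\partial_s\chi_{B_0}\,\Lambda\vec Q\rangle,
\]
the commutator term being controlled by $|(b_1)_s|\lesssim b_1^2$ combined with the strip localization near $y\sim B_0$. Substituting $\partial_s\vec q$ from \eqref{eq:qys}, I then have to bound each remaining term:
\textbf{(a)} $\langle\Hs\vec q,\Hs^{*L}(\chi_{B_0}\Lambda\vec Q)\rangle=\langle\vec q,\Hs^{*(L+1)}(\chi_{B_0}\Lambda\vec Q)\rangle$, which is small because $\Hs(\Lambda\vec Q)=0$ reduces everything to commutators with $\chi_{B_0}$ concentrated on $y\sim B_0$;
\textbf{(b)} the $(\lambda_s/\lambda)\Lambda\vec q$ contribution, estimated by Cauchy--Schwarz and the weighted bound \eqref{est:Es2K1};
\textbf{(c)} the modulation leftovers from indices $k<L$, where the $\vec T_k$ part vanishes and only the $\partial\vec S_j/\partial b_k$ remainders survive—these are higher order in $b_1$ by homogeneity and are absorbed using \eqref{eq:ODEbkl};
\textbf{(d)} the nonlinear term $\vec N(\vec q)$ and the linear correction $\vec L(\vec q)$, bounded via the $L^\infty$ control of $q_1$ and \eqref{est:Es2K1}.

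The crucial gain of $b_1^\delta$ compared to Lemma \ref{lemm:mod1} comes from the error $\vec{\mathbf\Psi}_b$: testing against a function supported in $y\le 2B_0$ (rather than $y\le 2B_1$), one uses the \emph{refined} local bound \eqref{eq:estPsiblocalB0} instead of \eqref{eq:estPsibLarge1}, which provides exactly the factor $b_1^{2+2(1-\delta)}$ needed after Cauchy--Schwarz with $\|\chi_{B_0}\Lambda\vec Q\|_{L^2}\lesssim B_0^{(d-2\gamma)/2}=b_1^{-(d-2\gamma)/2}$. The expected main obstacle will be to show that term \textbf{(a)}, which is not a priori of order $b_1^{L+(1-\delta)(2+\eta)}$, can indeed be reduced to that size: this requires exploiting the fact that $\Hs^{*(L+1)}(\chi_{B_0}\Lambda\vec Q)=\Hs^{*(L+1)}[(\chi_{B_0}-1)\Lambda\vec Q]+\Hs^{*(L+1)}\Lambda\vec Q$, where the second summand is zero and the first is supported in $y\gtrsim B_0$ with explicit weights; pairing this against $\vec q$ and using the weighted Hardy-type estimates behind \eqref{eq:qmbyE2k} yields the required gain. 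Dividing through by $\mathcal D$ and collecting everything produces \eqref{eq:ODEbLimproved}.
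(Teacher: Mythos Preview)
Your approach is essentially the same as the paper's: test the equation for $\vec q$ against $\Hs^{*L}(\chi_{B_0}\Lambda\vec Q)$ (equivalently, commute with $\Hs^L$ and pair with $\chi_{B_0}\Lambda\vec Q$), extract the time derivative, identify the $k=L$ modulation contribution via $\Hs^L\vec T_k=0$ for $k<L$ together with the parity of $\vec S_{L+1}$, and use the refined local bound \eqref{eq:estPsiblocalB0} for $\vec{\mathbf\Psi}_b$ to gain the extra $b_1^\delta$.

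Two minor corrections. First, your ``expected main obstacle'' for term \textbf{(a)} is a bookkeeping slip: that term does \emph{not} need to be of size $b_1^{L+(1-\delta)(2+\eta)}$; it is precisely the origin of the $b_1^\delta\sqrt{\Es_\Bbbk}$ piece on the right of \eqref{eq:ODEbLimproved}. Your mechanism (using $\Ls\Lambda Q=0$ so that $\Ls(\chi_{B_0}\Lambda Q)$ is supported on $y\sim B_0$) is exactly right and gives, after Cauchy--Schwarz with the weighted bound \eqref{eq:qmbyE2k}, a contribution $\lesssim b_1^{-(2\hbar+\delta)}\sqrt{\Es_\Bbbk}$, which becomes $b_1^\delta\sqrt{\Es_\Bbbk}$ after dividing by $\mathcal D\sim b_1^{-2\hbar-2\delta}$. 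Second, you omit the quotient-rule step: dividing by $\mathcal D$ produces the extra term $-\langle\Hs^L\vec q,\chi_{B_0}\Lambda\vec Q\rangle\,\mathcal D^{-2}\,\frac{d}{ds}\mathcal D$, which must be estimated separately. This is harmless---one has $|\frac{d}{ds}\mathcal D|\lesssim b_1\mathcal D$ (since $\partial S_{L+2}/\partial b_L$ does not depend on $b_L$ and $|\partial_s\chi_{B_0}|\lesssim b_1$), and $|\langle\Hs^L\vec q,\chi_{B_0}\Lambda\vec Q\rangle|\lesssim b_1^{-2\hbar-\delta-1}\sqrt{\Es_\Bbbk}$ by \eqref{eq:qmbyE2k}---but it should be mentioned.
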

\begin{proof} We commute  \eqref{eq:qys} with $\Hs^L$ and take the scalar product with $\chi_{B_0}\Lambda \vec Q$ and write
\begin{equation}\label{eq:HLeq}
\frac{d}{ds} \left<\Hs^L \vec q, \chi_{B_0} \Lambda \vec Q\right> = \left<\Hs^L \vec q_s, \chi_{B_0} \Lambda \vec Q\right> + \left<\Hs^L \vec q, b_{1,s} y\py \chi_{B_0}\Lambda \vec Q \right>.
\end{equation}
Recall that $L\gg 1$ is an odd integer, we estimate the last term in \eqref{eq:HLeq} by using \eqref{eq:qmbyE2k} as follows:
\begin{align*}
&\left| \left<\Hs^L \vec q, b_{1,s} y\py \chi_{B_0}\Lambda \vec Q \right>\right| = \left|(-1)^{\frac{L - 1}{2}}\int \Ls^{\frac{L-1}{2}}q_2 y\py \chi_{B_0}\Lambda Q \right| \lesssim b_1^2 \int_{y \sim B_0}|(q_2)_{L-1}| y^{1 - \gamma}\\
& \qquad \lesssim b_1^2 \left(\int \frac{|(q_2)_{L-1}|^2}{1  + y^{2 + 2\hbar}} \right)^\frac{1}{2}\left(\int_{y \sim B_0}y^{2 + 2\hbar}y^{2- 2\gamma} \right)^\frac{1}{2}\lesssim b_1^2\sqrt{\Es_\Bbbk}B_0^{4 + 2\hbar - 2\gamma + d} = b_1^{-(2\hbar  + \delta)}\sqrt{\Es_\Bbbk}.
\end{align*}
For the second term on the right hand side of \eqref{eq:HLeq}, we write from \eqref{def:H2k1} and \eqref{eq:qys}, 
\begin{align}
&(-1)^{\frac{L - 1}{2}}\left<\Hs^L \vec q_s, \chi_{B_0} \Lambda \vec Q\right> = \int \Ls^{\frac{L-1}{2}}q_{2,s}\chi_{B_0}\Lambda Q \label{eq:Hsq2} \\
&= \int \chi_{B_0}\Lambda Q \Ls^{\frac{L -1}2}\left[  D q_2 - \Ls q_1 - (\mathbf{\Psi}_{b})_2 - \chi_{B_1}(\textup{Mod})_2 + \left(\frac{\lambda_s}{\lambda}  +b_1\right)D(\textbf{Q}_b)_2 + L(q_1) - N(q_1) \right].\nonumber
\end{align}
We now estimate all the terms of \eqref{eq:Hsq2}.\\
- The term $Dq_2$: we use \eqref{est:Es2K1} to estimate 
\begin{align*}
&\left|\frac{\lambda_s}{\lambda} \int \chi_{B_0}\Lambda Q \Ls^{\frac{L -1}2}Dq_2 \right| \lesssim b_1 \left|\int \Ls^{\frac{L - 1}2} (\chi_{B_0} \Lambda Q) (q_2 + y\py q_2)\right| \\
& \lesssim b_1\left(\int \left| \Ls^{\frac{L - 1}2} (\chi_{B_0} \Lambda Q)( 1 + y^{\Bbbk - 1})  \right|^2\right)^\frac{1}{2} \left[ \left(\int \frac{|q_2|^2}{1 + y^{2\Bbbk - 2}} \right)^\frac 12 +  \left(\int \frac{|\py q_2|^2}{1 + y^{2\Bbbk - 4}} \right)^\frac 12\right] \\
& \lesssim b_1 \sqrt{\Es_\Bbbk}\left(\int_{y \sim B_0} y^{-2\gamma + 2\hbar + 2}\right)^\frac 12 \lesssim b_1\sqrt{\Es_\Bbbk}B_0^{ -2\gamma + 2\hbar  + 2 + d}:= b_1^{-(2\hbar + \delta)} \sqrt{\Es_\Bbbk}.
\end{align*}
- The term $\Ls q_2$: we use \eqref{eq:qmbyE2k} to estimate
$$\left|\int \chi_{B_0}\Lambda Q \Ls^{\frac{L + 1}{2}} q_2\right| \lesssim \left(\int |\Ls (\chi_{B_0}\Lambda Q) (1 + y^{\hbar   +1})|^2 \right)^\frac{1}{2} \left(\int \frac{|(q_2)_{L-1}|}{1 + y^{2\hbar  + 2}} \right)^\frac{1}{2} \lesssim b_1^{-(2\hbar + \delta)} \sqrt{\Es_\Bbbk}.$$
- The error term $(\mathbf{\Psi}_b)_2$: we use \eqref{eq:estPsiblocalB0} with $m = L - \hbar - 1$ to estimate 
$$\left|\int \chi_{B_0}\Lambda Q \Ls^{\frac{L - 1}2} (\mathbf{\Psi}_{b})_2 \right| \lesssim \|\chi_{B_0} \Lambda Q \|_{L^2(y \leq 2B_0)} \|\Ls^{\frac{L - 1}2} (\mathbf{\Psi}_{b})_2\|_{L^2(y \leq 2B_0)} \lesssim b_1^{-(2\hbar + \delta)}b_1^{L + 1+ (1 - \delta) - C_L\eta}.$$ 
- The terms $L(q_1)$ and $N(q_1)$ are estimated similarly by using \eqref{est:Es2K1} and the $L^\infty$ bound \eqref{eq:boundLinfq} for the nonlinear term, which results in 
$$\left|\int \Ls^{\frac{L - 1}2}(\chi_{B_0}\Lambda Q) (L(q_1) - N(q_1))\right| \lesssim  b_1b_1^{-(2\hbar + \delta)}\sqrt{\Es_\Bbbk}.$$
- The terms $\chi_{B_1}\textup{Mod}_2$ and $D(\mathbf{Q}_b)_2$: By \eqref{eq:Modt}, we write 
\begin{align*}
& \int \chi_{B_0}\Lambda Q \Ls^{\frac{L - 1}2}(\chi_{B_1}\textup{Mod}_2  - \left(\frac{\lambda_s}{\lambda} + b_1\right) D(\mathbf{Q}_b)_2)\\
& = \int \Ls^{\frac{L-1}{2}}(\chi_{B_0}\Lambda Q) \left(\sum_{k = 1}^{L-1}\big[ b_{k, s} + (k - \gamma)b_1b_{k} - b_{k}\big] \left(T_k \delta_{k\wedge 2, 1} + \sum_{j = k + 1, \textup{odd}}^{L + 2} \frac{\partial S_j}{\partial b_k} \right) \right) \\
& \quad  +\big[b_{L,s}+ (L - \gamma)b_1b_L\big]\int \Ls^{\frac{L-1}{2}}(\chi_{B_0}\Lambda Q)\left(T_L + \frac{\partial S_{L +2}}{\partial b_L}\right)- \left(\frac{\lambda_s}{\lambda} + b_1\right)\int  \Ls^{\frac{L-1}{2}}(\chi_{B_0}\Lambda Q)D(\mathbf{Q}_b)_2.
\end{align*}
Note that $\Ls^{\frac{L - 1}{2}}T_k = 0$ for $k < L$ and $\Ls^{\frac{L-1}{2}}T_L = (-1)^{\frac{L - 1}{2}}\Lambda Q$. We then use the admissibility of $\vec T_k$ and the homogeneity of $\vec S_k$ and Lemma \ref{lemm:mod1} to estimate
\begin{align*}
&\left|\int \Ls^{\frac{L-1}{2}}(\chi_{B_0}\Lambda Q) \left(\sum_{k = 1}^{L-1}\big[ b_{k, s} + (k - \gamma)b_1b_{k} - b_{k}\big] \left(T_k \delta_{k\wedge 2, 1} + \sum_{j = k + 1, \textup{odd}}^{L + 2} \frac{\partial S_j}{\partial b_k} \right) \right) \right|\\
&\quad + \left|\left(\frac{\lambda_s}{\lambda} + b_1\right)\int  \Ls^{\frac{L-1}{2}}(\chi_{B_0}\Lambda Q)D(\mathbf{Q}_b)_2 \right|\\
& \lesssim b_1^{L+1 +(1 - \delta)(1 + \eta)}\left\{\sum_{k = 1}^{L - 1}\sum_{j = k+1, \textup{odd}}^{L+2} \left|\int \chi_{B_0}\Lambda Q \Ls^{\frac{L-1}{2}}\left(\frac{\partial S_j}{\partial b_k}\right)\right| + \left|\int  (\chi_{B_0}\Lambda Q) \Ls^{\frac{L-1}{2}}(D(\Theta_b)_2) \right|  \right\}\\
&\lesssim b_1^{L+1 +(1 - \delta)(1 + \eta)} \left\{\sum_{k = 1}^{L - 1}\sum_{j = k+1, \textup{odd}}^{L+2} b_1^{j - k}\int_{y \leq 2B_0}y^{-2\gamma + j - L -1} + b_1^L\int_{y \leq 2B_0}y^{-2\gamma} + \sum_{k = 2, \textup{odd}}^{L+2} b_1^k\int_{y \leq 2B_0}y^{k - 2\gamma - 1} \right\}\\
&\lesssim b_1^{L+1 +(1 - \delta)(1 + \eta)}\left\{b_1^{-2\hbar -2\delta} + b_1^{L - 2\hbar  - 2\delta} + b_1^{1 - 2\hbar - 2\delta}\right\} \quad \lesssim b_1^{-(2\hbar +\delta)}b_1^{L + (1 - \delta)(2 + \eta)}.
\end{align*}
We also write 
\begin{align*}
&\big[b_{L,s}+ (L - \gamma)b_1b_L\big]\int \Ls^{\frac{L-1}{2}}(\chi_{B_0}\Lambda Q)\left(T_L + \frac{\partial S_{L +2}}{\partial b_L}\right)\\
& = \big[b_{L,s}+ (L - \gamma)b_1b_L\big]\left<\chi_{B_0}\Lambda Q, (-1)^{\frac{L-1}{2}}\Lambda Q + \Ls^{\frac{L-1}2}\left(\frac{\partial S_{L +2}}{\partial b_L}\right) \right>.
\end{align*}
Injecting all the above estimates into \eqref{eq:HLeq} yields
\begin{align*}
\frac{d}{ds}\left<\Hs^L \vec q, \chi_{B_0}\Lambda Q\right> &= \big[b_{L,s} + (L -\gamma)b_1b_L\big]G(s) + b_1^{-(2\hbar + \delta)}\Oc\left(\sqrt{\Es_{\Bbbk}} + b_1^{L + (1 - \delta)(2 + \eta)}\right),
\end{align*}
where we write for short
\begin{equation}\label{def:Gs}
G(s) = \left<\chi_{B_0}\Lambda Q,\Lambda Q  +  (-1)^{\frac{L - 1}2} \Ls^{\frac{L-1}2}\left(\frac{\partial S_{L+2}}{\partial b_L} \right)\right> \sim b_1^{-2\hbar - 2\delta}.
\end{equation}
Thus, we have
\begin{align*}
&\frac{d}{ds} \left[ \frac{\left<\Hs^L \vec q, \chi_{B_0}\Lambda \vec Q\right>}{G(s)}\right] - \big[b_{L,s} + (L -\gamma)b_1b_L\big]\\
& =\Oc\left( b_1^{4\hbar + 4\delta}\left|\left<\Hs^L \vec q, \chi_{B_0}\Lambda \vec Q\right> \frac{d}{ds}G(s)\right|\right) +  b_1^\delta \Oc\left(\sqrt{\Es_{\Bbbk}} + b_1^{L + (1 - \delta)(2 + \eta)}\right).
\end{align*}
From \eqref{def:H2k1} and \eqref{eq:qmbyE2k}, we estimate 
\begin{equation}\label{eq:HsLchiB0}
\left|\left<\Hs^L \vec q, \chi_{B_0}\Lambda \vec Q\right> \right| = \left|\int\chi_{B_0}\Lambda Q \Ls^{\frac{L-1}2}(q_2)\right| \lesssim \sqrt{\Es_\Bbbk}\left(\int_{y \sim B_0}y^{-2\gamma + 2 + 2\hbar} \right)^\frac{1}{2} \lesssim b_1^{-2\hbar -\delta - 1}\sqrt{\Es_\Bbbk}.
\end{equation}
Note that $\left|\frac{d}{ds}\chi_{B_0}\right| \lesssim b_1$ and has support on $B_0 \leq y \leq 2B_0$, and that $\frac{\partial S_{L+2}}{\partial b_L}$ does not depend on $b_L$, we can use the bounds on $b_1, \cdots, b_{L-1}$ given in Lemma \ref{lemm:mod1} to obtain the estimate
$$\left|\frac{d}{ds}G(s)\right| \lesssim b_1b_1^{-2\hbar -2\delta}.$$
Hence, we obtain 
$$\frac{d}{ds} \left[ \frac{\left<\Hs^L \vec q, \chi_{B_0}\Lambda \vec Q\right>}{G(s)}\right] - \big[b_{L,s} + (L -\gamma)b_1b_L\big] = b_1^\delta \Oc\left(\sqrt{\Es_{\Bbbk}} + b_1^{L + (1 - \delta)(2 + \eta)}\right),$$
which concludes the proof of Lemma \ref{lemm:mod2}.
\end{proof}

\subsection{Monotonicity for $\Es_\Bbbk$.}

We derive in this subsection the main monotonocity formula for $\Es_{\Bbbk}$. We claim the following which is the heart of this paper:

\begin{proposition}[Lyapunov monotonicity for $\Es_{\Bbbk}$] \label{prop:E2k}Given $\hbar$, $\delta$ and $\eta$ as defined in \eqref{def:kdeltaplus} and \eqref{def:B0B1}. For $K \geq 1$, we assume that there is $s_0(K) \gg 1$ such that $(b_1(s), \cdots, b_L(s), \vec q(s)) \in \Sc_K(s)$ for $s \in [s_0, s_1]$ for some $s_1 \geq s_0$. Then, the following estimate holds for $s \in [s_0, s_1]$:
\begin{align}
&\frac{d}{dt}\left\{\frac{\Es_{\Bbbk}}{\lambda^{2\Bbbk - d}}\left[1 + \Oc\left(b_1^{\eta(1-\delta)}\right) \right]\right\}\nonumber\\
&\quad \lesssim \frac{b_1}{\lambda^{2\Bbbk - d + 1}}\left[ \sqrt{\Es_{\Bbbk}} b_1^{L + (1- \delta)(1 + \eta)} + \Es_\Bbbk \Es_\sigma^{\frac{1}{2} +\Oc\left(\frac 1L\right)}+ \Es_\Bbbk b_1^{\eta(1-\delta)} +  \frac{\Es_\Bbbk}{N^{2\gamma - 1}} + C(N)\Es_{\Bbbk, \textup{loc}} \right], \label{eq:Es2sLya}
\end{align}
where 
\begin{equation}\label{def:Ekloc}
\Es_{\Bbbk, \textup{loc}} = \int_{y \leq N}|(q_1)_\Bbbk|^2 + \int_{y \leq N}|(q_2)_{\Bbbk - 1}|^2,
\end{equation}
with $N \gg 1$ being a fixed constant.
\end{proposition}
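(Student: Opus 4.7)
\medskip

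\noindent\textbf{Proof proposal.} The plan is to work with the unrescaled mixed energy $\|\vec v(t)\|_{\Bbbk,\lambda}^2 := \int v_1 \Ls_\lambda^{\Bbbk} v_1 + \int v_2 \Ls_\lambda^{\Bbbk-1} v_2$ in the original variables $(r,t)$, which by a scaling computation equals $\Es_{\Bbbk}(s)/\lambda^{2\Bbbk-d}$. Differentiating in $t$ and using equation \eqref{eq:vrt} produces three kinds of contributions: (a) a boundary/linear piece from the $-\Hs_\lambda \vec v$ part of the flow, (b) a commutator from the time-dependence of the potential $Z_\lambda$, and (c) the forcing $\lambda^{-2}\vec\Fc_\lambda$. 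For (a), the factorization $\Ls_\lambda = \As_\lambda^*\As_\lambda$ together with the symplectic structure of $\Hs_\lambda$ causes the purely linear part to vanish up to a commutator that, after integrating by parts, leaves only a localized term of the shape $\int_{y\le N} |(q_1)_\Bbbk|^2 + |(q_2)_{\Bbbk-1}|^2 = \Es_{\Bbbk,\mathrm{loc}}$, plus an outer tail controlled by $N^{-(2\gamma-1)}\Es_{\Bbbk}$ via the coercivity estimates coming from Lemma~\ref{lemm:coerA}. For (b), $\partial_t Z_\lambda$ produces a factor $|\lambda_t|/\lambda \lesssim b_1/\lambda$ against a weight $\Lambda Z / r^2$ which is localized near the origin; once this is folded into the Lyapunov functional it contributes the prefactor $1+\Oc(b_1^{\eta(1-\delta)})$.

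For (c) the plan is to decompose $\vec\Fc = -\vec{\mathbf{\Psi}}_b - \vec{\mathbf{M}} + \vec L(\vec q) - \vec N(\vec q)$ and estimate each piece after pairing against $\Ls^{\Bbbk}\vec q$ and $\Ls^{\Bbbk-1}\vec q$ respectively. The error $\vec{\mathbf{\Psi}}_b$ is handled by Cauchy--Schwarz combined with the large Sobolev bound \eqref{eq:estPsibLarge2}, yielding the $\sqrt{\Es_{\Bbbk}}\,b_1^{L+(1-\delta)(1+\eta)}$ term. The modulation term $\vec{\mathbf{M}}$ is treated using the orthogonality conditions \eqref{eq:orthqPhiM}: by construction, the leading contribution of $\vec{\mathbf{M}}$ lies in $\operatorname{span}\{\vec T_k\}_{k\le L}$, and the orthogonality is tailored precisely so that its pairing with $\Hs^\Bbbk \vec q$-type test functions is negligible modulo the quantitative bounds of Lemmas~\ref{lemm:mod1}--\ref{lemm:mod2}. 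The potential perturbation $\vec L(\vec q)$ is of size $\Oc(b_1)$ because $\cos(2Q)-\cos(2\mathbf{Q}_{b,1}) = \Oc(b_1 T_1)$, and after weighted Hardy inequalities it is absorbed in the $b_1 \Es_{\Bbbk}$ and $b_1 \Es_{\Bbbk,\mathrm{loc}}$ contributions.

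The key obstacle is the nonlinear term $\vec N(\vec q) = (0,N(q_1))$, whose contribution $\int N(q_1)\,\Ls^{\Bbbk-1}q_2$ produces, after the Leibniz rule, multilinear expressions of the form $\int\prod_j (\partial^{m_j}q_1) \cdot (\partial^n q_2) \cdot w(y)$ with weights $w$ that come from differentiating $(d-1)/(2y^2)$ and the trigonometric factor. The naive weighted $L^2$ bound would give $b_1\Es_{\Bbbk}$, which is not sharp enough to close the bootstrap. The resolution is to peel off one factor of $q_1$ and control it in $L^\infty$ via Gagliardo--Nirenberg interpolation between the low Sobolev norm $\Es_\sigma$ (which is controlled by the separate monotonicity formula sketched at the end of Section~1) and the high norm $\Es_{\Bbbk}$; this yields exactly the $\Es_\sigma^{1/2+\Oc(1/L)}$ factor. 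The remaining pieces are absorbed via the weighted Hardy inequalities attached to the factorisation $\Ls = \As^*\As$.

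Finally, to convert the localized leftover $\Es_{\Bbbk,\mathrm{loc}}$ into a sign-definite quantity and to absorb the non-coercive commutator term at the boundary of $\{y\le N\}$, the plan is to add the Morawetz correction $b_1\Mc/\lambda^{2\Bbbk-d}$ to the Lyapunov functional; its construction (deferred to the subsection following this proposition) exploits the positivity of $\Ls$ in $\dot H^1$ for $d\ge 7$. Combining the estimates of the three types (a)--(c) and the Morawetz correction then assembles into the displayed bound \eqref{eq:Es2sLya}.
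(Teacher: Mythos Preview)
Your outline has the right overall architecture---differentiate the unrescaled energy, split into linear/commutator/forcing---but you have mislocated two of the main mechanisms, and one of these is a genuine gap.

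First, the attributions in (a) and (b) are swapped. The pure linear contribution from $-\Hs_\lambda\vec v$ cancels \emph{exactly} by the factorisation $\Ls_\lambda=\As_\lambda^*\As_\lambda$ (the cross terms $\int v_2\Ls_\lambda^\Bbbk v_1 - \int \Ls_\lambda v_1 \Ls_\lambda^{\Bbbk-1}v_2$ vanish identically). It is the \emph{commutator} $[\partial_t,\Ls_\lambda^k]=\sum_m \Ls_\lambda^m(\partial_t Z_\lambda/r^2)\Ls_\lambda^{k-1-m}$ that produces the split $\Es_\Bbbk/N^{2\gamma-1}+C(N)\Es_{\Bbbk,\mathrm{loc}}$, via the decay $|\partial_y^j(\Lambda Z/y^2)|\lesssim y^{-2\gamma-1-j}$ and the weighted coercivity bounds. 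The commutator does \emph{not} generate the $1+\Oc(b_1^{\eta(1-\delta)})$ prefactor.

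Second, and more seriously, your treatment of $\vec{\mathbf{M}}$ is a real gap. The orthogonality conditions \eqref{eq:orthqPhiM} do not make $\int \mathbf{M}_2\,\Ls^{\Bbbk-1}q_2$ small: what matters is $\|(\chi_{B_1}T_k)_{\Bbbk-1}\|_{L^2}$, which is of size $b_1^{(1-\delta)(1+\eta)}$ (only boundary terms of the cutoff survive since $\Ls^{(L+1)/2}T_L=0$). For $k\le L-1$ this is fine once multiplied by the modulation bound \eqref{eq:ODEbkl}, but for $k=L$ the modulation equation \eqref{eq:ODEbL} only gives $|(b_L)_s+(L-\gamma)b_1b_L|\lesssim C(M)\sqrt{\Es_\Bbbk}+b_1^{L+1+(1-\delta)(1+\eta)}$, and the product $\sqrt{\Es_\Bbbk}\cdot b_1^{(1-\delta)(1+\eta)}\cdot\sqrt{\Es_\Bbbk}$ has no spare power of $b_1$ to factor out. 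The paper's resolution is to introduce an explicit correction
\[
\vec\Upsilon = C_\Upsilon\Big(\vec T_L+\tfrac{\partial\vec S_{L+1}}{\partial b_L}+\tfrac{\partial\vec S_{L+2}}{\partial b_L}\Big)\chi_{B_1},\qquad C_\Upsilon=\frac{\langle\Hs^L\vec q,\chi_{B_0}\Lambda\vec Q\rangle}{G(s)},
\]
and to write the modulation contribution as $\frac{d}{dt}[\lambda^{-(2\Bbbk-d)}\Sigma_0]+\text{remainder}$, where $\Sigma_0=\int\Upsilon_1\Ls^\Bbbk q_1+\int\Upsilon_2\Ls^{\Bbbk-1}q_2+\tfrac12\|\vec\Upsilon\|_\Bbbk^2$. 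The improved modulation equation \eqref{eq:ODEbLimproved} of Lemma~\ref{lemm:mod2} then controls $(b_L)_s+(L-\gamma)b_1b_L-\tfrac{d}{ds}C_\Upsilon$, and the estimate $|\Sigma_0|\lesssim\Es_\Bbbk b_1^{\eta(1-\delta)}$ is exactly the source of the $1+\Oc(b_1^{\eta(1-\delta)})$ correction in the Lyapunov functional. Without this time-derivative trick the $b_L$ piece does not close.

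Finally, the Morawetz correction $\Mc$ is \emph{not} part of this proposition; it is established separately (Proposition~\ref{prop:Mcon}) and only combined with \eqref{eq:Es2sLya} later, when integrating in time. Your $L(q_1)$ and $N(q_1)$ estimates are essentially right in spirit.
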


\begin{proof} By the definition of $\Es_\Bbbk$, \eqref{eq:reLLlambda} and equation \eqref{eq:vrt}, we write 
\begin{align}
\frac{d}{dt}\left[\frac{\Es_{\Bbbk}}{\lambda^{2\Bbbk - d}}\right] & = \frac{d}{dt}\left[\frac{1}{\lambda^{2\Bbbk - d}}\int q_1\Ls^\Bbbk q_1 + \int q_2\Ls^{\Bbbk - 1}q_2 \right] = \frac{d}{dt}\left[\int v_1 \Ls_\lambda^{\Bbbk}v_1 + \int v_2\Ls_\lambda^{\Bbbk- 1}v_2 \right]\nonumber\\
& = 2\int \pt v_1\Ls_\lambda^{\Bbbk}v_1 + 2\int \pt v_2 \Ls_\lambda^{\Bbbk - 1}v_2 + \int v_1 [\pt, \Ls_\lambda^\Bbbk]v_1 + \int v_2 [\pt, \Ls_\lambda^{\Bbbk-1}]v_2\nonumber\\
& = 2\int \frac{1}{\lambda}(\Fc_1)_\lambda \Ls_\lambda^\Bbbk v_1 + 2 \int \frac{1}{\lambda^2}(\Fc_2)_\lambda\Ls_\lambda^{\Bbbk - 1}v_2 + \int v_1 [\pt, \Ls_\lambda^\Bbbk]v_1 + \int v_2 [\pt, \Ls_\lambda^{\Bbbk-1}]v_2, \label{eq:IDEk}
\end{align}
where the commutator is defined by
\begin{align}
[\pt, \Ls_\lambda^k]f &= \pt(\Ls_\lambda^k f) - \Ls^k_\lambda (\pt f)\nonumber\\ 
& = \sum_{m = 0}^{k - 1}\Ls_\lambda^m\left([\pt, \Ls_\lambda] \Ls_\lambda^{k - 1 - m}f\right) = \sum_{m = 0}^{k - 1} \Ls_\lambda^m \left(\frac{\pt Z_\lambda}{r^2}\Ls_\lambda^{k - 1- m}f\right), \label{def:commu}
\end{align}
and we recall from \eqref{eq:qys},
\begin{align}
\Fc_1 = -(\mathbf{\Psi}_b)_1 - \mathbf{M}_1, \quad \mathbf{M}_1 = \chi_{B_1}(\textup{Mod})_1 - \left(\frac{\lambda_s}{\lambda}  + b_1\right)\Lambda (\mathbf{Q}_b)_1,\label{def:M1}\\
\Fc_2 = -(\mathbf{\Psi}_b)_2 - \mathbf{M}_2 +  L(q_1) - N(q_1), \quad \mathbf{M}_2 = \chi_{B_1}(\textup{Mod})_2 - \left(\frac{\lambda_s}{\lambda}  + b_1\right)D(\mathbf{Q}_b)_2.\label{def:M2}
\end{align}
$\bullet$ \textbf{The error term $\vec{\mathbf{\Psi}}_b$:} we use \eqref{eq:estPsibLarge2} to estimate 
\begin{align}
&\left|\int \frac{1}{\lambda}\big((\mathbf{\Psi}_b)_1\big)_\lambda \Ls_\lambda^\Bbbk v_1 + \int \frac{1}{\lambda^2}\big((\mathbf{\Psi}_b)_1\big)_\lambda\Ls_\lambda^{\Bbbk - 1}v_2\right|\nonumber\\
&\quad = \frac{1}{\lambda^{2\Bbbk - d + 1}}\left|\int(\mathbf{\Psi}_b)_1 \Ls^{\Bbbk}q_1 +\int(\mathbf{\Psi}_b)_2 \Ls^{\Bbbk - 1}q_2  \right|\nonumber\\
& \qquad = \frac{1}{\lambda^{2\Bbbk - d + 1}}\left|\int\big((\mathbf{\Psi}_b)_1\big)_{\Bbbk} (q_1)_\Bbbk +\int\big((\mathbf{\Psi}_b)_2\big)_{\Bbbk - 1} (q_2)_{\Bbbk - 1}\right|\nonumber\\
& \qquad \quad \lesssim \frac{1}{\lambda^{2\Bbbk - d + 1}}\sqrt{\Es_\Bbbk}b_1^{L + 1 + (1 -\delta)(1 + \eta)}.\label{est:Psi}
\end{align}
$\bullet$ \textbf{The nonlinear term $N(q_1)$:} we write 
\begin{equation}\label{est:Nq1}
\left|\int \frac{1}{\lambda^2}\big(N(q_1)\big)_\lambda\Ls_\lambda^{\Bbbk - 1}v_2\right| = \frac{1}{\lambda^{2\Bbbk - d  +1}}\left|\int N(q_1)\Ls^{\Bbbk - 1}q_2\right| \lesssim \frac{\sqrt{\Es_\Bbbk}}{\lambda^{2\Bbbk -d + 1 }}\|(N(q_1))_{\Bbbk - 1} \|_{L^2}.
\end{equation}
- \textit{Estimate for $y < 1$:} By \eqref{def:Nq}, we can write
$$N(q_1) = \frac{q_1^2}{y}\Phi \quad \text{with} \quad \Phi = -\frac{d-1}{y}\int_0^1(1 - \tau)\sin\big((2\mathbf{Q}_b)_1 + 2\tau q_1\big) d\tau.$$
Using the expansion \eqref{eq:expqat0} of $\vec q$ near the origin, we write
\begin{equation}\label{eq:q1y2}
\frac{q_1^2}{y} = \frac{1}{y} \left(\sum_{i = 0, \textup{even}}^{\Bbbk - 1}c_iT_i + r_1\right)^2 = \sum_{i = 0, \textup{even}}^{\Bbbk - 1}\tilde c_iy^{i + 1} + \tilde r_1,
\end{equation}
where 
$$|\tilde{c}_i| \lesssim \Es_\Bbbk \quad \text{and} \quad \sum_{j = 0}^{\Bbbk - 1}y^j|\py^j \tilde{r}_1| \lesssim y^{\Bbbk - \frac{d}{2}}\Es_\Bbbk \quad \text{for}\quad  y < 1.$$ 
Let $\tau \in [0,1]$ and 
$$v_\tau = (\mathbf{Q}_b)_1 + \tau q_1.$$
We obtain from Proposition \ref{prop:1} and \eqref{eq:expqat0} the expansion
$$v_\tau = \sum_{i = 0, \textup{even}}^{\Bbbk - 1} \hat c_i y^{i  +1} + \hat r_1,$$
where
$$|\hat c_i| \lesssim 1 \quad \text{and} \quad \sum_{j = 0}^{\Bbbk - 1}|y^j \py^j \hat r_1| \lesssim y^{\Bbbk - \frac{d}{2}} \quad \text{for} \quad y < 1.$$
By the Taylor expansion of $\sin(x)$ at $x = 0$, we write
\begin{equation}\label{eq:Phiq}
\Phi = \sum_{i = 0, \textup{even}}^{\Bbbk - 1} \bar c_i y^{i} + \bar r_1,
\end{equation}
where
$$|\bar c_i| \lesssim 1 \quad \text{and} \quad \sum_{j = 0}^{\Bbbk - 1}|y^j \py^j \bar r_1| \lesssim y^{\Bbbk - \frac{d}{2} - 1} \quad \text{for} \quad y < 1.$$
Thus, we can write from \eqref{eq:q1y2} and \eqref{eq:Phiq} the expansion of $N(q_1)$ near the origin as follows:
\begin{equation}\label{eq:expanNq1at0}
N(q_1) = \sum_{i = 0, \textup{even}}^{\Bbbk -1}c_i'y^{i + 1} + r'_1,
\end{equation}
where
$$|c'_i| \lesssim \Es_\Bbbk \quad \text{and} \quad \sum_{j = 0}^{\Bbbk - 1}|y^j \py^j r'_1| \lesssim \Es_\Bbbk y^{\Bbbk - \frac{d}{2}} \quad \text{for} \quad y < 1.$$
From the definition of $\As$ and $\As^*$, one can check that 
$$|(r'_1)_{\Bbbk - 1}| \lesssim \sum_{i = 0}^{\Bbbk - 1}\frac{\py^i r_1'}{y^{\Bbbk - 1 - i}} \lesssim \Es_\Bbbk y^{-\frac{d}{2}  +1} \quad \text{for}\;\; y < 1.$$
Using the fact that $\As(y) = \Oc(y^2)$ for $y < 1$, we obtain 
$$\left|\left(\sum_{i = 0, \textup{even}}^{\Bbbk -1}c_i'y^{i + 1} \right)_{\Bbbk - 1}\right| \lesssim y^2 \Es_\Bbbk.$$
Hence, we derive the estimate
\begin{equation}\label{eq:controlNqi0}
\|\big(N(q_1)\big)_{\Bbbk - 1}\|_{L^2(y < 1)} \lesssim \Es_\Bbbk.
\end{equation}

\noindent - \textit{Estimate for $y > 1$:} Let us rewrite from the definition \eqref{def:Nq} of $N(q_1)$,
\begin{equation}\label{def:Zpsi}
N(q_1) = Z^2 \psi, \quad Z = \frac{q_1}{y}, \quad \psi = -(d-1)\int_0^1(1 - \tau)\sin(2(\mathbf{Q}_b)_1 + 2\tau q_1) d\tau.
\end{equation}
Note from the definitions of $\As$ and $\As^*$ that 
$$\forall k \in \mathbb{N}, \quad |f_k| \lesssim \sum_{i = 0}^{k} \frac{|\py^i f|}{y^{k- i}},$$
from which and the Leibniz rule, we write 
\begin{align*}
\int_{y \geq 1}\left|\big(N(q_1)\big)_{\Bbbk - 1}\right|^2 &\lesssim \sum_{k = 0}^{\Bbbk - 1}\int_{y \geq 1}\frac{|\py^kN(q_1)|^2}{y^{2\Bbbk - 2k - 2}}\\
&\lesssim \sum_{k = 0}^{\Bbbk - 1}\sum_{i = 0}^k \int_{y \geq 1} \frac{|\py^i Z^2|^2|\py^{k - i}\psi|^2}{y^{2\Bbbk - 2k - 2}}\\
&\lesssim \sum_{k = 0}^{\Bbbk - 1}\sum_{i = 0}^k \sum_{m = 0}^i \int_{y \geq 1} \frac{|\py^m Z|^2 |\py^{i - m}Z|^2 |\py^{k - i}\psi|^2}{y^{2\Bbbk - 2k - 2}}.
\end{align*}
We aim at using \eqref{eq:weipykq} and \eqref{eq:boundLinfq} to prove that for $0 \leq k \leq \Bbbk - 1$, $0 \leq i \leq k$ and $0 \leq m \leq i$,
\begin{equation}\label{def:Akmi}
A_{k,i,m} := \int_{y \geq 1} \frac{|\py^m Z|^2 |\py^{i - m}Z|^2 |\py^{k - i}\psi|^2}{y^{2\Bbbk - 2k - 2}} \lesssim b_1^{2}\Es_\Bbbk\Es_\sigma^{1 + \Oc\left(\frac{1}{L} \right)}.
\end{equation}
from which and \eqref{eq:controlNqi0}, we derive the estimate 
\begin{equation}\label{eq:controlNq1}
\|\big(N(q_1)\big)_{\Bbbk - 1}\|_{L^2} \lesssim b_1\sqrt{\Es_\Bbbk}\Es_\sigma^{\frac 12 + \Oc\left(\frac{1}{L} \right)}.
\end{equation}
Let us prove \eqref{def:Akmi}. We distinguish in 3 cases:\\
\noindent - \underline{\textit{The initial case $k = 0$, then $m = i = k = 0$.}}  From \eqref{def:Zpsi}, it is obvious to see that $|\psi|$ is uniformly bounded.  We estimate from \eqref{eq:weipykq} and \eqref{eq:boundLinfq1}, 
\begin{align*}
A_{0,0,0}  = \int_{y \geq 1} \frac{|q_1|^4|\psi|^2}{y^{2\Bbbk + 2}} y^{d-1}dy  \lesssim  \left\| \frac{q_1}{y}\right\|^2_{L^\infty(y \geq 1)} \int_{y \geq 1} \frac{|q_1|^2}{y^{2\Bbbk}} \lesssim b_1^2\Es_\Bbbk\Es_\sigma^{1 + \Oc\left(\frac{1}{L} \right)}.
\end{align*} 
\noindent  - \underline{\textit{Case $k \geq 1$ and $k - i = 0$}}. We first use the Leibniz rule to write
\begin{equation}\label{eq:pylZexp}
\forall l \in \mathbb{N}, \quad |\py^l Z|^2 \lesssim \sum_{j = 0}^l \frac{|\py^j q_1|^2}{y^{2 + 2l - 2j}},
\end{equation}
from which and the uniform bound of $\psi$, we have
\begin{align*}
A_{k,k,m} &\lesssim \sum_{j = 0}^m \sum_{l = 0}^{k-m} \int_{y \geq 1} \frac{|\py^j q_1|^2 |\py^l q_1|^2}{y^{2 \Bbbk - 2j - 2l + 2}} = \sum_{j = 0}^m \sum_{l = 0}^{k-m} B_{j,l,0}.
\end{align*}
where 
$$B_{j,l,0} = \int_{y \geq 1} \frac{|\py^j q_1|^2 |\py^l q_1|^2}{y^{2 \Bbbk - 2j - 2l + 2}} \quad \text{for} \quad 0 \leq j +l \leq \Bbbk -1.$$
We consider two cases: \\
- If $0 \leq j \leq \frac{L + 1}{2}$, we estimate from \eqref{eq:boundLinfq1} and  \eqref{eq:interBound}, 
\begin{align*}
B_{j,l,0}  &\lesssim \left\|\frac{\py^j q_1}{y} \right\|^2_{L^\infty(y \geq 1)}\int_{y \geq 1} \frac{|\py^lq_1|^2}{y^{2\Bbbk  -2l - 2j}} \\
&\quad \lesssim \Es_\sigma^{1 + \Oc\left(\frac{1}{L} \right)}b_1^{2j + 2 + \frac{2\gamma(j + 1)}{L} + \Oc\left(\frac{1}{L^2}\right)}\Es_{\sigma}^\frac{j}{\Bbbk - \sigma} \Es_{\Bbbk}^{1 - \frac{j}{\Bbbk - \sigma}} \lesssim b_1^2\Es_\Bbbk\Es_\sigma^{1 + \Oc\left(\frac{1}{L} \right)},
\end{align*}
where we used the following fact
$$2j + \frac{2\gamma(j + 1)}{L} - \frac{j(2L + 2(1 - \delta)(1 + \eta))}{\Bbbk - \sigma} = \frac{2\gamma}{L} + \Oc\left(\frac{1}{L^2}\right) > 0.$$
- If $j \geq \frac{L + 1}{2} + 1$, then $l \leq \Bbbk - 1 - j \leq \frac{L - 3}{2} + \hbar$. We simply change the role of $j$ and $l$ in the above estimate resulting in the same estimate.\\

\noindent  - \underline{\textit{Case $k \geq 1$ and $k - i \geq 1$.}} Let us write from \eqref{def:Akmi} and \eqref{eq:pylZexp}, 
\begin{equation}\label{eq:Akmi11}
A_{k,m,i} \lesssim \sum_{j = 0}^{m}\sum_{l = 0}^{i - m} \int_{y \geq 1} \frac{|\py^j q_1|^2 |\py^l q_1|^2}{y^{2\Bbbk - 2j - 2l + 2}} \frac{|\py^{k - i} \psi|^2}{y^{-2(k - i)}}.
\end{equation}
At this stage, we need to precise the decay of $|\py^n \psi|$ to archive the bound \eqref{def:Akmi}. To do so, let us recall that $\vec T_i$ is admissible of degree $(i,i, i\wedge2)$ (see Lemma \ref{lemm:GenLk}) and $\vec S_i$ is homogeneous of degree $(i,i-1,i \wedge 2, i)$ (see Proposition \ref{prop:1}). We estimate for $j \geq 1$ and $y \geq 1$,
\begin{align*}
|\py^j (\mathbf{Q}_b)_1 | &= \left|\py^j \left(\sum_{i = 0}^{\frac{L - 1}{2}}b_{2i}T_{2i} + \sum_{i = 1}^{\frac{L + 1}{2}}S_{2i}\right) \right|\\
& \lesssim \frac{1}{y^{\gamma + j}} + \sum_{i = 0}^{\frac{L - 1}{2}}\frac{b_1^{2i} y^{2i}}{y^{\gamma + j}} \mathbf{1}_{\{y \leq 2B_1\}} \lesssim \frac{b_1^{-C\eta}}{y^{\gamma + j}}.
\end{align*}
Let $\tau \in [0,1]$ and $v_\tau = (\mathbf{Q}_b)_1 + \tau q_1$. We use the Faa di Bruno formula to write for $1 \leq n \leq \Bbbk - 1$,
\begin{align}
|\py^n \psi|^2 &\lesssim \int_0^1 \sum_{m^* = n} |\partial_{v_\tau}^{m_1 + \cdots+m_n}\sin(v_\tau)|^2  \prod_{i = 1}^n |\py^i (\mathbf{Q}_b)_1 + \py^iq_1|^{2m_i} d\tau \nonumber\\
&\quad \lesssim \sum_{|m|_2 = n}  \prod_{i = 1}^n \left(\frac{b_1^{-C(L)\eta}}{y^{2\gamma + 2i}} +  |\py^iq_1|^2\right)^{m_i}, \quad |m|_2 = \sum_{i = 1}^n im_i. \label{est:psiq1}
\end{align}
Hence, we need to estimate terms of the form 
\begin{equation}
B_{i,j,n}:= \int \frac{|\py^j q_1|^2 |\py^l q_1|^2}{y^{2\Bbbk - 2j - 2l + 2 - 2n}}\prod_{i = 1}^n \left(\frac{b_1^{-C(L)\eta}}{y^{2\gamma + 2i}} +  |\py^iq_1|^2\right)^{m_i},
\end{equation}
where $(j,l,n) \in \mathbb{N} \times \mathbb{N} \times \mathbb{N}^*$ and $m_i \in \{0,1, \cdots, n\}$ satisfying  
$$1 \leq j +l +n \leq \Bbbk - 1, \quad |m|_2 = \sum_{i = 1}^n im_i = n.$$
We consider two cases:\\
- Case 1: $m_i = 0$ for $L - 2 \leq i \leq n$ (if $n < L-2$ then we are in this case as well). We now use \eqref{eq:boundLinfq2} with $p = 0$ to estimate 
\begin{align*}
&\prod_{i = 1}^n \left(\frac{b_1^{-C(L)\eta}}{y^{2\gamma + 2i}} +  |\py^iq_1|^2\right)^{m_i} \lesssim \prod_{i = 1}^n \left(\frac{b_1^{-C(L)\eta}}{y^{2\gamma + 2i}} +  b_1^{2i}\right)^{m_i}\\
& \lesssim \prod_{i = 1}^n\left(\frac{b_1^{-C(L)\eta}}{y^{2\gamma + 2i}} +  \frac{1}{y^{2i}}\right)^{m_i}\mathbf{1}_{\{1 \leq y \leq B_0\}} + \prod_{i = 1}^n \left(b_1^{-C\eta + 2\gamma + 2i} + b_1^{2i} \right)^{m_i}\mathbf{1}_{\{y \geq B_0\}}\\
& \lesssim \frac{1}{y^{2n}}\mathbf{1}_{\{1 \leq y \leq B_0\}} + b_1^{2n }\mathbf{1}_{\{y \geq B_0\}}.
\end{align*}
Thus, we have 
\begin{align*}
B_{j,l,n} \lesssim \int_{1 \leq y \leq B_0} \frac{|\py^j q_1|^2 |\py^l q_1|^2}{y^{2\Bbbk - 2j - 2l + 2 }} +  b_1^{2n}\int_{y \geq B_0} \frac{|\py^j q_1|^2 |\py^l q_1|^2}{y^{2\Bbbk - 2j - 2l + 2 - 2n}}
\end{align*}
By the similar estimate as for $B_{i,l,0}$, we derive the bound
$$B_{j,l,n} \lesssim b_1^2\Es_\Bbbk\Es_\sigma^{1 + \Oc\left(\frac{1}{L} \right)}.$$
- Case 2: there exists $i_* \in \{L - 2,\cdots,  n\}$ ($n \leq L - 3$, this case does not occur) such that $m_{i_*} \ne 0$. Since $L \gg 1$ and the fact that $$0\leq \sum_{i = 1, i \ne i_*}^n i m_i = n - i_*m_{i_*} \leq \Bbbk - 1 - (L-2)m_{i_*},$$
we deduce that
$$m_{i_*} = 1, \quad m_{i} = 0 \quad \text{for}\quad \hbar + 2 \leq i \ne i_* \leq n.$$
We then write 
\begin{align*}
&\prod_{i = 1}^n \left(\frac{b_1^{-C(L)\eta}}{y^{2\gamma + 2i}} +  |\py^iq_1|^2\right)^{m_i} = \left(\frac{b_1^{-C(L)\eta}}{y^{2\gamma + 2i_*}} +  |\py^{i_*}q_1|^2\right)\prod_{i = 1}^{\hbar + 2} \left(\frac{b_1^{-C(L)\eta}}{y^{2\gamma + 2i}} +  |\py^iq_1|^2\right)^{m_i} \\
&\lesssim \left(\frac{b_1^{-C(L)\eta}}{y^{2\gamma + 2i_*}} +  |\py^{i_*}q_1|^2\right)\prod_{i = 1}^{\hbar + 2} \left(\frac{b_1^{-C(L)\eta}}{y^{2\gamma + 2i}} +  b_1^{2i}\right)^{m_i}\\
&\lesssim \left(\frac{b_1^{-C\eta}}{y^{2\gamma + 2n}} + \frac{|\py^{i_*} q_1|^2}{y^{2n - 2i_* }}\right)\mathbf{1}_{\{1 \leq y \leq B_0\}} + \left(b_1^{2n + 2\gamma - C\eta} +  b_1^{2n - 2i^* } |\py^{i_*}q_1|^2  \right) \mathbf{1}_{\{y \geq B_0\}}.
\end{align*}
Thus, we have 
\begin{align*}
B_{j,l,n} &\lesssim \int_{1 \leq y \leq B_0}\frac{|\py^j q_1|^2 |\py^l q_1|^2}{y^{2\Bbbk - 2j - 2l + 2}}\left(\frac{b_1^{-C\eta}}{y^{2\gamma}} + \frac{|\py^{i_*} q_1|^2}{y^{- 2i_*}}\right)\\
& +b_1^{2n}\int_{y \geq B_0}\frac{|\py^j q_1|^2 |\py^l q_1|^2}{y^{2\Bbbk - 2j - 2l + 2 - 2n}} \left(b_1^{2\gamma - C\eta} +  b_1^{- 2i^* } |\py^{i_*}q_1|^2  \right).
\end{align*}
Since $i_* \geq L - 3$, we can use the interpolation bound \eqref{eq:interBound} to control $\int |\py^{i^*}q_1|^2$ directly, then the rest terms are controlled by the $L^\infty$ bound \eqref{eq:boundLinfq1} resulting in 
$$B_{j,l,n} \lesssim b_1^2 \Es_\Bbbk\Es_\sigma^{1 + \Oc\left(\frac{1}{L} \right)}.$$
This concludes the proof of \eqref{def:Akmi} as well as \eqref{eq:controlNq1}. \\

\noindent $\bullet$ \textbf{The small linear term $L(q_1)$:} we write 
$$\left|\int \frac{1}{\lambda^2}\big(L(q_1)\big)_\lambda\Ls_\lambda^{\Bbbk - 1}v_2\right| = \frac{1}{\lambda^{2\Bbbk - d  +1}}\left|\int L(q_1)\Ls^{\Bbbk - 1}q_2\right| \lesssim \frac{\sqrt{\Es_\Bbbk}}{\lambda^{2\Bbbk -d + 1 }}\|(L(q_1))_{\Bbbk - 1} \|_{L^2}.$$
We claim that 
\begin{equation}\label{eq:controlLq1}
\|(L(q_1))_{\Bbbk - 1} \|_{L^2} \lesssim b_1^{2(1 - C(L)\eta)} \sqrt{\Es_\Bbbk}.
\end{equation}
Let us rewrite from \eqref{def:Lq} the definition of $L(q_1)$, 
$$L(q_1) = \Phi q_1 \quad \text{with} \quad \Phi = \frac{(d-1)}{y^2}\left[\cos(2Q) - \cos(2Q + 2(\mathbf{\Theta}_b)_1)\right],$$
where 
$$(\mathbf{\Theta}_b)_1 = \sum_{i = 1, \textup{even}}^Lb_i T_i \chi_{B_1} + \sum_{i = 2, \textup{even}}^{L+2}S_i(b,y)\chi_{B_1}.$$
From the asymptotic behavior of $Q$ given in \eqref{eq:asymQ}, the admissibility of $\vec T_i$ and the homogeneity of $\vec S_i$, we deduce that $\Phi$ is a regular function both at the origin and at infinity. We then apply the Leibniz rule \eqref{eq:LeibnizLk} with  $\phi = \Phi$ to write 
$$(\Phi q_1)_{\Bbbk - 1} = \sum_{m = 0}^{\Bbbk - 1}\big(q_1 \big)_m\Phi_{\Bbbk - 1, \Bbbk - 1 - m},$$
where $\Phi_{\Bbbk - 1, \Bbbk - 1 -  m}$ with $0 \leq m \leq \Bbbk - 1$ are defined by the recurrence relation given in Lemma \ref{lemm:LeibnizLk}. In particular, we have the following estimate
$$|\Phi_{\Bbbk - 1, \Bbbk - 1 - m}| \lesssim \frac{b_1^{2(1 - C(L)\eta)}}{1 + y^{2\gamma + \Bbbk - 1 - m }} \quad \text{for}\quad 0 \leq m \leq \Bbbk - 1.$$
Hence, from  the coercivity bound \eqref{eq:qmbyE2k} and $2\gamma - 1 \geq 1$, we estimate
\begin{align*}
\int |L(q_1)_{\Bbbk - 1}|^2 &\lesssim b_1^{2(1 - \eta)}\sum_{m = 0}^{\Bbbk - 1}\int \frac{|(q_1)_m|^2}{1 + y^{4\gamma - 2 + 2(\Bbbk - m)}} \lesssim b_1^{4(1 - C(L)\eta)}\Es_\Bbbk,
\end{align*}
which concludes the proof of \eqref{eq:controlLq1}. Hence, we have
\begin{align}
&\left|\int \frac{1}{\lambda^2}\big(L(q_1)\big)_\lambda\Ls_\lambda^{\Bbbk - 1}v_2\right| \lesssim \frac{b_1^{2(1 - C(L)\eta)}}{\lambda^{2\Bbbk - d + 1}}\Es_\Bbbk. \label{est:Lq1}
\end{align}

\noindent $\bullet$ \textbf{The commutator term:} By \eqref{def:commu}, we write 
\begin{align*}
&\int v_1 [\pt, \Ls_\lambda^\Bbbk]v_1 + \int v_2 [\pt, \Ls_\lambda^{\Bbbk-1}]v_2 \\
&\quad = -\frac{\lambda_s}{\lambda^{2\Bbbk - d + 2}}\left[ \sum_{m = 0}^{\Bbbk - 1}\int q_1 \Ls^m \left(\frac{\Lambda Z}{y^2}\Ls^{\Bbbk - 1 - m}q_1 \right) +\sum_{m = 0}^{\Bbbk - 2}\int q_2 \Ls^m \left(\frac{\Lambda Z}{y^2}\Ls^{\Bbbk - 2 - m}q_2 \right)\right]\\
& \quad = -\frac{\lambda_s}{\lambda^{2\Bbbk - d + 2}} \left[ 2\sum_{m = 0}^{\left[\frac{\Bbbk + 1}{2}\right] - 1}\int (q_1)_\Bbbk \left(\frac{\Lambda Z}{y^2} (q_1)_{2m} \right)_{\Bbbk - 2 - 2m} + 2\sum_{m = 0}^{\left[\frac{\Bbbk - 1}{2}\right] - 1}\int (q_2)_{\Bbbk-1} \left(\frac{\Lambda Z}{y^2} (q_2)_{2m} \right)_{\Bbbk - 3 - 2m} \right]\\
& \quad \lesssim \frac{b_1}{\lambda^{2\Bbbk - d + 1}}\sqrt{\Es_\Bbbk}\left[\sum_{m = 0}^{\left[\frac{\Bbbk + 1}{2}\right] - 1}\left\|\left(\frac{\Lambda Z}{y^2} (q_1)_{2m} \right)_{\Bbbk - 2 - 2m} \right\|_{L^2} +  \sum_{m = 0}^{\left[\frac{\Bbbk - 1}{2}\right] - 1}\left\|\left(\frac{\Lambda Z}{y^2} (q_2)_{2m} \right)_{\Bbbk - 3 - 2m} \right\|_{L^2}\right],
\end{align*}
where we used in the last line the fact that $\left|\frac{\lambda_s}{\lambda}\right| \sim b_1$ from the modulation equation \eqref{eq:ODEbkl} and the Cauchy-Schwartz inequality. We note from \eqref{def:Lc} and \eqref{eq:asympV} that 
$$\frac{\Lambda Z}{y^2} = \sum_{i = 0}^k d_i y^{2i + 1} + \Oc(y^{2k + 3}) \quad \text{for} \quad y \to 0,$$
and 
$$\forall j \in \mathbb{N},\quad \left|\py^j\left(\frac{\Lambda Z}{y^2}\right)\right| \lesssim \frac{1}{ y^{2\gamma + 1 + j}} \quad \text{for} \quad y \to +\infty.$$
Applying Lemma \ref{lemm:LeibnizLk} with $\phi = \frac{\Lambda Z}{y^2}$, we write
\begin{align*}
\sum_{m = 0}^{\left[\frac{\Bbbk + 1}{2}\right] - 1 }\big(\phi (q_1)_2m\big)_{\Bbbk - 2  -2m} &= \sum_{m = 0}^{\left[\frac{\Bbbk + 1}{2}\right] - 1} \sum_{j = 0}^{\Bbbk - 2 - 2m}(q_1)_{2m + j} \phi_{\Bbbk - 2 - 2m, \Bbbk - 2 - 2m - j},\\
\sum_{m = 0}^{\left[\frac{\Bbbk - 1}{2}\right] - 1}\big(\phi (q_2)_2m\big)_{\Bbbk - 3  -2m} &= \sum_{m = 0}^{\left[\frac{\Bbbk - 1}{2}\right] - 1}\sum_{j = 0}^{\Bbbk - 3 - 2m}(q_2)_{2m + j} \phi_{\Bbbk - 3 - 2m, \Bbbk - 3 - 2m - j},
\end{align*}
where we compute from the recurrence formula of Lemma \ref{lemm:LeibnizLk}, 
$$\phi_{\Bbbk - 2 - 2m, \Bbbk - 2 - 2m - j} \lesssim \frac{1}{1 + y^{2\gamma + 1 + \Bbbk - 2 - 2m - j }}, \quad \phi_{\Bbbk - 3 - 2m, \Bbbk - 3 - 2m - j} \lesssim \frac{1}{1 + y^{2\gamma + 1 + \Bbbk - 3 - 2m - j}}.$$
We then use the coercivity bound of $\As$ and $\As^*$ given in Lemmas \ref{lemm:coerA} and \ref{lemm:coerA} to estimate 
\begin{align*}
&\sum_{m = 0}^{\left[\frac{\Bbbk + 1}{2}\right] - 1}\left\|\left(\frac{\Lambda Z}{y^2} (q_1)_{2m} \right)_{\Bbbk - 2 - 2m} \right\|^2_{L^2} +  \sum_{m = 0}^{\left[\frac{\Bbbk - 1}{2}\right] - 1}\left\|\left(\frac{\Lambda Z}{y^2} (q_2)_{2m} \right)_{\Bbbk - 3 - 2m} \right\|^2_{L^2}\\
& \quad \lesssim \sum_{m = 0}^{\left[\frac{\Bbbk + 1}{2}\right] - 1} \sum_{j = 0}^{\Bbbk - 2 - 2m} \int \frac{|(q_1)_{2m + j}|^2}{1 + y^{2 (2\gamma - 1 + \Bbbk - 2m - j)}} + \sum_{m = 0}^{\left[\frac{\Bbbk - 1}{2}\right] - 1}\sum_{j = 0}^{\Bbbk - 3 - 2m} \int \frac{|(q_2)_{2m + j}|^2}{1 + y^{2(2\gamma - 1 + \Bbbk - 2m - j - 1)}}\\
& \qquad \lesssim \sum_{m = 0}^{\left[\frac{\Bbbk + 1}{2}\right] - 1} \sum_{j = 0}^{\Bbbk - 2 - 2m} \int \frac{|(q_1)_{\Bbbk}|^2}{1 + y^{2 (2\gamma - 1)}} + \sum_{m = 0}^{\left[\frac{\Bbbk - 1}{2}\right] - 1}\sum_{j = 0}^{\Bbbk - 3 - 2m} \int \frac{|(q_2)_{\Bbbk - 1}|^2}{1 + y^{2(2\gamma - 1)}}\\
& \quad \qquad \lesssim \frac{\Es_\Bbbk}{N^{2(2\gamma - 1)}} + \left\|\frac{(q_1)_\Bbbk}{1 + y^{2\gamma - 1}} \right\|^2_{L^2(y \leq N)} + \left\|\frac{(q_2)_{\Bbbk - 1}}{1 + y^{2\gamma - 1}} \right\|^2_{L^2(y \leq N)}.
\end{align*}
Thus, we obtain
\begin{align}
&\left|\int v_1 [\pt, \Ls_\lambda^\Bbbk]v_1 + \int v_2 [\pt, \Ls_\lambda^{\Bbbk-1}]v_2 \right| \nonumber \\
& \quad \lesssim \frac{b_1}{\lambda^{2\Bbbk - d + 1}}\sqrt{\Es_\Bbbk}\left[\frac{\sqrt{\Es_\Bbbk}}{N^{2\gamma - 1}} + \left\|\frac{(q_1)_\Bbbk}{1 + y^{2\gamma - 1}} \right\|_{L^2(y \leq N)} + \left\|\frac{(q_2)_{\Bbbk - 1}}{1 + y^{2\gamma - 1}} \right\|_{L^2(y \leq N)}\right],\nonumber\\
& \qquad \lesssim \frac{b_1}{\lambda^{2\Bbbk - d + 1}} \left[\frac{\Es_\Bbbk}{N^{2\gamma - 1}} + C(N) \Es_{\Bbbk, \textup{loc}}\right],\label{est:commuk}
\end{align}
where $\Es_{\Bbbk, \textup{loc}}$ is defined by \eqref{def:Ekloc}.

\noindent $\bullet$ \textbf{The modulation term:} Let us introduce the vector function
\begin{equation}\label{def:chiL}
\vec \Upsilon = C_\Upsilon \left( \vec{T}_L + \frac{\partial \vec{S}_{L + 1}}{\partial b_L} + \frac{\partial \vec{S}_{L + 2}}{\partial b_L}\right)\chi_{B_1},
\end{equation}
where
\begin{equation}\label{def:CUp}
C_\Upsilon = \frac{\left< \Hs^L\vec q, \chi_{B_0}\Lambda \vec Q \right>}{\left<\chi_{B_0}\Lambda Q, \Lambda Q + (-1)^{\frac{L - 1} {2}} \left(\frac{\partial S_{L+2}}{\partial b_L}\right)_{L-1}\right>}, \quad |C_\Upsilon|\lesssim b_1^{\delta - 1}\sqrt{\Es_\Bbbk}.
\end{equation}
The size of the coefficient $C_\Upsilon$ is computed from \eqref{def:Gs} and \eqref{eq:HsLchiB0}. The introduction of $\chi_L$ is to take advantage of the improved modulation equation \eqref{eq:ODEbLimproved}. Let us write 
\begin{align*}
&\int \frac{1}{\lambda}(\mathbf{M}_1)_\lambda \Ls^\Bbbk_\lambda v_1 + \int \frac{1}{\lambda^2}(\mathbf{M}_2)_\lambda \Ls^{\Bbbk-1}_\lambda v_2  = \frac{1}{\lambda^{2\Bbbk - d + 1}} \left[\int \mathbf{M}_1 \Ls^\Bbbk q_1 + \int \mathbf{M}_2 \Ls^{\Bbbk-1} q_2\right]\\
& \quad = \frac{1}{\lambda^{2\Bbbk - d + 1}} \left[\int \ps \Upsilon_1 \Ls^\Bbbk q_1 + \int \ps \Upsilon_2 \Ls^{\Bbbk-1} q_2\right] \\
& \qquad + \frac{1}{\lambda^{2\Bbbk - d + 1}} \left[\int \left(\mathbf{M}_1 - \ps \Upsilon_1\right) \Ls^\Bbbk q_1 + \int \left(\mathbf{M}_2 - \ps \Upsilon_2\right) \Ls^{\Bbbk-1} q_2\right]\\
& \quad = \frac{d}{dt} \left[\frac{1}{\lambda^{2\Bbbk - d}} \Sigma_0 \right] + \frac{1}{\lambda^{2\Bbbk - d + 1}}\left[(2\Bbbk - d)\frac{\lambda_s}{\lambda}\Sigma_0 - \Sigma_1 + \Sigma_2\right]
\end{align*}
where $\mathbf{M}_1$ and $\mathbf{M}_2$ are introduced in \eqref{def:M1} and \eqref{def:M2}, 
\begin{align*}
\Sigma_0 &=  \int \Upsilon_1 \Ls^\Bbbk q_1 + \int \Upsilon_2 \Ls^{\Bbbk-1} q_2  +\frac{1}{2}\int \Upsilon_1 \Ls^\Bbbk \Upsilon_1 + \frac{1}{2}\int \Upsilon_2 \Ls^{\Bbbk -1}\Upsilon_2,\\
\Sigma_1 &= \int \left(\mathbf{M}_1 - \ps \Upsilon_1\right) \Ls^\Bbbk q_1 + \int \left(\mathbf{M}_2 - \ps \Upsilon_2\right) \Ls^{\Bbbk-1} q_2,\\
\Sigma_2 & =\int \Upsilon_1 \Ls^\Bbbk \big(\ps q_1 + \ps \Upsilon_1 \big) + \int \Upsilon_2 \Ls^{\Bbbk-1} \big(\ps q_2 + \ps \Upsilon_2\big).
\end{align*}
We claim that 
\begin{equation}\label{est:Sigma012}
|\Sigma_0| \lesssim \Es_\Bbbk b_1^{\eta(1 - \delta)}, \quad |\Sigma_1| + |\Sigma_2| \lesssim \Es_\Bbbk b_1^{1 + \eta(1 - \delta)} + \sqrt{\Es_\Bbbk}b_1^{L + 1 + (1 - \delta)(1 + \eta)},
\end{equation}
from which and $-\frac{\lambda_s}{\lambda} \sim b_1$ we obtain 
\begin{align}
&\int \frac{1}{\lambda}(\mathbf{M}_1)_\lambda \Ls^\Bbbk_\lambda v_1 + \int \frac{1}{\lambda^2}(\mathbf{M}_2)_\lambda \Ls^{\Bbbk-1}_\lambda v_2 \nonumber \\
&\quad =  \frac{d}{dt} \left[\frac{1}{\lambda^{2\Bbbk - d}}\Oc\left(\Es_\Bbbk b_1^{\eta(1 - \delta)}\right) \right] + \Oc\left(\Es_\Bbbk b_1^{1 + \eta(1 - \delta)} + \sqrt{\Es_\Bbbk}b_1^{L + 1 + (1 - \delta)(1 + \eta)}\right). \label{est:Modk}
\end{align}
Let us start the estimate of $\Sigma_0$. By the Cauchy-Schwartz inequality, we write
\begin{equation*}
|\Sigma_0|\lesssim \sqrt{\Es_\Bbbk} \left(\|(\Upsilon_1)_{\Bbbk}\|_{L^2} + \|(\Upsilon_2)_{\Bbbk - 1}\|_{L^2}\right) + \|(\Upsilon_1)_{\Bbbk}\|^2_{L^2} + \|(\Upsilon_2)_{\Bbbk - 1}\|^2_{L^2}.
\end{equation*}
We use the admissibility of $\vec T_k$, the homogeneity of $\vec{S}_{k}$ together with the fact that $\Ls^\frac{L+1}{2}T_L = 0$ to estimate 
\begin{align}
&\|(\Upsilon_1)_{\Bbbk}\|^2_{L^2} + \|(\Upsilon_2)_{\Bbbk - 1}\|^2_{L^2} \lesssim C_\Upsilon^2\left[\int \left|\left(\chi_{B_1} \frac{\partial S_{L+1}}{\partial b_L}\right)_{\Bbbk}\right|^2 +  \int \left|\left(\chi_{B_1} \left( T_L +  \frac{\partial S_{L+2}}{\partial b_L}\right)\right)_{\Bbbk - 1}\right|^2\right]\nonumber\\
&\quad \lesssim C_\Upsilon^2 \left[\int_{y \leq 2B_1}\frac{b_1^2}{1 + y^{2(- L + \gamma + \Bbbk)}} + \int_{B_1 \leq y \leq 2B_1}\frac{1}{1 + y^{2(- L + \gamma  + \Bbbk)}}  +\int_{y \leq 2B_1}\frac{b_1^4}{1 + y^{2(-L + \gamma + \Bbbk - 1)}} \right]\nonumber\\
& \qquad \lesssim b_1^{2\delta - 2}\Es_\Bbbk \left[b_1^2 b_1^{(2 - 2\delta)(1 + \eta)} + b_1^{(2 - 2\delta)(1 + \eta)} + b_1^4b_1^{-2\delta(1 + \eta)}\right] \quad \lesssim b_1^{2\eta(1 - \delta)}\Es_\Bbbk.\label{est:Up12}
\end{align} 
This concludes the proof of \eqref{est:Sigma012} for $\Sigma_0$.

We now prove the estimate \eqref{est:Sigma012} for $\Sigma_1$. From the Cauchy-Schwartz inequality, we write
$$|\Sigma_1| \lesssim \sqrt{\Es_\Bbbk}\left[ \left\| \big(\mathbf{M}_1 - \ps \Upsilon_1 \big)_\Bbbk\right \|_{L^2} + \left\| \big(\mathbf{M}_2 - \ps \Upsilon_2 \big)_{\Bbbk - 1}\right \|_{L^2} \right].$$
We only deal with the second coordinate because the first one is estimated in the same way. Let us write 
\begin{align*}
\mathbf{M}_2 - \ps \Upsilon_2 & = \sum_{k = 1, \textup{odd}}^{L - 1} \big[(b_k)_s + (k - \gamma)b_1b_k - b_{k - 1}\big]\left(T_k + \sum_{j = k + 1, \textup{odd}}^{L + 2}\frac{\partial S_{j}}{\partial b_k}\right) \chi_{B_1} - \left(\frac{\lambda_s}{\lambda} + b_1\right)D(\mathbf{Q}_b)_2\\
& + \left[(b_L)_s + (L - \gamma)b_1b_L - \frac{d}{ds}C_\Upsilon\right]\left(T_L + \frac{\partial S_{L+2}}{\partial b_L} \right)\chi_{B_1} - C_\Upsilon \frac{d}{ds}\left[\left(T_L + \frac{\partial S_{L+2}}{\partial b_L} \right)\chi_{B_1} \right].
\end{align*}
Since $\Ls^{\frac{L+1}{2}}T_k = 0$ for $1 \leq k \leq L$, we us the admissibility of $\vec T_k$ to estimate 
$$\sum_{k = 1, \textup{odd}}^L \left\|\big(\chi_{B_1}T_k\big)_{\Bbbk - 1}\right\|_{L^2}^2 \lesssim \sum_{k = 1, \textup{odd}}^L \int_{y \sim B_1}y^{2(k - \gamma - \Bbbk)} \lesssim \int_{y \sim B_1}y^{2(-\gamma - \hbar -1)} \lesssim b_1^{2(1 - \delta)(1 + \eta)}.$$
From the homogeneity of $\vec S_k$, we have
$$\sum_{k = 1, \textup{odd}}^{L}\sum_{j = k + 1, \textup{odd}}^{L + 2}\left\| \left(\frac{\partial S_{j}}{\partial b_k} \chi_{B_1}\right)_{\Bbbk - 1}\right\|_{L^2}^2 \lesssim \sum_{k = 1, \textup{odd}}^{L}\sum_{j = k + 1, \textup{odd}}^{L + 2} \int_{y \leq 2B_1}\frac{b_1^4}{1 + y^{2(-j + 1 + \gamma + \Bbbk)}}\lesssim b_1^{2(1 - \delta)(1 + \eta)}.$$
Similarly, since $\left|\frac{d}{ds}\chi_{b_1}\right| \lesssim b_1$ and $\frac{\partial S_{L+2}}{\partial b_L}$ does not depend on $b_L$, we have the estimates 
\begin{align*}
\left\|\big(D(\mathbf{Q}_b)_2 \big)_{\Bbbk - 1} \right\|^2_{L^2} \lesssim b_1^{2(1 - \delta)(1 + \eta)}, \quad \left\|\left(\frac{d}{ds}\left(T_L + \frac{\partial S_{L+2}}{\partial b_L} \right)\chi_{B_1} \right)_{\Bbbk - 1} \right\|_{L^2}^2 \lesssim b_1^2b_1^{2(1 - \delta)(1 + \eta)}.
\end{align*}
Gathering all these above estimates together with the modulation equations \eqref{eq:ODEbkl}, \eqref{eq:ODEbLimproved} and the estimate \eqref{def:CUp} yields
\begin{equation}\label{est:M2Up2}
\left\| \big(\mathbf{M}_2 - \ps \Upsilon_2 \big)_{\Bbbk - 1}\right \|_{L^2} \lesssim b_1^{1 + \eta(1 - \delta)}\sqrt{\Es_\Bbbk} + b_1^{L + 1 + (1 - \delta)(2 + \eta)},
\end{equation}
which follows the estimate \eqref{est:Sigma012} for $\Sigma_1$.

We now turn to the proof of \eqref{est:Sigma012} for $\Sigma_2$. We only deal with the second coordinate because the same proof holds for the first one. Let us write from equation \eqref{eq:qys}, 
\begin{align*}
&\int \Upsilon_2 \Ls^{\Bbbk - 1} \left(\ps q_2 + \ps \Upsilon_2\right)\\
&\quad  = \int \Ls^{\Bbbk - 1}\Upsilon_2 \left( \frac{\lambda_s}{\lambda}Dq_2 - \Ls q_1 - (\mathbf{\Psi}_b)_2  - \mathbf{M}_2 + \ps \Upsilon_2 + L(q_1) - N(q_1)\right).
\end{align*}
Using the admissibility of $\vec T_L$, the homogeneity of $\vec S_{L+2}$ and the fact that $\Ls^{\frac{L + 1}{2}}T_L = 0$, we have 
\begin{align}
\left|\Ls^{\Bbbk - 1}\Upsilon_2\right| &\lesssim |C_\Upsilon|\left( \frac{1}{1 + y^{- L + 1 + \gamma + 2\Bbbk - 2}}\mathbf{1}_{\{B_1 \leq y \leq 2B_1\}} + \frac{b_1^2}{1 + y^{- L + \gamma + 2\Bbbk - 2}}\mathbf{1}_{\{y \leq 2B_1\}} \right)\nonumber\\
&\quad \lesssim b_1^{\delta - 1} \sqrt{\Es_\Bbbk} \frac{b_1^{1 + \eta}}{1 + y^{\Bbbk - 1 + \gamma + \hbar}}\mathbf{1}_{\{y \leq 2B_1\}}. \label{est:Up2Lsk}
\end{align}
From \eqref{est:Up2Lsk}, the coercivity bound \eqref{eq:weipykq} and  $-\frac{\lambda_s}{\lambda} \sim b_1$, we estimate 
\begin{align*}
&\left| \frac{\lambda_s}{\lambda}\int \Ls^{\Bbbk - 1}\Upsilon_2 Dq_2\right|\\
 & \quad \lesssim b_1^{1 + \delta + \eta}\sqrt{\Es_\Bbbk} \left[\left(\int \frac{|q_2|^2}{1 + y^{2(\Bbbk - 1)}} \right)^\frac{1}{2} + \left(\int \frac{|\py q_2|^2}{1 + y^{2(\Bbbk - 2)}} \right)^\frac{1}{2} \right]\left(\int_{y \leq 2B_1} \frac{1}{1 + y^{2(\gamma + \hbar)}}\right)^\frac{1}{2}\\
& \qquad \lesssim b_1^{1 + \delta + \eta}\Es_\Bbbk b_1^{-\delta(1  +\eta)} \quad \lesssim b_1^{1 + \eta
(1 - \delta)}\Es_\Bbbk.
\end{align*}
Similarly, we have 
\begin{align*}
\left|\int \Ls^{\Bbbk - 1}\Upsilon_2 \Ls q_1\right| \lesssim b_1^{\delta - 1}\sqrt{\Es_\Bbbk} \left( \int \frac{|(q_1)_2|^2}{1 + y^{2(\Bbbk - 2)}}\right)^\frac{1}{2}\left(\int_{y \leq 2B_1} \frac{1}{1 + y^{2(\gamma + \hbar + 1)}}\right)^\frac{1}{2} \lesssim b_1^{1 + \eta(1 - \delta)} \Es_\Bbbk.
\end{align*}
Using \eqref{eq:estPsibLarge2} and \eqref{est:Up12} yields 
\begin{align*}
\left|\int \Ls^{\Bbbk - 1}\Upsilon_2 (\mathbf{\Psi}_b)_2 \right| \lesssim \left\| \big((\mathbf{\Psi}_b)_2\big)_{\Bbbk - 1} \right\|_{L^2} \left\|\big(\Upsilon_2\big)_{\Bbbk - 1} \right\|_{L^2} \lesssim b_1^{1 + \eta(1 - \delta)}\sqrt{\Es_\Bbbk}b_1^{L + (1 - \delta)(1 + \eta)}.
\end{align*}
From \eqref{est:M2Up2} and \eqref{est:Up12}, we have 
\begin{align*}
\left|\int \Ls^{\Bbbk - 1}\Upsilon_2 \big(\mathbf{M}_2 - \ps \Upsilon_2\big) \right| &\lesssim \left\| \big(\mathbf{M}_2 - \ps \Upsilon_2\big)_{\Bbbk - 1} \right\|_{L^2} \left\|\big(\Upsilon_2\big)_{\Bbbk - 1} \right\|_{L^2}\\
& \quad \lesssim b_1^{1 + 2\eta(1 - \delta)}\Es_\Bbbk + b_1^{1 + \eta(1 - \delta)}\sqrt{\Es_\Bbbk}b_1^{L + (1 - \delta)(1 + \eta)}.
\end{align*}
Note from the definition \eqref{def:Lq} of $L(q_1)$ that
$$|L(q_1)| \lesssim \frac{b_1^{2(1 - \eta)}|q_1|}{1 + y^{2\gamma}}.$$
Thus, by using \eqref{est:Up2Lsk} and the coercivity bound \eqref{eq:weipykq}, we derive
$$\left|\int \Ls^{\Bbbk - 1}\Upsilon_2 L(q_1)\right| \lesssim b_1^{1 + \eta(1 - \delta)} \Es_\Bbbk.$$
For the nonlinear term $N(q_1)$ defined in \eqref{def:Nq}, we note that $|N(q_1)| \lesssim \frac{q_1^2}{y^2}$, we then use \eqref{est:Up2Lsk}, the coercivity bound \eqref{eq:qmbyE2k} and the bootstrap bound given in Definition \ref{def:Skset} to estimate 
\begin{align*}
\left|\int \Ls^{\Bbbk - 1}\Upsilon_2 N(q_1)\right| &\lesssim b_1^{\delta + \eta} \sqrt{\Es_\Bbbk} \int_{y \leq 2B_1} \frac{q_1^2}{y^2(1 + y^{2\Bbbk - 2})} (1 + y^{\Bbbk - 1 - \gamma - \hbar})\\
& \quad \lesssim b_1^{\delta + \eta} \sqrt{\Es_\Bbbk} \Es_\Bbbk b_1^{(-L +\gamma)(1  +\eta)} \lesssim b_1^{1 + \eta(1 - \delta)}\Es_\Bbbk.
\end{align*}
This finishes the proof of \eqref{est:Sigma012} for $\Sigma_2$. 

\medskip

A collection of the estimates \eqref{est:Psi}, \eqref{est:Nq1}, \eqref{eq:controlNq1}, \eqref{est:Lq1}, \eqref{est:commuk} and \eqref{est:Modk} into the identity \eqref{eq:IDEk} yields the formula \eqref{eq:Es2sLya}. This concludes the proof of Proposition \ref{prop:E2k}.
\end{proof}

\subsection{Local Morawetz control.}
We establish in this subsection the so-called Morawetz type identity in order to control the local term $\Es_{\Bbbk, \text{loc}}$ involved in the formula \eqref{eq:Es2sLya}. In particular, we have the following:
\begin{proposition}[Local Morawetz control] \label{prop:Mcon} Let $0 < \nu \ll 1$ and $A \gg 1$ be small and large enough constants, we define
\begin{equation}\label{def:phiA}
\phi_A(y) = \int_0^y \chi_A(\xi)\xi^{1 - \nu}d\xi,
\end{equation}
and 
\begin{equation}\label{def:Mofun}
\Mc = - \int \big[\nabla \phi_A . \nabla (q_1)_{\Bbbk - 1} + \frac{(1 - \nu)}{2}\Delta \phi_A (q_1)_{\Bbbk- 1} \big](q_2)_{\Bbbk - 1}.
\end{equation}
Then the following bounds hold for all $s \in [s_0, s_1]$ for $s_0$ large enough,
\begin{equation}\label{est:Mbound}
|\Mc| \leq C(A,M)\Es_\Bbbk,
\end{equation}
and
\begin{equation}\label{est:Mcontrol}
\frac{d}{dt}\left[\frac{\Mc}{\lambda^{2\Bbbk - d}}\right] \geq \frac{1}{\lambda^{2\Bbbk - d + 1}}\left( \frac{\nu}{2N^\nu}\Es_{\Bbbk,\textup{loc}} - \frac{C(M)}{A^\nu} \Es_\Bbbk - C(A,M)\sqrt{\Es_\Bbbk}b_1^{L + 1 + (1 - \delta)(1 + \eta)}\right),
\end{equation}
where the large constants $M$, $N$ and $\Es_{\Bbbk,\textup{loc}}$ are introduced in \eqref{def:PhiM} and \eqref{def:Ekloc}.
\end{proposition}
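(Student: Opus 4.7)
The upper bound \eqref{est:Mbound} is straightforward. By \eqref{def:phiA}, the weights $\nabla\phi_A$ and $\Delta\phi_A$ are supported in $\{y\leq 2A\}$ with $|\nabla\phi_A|\lesssim y^{1-\nu}$ and $|\Delta\phi_A|\lesssim y^{-\nu}$, so Cauchy--Schwarz applied to \eqref{def:Mofun}, combined with the coercivity of $\As$ (Lemma \ref{lemm:coerA}) that controls $\|\nabla(q_1)_{\Bbbk-1}\|_{L^2(y\leq 2A)}$ and $\|y^{-1}(q_1)_{\Bbbk-1}\|_{L^2(y\leq 2A)}$ in terms of $\|(q_1)_\Bbbk\|_{L^2}\leq\sqrt{\Es_\Bbbk}$, yields \eqref{est:Mbound} with constant depending on $A$ and $M$.

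For \eqref{est:Mcontrol} the plan is to differentiate $\Mc$ in the renormalized time $s$ and substitute the equations satisfied by $(q_1)_{\Bbbk-1}$ and $(q_2)_{\Bbbk-1}$, obtained by applying the appropriate adapted derivative (either $\Ls^{(\Bbbk-1)/2}$ or $\As\,\Ls^{(\Bbbk-2)/2}$, depending on the parity of $\Bbbk-1$) to each component of \eqref{eq:qys}. This produces a wave-type system of the schematic form
\begin{equation*}
\ps (q_1)_{\Bbbk-1} - (q_2)_{\Bbbk-1} = -\tfrac{\lambda_s}{\lambda}\mathcal{C}_1 + (\Fc_1)_{\Bbbk-1}, \qquad \ps (q_2)_{\Bbbk-1} + \tilde{\Ls}\,(q_1)_{\Bbbk-1} = -\tfrac{\lambda_s}{\lambda}\mathcal{C}_2 + (\Fc_2)_{\Bbbk-1},
\end{equation*}
where the $\mathcal{C}_j$ collect commutators of the adapted derivative with $\Lambda$ involving the potential $\tfrac{\Lambda Z}{y^2}$ as in \eqref{def:commu}. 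Plugging these into $\tfrac{d\Mc}{ds}$ and performing the standard radial Morawetz integration by parts in $y$ yields the classical Morawetz density
\begin{equation*}
\int \phi_A''\,|\py (q_1)_{\Bbbk-1}|^2 + \int \tfrac{\phi_A'}{y}\,|\nabla_\omega (q_1)_{\Bbbk-1}|^2 + (\text{lower-order potential contributions from }\tfrac{V}{y^2}),
\end{equation*}
plus remainders. On $\{y\leq A\}$ we have $\phi_A''(y) = (1-\nu)\,y^{-\nu}$ and $\phi_A'(y)/y = y^{-\nu}$, so choosing $A\geq N$ the positive part is bounded below by $\tfrac{\nu}{N^\nu}\,\Es_{\Bbbk,\textup{loc}}$, after absorbing the $\tfrac{V}{y^2}$ piece via the positivity of $\Ls=\As^*\As$ on $\dot H^1$ (which is where $d\geq 7$ enters crucially).

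The remaining contributions split as follows. (i) The transition zone $A/2\leq y\leq 2A$ where $\chi_A$ is cut off contributes a defect of size $\tfrac{C(M)}{A^\nu}\Es_\Bbbk$, since $|\phi_A''|+|\phi_A'/y|\lesssim A^{-\nu}$ there and the quadratic form is estimated by Cauchy--Schwarz against the full $\Es_\Bbbk$. (ii) The $\tfrac{\lambda_s}{\lambda}\mathcal{C}_j$ terms are handled exactly as in the commutator analysis in the proof of Proposition \ref{prop:E2k}, using $|\lambda_s/\lambda|\sim b_1$ and the decay of $\tfrac{\Lambda Z}{y^2}$, and are absorbed into the local energy plus an $A^{-\nu}$ tail. (iii) The source terms $(\Fc_j)_{\Bbbk-1}$ are estimated via Cauchy--Schwarz by $\sqrt{\Es_\Bbbk}$ times the $L^2(\{y\leq 2A\})$-norm of the various pieces of $\vec\Fc$, which by Proposition \ref{prop:localProfile}, the modulation bounds of Lemma \ref{lemm:mod1}, and \eqref{eq:controlLq1}--\eqref{eq:controlNq1} produce the remainder $C(A,M)\sqrt{\Es_\Bbbk}\,b_1^{L+1+(1-\delta)(1+\eta)}$. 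Finally, converting $\tfrac{d}{ds}$ to $\tfrac{d}{dt}=\tfrac{1}{\lambda}\tfrac{d}{ds}$ and incorporating the scaling factor $\lambda^{d-2\Bbbk}$ introduces only benign additional contributions of the form just analysed. The main obstacle throughout is the careful treatment of the potential $\tfrac{Z}{y^2}$ in $\Ls$: without the positivity of $\Ls$ on $\dot H^1$ proper to $d\geq 7$, the Morawetz quadratic form could lose its sign, and it is precisely this positivity that makes the localized lower bound $\tfrac{\nu}{2N^\nu}\Es_{\Bbbk,\textup{loc}}$ robust.
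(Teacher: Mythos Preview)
Your overall strategy matches the paper's: differentiate $\Mc$ in $s$, substitute the equations for $\ps q_1$ and $\ps q_2$ from \eqref{eq:qys}, invoke the linear Morawetz identity (the paper's internal lemma, referred to \cite{Car161}) for the leading quadratic form, and bound the remainders on the compact support $\{y\leq 2A\}$. The passage from $d/ds$ to $d/dt$ and the bound \eqref{est:Mbound} are as you describe.

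There is, however, a genuine gap in your treatment of the modulation piece $\vec{\mathbf{M}}$. You claim the modulation bounds of Lemma~\ref{lemm:mod1} alone yield the remainder $C(A,M)\sqrt{\Es_\Bbbk}\,b_1^{L+1+(1-\delta)(1+\eta)}$. But \eqref{eq:ODEbL} for $b_L$ contains a $C(M)\sqrt{\Es_\Bbbk}$ term, so Cauchy--Schwarz against the $T_L$ contribution would produce $C(A,M)\Es_\Bbbk$ with a constant growing in $A$, which is \emph{not} absorbable by $-\tfrac{C(M)}{A^\nu}\Es_\Bbbk$. The paper's crucial observation is that on $\{y\leq 2A\}$ one has $\chi_{B_1}T_i=T_i$ and $(T_i)_{\Bbbk-1}=0$ for all $1\leq i\leq L$ (since $\Bbbk-1=L+\hbar\geq L$ kills the iterated kernel), so the $T_i$ parts of $\mathbf{M}$ contribute nothing. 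Only the $\partial S_j/\partial b_k$ pieces survive, and their extra $b_1$ from homogeneity converts the dangerous $\sqrt{\Es_\Bbbk}$ into $b_1\Es_\Bbbk$.

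A related imprecision: the scaling terms $\tfrac{\lambda_s}{\lambda}(\Lambda q_1)_{\Bbbk-1}$, $\tfrac{\lambda_s}{\lambda}(Dq_2)_{\Bbbk-1}$, as well as $L(q_1)$, $N(q_1)$ and the $S_j$ part of $\mathbf{M}$, all produce $C(A,M)b_1\Es_\Bbbk$, not $\sqrt{\Es_\Bbbk}\,b_1^{L+1+\cdots}$. These are absorbed not by an $A^{-\nu}$ tail but by taking $s_0=s_0(A)$ large so that $b_1\leq 1/A$; the paper makes this step explicit when passing from the $d\Mc/ds$ inequality to \eqref{est:Mcontrol}.
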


\begin{proof} The estimate \eqref{est:Mbound} simply follows from the coercivity bound \eqref{eq:qmbyE2k}. We aim at proving the bound 
\begin{equation}\label{eq:Mcon}
\frac{d}{ds}\Mc \geq  \frac{\nu}{2N^\nu}\Es_{\Bbbk,\textup{loc}} - \frac{C(M)}{A^\nu} \Es_\Bbbk - C(A,M)\sqrt{\Es_\Bbbk}b_1^{L + 1 + (1 - \delta)(1 + \eta)},
\end{equation}
which immediately implies \eqref{est:Mcontrol}. Indeed, from $-\frac{\lambda_s}{\lambda} \sim b_1$ and the bound \eqref{est:Mbound}, we have 
\begin{align*}
&\frac{d}{dt}\left[\frac{\Mc}{\lambda^{2\Bbbk - d}}\right] = \frac{1}{\lambda^{2\Bbbk - d + 1}}\frac{d}{ds}\Mc - \frac{(2\Bbbk - d)\lambda_s}{\lambda^{2\Bbbk - d + 1}\lambda} \Mc\\
& \quad \geq \frac{1}{\lambda^{2\Bbbk - d + 1}}\left[ \frac{\nu}{2N^\nu}\Es_{\Bbbk,\textup{loc}} - \frac{C(M)}{A^\nu} \Es_\Bbbk - C(A,M)\sqrt{\Es_\Bbbk}b_1^{L + 1 + (1 - \delta)(1 + \eta)} \right] - C(A,M)\frac{b_1}{\lambda^{2\Bbbk - d + 1}}\Es_\Bbbk.
\end{align*}
Since $b_1(s) \leq b_1(s_0)\sim \frac{1}{s_0}$, we can take $s_0 = s_0(A)$ large such that $b_1(s_0) \leq \frac{1}{A}$, then the estimate \eqref{est:Mcontrol} follows. \\

Let us give the proof of \eqref{eq:Mcon}. We first claim the following:
\begin{lemma}[Morawetz type identity at the linear level] Let $A \gg 1$ and $0 < \nu \ll 1$, there holds the following:
\begin{align}
&\int \big[\nabla \phi_A . \nabla (q_1)_{\Bbbk - 1} + \frac{(1 - \nu)}{2}\Delta \phi_A (q_1)_{\Bbbk- 1} \big]\Ls (q_1)_{\Bbbk - 1}\nonumber\\
& \quad - \int \big[\nabla \phi_A . \nabla (q_2)_{\Bbbk - 1} + \frac{(1 - \nu)}{2}\Delta \phi_A (q_2)_{\Bbbk- 1} \big](q_2)_{\Bbbk - 1} \quad \gtrsim  \frac{\nu}{N^\nu}\Es_{\Bbbk, \textup{loc}} - \frac{1}{A^\nu}\Es_\Bbbk.\label{id:Mo}
\end{align}
\end{lemma}
\begin{proof} The proof follows exactly the same lines as Lemma 3.8 in \cite{Car161} because we have the same definition $\phi_A$ and a similar structure of the linear operator $\Ls$. Although the potential $\frac{Z}{y^2}$ is different from the one defined in \cite{Car161}, it still satisfies
$$\frac{1}{2}y\py\left(\frac{Z}{y^2}\right) \geq - \frac{1}{y^2}\left[\left(\frac{d-2}{2} \right)^2  - \kappa(d)\right] \quad \text{for some $\kappa(d) > 0$ and $d \geq 7$},$$
thanks to  the asymptotic behavior \eqref{eq:asymQ} and the fact that $\frac{(d-2)^2}{4} - (d-1) \geq \frac{1}{4}$ for $d \geq 7$. For that reason, we refer the interested reader to \cite{Car161} for details of the proof.
\end{proof}
We now use the identity \eqref{id:Mo} to derive the formula \eqref{eq:Mcon}. We compute from the definition of $\Mc$ and the equation \eqref{eq:qys}, 
\begin{align}
\frac{d}{ds}\Mc &= -\int \nabla \phi_A . \nabla \left( q_2 - \frac{\lambda_s}{\lambda}\Lambda q_1 - (\mathbf{\Psi}_b)_1 - \mathbf{M}_1 \right)_{\Bbbk - 1} (q_2)_{\Bbbk - 1} \nonumber\\
& -\int \frac{1 - \nu}{2}\Delta \phi_A \left( q_2 - \frac{\lambda_s}{\lambda}\Lambda q_1 - (\mathbf{\Psi}_b)_1 - \mathbf{M}_1 \right)_{\Bbbk - 1} (q_2)_{\Bbbk - 1}\nonumber\\
& + \int \nabla \phi_A . \nabla (q_1)_{\Bbbk - 1} \left(\Ls q_1 + \frac{\lambda_s}{\lambda}D q_2 + (\mathbf{\Psi}_b)_2 + \mathbf{M}_2  - L(q_1) + N(q_1)\right)_{\Bbbk - 1}\nonumber\\
& +  \int \frac{1 - \nu}{2}\Delta \phi_A (q_1)_{\Bbbk - 1} \left(\Ls q_1 + \frac{\lambda_s}{\lambda}D q_2 + (\mathbf{\Psi}_b)_2 + \mathbf{M}_2  - L(q_1) + N(q_1)\right)_{\Bbbk - 1}.\label{eq:IdM}
\end{align}
From the definitions of $\As$ and $\As^*$, the coercivity bound \eqref{eq:qmbyE2k}, $-\frac{\lambda_s}{\lambda} \sim b_1$ and the compactness of the support of $\nabla\phi_A$ and $\Delta \phi_A$, we have the estimate
\begin{align*}
&\left|\frac{\lambda_s}{\lambda} \int \big[\nabla \phi_A . \nabla \left(\Lambda q_1\right)_{\Bbbk - 1} + \frac{(1 - \nu)}{2}\Delta \phi_A \left(\Lambda q_1\right)_{\Bbbk- 1} \big](q_2)_{\Bbbk - 1}\right|\\
& +\left|\frac{\lambda_s}{\lambda}\int \big[\nabla \phi_A . \nabla (q_1)_{\Bbbk - 1} + \frac{(1 - \nu)}{2}\Delta \phi_A (q_1)_{\Bbbk- 1} \big]\left(Dq_2 \right)_{\Bbbk - 1}\right|\quad \lesssim C(A)b_1\Es_\Bbbk.
\end{align*}
Again from the compactness of the support of $\nabla\phi_A$ and $\Delta \phi_A$, we use the Cauchy-Schwartz inequality and the local bound \eqref{eq:estPsiblocalTilde} for $\vec{\mathbf{\Psi}}$ to obtain the estimate
\begin{align*}
&\left|\int \big[\nabla \phi_A . \nabla \left((\mathbf{\Psi}_b)_1\right)_{\Bbbk - 1} + \frac{(1 - \nu)}{2}\Delta \phi_A \left((\mathbf{\Psi}_b)_1\right)_{\Bbbk- 1} \big](q_2)_{\Bbbk - 1}\right|\\
& +\left|\int \big[\nabla \phi_A . \nabla (q_1)_{\Bbbk - 1} + \frac{(1 - \nu)}{2}\Delta \phi_A (q_1)_{\Bbbk- 1} \big]\left((\mathbf{\Psi}_b)_2 \right)_{\Bbbk - 1}\right|\quad \lesssim C(A)\sqrt{\Es_\Bbbk}b_1^{L + 3}.
\end{align*}
For the small linear term $L(q_1)$ and the nonlinear term $N(q_1)$, we use the Cauchy-Schwartz inequality, the bounds \eqref{eq:controlLq1} and \eqref{eq:controlNq1} to estimate
\begin{align*}
\left|\int \big[\nabla \phi_A . \nabla (q_1)_{\Bbbk - 1} + \frac{(1 - \nu)}{2}\Delta \phi_A (q_1)_{\Bbbk- 1} \big]\left(L(q_1) - N(q_1) \right)_{\Bbbk - 1}\right|\quad \lesssim C(A)b_1\Es_\Bbbk.
\end{align*}
It remains to control the modulation term.  We use the fact that $\chi_{B_1} T_i = T_i$ for $y \leq 2A \ll B_1$ and $(T_i)_{\Bbbk - 1} = 0$ for $1 \leq i \leq L$ to deduce that 
 \begin{align*}
& \sum_{i = 1, \textup{even}}^L\int \big[\nabla \phi_A . \nabla \left(T_i\right)_{\Bbbk - 1} + \frac{(1 - \nu)}{2}\Delta \phi_A \left(T_i\right)_{\Bbbk- 1} \big](q_2)_{\Bbbk - 1} \\
& + \sum_{i = 1, \textup{odd}}^L\int \big[\nabla \phi_A . \nabla (q_1)_{\Bbbk - 1} + \frac{(1 - \nu)}{2}\Delta \phi_A (q_1)_{\Bbbk- 1} \big]\left(\chi_{B_1}T_i\right)_{\Bbbk - 1}=  0.
\end{align*}
For the term $\frac{\partial S_j}{b_i}$ for $j \geq i + 1$, we recall that $S_j$ is homogeneous of degree $(j, j - 1, j \wedge 2, j)$. Thus, $\left|\frac{\partial S_j}{b_i}\right| \lesssim C(A)b_1$ for $y \leq 2A$. We then use Lemma \ref{lemm:mod1} to derive the bound for $1 \leq i \leq L$,
 \begin{align*}
& \sum_{j = i+1, \textup{even}}^{L+2}\left| \big[(b_i)_s + (i - \gamma)b_1b_i - b_{i + 1}\big]\int \left[\nabla \phi_A . \nabla \left(\chi_{B_1}\frac{\partial S_j}{\partial b_i}\right)_{\Bbbk - 1} + \frac{(1 - \nu)}{2}\Delta \phi_A \left(\chi_{B_1}\frac{\partial S_j}{\partial b_i}\right)_{\Bbbk - 1} \right](q_2)_{\Bbbk - 1} \right| \\
& + \sum_{j = i+1, \textup{odd}}^{L+2}\left| \big[(b_i)_s + (i - \gamma)b_1b_i - b_{i + 1}\big]\int \big[\nabla \phi_A . \nabla (q_1)_{\Bbbk - 1} + \frac{(1 - \nu)}{2}\Delta \phi_A (q_1)_{\Bbbk- 1} \big]\left(\chi_{B_1}\frac{\partial S_j}{\partial b_i}\right)_{\Bbbk - 1} \right|\\
& \quad \lesssim C(A,M)\left(b_1\Es_\Bbbk + \sqrt{\Es_\Bbbk}b_1^{L + 2 + (1 - \delta)(1 + \eta)}\right).
\end{align*}
Injecting all the above estimates and identity \eqref{id:Mo} to \eqref{eq:IdM} yields the formula \eqref{eq:Mcon}. This concludes the proof of Proposition \ref{prop:Mcon}.
\end{proof}

\subsection{Monotonicity for $\Es_\sigma$.}
We now in the position to derive the monotonicity formula for $\Es_\sigma$. We claim the following.
\begin{proposition}[Lyapunov monotonicity for $\Es_\sigma$] \label{prop:Esigma} Given $\hbar$, $\delta$ and $\eta$ as defined in \eqref{def:kdeltaplus} and \eqref{def:B0B1}. For $K \geq 1$, we assume that there is $s_0(K) \gg 1$ such that $(b_1(s), \cdots, b_L(s), \vec q(s)) \in \Sc_K(s)$ for $s \in [s_0, s_1]$ for some $s_1 \geq s_0$. Then, the followings hold for $s \in [s_0, s_1]$:
\begin{align}
&\frac{d}{dt}\left(\frac{\Es_{\sigma}}{\lambda^{2\sigma - d}} \right) \leq \frac{b_1}{\lambda^{2\sigma - d + 1}}\sqrt{\Es_\sigma}b_1^{\frac{\ell}{\ell - \gamma}\left(\sigma - \frac d2\right)}\left[ b_1^{\frac{1}{L}\left(\frac{\gamma}{2} - \frac 14\right) + \Oc\left(\frac{1}{L}\left|\sigma - \frac d2\right|\right)} + b_1^{\frac{\gamma - 1}{2}} \right].\label{eq:Esigma}
\end{align}
\end{proposition}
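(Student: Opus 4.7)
The plan is to mirror the proof of Proposition~\ref{prop:E2k}, now at the fractional regularity level $\sigma \in (d/2, d/2 + 1/L^2]$. First I pass to the rescaled variables $\vec v(r,t) = (q_1, q_2/\lambda)(y,s)$, for which the scaling identity reads $\Es_\sigma/\lambda^{2\sigma - d} = \|\vec v\|_{\dot H^\sigma(\Rd) \times \dot H^{\sigma - 1}(\Rd)}^2$. Differentiating this rescaled energy with the flow $\pt \vec v + \Hs_\lambda \vec v = \lambda^{-2}\vec \Fc_\lambda$, the principal $-\Delta$ part of $\Ls_\lambda$ cancels against the cross pairing $\langle v_1, v_2\rangle_{\dot H^\sigma}$ by integration by parts, leaving a zero-order potential commutator $-\langle \frac{V_\lambda}{r^2}v_1, v_2\rangle_{\dot H^{\sigma - 1}}$ (which is absorbed as an $O(\lambda^{-1})$ perturbation via Hardy's inequality, since $\sigma - 1 < d/2$), plus the two forcing inner products $\lambda^{-1}\langle \Fc_{1,\lambda}, v_1\rangle_{\dot H^\sigma}$ and $\lambda^{-2}\langle \Fc_{2,\lambda}, v_2\rangle_{\dot H^{\sigma - 1}}$. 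After undoing the rescaling, the problem reduces to controlling $\|\vec \Fc\|_{\dot H^\sigma \times \dot H^{\sigma - 1}}$ on the $y$-profile side with overall prefactor $\lambda^{d - 2\sigma - 1}$, which is precisely the one appearing in \eqref{eq:Esigma}.

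For the profile error $\vec{\mathbf \Psi}_b$ and the modulation remainder $\vec{\mathbf M}$, I would interpolate the integer-order bounds of Proposition~\ref{prop:localProfile} between $\dot H^\hbar$ and $\dot H^{\hbar + 1}$ and combine them with the modulation estimates of Lemma~\ref{lemm:mod1}. Since $\sigma - 1 - \hbar = \delta + \gamma - 1 + O(1/L^2)$, the resulting gain is of order $b_1^{1 + \gamma + O(\eta)}$, which comfortably dominates the bracket in \eqref{eq:Esigma}. The small linear term $L(q_1) = \Phi q_1$, with $|\py^j \Phi| \lesssim b_1^{2(1-\eta)}/(1+y^{2\gamma + j})$ read off from \eqref{def:Lq} and the expansion \eqref{eq:asymQ}, is controlled via a fractional Leibniz rule in $\dot H^{\sigma - 1}$ together with a weighted Hardy inequality: since $\gamma > 1$, the decay $y^{-2\gamma}$ partially trades against the derivatives so as to yield the net contribution $b_1^{1 + (\gamma - 1)/2 - O(\eta)}\sqrt{\Es_\sigma}$, which is the source of the second term in the bracket of \eqref{eq:Esigma}.

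The main obstacle is the nonlinear contribution $N(q_1) \sim q_1^2/y^2$. At the almost-critical exponent $\sigma - 1$, a naive product estimate in $\dot H^{\sigma - 1}$ is borderline; instead I exploit the uniform $L^\infty$ bound $\|q_1/y\|_{L^\infty} \lesssim \sqrt{\Es_\sigma}\,b_1^{O(1/L)}$, obtained by interpolating $\Es_\sigma$ against the high-Sobolev energy $\Es_\Bbbk$ (noting $\Bbbk - \sigma \sim L$), and couple it with the fractional Hardy inequality $\|q_1/y\|_{\dot H^{\sigma - 1}} \lesssim \|q_1\|_{\dot H^\sigma}$ to obtain
\begin{equation*}
\|N(q_1)\|_{\dot H^{\sigma - 1}} \lesssim \|q_1/y\|_{L^\infty}\,\|q_1/y\|_{\dot H^{\sigma - 1}} \lesssim \sqrt{\Es_\sigma}\, b_1^{1 + \frac{1}{L}\left(\frac{\gamma}{2} - \frac{1}{4}\right) + O\left(\frac{1}{L}|\sigma - \frac d2|\right)}.
\end{equation*}
The residual exponent $\frac{1}{L}(\gamma/2 - 1/4)$ is precisely the slack left by the interpolation trading $L + \hbar + 1$ derivatives between $\Es_\sigma$ and $\Es_\Bbbk$; it is this delicate balance between the $L^\infty$ gain and the critical Sobolev product rule that forces the quantitative choice of $\sigma$ in \eqref{def:sigma}. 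Summing the four contributions and substituting the scaling identity $\lambda^{\sigma - d/2} = b_1^{\ell(\sigma - d/2)/(\ell - \gamma)}$ yields precisely \eqref{eq:Esigma}.
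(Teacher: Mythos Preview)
Your overall skeleton (rescale to $\vec v$, cancel the Laplacian cross term, reduce to bounding $\|\vec\Fc\|_{\dot H^\sigma\times\dot H^{\sigma-1}}$) matches the paper. But the treatment of the potential commutator has a genuine gap, and you have misidentified the origin of the bracket terms in \eqref{eq:Esigma}.

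First, the leftover potential is $Z_\lambda/r^2$, not $V_\lambda/r^2$ (see \eqref{def:Llambda}). More importantly, you claim this term is ``absorbed as an $O(\lambda^{-1})$ perturbation via Hardy's inequality, since $\sigma-1<d/2$''. This does not work: from \eqref{eq:asymQ} one has $Z(y)=(d-1)\cos(2Q)\to -(d-1)$ as $y\to\infty$, so $Z/y^2\sim -(d-1)/y^2$ does \emph{not} decay faster than $y^{-2}$. A Hardy/Rellich bound on $\nabla^{\sigma-1}\!\big(\tfrac{Z}{y^2}q_1\big)$ therefore costs two full derivatives and returns $\|q_1\|_{\dot H^{\sigma+1}}$, which is \emph{not} controlled by $\sqrt{\Es_\sigma}$. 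In the paper this term is handled by interpolating between $\Es_\sigma$ and $\Es_\Bbbk$ (via the integer bounds $\|\nabla^{\lfloor\sigma-1\rfloor}(\cdot)\|_{L^2}$ and $\|\nabla^{\Bbbk-1}(\cdot)\|_{L^2}$ and \eqref{eq:interBound}); the resulting bound is precisely
\[
\Big\|\nabla^{\sigma-1}\Big(\frac{Z}{y^2}q_1\Big)\Big\|_{L^2}^2 \lesssim b_1^{\frac{2\ell}{\ell-\gamma}(\sigma-\frac d2)}\,b_1^{2+\frac{2\gamma-1}{2L}+O\left(\frac{1}{L}|\sigma-\frac d2|\right)},
\]
and it is \emph{this} potential term---not the nonlinear one---that produces the dominant bracket contribution $b_1^{\frac{1}{L}(\frac{\gamma}{2}-\frac14)}$.

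Conversely, the nonlinear term in the paper gives a strictly better gain $b_1^{\gamma/(2L)}$ than $b_1^{\frac{1}{L}(\gamma/2-1/4)}$, obtained not by a single $L^\infty\times\dot H^{\sigma-1}$ product estimate but by a Fa\`a di Bruno expansion of $N(q_1)=Z^2\psi$ at integer orders $\lfloor\sigma-1\rfloor$ and $\lfloor\sigma\rfloor$, controlling all factors via the pointwise bound \eqref{eq:Hardy4sigma} $\|\py^k q_1/y^{\sigma-d/2-k}\|_{L^\infty}\lesssim\sqrt{\Es_\sigma}$, and then interpolating. Your heuristic $\|N(q_1)\|_{\dot H^{\sigma-1}}\lesssim\|q_1/y\|_{L^\infty}\|q_1/y\|_{\dot H^{\sigma-1}}$ is morally right for the leading piece but does not by itself yield the stated exponent, and in any case attributes the wrong bracket term to the wrong source. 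The second bracket term $b_1^{(\gamma-1)/2}$ in \eqref{eq:Esigma} actually comes from the modulation remainder $\mathbf M_2$ (via Lemma~\ref{lemm:mod1} and the tail size $B_1^{L-\sigma+\hbar+\delta}$), not from $L(q_1)$.
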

\begin{proof} We compute from definition of $\Es_\sigma$ and equation \eqref{eq:vrt}, 
\begin{align}
&\frac{d}{dt}\left[\frac{\Es_\sigma}{\lambda^{2\sigma -d }} \right] = \frac{d}{dt}\left[\int |\nabla^\sigma v_1|^2 + \int |\nabla^{\sigma- 1}v_2|^2\right]\nonumber\\
& = 2\int \nabla^\sigma v_1. \nabla^\sigma \left(v_2 + \frac{1}{\lambda}(\Fc_1)_\lambda\right) + 2\int \nabla^{\sigma - 1} v_2. \nabla^{\sigma - 1} \left(- \Ls_\lambda v_1 + \frac{1}{\lambda^2}(\Fc_2)_\lambda\right)\nonumber\\
& \quad = -2\int \nabla^{\sigma - 1}v_2.\nabla^{\sigma - 1}\left(\frac{Z_\lambda}{r^2}v_1\right) +   2\int \nabla^\sigma v_1. \nabla^\sigma \left(\frac{1}{\lambda}(\Fc_1)_\lambda\right) + 2\int \nabla^{\sigma - 1} v_2. \nabla^{\sigma - 1} \left(\frac{1}{\lambda^2}(\Fc_2)_\lambda\right)\nonumber\\
& \qquad \leq C\frac{\sqrt{\Es_\sigma}}{\lambda^{2\sigma -d + 1}}\left\{\left\|\nabla^{\sigma - 1}\left(\frac{Z}{y^2}q_1\right) \right\|_{L^2} + \left\|\nabla^\sigma \Fc_1\right\|_{L^2} + \left\|\nabla^{\sigma - 1} \Fc_2 \right\|_{L^2}    \right\},\label{id:Esigma}
\end{align}
where $Z$, $\Fc_1$ and $\Fc_2$ are defined in \eqref{def:Lc} and \eqref{eq:qys}.\\

\noindent \textit{- Estimate for the potential term:} From the expansion \eqref{eq:expqat0}, we have 
$$\int_{y \leq 1} \left| \nabla^{\sigma - 1}\left(\frac{Z}{y^2}q_1\right) \right|^2 \lesssim \Es_\Bbbk.$$
For $y \geq 1$, we note from the asymptotic behavior \eqref{eq:asymQ} that $\left|\py^j\left(\frac{Z}{y^2}\right)\right| \lesssim \frac{1}{y^{2 + j}}$ for $j \in \mathbb{N}$. We then use the Leibniz rule and interpolation bound \eqref{eq:interBound} with $\lfloor\sigma -1 \rfloor + 2 > \sigma$ to obtain the estimates 
\begin{align*}
\int_{y \geq 1} \left| \nabla^{\lfloor\sigma -1 \rfloor}\left(\frac{Z}{y^2}q_1\right) \right|^2 \lesssim \Es_\sigma^{\frac{\Bbbk - \lfloor\sigma -1 \rfloor - 2}{\Bbbk - \sigma}} \Es_\Bbbk^{\frac{\lfloor\sigma -1 \rfloor + 2 - \sigma}{\Bbbk - \sigma}},  \quad \int_{y \geq 1} \left| \nabla^{\Bbbk - 1}\left(\frac{Z}{y^2}q_1\right) \right|^2 \lesssim \Es_\Bbbk.
\end{align*}
By interpolation and the bootstrap bounds given in Definition \eqref{def:Skset}, we have 
$$\int\left| \nabla^{\sigma - 1}\left(\frac{Z}{y^2}q_1\right) \right|^2 \leq C \Es_\sigma^{\frac{\Bbbk - \lfloor\sigma -1 \rfloor - 2}{\Bbbk - 1 - \lfloor\sigma -1 \rfloor}}\Es_\Bbbk^{\frac{1}{\Bbbk - 1 - \lfloor\sigma -1 \rfloor}} \leq C K^2b_1^{\frac{2\ell}{\ell - \gamma}\left(\sigma - \frac d2\right) + \Oc\left(\frac{1}{L}\left|\sigma - \frac d2 \right|\right)}b_1^{\frac{2L + 2(1 - \delta)(1 + \eta)}{L + \hbar - \lfloor\sigma -1 \rfloor}}.$$
Since $\left|\sigma  - \frac d2\right| \ll 1$, we note that $\lfloor\sigma -1 \rfloor \geq \frac{d-3}{2} = \gamma + \hbar + \delta - \frac{3}{2}$. We then compute the exponent 
\begin{align*}
\frac{2L + 2(1 - \delta)(1 + \eta)}{L + \hbar - \lfloor\sigma -1 \rfloor} &= 2 \left[1 + \frac{(1 - \delta)(1 + \eta) + \lfloor\sigma -1 \rfloor - \hbar}{L} + \Oc\left(\frac{1}{L^2}\right) \right]\\
& \geq 2 \left[1 + \frac{\gamma - \frac 12 + \eta(1 - \delta)}{L} + \Oc\left(\frac{1}{L^2}\right) \right].
\end{align*}
Since $b_1(s) \sim \frac{1}{s} \leq \frac{1}{s_0}$, we can take $s_0 = s_0(K)$ large enough to obtain the bound
\begin{equation}\label{est:poEsigma}
\int\left| \nabla^{\sigma - 1}\left(\frac{Z}{y^2}q_1\right) \right|^2 \leq b_1^{\frac{2\ell}{\ell - \gamma}\left(\sigma - \frac d2\right)} b_1^{2 + \frac{2\gamma - 1}{2L} + \Oc \left(\frac{1}{L}\left| \sigma - \frac{d}{2}\right|\right)}.
\end{equation}

We now turn to the estimate for the last term in \eqref{id:Esigma}. We only deal with the $\Fc_2$ term since the same estimate holds for $\Fc_1$. Let us recall form \eqref{eq:qys}, 
$$\Fc_2 = - (\mathbf{\Psi}_b)_2 - \mathbf{M}_2 + L(q_1) - N(q_1).$$

\noindent \textit{- Estimate for the error term $(\mathbf{\Psi}_b)_2$:} we apply \eqref{eq:estPsibLarge1} with $j = \lfloor\sigma -1 \rfloor- \hbar$ and $j = \lfloor\sigma\rfloor - \hbar$ to find that 
\begin{align*}
 \|\nabla^{\lfloor\sigma -1 \rfloor}(\mathbf{\Psi}_b)_2\|_{L^2}^2 &\lesssim b_1^{2\lfloor\sigma -1 \rfloor - 2\hbar + 2 + 2(1 - \delta) - C_L\eta} \lesssim b_1^{2\gamma + 1 - C_L\eta},\\
\|\nabla^{\lfloor\sigma\rfloor}(\mathbf{\Psi}_b)_2\|_{L^2}^2 &\lesssim b_1^{2\lfloor\sigma\rfloor - 2\hbar + 2 + 2(1 - \delta) - C_L\eta} \lesssim b_1^{2\gamma + 3 - C_L\eta},
\end{align*}
where we used the fact that $2\lfloor\sigma -1 \rfloor\geq d- 3$ and $2\lfloor\sigma\rfloor \geq d - 1$ for $\left|\sigma - \frac{d}{2}\right| \ll 1$ and $d = 2\gamma +2\hbar + 2\delta$ from \eqref{def:kdeltaplus}.
Note that $\lfloor\sigma\rfloor - \lfloor\sigma -1 \rfloor = 1$ and $\sigma - 1 - \lfloor\sigma -1 \rfloor \geq \sigma - \frac{d}{2}$ for $\left|\sigma - \frac{d}{2}\right| \ll 1$, we have by interpolation
\begin{align}
\|\nabla^{\sigma - 1}(\mathbf{\Psi}_b)_2\|_{L^2}^2 &\lesssim b_1^{(2\gamma + 1 - C_L \eta) (\lfloor\sigma \rfloor - \sigma +1)}b_1^{(2\gamma + 3 - C_L\eta)(\sigma - 1 - \lfloor\sigma -1 \rfloor)}\nonumber\\
& = b_1^{2\gamma + 1 - C_L\eta}b_1^{2(\sigma - 1 - \lfloor\sigma -1 \rfloor)} \lesssim b_1^2 b_1^{\frac{2\ell}{\ell - \gamma}\left(\sigma - \frac d2\right)} b_1^{\frac{2\gamma - 1}{2}},\label{est:errEs}
\end{align}
for $\eta$ and $\left|\sigma - \frac{d}{2}\right|$ small enough.\\

\noindent \textit{- Estimate for the modulation term $\mathbf{M}_2$:} From Lemma \ref{lemm:mod1}, the admissibility of $\vec T_k$ and homogeneity of $\vec S_k$, we estimate 
\begin{align}
\left\|\nabla^{\sigma - 1}\mathbf{M}_2 \right\|_{L^2} &\lesssim \sqrt{\Es_\Bbbk}\left[\sum_{k = 1, \textup{odd}}^{L}\left(\left\|\nabla^{\sigma  -1} \left(\chi_{B_1}T_k \right) \right\|_{L^2} + \sum_{j = k + 1, \textup{odd}}^{L+2} \left\|\nabla^{\sigma - 1}\left(\chi_{B_1} \frac{\partial S_j}{\partial b_k}\right) \right\|_{L^2}\right)\right]\nonumber\\
& \quad  + b_1^{L + 1 + (1 - \delta)(1 + \eta)}\left\|\nabla^{\sigma - 1}\left(D (\mathbf{Q}_b)_2\right)\right\|_{L^2}\nonumber\\
& \qquad \lesssim \sqrt{\Es_\Bbbk}B_1^{L - \sigma + \hbar + \delta} + b_1^{L + 1 + (1 - \delta)(1 + \eta)}b_1^{2(1 - \eta)}B_1^{\hbar + \delta} \lesssim b_1 b_1^{\frac{\ell}{\ell - \gamma}\left(\sigma - \frac{d}{2}\right)} b_1^{\frac{\gamma - 1}{2}}, \label{est:modEs}
\end{align}
for $\eta$ and $\left|\sigma - \frac{d}{2}\right|$ small enough.\\

\noindent \textit{- Estimate for the small linear term $L(q_1)$:} From the asymptotic behavior \eqref{eq:asymQ} of $Q$ and the definition \eqref{def:Lq} of $L(q_1)$, we have by Leibniz rule
$$\forall j \in \mathbb{N}, \quad \left|\py^j(L(q_1))\right| \lesssim b_1^{2(1 - C(L)\eta)}\sum_{i = 0}^j \frac{|\py^i q_1|}{1 + y^{2\gamma + j - i}}.$$
From the bounds \eqref{eq:weipykq} and \eqref{eq:interBound}, we have the estimate 
$$\int |\nabla^{\lfloor\sigma -1 \rfloor} L(q_1)|^2 \lesssim \Es_\sigma^{\frac{\Bbbk - (2\gamma + \lfloor\sigma -1 \rfloor)}{\Bbbk - \sigma}}\Es_\Bbbk^{\frac{2\gamma + \lfloor\sigma -1 \rfloor - \sigma}{\Bbbk - \sigma}}, \quad \int |\nabla^{\Bbbk - 2}L(q_1)|^2 \lesssim \Es_\Bbbk.$$
By interpolation and the same computation of the exponent as for the potential term, we obtain the estimate
\begin{align}
\left\|\nabla^{\sigma - 1}L(q_1)\right\|^2_{L^2} \lesssim  b_1^{2 + \frac{2\ell}{\ell - \gamma}\left(\sigma - \frac{d}{2}\right)} b_1^{\frac{\gamma - 1}{2}}. \label{est:LEsigma}
\end{align}
\noindent \textit{- Estimate for the nonlinear term $N(q_1)$:} We claim that 
\begin{align}
\left\|\nabla^{\sigma - 1}N(q_1)\right\|^2_{L^2} \lesssim  b_1^{2 + \frac{2\ell}{\ell - \gamma}\left(\sigma - \frac{d}{2}\right)} b_1^{\frac{\gamma}{2L}}. \label{est:NEsigma}
\end{align}
For $y < 1$, we use the expansion \eqref{eq:expanNq1at0} to deduce that 
$$\left\|\nabla^{\sigma - 1}N(q_1)\right\|^2_{L^2(y < 1)} \lesssim \Es_\Bbbk. $$
For $y \geq 1$, we shall control $\left\|\nabla^{\lfloor\sigma -1 \rfloor}N(q_1)\right\|^2_{L^2(y \geq 1)}$ and $\left\|\nabla^{\lfloor\sigma\rfloor}N(q_1)\right\|^2_{L^2(y \geq 1)}$, then obtain the result by interpolation. 
From \eqref{def:Zpsi}, the Leibniz rule and estimate \eqref{est:psiq1}, we write   
\begin{align*}
\int_{y \geq 1}|\py^{\lfloor\sigma -1 \rfloor} N(q_1)|^2  &\lesssim \sum_{j = 0}^{\lfloor\sigma -1 \rfloor} \sum_{i = 0}^j \sum_{k = 0}^i \int_{y \geq 1} \frac{|\py^k q_1|^2 |\py^{i - k}q_1|^2 |\py^{\lfloor\sigma -1 \rfloor - j}\psi|^2}{y^{4 + 2(j - i)}}\nonumber\\
& \lesssim \sum_{j = 0}^{\lfloor\sigma -1 \rfloor} \sum_{i = 0}^j \sum_{k = 0}^i \sum_{m \in \mathcal{I}} \int_{y \geq 1} \frac{|\py^k q_1|^2 |\py^{i - k}q_1|^2}{y^{4 + 2(j - i)}}\prod_{l = 1}^{\lfloor\sigma -1 \rfloor - j} \left(\frac{b_1^{-C\eta}}{y^{2\gamma + 2l}} + |\py^l q_1|^2\right)^{m_l},
\end{align*}
where $ \mathcal{I} = \{m \in \mathbb{N}^{\lfloor\sigma -1 \rfloor - j}: \sum_{l = 1}^{\lfloor\sigma -1 \rfloor - j} lm_l = [\sigma - 1] - j\}$. \\
From the Hardy inequality \eqref{eq:Hardy1} and \eqref{eq:interBound}, we have the estimate
\begin{align}
\left\|\frac{\py^k q_1}{y^{-k + \sigma - \frac{d}{2}}} \right\|^2_{L^\infty(y \geq 1)} \lesssim \int \frac{|\py^{k + 1}q_1|^2}{y^{2(-k - 1 + \sigma)}} + |\py^k q_1(1)|^2\lesssim \Es_\sigma \quad \text{for} \quad k = 0, \cdots, [\sigma - 1].  \label{eq:Hardy4sigma}
\end{align} 
from which we derive for every $m \in \mathcal{I}$, 
\begin{align*}
\frac{y^{2(\lfloor\sigma -1 \rfloor - j)}}{y^{\sum_l m_l (2\sigma  - d)}}\prod_{l = 1}^{\lfloor\sigma -1 \rfloor - j} \left(\frac{b_1^{-C\eta}}{y^{2\gamma + 2l}} + |\py^l q_1|^2\right)^{m_l} = \prod_{l = 1}^{\lfloor\sigma -1 \rfloor - j} \left(\frac{b_1^{-C\eta}}{y^{2\gamma + 2\sigma - d}} + \frac{|\py^l q_1|^2}{y^{-2l + 2\sigma - d}} \right)^{m_l} \lesssim b_1^{-C\eta}.
\end{align*}
Hence, we have
\begin{align*}
\int_{y \geq 1}|\py^{\lfloor\sigma -1 \rfloor} N(q_1)|^2  &\lesssim \sum_{j = 0}^{\lfloor\sigma -1 \rfloor} \sum_{i = 0}^j \sum_{k = 0}^i \sum_{m \in \mathcal{I}} b_1^{-C\eta}\int_{y \geq 1} \frac{|\py^k q_1|^2 |\py^{i - k}q_1|^2}{y^{4  - 2i + 2\lfloor\sigma -1 \rfloor - C(2\sigma - d)}}\\
 &\lesssim \sum_{j = 0}^{\lfloor\sigma -1 \rfloor} \sum_{i = 0}^j \sum_{k = 0}^i \sum_{m \in \mathcal{I}} b_1^{-C\eta} \left\|\frac{|\py^k q_1|^2}{y^{-2k + 2\sigma - d }} \right\|_{L^\infty(y \geq 1)} \int_{y \geq 1} \frac{|\py^{i - k}q_1|^2}{y^{4  - 2i + 2k + 2\lfloor\sigma -1 \rfloor - C(2\sigma - d)}}\\
 & \lesssim b_1^{-C\eta}\Es_\sigma \Es_\sigma^{\frac{\Bbbk - (2 + \lfloor\sigma -1 \rfloor - C(2\sigma - d))}{\Bbbk - \sigma}} \Es_\Bbbk^{\frac{2 + \lfloor\sigma -1 \rfloor - \sigma - C(2\sigma - d)}{\Bbbk - \sigma}} \lesssim \Es_\sigma^{2 + \Oc\left(\frac{1}{L}\right)}b_1^{2 \big(2 + \lfloor\sigma -1 \rfloor - \sigma\big)\left(1 + \frac{\gamma}{2L}\right)},
\end{align*}
where in the last estimate, we used the bootstrap bound on $\Es_\Bbbk$ given in Definition \eqref{def:Skset}, the smallness $|2\sigma - d| + \eta = \Oc\left(\frac{1}{L^2}\right)$ from \eqref{def:sigma} and \eqref{def:B0B1} and the following algebra
\begin{align*}
&-C \eta + {\frac{2 + \lfloor\sigma -1 \rfloor - \sigma - C(2\sigma - d)}{\Bbbk - 1}}\big(2L + 2(1 - \delta)(1 + \eta)\big)\\
& \quad = 2\big(2 + \lfloor\sigma -1 \rfloor - \sigma \big)\left(1 +\frac{\gamma}{L} \right) + \Oc\left(\frac{1}{L^2}\right) \geq 2\big(2 + \lfloor\sigma -1 \rfloor - \sigma \big)\left(1 +\frac{\gamma}{2L} \right) \quad \text{for}\quad L \gg 1.
\end{align*}
Similarly, we have the estimate
$$\int_{y \geq 1}|\py^{\lfloor\sigma\rfloor} N(q_1)|^2  \lesssim \Es_\sigma^{2 + \Oc\left(\frac{1}{L}\right)}b_1^{2 \big(2 + \lfloor\sigma\rfloor - \sigma\big)\left(1 + \frac{\gamma}{2L}\right)}.$$
By interpolation and the fact that $\lfloor\sigma\rfloor - \lfloor\sigma -1 \rfloor = 1$ for $|\sigma - d/2| \ll 1$, we have 
\begin{align*}
\int|\py^{\sigma - 1} N(q_1)|^2  &\lesssim \Es_\sigma^{2 + \Oc\left(\frac{1}{L}\right)} b_1^{2\left( 1 + \frac{\gamma}{2L}\right)\Big( \big(2 + \lfloor\sigma -1 \rfloor - \sigma\big)\big(\lfloor\sigma \rfloor - \sigma + 1\big) + \big(2 + \lfloor\sigma\rfloor - \sigma\big)\big(\sigma - 1 - \lfloor\sigma -1 \rfloor\big) \Big)}\\
& \quad = \Es_\sigma^{2 + \Oc\left(\frac{1}{L}\right)} b_1^{2\left( 1 + \frac{\gamma}{2L}\right)} \lesssim b_1^{\frac{\ell}{\ell - \gamma}(2\sigma - d)}b_1^{2\left( 1 + \frac{\gamma}{2L}\right)},
\end{align*}
which concludes the proof of \eqref{est:NEsigma}. \\

We note that the bounds \eqref{est:errEs} and \eqref{est:modEs} also hold for $\| \nabla^\sigma (\mathbf{\Psi}_b)_1\|^2_{L^2}$ and $\|\nabla^\sigma \mathbf{M}_1\|^2_{L^2}$ by using the same computation. We then inject the estimates \eqref{est:poEsigma}, \eqref{est:errEs}, \eqref{est:modEs}, \eqref{est:LEsigma} and \eqref{est:NEsigma} into the identity \eqref{id:Esigma} to obtain the desired formula \eqref{eq:Esigma}. This concludes the proof of Proposition \ref{prop:Esigma}.
\end{proof}

\subsection{Conclusion of Proposition \ref{prop:redu}.}
We give the proof of Proposition \ref{prop:redu} in this subsection in order to complete the proof of Theorem \ref{Theo:1}. Note that this section corresponds to Section 6.1 of \cite{MRRcjm15}. Here we follow exactly the same lines as in \cite{MRRcjm15} and no new ideas are needed. We divide the proof into 2 parts:

- \textit{Part 1: Reduction to a finite dimensional problem.} Assume that for a given $K > 0$ large and an initial time $s_0 \geq 1$ large, we have $(b_1(s), \cdots, b_L(s), \vec q(s)) \in \Sc_K(s)$ for all $s \in [s_0, s_1]$ for some $s_1 \geq s_0$. By using \eqref{eq:ODEbkl}, \eqref{eq:ODEbLimproved}, \eqref{eq:Es2sLya} and \eqref{eq:Esigma}, we derive new bounds on $\Vc_1(s)$, $b_k(s)$ for $\ell + 1 \leq k \leq L$ and $\Es_\sigma$ and $\Es_\Bbbk$, which are better than those defining $\Sc_K(s)$ (see Definition \ref{def:Skset}). It then remains to control $(\Vc_2(s), \cdots, \Vc_\ell(s))$. This means that the problem is reduced to the control of a finite dimensional function $(\Vc_2(s), \cdots, \Vc_\ell(s))$ and then get the conclusion $(i)$ of Proposition \ref{prop:redu}.

- \textit{Part 2: Transverse crossing.} We aim at proving that if $(\Vc_2(s), \cdots, \Vc_\ell(s))$ touches 
$$\partial \hat \Sc _K(s):= \partial\left(-\frac{K}{s^{\frac{\eta}{2}}(1 - \delta)}, \frac{K}{s^{\frac{\eta}{2}}(1 - \delta)}\right)^{\ell - 1}$$
at $s = s_1$, it actually leaves $\partial \hat \Sc_K(s)$ at $s=s_1$ for $s_1 \geq s_0$, provided that $s_0$ is large enough. We then get the conclusion $(ii)$ of Proposition \ref{prop:redu}.

\paragraph{$\bullet\,$ Reduction to a finite dimensional problem.} We give the proof of item $(i)$ of Proposition \ref{prop:redu} in this part. Given $K > 0$, $s_0 \geq 1$ and the initial data at $s = s_0$ as in Definition \ref{def:1}, we assume for all $s \in [s_0, s_1]$, $(b_1(s), \cdots, b_L(s), \vec q(s)) \in \Sc_K(s)$ for some $s_1 \geq s_0$. We claim that for all $s \in [s_0, s_1]$,
\begin{align}
&|\Vc_1(s)| \leq s^{-\frac \eta 2(1- \delta)},\label{est:impV1}\\
&|b_k(s)| \lesssim s^{-(k + \eta(1 - \delta))} \quad \text{for}\;\; \ell + 1 \leq k \leq L,\label{est:impbk}\\
&\Es_\sigma \leq \frac{K}{2}s^{-\frac{2\ell(\sigma - \frac{d}{2})}{\ell - \gamma}}\label{est:impEsigma}\\
&\Es_{\Bbbk} \leq \frac{K}{2} s^{-(2L + 2(1 - \delta)(1 + \eta))},\label{est:impE2k}
\end{align}
Once these estimates are proved, it immediately follows from Definition \ref{def:Skset} of $\Sc_K$ that if 
$$(b_1(s_1), \cdots, b_L(s), \vec q(s_1)) \in \partial\Sc_K(s_1),$$
then $(\Vc_2, \cdots, \Vc_\ell))(s_1)$ must be in $\partial \hat \Sc_K(s_1)$, which concludes the proof of item $(i)$ of Proposition \ref{prop:redu}.\\

Before going to the proof of \eqref{est:impV1}-\eqref{est:impE2k}, let us compute explicitly the scaling parameter $\lambda$. To do so, let us note from \eqref{eq:Ukbke} and the a priori bound on $\Uc_1$ given in Definition \ref{def:Skset} that 
$b_1(s) = \frac{c_1}{s} + \frac{\Uc_1}{s} = \frac{\ell}{(\ell - \gamma)s} + \Oc\left(\frac{1}{s^{1 + c\eta}}\right).$ 
From the modulation equation \eqref{eq:ODEbkl}, we have
\begin{equation}\label{eq:lam10}
-\frac{\lambda_s}{\lambda} = \frac{\ell}{(\ell - \gamma)s} + \Oc\left(\frac{1}{s^{1 + c\eta}}\right),
\end{equation}
from which we write
\begin{equation*}
\left|\frac{d}{ds}\left\{\log \Big(s^{\frac{\ell}{\ell - \gamma}}\lambda(s)\Big)\right\}\right| \lesssim \frac{1}{s^{1 + c\eta}}.
\end{equation*}
We now integrate by using the initial data value $\lambda(s_0) = 1$ to get 
\begin{equation}\label{eq:Lamdas}
\lambda(s) = \left(\frac{s_0}{s}\right)^\frac{\ell}{\ell - \gamma}\left[1 + \Oc\left(s^{-c\eta}\right)\right] \quad \text{for}\;\; s_0 \gg 1.
\end{equation}
This implies that 
\begin{equation}\label{est:b1so_s}
s_0^{-\frac{\ell}{\ell - \gamma}} \lesssim \frac{s^{-\frac{\ell}{\ell - \gamma}} }{\lambda(s)}\lesssim s_0^{-\frac{\ell}{\ell - \gamma}}.
\end{equation}

\noindent - \textit{Improved control of $\Es_{\Bbbk}$}: We aim at using \eqref{eq:Es2sLya} to derive the improved bound \eqref{est:impE2k}. From the Morawetz formula \eqref{eq:Mcon}, we have 
$$\frac{b_1 \Es_{\Bbbk, \textup{loc}}}{\lambda^{2\Bbbk - d}} \leq C(N)b_1 \frac{d}{ds}\left[\frac{\Mc}{\lambda^{2\Bbbk - d}}\right] + \frac{C(M,N)}{A^\nu}\frac{b_1\Es_\Bbbk}{\lambda^{2\Bbbk - d}} + \frac{C(A, M, N)}{\lambda^{2\Bbbk - d}} \sqrt{\Es_\Bbbk}b_1^{L + 2 + (1 - \delta)(1 + \eta)},$$
from which and the monotonicity formula \eqref{eq:Es2sLya}, we write
\begin{align*}
&\frac{d}{ds} \left\{\frac{\Es_\Bbbk}{\lambda^{2\Bbbk - d}} \left[1 + \Oc(b_1^{\eta(1 - \delta)})\right]\right\} \leq C(N,M)b_1 \frac{d}{ds}\left[\frac{\Mc}{\lambda^{2\Bbbk - d}}\right]\\
&\quad + \frac{b_1}{\lambda^{2\Bbbk - d}}\left\{C(A, M, N)\sqrt{\Es_\Bbbk} b_1^{L + (1 - \delta)(1 + \eta)} + C(M) \left[K b_1^{\frac{\ell}{\ell - \gamma}(2\sigma - d) + \eta(1 - \delta) + \Oc\left(\frac{|2\sigma  - d|}{L}\right)} + \frac{1}{N^{2\gamma - 1}} + \frac{C(N)}{A^\nu} \right]\Es_\Bbbk \right\} \\
& \quad \leq \frac{C(N,M)}{s} \frac{d}{ds}\left[\frac{\Mc}{\lambda^{2\Bbbk - d}}\right] + \frac{1}{\lambda^{2\Bbbk - d}} \left[ \left(C(M)(\sqrt{K} + 1)s^{-(2L  +1 + 2(1 - \delta)(1 + \eta))} \right) \right],
\end{align*}
where we used the bootstrap bounds given in Definition \ref{def:Skset}, $b_1 \sim \frac{1}{s} \leq \frac{1}{s_0}$ and the constants $s_0, A, N, M, L$ is fixed large enough. Integrating in time by using $\lambda(s_0) = 1$ and $|\Mc| \leq C(A,M)\Es_\Bbbk$ yields for all $s \in [s_0, s_1)$,
\begin{align*}
\Es_{\Bbbk}(s) &\leq C(M) \lambda(s)^{2\Bbbk - d}\left[\Es_{\Bbbk}(s_0) + \left(\sqrt K +  1\right)\int_{s_0}^s\frac{\tau^{-(2L + 1 + 2(1 - \delta)(1 + \eta))}}{\lambda(\tau)^{2\Bbbk - d}}d\tau\right]\\
& \quad + C(N,M)\left[\frac{\Mc(s)}{s} - \frac{\Mc(s_0)}{s_0} \lambda(s)^{2\Bbbk - d} \right] + C(M,N)\lambda(s)^{2\Bbbk - d}\int_{s_0}^s \frac{|\Mc(\tau)|}{\tau^2 \lambda(\tau)^{2\Bbbk - d}}\\
& \quad \leq  C(M) \lambda(s)^{2\Bbbk - d}\left[\Es_{\Bbbk}(s_0) + \left(\sqrt K +  1\right)\int_{s_0}^s\frac{\tau^{-(2L + 1 + 2(1 - \delta)(1 + \eta))}}{\lambda(\tau)^{2\Bbbk - d}}d\tau\right],
\end{align*}
for $s_0$ large enough. Using \eqref{est:b1so_s}, we estimate
\begin{align*}
&\lambda(s)^{2\Bbbk - d}\int_{s_0}^s\frac{\tau^{-(2L + 1 + 2(1 - \delta)(1 + \eta))}}{\lambda(\tau)^{2\Bbbk - d}}d\tau\\
& \quad \lesssim s^{- \frac{\ell(2\Bbbk - d)}{\ell - \gamma}}\int_{s_0}^s \tau^{\frac{\ell(2\Bbbk - d)}{\ell - \gamma} - (2L + 1 + 2(1 - \delta)(1 + \eta))}d\tau \lesssim s^{-(2L + 2(1 - \delta)(1 + \eta))}.
\end{align*}
Here we used the fact that the integral is divergent because
$$\frac{\ell(2\Bbbk - d)}{\ell - \gamma} - [2L + 1 + 2(1 - \delta)(1 + \eta)] = \frac{2\gamma L}{\ell - \gamma} + \Oc_{L \to +\infty}(1) \gg -1.$$
Using again \eqref{est:b1so_s} and the initial bound \eqref{eq:intialbounE2m}, we estimate
$$\lambda(s)^{2\Bbbk - d}\Es_{\Bbbk}(s_0) \leq \left(\frac{s_0}{s}\right)^\frac{\ell(2\Bbbk - d)}{\ell - \gamma} s_0^{-\frac{10L\ell}{\ell - \gamma}} \lesssim s^{-(2L + 2(1 - \delta)(1 + \eta))},$$
for $L$ large enough. Therefore, we obtain 
$$\Es_{\Bbbk}(s) \leq C\left(\sqrt K + 1\right)s^{-(2L + 2(1 - \delta)(1 + \eta))} \leq \frac{K}{2}s^{-(2L + 2(1 - \delta)(1 + \eta))}, $$
for $K = K(M)$ large enough. This concludes the proof of \eqref{est:impE2k}.\\

\noindent - \textit{Improved control of $\Es_{\sigma}$.} We can improve the control of $\Es_{\sigma}$ by using the monotonicity formula \eqref{eq:Esigma}. Indeed, we see from \eqref{eq:Esigma} that there exists a small constant $0 < \epsilon \ll 1$ such that 
$$\frac{d}{ds} \left[\frac{\Es_\sigma}{\lambda^{2\sigma - d}}\right] \leq \frac{b_1\sqrt{\Es_\sigma}}{\lambda^{2\sigma - d }}b_1^{\frac{\ell}{\ell - \gamma}\left(\sigma - \frac d2\right) + \epsilon},$$
from which  we obtain
$$\Es_\sigma(s) \leq \lambda^{2\sigma - d}\left[\Es_\sigma(s_0) + \sqrt{K}\int_{s_0}^s \frac{b_1}{\lambda^{2\sigma -d}}b_1^{\frac{2\ell}{\ell - \gamma}\left(\sigma - \frac d2\right) + \epsilon}\right].$$
We estimate from the initial bound \eqref{eq:interBound} on $\Es_\sigma(0)$ and \eqref{eq:lam10},
$$\Es_\sigma(s_0)\lambda^{2\sigma - d}(s) \leq C s_0^{-\frac{10L \ell}{\ell - \gamma}} \left(\frac{s_0}{s}\right)^{\frac{\ell}{\ell - \gamma} \left(2\sigma - d\right)} \leq s^{-\frac{2\ell}{\ell - \gamma}\left(\sigma - \frac d2\right)}.$$
Note that $\frac{b_1}{\lambda^{2\sigma -d}}b_1^{\frac{2\ell}{\ell - \gamma}\left(\sigma - \frac d2\right) + \epsilon} \leq \frac{1}{s^{1 + \epsilon}}$, the integral is convergent. Thus, we have the estimate
$$\sqrt{K}\int_{s_0}^s \frac{b_1}{\lambda^{2\sigma -d}}b_1^{\frac{2\ell}{\ell - \gamma}\left(\sigma - \frac d2\right) + \epsilon} \leq C\sqrt{K} s^{-\frac{2\ell}{\ell - \gamma}\left(\sigma - \frac d2\right)}.$$
Hence, by choosing $K$ large enough such that $1  +C\sqrt{K} \leq \frac{K}{2}$, we deduce the improve bound \eqref{est:impEsigma}.\\
  
\noindent - \textit{Control of the stable modes $b_k$'s.} We now close the control of the stable modes $(b_{\ell + 1}, \cdots, b_L)$, in particular, we prove \eqref{est:impbk}. We first treat the case when $k = L$. Let 
$$\tilde{b}_L = b_L + \frac{\left<\Hs^L \vec q,\chi_{B_0}\Lambda \vec Q \right>}{\left<\chi_{B_0}\Lambda Q, \Lambda Q  + (-1)^{\frac{L - 1}{2}}\Ls^{\frac{L - 1}{2}} \left(\frac{\partial S_{L+2}}{\partial b_L}\right)\right>},$$
then from \eqref{def:Gs}, \eqref{eq:HsLchiB0} and \eqref{est:impE2k}, 
$$|\tilde b_L - b_L| \lesssim b_1^{-(1 - \delta)}\sqrt{\Es_{\Bbbk}} \lesssim b_1^{L + \eta(1 - \delta)}.$$
Hence, we have from the improved modulation equation \eqref{eq:ODEbLimproved}, 
\begin{align*}
|(\tilde{b}_L)_s + (L - \gamma)b_1\tilde{b}_L|&\lesssim b_1|\tilde b_L - b_L| +b_1^\delta\left[C(M)\sqrt{\Es_{\Bbbk}} + b_1^{L +(1 - \delta)}\right] \lesssim b_1^{L + 1 + \eta(1 -\delta)}.
\end{align*}
This follows
$$\left|\frac{d}{ds} \left\{\frac{\tilde b_L}{\lambda^{L - \gamma}} \right\}\right| \lesssim \frac{b_1^{L + 1 + \eta(1 - \delta)}}{\lambda^{L - \gamma}}.$$
Integrating this identity in time from $s_0$ and recalling that $\lambda(s_0) = 1$ yields
\begin{align*}
\tilde{b}_L(s) \lesssim C\lambda(s)^{L - \gamma}\left(\tilde{b}_L(s_0) + \int_{s_0}^s\frac{b_1(\tau)^{L + 1 + \eta(1 - \delta)}}{\lambda(\tau)^{L - \gamma}} d\tau\right).
\end{align*}
Using the fact that $b_1(s) \sim \frac{1}{s}$, the initial bounds \eqref{eq:initbk} and \eqref{eq:intialbounE2m} together with \eqref{est:b1so_s}, we estimate
$$\lambda(s)^{L - \gamma}\tilde{b}_L(s_0) \lesssim \left(\frac{s_0}{s}\right)^{\frac{\ell(L - \gamma)}{\ell - \gamma}}\left(s_0^{-\frac{5\ell(L - \gamma)}{\ell - \gamma}} + s_0^{\eta(1 - \delta)}s_0^{-\frac{5L\ell}{\ell - \gamma}} \right) \lesssim s^{- L - \eta(1 - \delta)},$$
and 
\begin{align*}
\lambda(s)^{L - \gamma}\int_{s_0}^s\frac{b_1(\tau)^{L + 1 + \eta(1 - \delta)}}{\lambda(\tau)^{L - \gamma}} d\tau &\lesssim s^{-\frac{\ell(L - \gamma)}{\ell - \gamma}}\int_{s_0}^s \tau ^{ \frac{\ell(L - \gamma)}{\ell - \gamma} - L - 1 - \eta(1 - \delta)}d\tau\\
&\quad \lesssim s^{-L - \eta(1 - \delta)}.
\end{align*}
Therefore, 
$$b_L(s) \lesssim |\tilde{b}_L(s)| + |\tilde{b}_L(s) - b_L(s)| \lesssim s^{-L - \eta(1 - \delta)},$$
which concludes the proof of \eqref{est:impbk} for $k = L$.
Now we will propagate this improvement that we found for the bound of $b_L$ to all $b_k$ for all $\ell + 1 \leq k \leq L - 1$. To do so we do a descending induction where the initialization is for $k=L$. Let assume  the bound 
$$|b_k|\lesssim  b_1^{k + \eta(1 - \delta)},$$
for $k+1$ and let's prove it for $k$. 
Indeed, from \eqref{eq:ODEbkl} and the induction bound, we have 
$$\left|(b_k)_s - (k - \gamma)\frac{\lambda_s}{\lambda}b_k\right| \lesssim b_1^{L + 1} + |b_{k + 1}| \lesssim b_1^{k + 1 + \eta(1 - \delta)},$$
which follows
$$\left|\frac{d}{ds}\left\{\frac{b_k}{\lambda^{k - \gamma}}\right\} \right| \lesssim \frac{b_1^{k + 1 + \eta(1 - \delta)}}{\lambda^{k - \gamma}}.$$
Integrating this identity in time as for the case $k = L$, we end-up with 
\begin{align*}
b_k(s) &\lesssim C\lambda(s)^{k - \gamma}\left(b_k(s_0) + \int_{s_0}^s\frac{b_1(\tau)^{k + 1 + \eta(1 - \delta)}}{\lambda(\tau)^{k - \gamma}} d\tau\right)\\
&\quad \lesssim s^{-k - \eta(1 - \delta)},
\end{align*}
where we used the initial bound \eqref{eq:initbk}, \eqref{est:b1so_s} and $k \geq \ell + 1$. This concludes the proof of \eqref{est:impbk}.

\noindent - \textit{Control of the stable mode $\Vc_1$.} We recall from \eqref{eq:Ukbke} and \eqref{def:VctoUc} that 
$$b_k = b_k^e + \frac{\Uc_k}{s^k}, \quad 1 \leq k \leq \ell, \quad \Vc = P_\ell \Uc,$$
where $P_\ell$ diagonalize the matrix $A_\ell$ with spectrum \eqref{eq:diagAlPl}. From \eqref{eq:bkk1}, and \eqref{eq:ODEbkl}, we estimate for $1 \leq k \leq \ell - 1$, 
$$|s(\Uc_k)_s - (A_\ell \Uc)_k| \lesssim s^{k + 1}|(b_k)_s + (k - \gamma)b_1 b_k - b_{k + 1}| + |\Uc|^2 \lesssim s^{-L + k} + |\Uc|^2,$$
From \eqref{eq:bell}, \eqref{eq:ODEbkl} and the improved bound \eqref{est:impbk}, we have 
$$|s(\Uc_\ell)_s - (A_\ell \Uc)_\ell| \lesssim s^{\ell + 1}\left(|(b_k)_s + (k - \gamma)b_1 b_\ell - b_{\ell + 1}| + |b_{\ell + 1}|\right) + |\Uc|^2 \lesssim s^{-\eta(1 - \delta)} + |\Uc|^2.$$
Using the diagonalization \eqref{eq:diagAlPl}, we obtain 
\begin{equation}\label{eq:sVs}
s\Vc_s = D_\ell \Vc + \Oc(s^{-\eta(1 - \delta)}).
\end{equation}
Using \eqref{eq:diagAlPl} again yields the control of the stable mode $\Vc_1$:
$$|(s\Vc_1)_s| \lesssim s^{-\eta(1 - \delta)}.$$
Thus from the initial bound \eqref{eq:initbk}, 
$$|s^{\eta(1 - \delta)}\Vc_1(s)| \leq \left(\frac{s_0}{s}\right)^{1 - \eta(1 - \delta)}s_0^{\eta(1 - \eta)}\Vc_1(s_0) + 1\lesssim s_0^{\eta(1 - \delta)},$$
which yields \eqref{est:impV1} for $s_0 \geq s_0(\eta)$ large enough.\\

\paragraph{$\bullet \,$ Transverse crossing.} We give the proof of item $(ii)$ of Proposition \ref{prop:redu} in this part. We compute from \eqref{eq:sVs} and \eqref{eq:diagAlPl} at the exit time $s = s_1$:
\begin{align*}
&\frac{1}{2}\frac{d}{ds}\left(\sum_{k = 2}^\ell|s^{\frac{\eta}{2}(1 - \delta)}\Vc_k(s)|^2 \right)_{\big|_{s= s_1}} \\
&\quad = \left(s^{\eta(1 - \delta) - 1} \sum_{k = 2}^\ell \left[\frac{\eta}{2}(1 - \delta)\Vc_k^2(s) + s\Vc_k(\Vc_k)_s \right]\right)_{\big|_{s= s_1}}\\
&\qquad = \left(s^{\eta(1 - \delta) - 1} \Bigg[\sum_{k = 2}^\ell \left[\frac{k\gamma}{k - \gamma} + \frac{\eta}{2}(1 - \delta)\right]\Vc_k^2(s) + \Oc\left(\frac{1}{s^{\frac{3}{2}\eta(1 - \delta)}}\right)\Bigg] \right)_{\big|_{s= s_1}}\\
&\quad \qquad \geq \frac{1}{s_1}\left[c(d,\ell) \sum_{k = 2}^\ell |s_1^{\frac{\eta}{2}(1 - \delta)}\Vc_k(s_1)|^2 +  \Oc\left(\frac{1}{s_1^{\frac{\eta}{2}(1 - \delta)}}\right)\right]\\
& \qquad \qquad \geq \frac{1}{s_1}\left[c(d,\ell) +  \Oc\left(\frac{1}{s_1^{\frac{\eta}{2}(1 - \delta)}}\right)\right] > 0,
\end{align*}
where we used item $(i)$ of Proposition \ref{prop:redu} in the last step. This completes the proof of Proposition \ref{prop:redu}.

\appendix

\section{Coercivity of the adapted norms.}
In this section we aim at proving the coercivity of the adapted norms $\Es_\Bbbk$ defined by (see \eqref{def:normk})
$$\Es_{\Bbbk}:= \|\vec q\|^2_{\Bbbk} = \int_{\Rb^d} q_1\Ls^{\Bbbk}q_1 + \int_{\Rb^d} q_2\Ls^{\Bbbk - 1}q_2 = \int_{\Rb^d}|(q_1)_{\Bbbk}|^2 + \int_{\Rb^d}|(q_2)_{\Bbbk - 1}|^2,$$
where we exploit the notation 
$$f_0 = f, \quad f_{2k + 1} = \As f_{2k} = \As \Ls^k f, \quad f_{2k+2} = \As^*f_{2k + 1} = \Ls^{k + 1}f \quad \text{for} \quad k \in \mathbb{N}.$$
To do so, we first recall some results in \cite{GINapde18}
concerning  the coercivity estimates for the operator $\As$, $\As^*$ under some suitable orthogonality condition. As a consequence, we then obtain the coercivity of $\Es_\Bbbk$.

We recall from Lemma \ref{lemm:estPhiM} that the direction $\vec \Phi_M$ defined in \eqref{def:PhiM} is of the form 
\begin{equation}\label{def:fromPhiM}
\vec \Phi_M = \binom{\Phi_M}{0} \quad \text{with} \quad \Phi_M = \sum_{k = 0}^{\frac{L-1}2}(-1)^kc_{2k,M}\Ls^k(\chi_M \Lambda Q),
\end{equation}
where $L \gg 1$ is an odd integer. We denote by $\Dc_{rad}$ as the set of all radially symmetric functions.
For simplicity, we write
$$\int f := \int_0^{+\infty} f(y)y^{d-1}dy.$$

We have the following:
\begin{lemma}[Hardy inequalities]\label{lemm:Hardy} Let $d \geq 7$ and $f \in \Dc_{rad}$, then\\
$(i)$ (Control near the origin)
\begin{equation*}
\int_{y \leq 1} \frac{|\py f|^2}{y^{2i}} \geq \frac{(d - 2 - 2i)^2}{4}\int_{y\leq 1} \frac{f^2}{y^{2+2i}} - C(d)f^2(1), \quad i = 0, 1, 2.
\end{equation*}
$(ii)$ (Control away from the origin for the non-critical exponent) Let $\alpha > 0, \alpha \ne \frac{d-2}{2}$, then
\begin{align}
&\int_{y \geq 1} \frac{|\py f|^2}{y^{2\alpha}} \geq \left(\frac{d - (2\alpha + 2)}{2}\right)^2\int_{y \geq 1} \frac{f^2}{y^{2 + 2\alpha}} - C(\alpha,d)f^2(1),\label{eq:Hardy0}\\
&\int_{y \geq 1} \frac{|\py f|^2}{y^{2\alpha}} \geq \left(\frac{d - (2\alpha + 2)}{2}\right)^2\left\| \frac{f}{y^{ \alpha + 1 - \frac d2}}\right\|^2_{L^\infty(y \geq 1)} - C(\alpha,d)f^2(1),\label{eq:Hardy1}
\end{align}
$(iii)$ (Control away from the origin for the critical exponent) Let $\alpha = \frac{d-2}{2}$, then
$$
\int_{y \geq 1} \frac{|\py f|^2}{y^{2\alpha}} \geq \frac 14 \int_{y \geq 1} \frac{f^2}{y^{2 + 2\alpha} (1 + \log y)^2} - C(d)f^2(1).$$
$(iv)$ (Weighted Hardy inequality) For any $\mu > 0$, $k \geq 2$ be an integer and $1 \leq j \leq k-1$,
\begin{equation}\label{eq:genHardy}
\int \frac{|\py^jf|^2}{1 + y^{\mu + 2(k - j)}} \lesssim_{j,\mu} \int \frac{|\py^k f|^2}{1 + y^\mu} + \int \frac{f^2}{1 + y^{\mu + 2k}}.
\end{equation}
\end{lemma}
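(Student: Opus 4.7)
The plan is to prove all four estimates by the classical substitution-and-integration-by-parts method, with the exponents tuned to extract the stated optimal Hardy constants.

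\textbf{Parts (i) and (ii), first estimate.} I would substitute $f = g y^\beta$ and expand to obtain the identity
\begin{equation*}
\int \frac{|\py f|^2}{y^{2i}} y^{d-1} dy = \int |\py g|^2 y^{2\beta+d-1-2i} dy + \beta \big[g^2 y^{2\beta+d-2-2i}\big]_{\partial} - \beta(\beta+d-2-2i) \int \frac{f^2}{y^{2+2i}} y^{d-1} dy.
\end{equation*}
Choosing $\beta = -(d-2-2i)/2$ (respectively $\beta = -(d-2-2\alpha)/2$) makes the power of $y$ in the boundary term vanish and brings $-\beta(\beta+d-2-2i)$ to its optimal value $(d-2-2i)^2/4$. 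Dropping the nonnegative $\int |\py g|^2$ term yields the desired lower bound. For (i) on $[0,1]$, the endpoint $y=0$ contributes nothing because $d-2-2i \geq 1$ (using $d\geq 7$ and $i \leq 2$) forces $g = f y^{(d-2-2i)/2} \to 0$; the remaining term $\beta f^2(1)$ produces the $C(d) f^2(1)$ remainder. For (ii) on $[1,\infty)$, $\alpha \neq (d-2)/2$ guarantees $\beta \neq 0$; the identity is first derived on $[1,R]$ and then passed to the limit on the dense subclass of test functions for which the target right-hand side is finite (so that the boundary contribution at $R$ vanishes), and the general case follows by monotone approximation.

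\textbf{The $L^\infty$ version \eqref{eq:Hardy1} and critical case (iii).} The $L^\infty$ estimate would follow from integrating the pointwise identity $\tfrac{d}{dy}\big(f^2/y^{2\alpha+2-d}\big) = 2 f \py f / y^{2\alpha+2-d} - (2\alpha+2-d) f^2/y^{2\alpha+3-d}$ from $1$ to $R$, taking $\sup_R$, applying Cauchy--Schwarz to the cross term (which is exactly $\big(\int f^2/y^{2+2\alpha}\cdot y^{d-1}\big)^{1/2}\big(\int |\py f|^2/y^{2\alpha}\cdot y^{d-1}\big)^{1/2}$), and using the already-established \eqref{eq:Hardy0} to absorb the $\int f^2/y^{2+2\alpha}$ factor. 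For (iii), the above $\beta$-choice degenerates to $0$, so I would instead perform the intermediate substitution $f = y^{(d-2)/2} g$ to reduce $\int_1^\infty |\py f|^2/y^{d-2}\cdot y^{d-1}dy$ to $\int_1^\infty y|\py g|^2 dy$ modulo boundary, then invoke the one-dimensional logarithmic Hardy inequality $\int_1^\infty y|\py h|^2 dy \geq \tfrac14 \int_1^\infty h^2/[y(1+\log y)^2] dy - Ch^2(1)$, itself obtained via the further substitution $h = \phi\sqrt{1+\log y}$ (using $(\log y)' = 1/y$) together with one more integration by parts.

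\textbf{Part (iv) and main obstacle.} Part (iv) then follows by descending induction on $j$: at each step, I would split the weight $(1+y^{\mu+2(k-j)})^{-1}$ at $y=1$, apply (i) with integer index $i = k-j$ on $\{y \leq 1\}$ and (ii) or (iii) with exponent $\alpha = (\mu+2(k-j)-2)/2$ on $\{y \geq 1\}$, and iterate down from $j = k-1$; the boundary contributions at $y = 1$ produced by successive integrations by parts are controlled by trace terms of the form $|\py^m f(1)|^2$, which are in turn absorbed into $\int |\py^k f|^2/(1+y^\mu) + \int f^2/(1+y^{\mu+2k})$ by one last application of the basic Hardy bound on a neighborhood of $y=1$. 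The main obstacle is the critical case (iii): at $\alpha = (d-2)/2$ the standard Hardy constant collapses and must be replaced by the logarithmic correction, which requires the $\sqrt{1+\log y}$ substitution and careful bookkeeping of boundary terms. Beyond this, the only recurring technicality is controlling the boundary contribution at $y=\infty$ during integration by parts on $[1,\infty)$, handled uniformly by the density argument mentioned above.
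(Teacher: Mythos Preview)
Your proposal is correct and follows the classical route (optimal substitution $f=gy^\beta$ plus integration by parts, with the logarithmic variant in the critical case). The paper does not give its own argument here: it simply refers to Lemma~B.1 in \cite{MRRcjm15}, where the same computation is carried out. One small remark: in your treatment of (iv) you invoke ``(i) with integer index $i=k-j$'', but (i) is only stated for $i\in\{0,1,2\}$; this is harmless because on $\{y\le 1\}$ the weights $(1+y^{\mu+2(k-j)})^{-1}$ are all comparable to $1$, so no near-origin weight is actually needed in the induction and the step reduces to controlling $\int_{y\le 1}|\py^{j}f|^2$ by one application of (i) with $i=0$ (or by a direct trace argument). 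Likewise, in (iii) the intermediate substitution $f=y^{(d-2)/2}g$ is not strictly necessary since, at $\alpha=(d-2)/2$, the weighted integral already collapses to $\int_1^\infty y|\py f|^2\,dy$; your $\sqrt{1+\log y}$ substitution can be applied directly to $f$.
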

\begin{proof} See Lemma B.1 in \cite{MRRcjm15}.
\end{proof}

We have the following coercivity of $\As^*$ and $\As$: 
\begin{lemma}[Weight coercivity of $\As^*$]\label{lemm:coerAst} Let $\alpha \geq 0$, and $f \in \Dc_{rad}$ satisfying 
$$i = 0, 1,2, \quad \int \frac{|\py f|^2}{y^{2i}(1 + y^{2\alpha})} + \int \frac{f^2}{y^{2i + 2}(1 + y^{2\alpha})} < +\infty,
$$
then 
\begin{equation}\label{eq:coerAst}
i = 0, 1,2, \quad \int \frac{|\As^* f|^2}{y^{2i}(1 + y^{2\alpha})} \geq c_\alpha\left(\int \frac{|\py f|^2}{y^{2i}(1 + y^{2\alpha})} + \int \frac{f^2}{y^{2i + 2}(1 + y^{2\alpha})}\right),
\end{equation}
for some $c_\alpha > 0$.
\end{lemma}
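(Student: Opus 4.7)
The plan is to reduce the claimed coercivity to a weighted one-dimensional Hardy inequality by factorizing $\As^*$ through $\Lambda Q$. Using the explicit form \eqref{def:Astar}, namely $\As^* f = \frac{1}{y^{d-1}\Lambda Q}\py(y^{d-1}\Lambda Q f)$, I would introduce the change of unknown
$$g(y) = y^{d-1}\Lambda Q(y)\, f(y), \qquad \text{so that} \qquad \As^* f = \frac{\py g}{y^{d-1}\Lambda Q}.$$
Substituting, the statement \eqref{eq:coerAst} for the term $\int \frac{f^2}{y^{2i+2}(1+y^{2\alpha})}$ becomes the one-dimensional Hardy-type inequality
$$\int_0^\infty \frac{|\py g|^2}{\omega(y)}\, dy \;\geq\; c_\alpha \int_0^\infty \frac{g(y)^2}{y^2\, \omega(y)}\, dy, \qquad \omega(y) := y^{d-1+2i}\bigl(1+y^{2\alpha}\bigr)(\Lambda Q(y))^2.$$

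For the Hardy step I would exploit the asymptotic behavior \eqref{eq:asymLamQ}: $\Lambda Q \sim y$ at the origin and $\Lambda Q \sim a_0 \gamma\, y^{-\gamma}$ at infinity, so the weight $1/\omega$ behaves like $y^{-(d+1+2i)}$ near $0$ and like $y^{-(d-1+2i+2\alpha-2\gamma)}$ at infinity. Since $d \geq 7$, $i \in \{0,1,2\}$ and $\gamma \in (1,2]$, both exponents lie strictly below the Hardy-critical value $\tfrac{d-2}{2}$ (equivalently away from the critical power $-1$ when interpreted as an exponent of $y$ in the one-dimensional Hardy inequality), so Lemma \ref{lemm:Hardy}(ii) applies on $\{y \leq 1\}$ and $\{y \geq 1\}$. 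A partition-of-unity argument between these two regions, combined with the boundedness and strict positivity of $\omega$ on the intermediate compact interval $[\tfrac12, 2]$, yields the desired Hardy inequality; the boundary terms in the integration by parts vanish thanks to the integrability assumption on $f$ and the fact that $g(0)=0$ since $\Lambda Q(0)=0$.

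To recover the gradient bound, I would use the pointwise identity $\py f = \As^* f - \tfrac{d-1+V(y)}{y} f$, noting from \eqref{eq:asympV} that $|d-1+V(y)|$ is uniformly bounded on $(0,\infty)$ (with positive limits $d$ at $0$ and $d-1-\gamma \geq 4$ at $\infty$). Squaring and integrating against $\frac{1}{y^{2i}(1+y^{2\alpha})}$ gives
$$\int \frac{|\py f|^2}{y^{2i}(1+y^{2\alpha})} \;\leq\; 2\int \frac{|\As^* f|^2}{y^{2i}(1+y^{2\alpha})} + C\int \frac{f^2}{y^{2i+2}(1+y^{2\alpha})},$$
and the second term is absorbed via the Hardy step just proved, concluding \eqref{eq:coerAst}.

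The main obstacle I anticipate is verifying case-by-case that the asymptotic exponents of $\omega$ avoid the Hardy-critical value for all three values of $i \in \{0,1,2\}$ and all $\alpha \geq 0$ (the borderline case $\alpha=0$, $i=0$, $d=7$ requires checking $2\gamma = 4 < d-1 = 6$ strictly). A secondary technical point is justifying the integration by parts near $y=0$, where $1/\omega$ has a pole of order $d+1+2i$: this requires showing the boundary contribution $\lim_{y\to 0^+} g(y)^2/(y\,\omega(y))$ vanishes, which follows from the standing assumption $\int \frac{f^2}{y^{2i+2}(1+y^{2\alpha})} < +\infty$ combined with $\Lambda Q(y) \sim y$ near $0$.
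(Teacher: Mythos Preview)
The paper does not prove this lemma directly but refers to Lemma~A.2 in \cite{GINapde18}. Your approach---factorize $\As^*$ through $\Lambda Q$ via the substitution $g=y^{d-1}\Lambda Q\, f$, reduce the zero-order bound to a one-dimensional weighted Hardy inequality, then recover the gradient via the pointwise identity $\py f=\As^*f-\frac{d-1+V}{y}f$---is the standard route and is correct.

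One technical point to clean up: Lemma~\ref{lemm:Hardy} is stated with the radial measure $y^{d-1}dy$, whereas your substitution produces a genuinely one-dimensional integral $\int_0^\infty \frac{|\py g|^2}{\omega(y)}\,dy$ with $\omega(y)=y^{d-1+2i}(1+y^{2\alpha})(\Lambda Q)^2$. So you cannot invoke Lemma~\ref{lemm:Hardy}(ii) verbatim; you need the one-dimensional Hardy inequality $\int|g'|^2 y^\beta\,dy\ge\bigl(\tfrac{\beta-1}{2}\bigr)^2\int g^2 y^{\beta-2}\,dy$ for $\beta\neq 1$. Checking the exponents: near the origin $1/\omega\sim y^{-(d+1+2i)}$ gives $\beta=-(d+1+2i)$ and constant $\bigl(\tfrac{d+2+2i}{2}\bigr)^2>0$; near infinity $1/\omega\sim y^{-(d-1+2i+2\alpha-2\gamma)}$ gives constant $\bigl(\tfrac{d+2i+2\alpha-2\gamma}{2}\bigr)^2$, which is strictly positive for all $\alpha\ge 0$ since $d-2\gamma\ge 7-4=3>0$. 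So the criticality check you anticipated does go through, just with the one-dimensional (not $d$-dimensional) critical value $\beta=1$. The boundary-term and partition-of-unity handling you describe are fine.
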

\begin{proof} See Lemma A.2 in \cite{GINapde18}.
\end{proof}
We also have the following coercivity of $\As$. 
\begin{lemma}[Weight coercivity of $\As$] \label{lemm:coerA} Let $p \geq 0$ and $i = 0, 1, 2$ such that $|2p + 2i - (d - 2 - 2\gamma)| \ne 0$, where $\gamma \in (1,2]$ is defined by \eqref{def:gamome}. 
For all $f \in \mathcal{D}_{rad}$ with 
$$\int \frac{|\py f|^2}{y^{2i}(1 + y^{2p})}  + \int \frac{f^2}{y^{2i + 2}(1 + y^{2p})} < +\infty,$$
and
\begin{equation}\label{cond:A3}
\left<f, \Phi_M\right> = 0 \quad \text{if}\quad 2i + 2p > d - 2\gamma - 2,
\end{equation}
where $\Phi_M$ is defined in \eqref{def:PhiM}, we have 
\begin{equation}\label{eq:coerA}
\int \frac{|\As f|^2}{y^{2i}(1 + y^{2p})} \gtrsim \int \frac{|\py f|^2}{y^{2i}(1 + y^{2p})} + \int \frac{f^2}{y^{2i + 2}(1 + y^{2p})}.
\end{equation}
\end{lemma}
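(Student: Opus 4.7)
The natural substitution is $g = f/\Lambda Q$, so that $\As f = -\Lambda Q\,\partial_y g$. Since $\Lambda Q$ is smooth, positive, and has the asymptotics $\Lambda Q(y)\sim y$ as $y\to 0$ and $\Lambda Q(y)\sim y^{-\gamma}$ as $y\to\infty$ (see \eqref{eq:asymLamQ}), the inequality \eqref{eq:coerA} reduces, modulo controlling $\partial_y f$ by Leibniz's rule, to establishing the weighted Hardy-type estimate
\begin{equation}\label{plan:HardyG}
\int \frac{(\Lambda Q)^2 |\partial_y g|^2}{y^{2i}(1+y^{2p})} \;\gtrsim\; \int \frac{(\Lambda Q)^2\, g^2}{y^{2i+2}(1+y^{2p})}.
\end{equation}
Indeed, the contribution of $(\partial_y \Lambda Q)^2 g^2$ in the expansion of $|\partial_y f|^2$ has, both at $0$ and at $\infty$, exactly the same weight as $(\Lambda Q)^2 g^2/y^2$, so \eqref{plan:HardyG} controls the full right-hand side of \eqref{eq:coerA}.

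The plan is to prove \eqref{plan:HardyG} by splitting $g = \chi\, g + (1-\chi)g$ for a smooth cutoff $\chi$ at some fixed $y\sim 1$ and applying Lemma \ref{lemm:Hardy} on each piece. On the region $y\le 1$, $\Lambda Q\sim y$ so the weighted inequality becomes $\int_{y\le 1} y^{-2i}(\partial_y g)^2 \gtrsim \int_{y\le 1} y^{-2i-2} g^2$, which follows from item $(i)$ of Lemma \ref{lemm:Hardy} applied to $f=yg$, together with a boundary correction at $y=1$ that is absorbed by the far-field estimate. On the region $y\ge 1$, the relevant exponent is $\alpha:= p+i+\gamma$, and one integrates by parts to produce a Hardy inequality with constant $\big(\tfrac{d-2(\alpha+1)}{2}\big)^2 = \tfrac{1}{4}(d-2-2\gamma-2i-2p)^2$; the condition $|2p+2i-(d-2-2\gamma)|\ne 0$ is precisely the non-criticality assumption needed to invoke \eqref{eq:Hardy0} and obtain a positive constant.

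The main obstacle is the supercritical regime $2i+2p > d-2\gamma-2$, where $\alpha < \tfrac{d-2}{2}$ and the signs arising from integration by parts no longer give a positive quadratic form for free: the obstructing direction is $g\equiv\text{const}$, i.e.\ $f\propto \Lambda Q$, which lies in $\ker \As$ but has infinite weighted $L^2$ norm unless a projection is removed. The orthogonality condition $\langle f,\Phi_M\rangle = 0$ is designed exactly to eliminate this direction: by the construction \eqref{def:PhiM}--\eqref{def:ckM} one has $\langle \Phi_M,\Lambda Q\rangle = \langle \chi_M\Lambda Q,\Lambda Q\rangle \gtrsim M^{d-2\gamma}\neq 0$, so the hypothesis $\langle f,\Phi_M\rangle=0$ forbids $f$ from pointing along $\Lambda Q$. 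Quantitatively, the plan is to decompose $f = \mu \,\chi_M \Lambda Q + f^\perp$ with $\mu$ chosen so that $\langle f^\perp,\chi_M\Lambda Q\rangle = 0$; the orthogonality then forces $\mu$ to be controlled by lower-order terms, and for $f^\perp$ one can run a subtraction argument against $\Lambda Q$ to recover the Hardy inequality with positive constant on $y\ge 1$.

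Rather than reproving all of this from scratch, I would invoke the fact that the operators $\As, \As^*$ and the projection direction $\Phi_M$ here coincide with those analyzed in \cite{GINapde18}, and that Lemma \ref{lemm:coerAst} (the adjoint version) has already been established there. The coercivity of $\As$ under the orthogonality condition \eqref{cond:A3} is then proved by the same three-step scheme (substitution $g=f/\Lambda Q$, weighted Hardy on each dyadic annulus, removal of the $\Lambda Q$ obstruction via $\Phi_M$), so one can legitimately refer to the analogous statement in \cite{GINapde18} for the details.
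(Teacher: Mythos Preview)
Your proposal is correct and takes essentially the same approach as the paper: the paper's proof simply reads ``See Lemma A.3 in \cite{GINapde18},'' and your final paragraph arrives at exactly this citation after sketching the underlying mechanism (substitution $g=f/\Lambda Q$, weighted Hardy on each region, removal of the $\Lambda Q$ direction via $\langle f,\Phi_M\rangle=0$). One minor slip: in the supercritical regime $2i+2p>d-2\gamma-2$ you have $\alpha=i+p+\gamma>\tfrac{d-2}{2}$, not $\alpha<\tfrac{d-2}{2}$, but this does not affect the argument.
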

\begin{proof} See Lemma A.3 in \cite{GINapde18}.
\end{proof}
From the coercivity estimates of $\As$ and $\As^*$ given in Lemmas \ref{lemm:coerA} and \ref{lemm:coerAst}, we can turn to the core of this part: the coercivity of the adapted norm $\Es_{\Bbbk}$. In particular, we have the following.
\begin{lemma}[Coercivity of $\Es_\Bbbk$]  \label{lemm:coerEk} Let $L \gg 1$ be an odd integer and $\Bbbk$ be defined as in \eqref{def:kbb}, there exists a constant $c = c(L, M) > 0$ such that for all radially symmetric vector function $\vec q$ satisfying
\begin{equation}
\sum_{k = 0}^{\Bbbk - 1}\int \frac{|(q_1)_k|^2}{y^2(1 + y^{2\Bbbk -2 - 2k})} + \sum_{k = 0}^{\Bbbk - 2}\int \frac{|(q_2)_k|^2}{y^2(1 + y^{2\Bbbk - 4 - 2k)}} < +\infty,
\end{equation}
and 
\begin{equation}\label{ap:ortHk}
\big \langle \vec q, \Hs^{*i}\vec \Phi_M \big \rangle = 0 \quad \text{for} \quad 0 \leq i \leq L,
\end{equation}
there holds:
\begin{equation}
\sum_{k = 0}^{\Bbbk - 1}\int \frac{|(q_1)_k|^2}{y^2(1 + y^{2\Bbbk -2 - 2k})} + \sum_{k = 0}^{\Bbbk - 2}\int \frac{|(q_2)_k|^2}{y^2(1 + y^{2\Bbbk - 4 - 2k)}} \leq c\Es_\Bbbk.
\end{equation}
\end{lemma}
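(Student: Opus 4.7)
The plan is to perform a descending induction on the derivative index, separately for each coordinate $q_1$ and $q_2$, using the weighted coercivity estimates of Lemmas \ref{lemm:coerAst} and \ref{lemm:coerA}.

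First I would decode the orthogonality conditions. Since $\vec\Phi_M = \binom{\Phi_M}{0}$ by \eqref{def:PhiM1}, the matrix expressions \eqref{def:H2k1adj} show that the $L+1$ conditions \eqref{ap:ortHk} split by parity of $i$ into $\langle q_1, \Ls^j\Phi_M\rangle = 0$ (from even $i = 2j$) and $\langle q_2, \Ls^j\Phi_M\rangle = 0$ (from odd $i = 2j+1$), each for $0 \le j \le (L-1)/2$. In terms of the adapted derivatives \eqref{def:adapder}, this is exactly $\langle (q_a)_{2j}, \Phi_M\rangle = 0$ for $a \in \{1,2\}$ and $0 \le 2j \le L-1$, since $(q_a)_{2j} = \Ls^j q_a$.

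Next, for each coordinate I start from the base bound $\|(q_1)_\Bbbk\|^2_{L^2} + \|(q_2)_{\Bbbk-1}\|^2_{L^2} = \Es_\Bbbk$ and descend the index. The crucial recursion is $(q_a)_{k+1} = \As(q_a)_k$ when $k$ is even, and $(q_a)_{k+1} = \As^*(q_a)_k$ when $k$ is odd. At each descent step I invoke Lemma \ref{lemm:coerAst} (if $k$ is odd, no orthogonality needed) or Lemma \ref{lemm:coerA} (if $k = 2j$ is even). In the latter case, the required orthogonality $\langle (q_a)_{2j}, \Phi_M\rangle = 0$ is furnished by the decoding above as long as $2j \le L-1$, which is precisely when it is triggered by the weight threshold, as explained below.

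The main technical obstacle is managing the weights through $\Bbbk = L + \hbar + 1$ descent steps, given that Lemmas \ref{lemm:coerAst} and \ref{lemm:coerA} restrict the origin exponent to $i \in \{0,1,2\}$. I handle this by first iterating with $i$ increasing from $0$ to $2$ over the top three descent steps, then fixing $i = 2$ and advancing the infinity exponent $p$ by $1$ per step thereafter; this yields at step $k$ a weighted $L^2$ bound that dominates the target $\tfrac{1}{y^2(1 + y^{2\Bbbk - 2 - 2k})}$ both near the origin (where we in fact obtain stronger weights) and at infinity (where the exponents match by construction). The compatibility of orthogonality with the threshold $2i + 2p > d - 2\gamma - 2 = 2(\hbar + \delta - 1)$ of Lemma \ref{lemm:coerA} must then be checked: a direct count shows that orthogonality is required at an even-index step $k = 2j$ exactly when $2j \le L$, i.e.\ when $2j \le L-1$ since $L$ is odd, which is precisely the range supplied by our decoded conditions. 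For $2j \ge L+1$ the weight has already decayed past the threshold and no orthogonality is needed. The detailed weight bookkeeping is the same as in Appendix A of \cite{GINapde18}, since the operator $\Ls$, its factorization $\Ls = \As^*\As$, and the direction $\Phi_M$ used here are structurally identical to that setting; only the integer parameters $\Bbbk, \hbar, \delta$ are re-indexed for the wave-maps case, so the argument transfers mutatis mutandis and summing the two coordinate controls yields the claimed bound.
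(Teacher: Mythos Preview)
Your proposal is correct and follows essentially the same approach as the paper: decode the orthogonality \eqref{ap:ortHk} componentwise via \eqref{def:H2k1adj} into $\langle (q_a)_{2j},\Phi_M\rangle=0$ for $0\le 2j\le L-1$, then descend the index one step at a time by alternately applying Lemmas \ref{lemm:coerAst} and \ref{lemm:coerA} with progressively increasing weights $(i,p)$. Your explicit verification that the orthogonality threshold $2i+2p>d-2\gamma-2$ is triggered precisely for $2j\le L-1$ is a welcome addition that the paper leaves implicit.
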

\begin{proof} By \eqref{def:H2k1adj}, we see that the condition \eqref{ap:ortHk} is equivalent to 
\begin{equation}\label{eq:orap}
\big \langle \Ls^iq_1,  \Phi_M \big \rangle = 0 \quad \text{and} \quad \big \langle \Ls^{i}q_2,  \Phi_M \big \rangle = 0 \quad \text{for} \;\; 0 \leq i \leq \frac{L-1}{2}.
\end{equation}

Recall that 
\begin{equation}
\Es_\Bbbk = \int |(q_1)_{\Bbbk}|^2 + \int |(q_2)_{\Bbbk - 1}|^2.
\end{equation}
We will write indifferently $q$ to denote $q_1$ and $q_2$, and try to control the term of the form 
$\int |q_{k}|^2$ with $k = \Bbbk$ or $k = \Bbbk - 1$. Let us rewrite
$$q_k = \As q_{k - 1} \; \text{or}\; q_k = \As^*q_{k-1},$$
and apply Lemma \ref{lemm:coerA} or Lemma \ref{lemm:coerAst} with $i = p = 0$ to find that
$$\int |q_k|^2 \gtrsim \int \frac{|q_{k - 1}|^2}{y^2}.$$
If $k - 1 = 0$, we are done, if not, we repeat this step again by writing 
$$q_{k-1} = \As^* q_{k - 2} \; \text{or}\; q_{k-1} = \As q_{k-2},$$
and so forth. Note that $\frac{1}{y^2} \gtrsim \frac{1}{1 + y^2}$ and that the orthogonal condition in Lemma \ref{lemm:coerA} is fulfilled thanks to \eqref{eq:orap}. Then, applying Lemma \ref{lemm:coerA} or Lemma \ref{lemm:coerAst} with the appropriate values of $i$ and $p$ would give
$$\int |q_k|^2 \gtrsim \int \frac{|q_{k - 1}|^2}{y^2} \gtrsim \cdots \gtrsim \int \frac{|q_1|^2}{y^2(1 + y^{2k - 2})} \gtrsim \int \frac{|q|^2}{y^2(1 + y^{2k - 1})}.$$
This concludes the proof of Lemma \ref{lemm:coerEk}.
\end{proof}

\section{Interpolation bounds.}
We derive in this section interpolation bounds on $\vec q$ which are the consequence of the coercivity property given in Lemma \ref{lemm:coerEk}. We have the following:
\begin{lemma}[Interpolation bounds]\label{lemm:interbounds} Suppose that $\Es_\Bbbk$ and $\Es_\sigma$ satisfy the bootstrap bounds in Definition \ref{def:Skset} and that $\vec q$ satisfies the orthogonal condition \eqref{eq:orthqPhiM}, there holds:\\
$(i)$ Weighted bounds for $\vec q$:
\begin{equation}\label{eq:qmbyE2k}
\int |(q_1)_{\Bbbk}|^2 + \int|(q_2)_{\Bbbk - 1}|^2  + \sum_{i = 0}^{\Bbbk - 1}\int \frac{|(q_1)_i|^2}{y^2(1 + y^{2\Bbbk - 2i - 2})} + \sum_{i = 0}^{\Bbbk - 1}\int \frac{|(q_2)_i|^2}{y^2(1 + y^{2\Bbbk - 2i - 4})} \leq c(M) \Es_{\Bbbk}.
\end{equation}
$(ii)$ Expansion near the origin: for $y < 1$,
\begin{equation}\label{eq:expqat0}
\vec q = \sum_{i = 1}^{\Bbbk}c_i\vec T_{\Bbbk - i} + \vec r, \quad |c_i| \lesssim \sqrt{\Es_\Bbbk},
\end{equation}
where $\vec T_{k}$ is defined as in \eqref{def:Tk} for all $k \in \mathbb{N}$, and $\vec r$ satisfies the bounds
$$\sum_{i = 0}^{\Bbbk - 1}|y^i\py^i r_1|  \lesssim y^{\Bbbk - \frac{d}{2}}\sqrt{\Es_\Bbbk}, \quad  \sum_{i = 0}^{\Bbbk-2}|y^i\py^i r_2|  \lesssim y^{\Bbbk - 1 - \frac{d}{2}}\sqrt{\Es_\Bbbk}.$$
$(iii)$ Weighted bounds for $\py^i \vec q$: 
\begin{equation}\label{eq:weipykq}
\sum_{i = 0}^{\Bbbk}\int \frac{|\py^i q_1|^2}{1 + y^{2\Bbbk - 2i}} + \sum_{i = 0}^{\Bbbk - 1}\int \frac{|\py^i q_2|^2}{1 + y^{2\Bbbk - 2i - 2}}   \leq c(M) \Es_{\Bbbk},
\end{equation}
hence,
\begin{equation}\label{eq:Hkbound}
\|\vec q \,\|_{\dot{H}^\Bbbk \times \dot{H}^{\Bbbk - 1}}^2 \leq c(M)\Es_\Bbbk \quad \text{and} \quad \|\vec q \,\|_{\dot{H}^\beta \times \dot{H}^{\beta - 1}}^2 \leq c(M)\Es_\sigma^\frac{\Bbbk - \beta}{\Bbbk - \sigma}\Es_\Bbbk^\frac{\beta - \sigma}{\Bbbk - \sigma} \quad \text{for} \quad \sigma \leq \beta \leq \Bbbk.
\end{equation}
Moreover, for $j \in \mathbb{N}$ and $p > 0$ satisfying $\sigma \leq j + p \leq \Bbbk$, we have
\begin{equation}\label{eq:interBound}
\int_{y \geq 1} \frac{|\py^j q_1|^2}{y^{2p}} \leq c(M)\Es_\sigma^{\frac{\Bbbk - (j + p)}{\Bbbk - \sigma}}\Es_\Bbbk^{\frac{(j+p) -\sigma}{\Bbbk - \sigma}}.
\end{equation}

\noindent $(iv)$ Weighted $L^\infty$ control: Let $a > 0$ satisfying $\sigma - \frac{d}{2} \leq p \leq \Bbbk -\frac{d}{2}$, then 
\begin{equation}\label{eq:boundLinfq}
\left\|\frac{q_1}{y^p}\right\|^2_{L^\infty(y \geq 1)} \lesssim \Es_\sigma^{\frac{\Bbbk - \frac{d}{2} - p}{\Bbbk - \sigma}}\Es_\Bbbk^{\frac{p - \sigma + \frac{d}{2}}{\Bbbk - \sigma}}.
\end{equation}
Let $j \in \mathbb{N}^*$ and $p \geq 0$ such that $1 \leq j \leq \Bbbk - \frac{d}{2}$ and $0 \leq p \leq \Bbbk - j - \frac{d}{2}$, then 
\begin{equation}\label{eq:boundLinfq2}
\left\|\frac{\py^jq_1}{y^p}\right\|^2_{L^\infty(y \geq 1)} \lesssim \Es_\sigma^{\frac{\Bbbk - j - p - \frac{d}{2}}{\Bbbk - \sigma}}\Es_\Bbbk^{\frac{j + \frac{d}{2} -  \sigma}{\Bbbk - \sigma}\left(1 - \frac{p}{\Bbbk - j - \frac d2}\right) + \frac{p}{\Bbbk - j - \frac d2}}.
\end{equation}
Moreover, if $1\leq j + p \ll L$, then
\begin{equation}\label{eq:boundLinfq1}
\left\|\frac{\py^j q_1}{y^p}\right\|^2_{L^\infty(y\geq 1)} \lesssim  b_1^{2(j + p) + \frac{2\gamma(j + p)}{L} + \Oc\left(\frac{1}{L^2}\right)}\Es_\sigma^{1 + \Oc\left( \frac{1}{L}\right)}.
\end{equation}
\end{lemma}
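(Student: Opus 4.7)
The plan is to establish the four items in order, each reducing to previously stated tools (coercivity, Hardy, factorization of $\Ls$, Sobolev embedding). The orthogonality \eqref{eq:orthqPhiM} coincides with the hypothesis \eqref{ap:ortHk} of Lemma~\ref{lemm:coerEk}, so item $(i)$ is nothing but that lemma applied to $\vec q$. No extra work is needed beyond a matching of notation.

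For item $(ii)$, I would build the expansion iteratively. Using the admissibility of $\vec T_k$ from Lemma~\ref{lemm:GenLk}, each $T_k$ has a Taylor expansion near the origin starting at the appropriate power of $y$. Choose the coefficients $c_i$ so that the partial sum $\sum c_i \vec T_{\Bbbk-i}$ matches the formal Taylor series of $\vec q$ up to order $\Bbbk-1$, then apply the weighted Hardy inequality \eqref{eq:genHardy} to the remainder $\vec r$. The $L^\infty$ bound on $y^i\partial_y^i r$ for $y<1$ follows from the embedding $\dot H^{d/2+\varepsilon}_{\rm rad}\hookrightarrow L^\infty$ combined with item $(i)$; the bound $|c_i|\lesssim\sqrt{\Es_\Bbbk}$ drops out by reading off the Taylor coefficients of $\vec q$ in the $L^2$ weighted norm.

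For item $(iii)$, the bound \eqref{eq:weipykq} follows from \eqref{eq:qmbyE2k} by expressing $\partial_y^k f$ in terms of $f_k=\As^{(*)}\Ls^{\cdots}f$: from the definitions of $\As$ and $\As^{*}$ (Lemma~\ref{lemm:factorL}) together with the asymptotics \eqref{eq:asympV} of $V$, one has schematically $|f_k|^2\sim|\partial_y^k f|^2+\sum_{j<k}|\partial_y^j f|^2/y^{2(k-j)}$, and the lower order terms are reabsorbed via the weighted Hardy inequality \eqref{eq:genHardy}. The estimate \eqref{eq:Hkbound} and the interpolation bound \eqref{eq:interBound} then result from standard Gagliardo-Nirenberg interpolation between $\dot H^\sigma$ and $\dot H^{\Bbbk}$, combined with the bootstrap bounds on $\Es_\sigma$ and $\Es_{\Bbbk}$ from Definition~\ref{def:Skset}.

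For item $(iv)$, the $L^\infty$ estimates rely on the one-dimensional identity
\[
\left|\frac{q_1(y)}{y^p}\right|^2=-\int_y^{+\infty}\partial_\tau\left(\frac{q_1(\tau)}{\tau^p}\right)^2 d\tau,
\]
which via Cauchy-Schwarz reduces \eqref{eq:boundLinfq}, \eqref{eq:boundLinfq2} to weighted $L^2$ bounds of the type \eqref{eq:interBound}. The refined bound \eqref{eq:boundLinfq1}, which isolates the $b_1$-gain, is the subtlest point: one must use both the $\Es_\sigma$ control (providing the small weight in $\sigma-d/2$) and the $\Es_\Bbbk$ control (providing the large number of derivatives), and interpolate them with the right exponents so that the extra logarithmic-in-$L$ exponent $2\gamma(j+p)/L$ appears exactly. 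I expect this to be the main obstacle, because it requires a tight bookkeeping of the exponents coming from the bootstrap bounds $\Es_\Bbbk\lesssim s^{-(2L+2(1-\delta)(1+\eta))}$ and $\Es_\sigma\lesssim s^{-\frac{\ell(2\sigma-d)}{\ell-\gamma}}$, together with $b_1\sim 1/s$, to ensure the loss $C_L\eta$ is cleanly traded against the gain from differentiability; once the interpolation exponent is correctly chosen, the estimate becomes algebraic.
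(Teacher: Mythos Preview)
Your outline for items $(i)$, $(iii)$, and $(iv)$ matches the paper's argument in spirit. Item $(i)$ is indeed just Lemma~\ref{lemm:coerEk}; for $(iii)$ the paper uses the conversion formula $\partial_y^j f=\sum_{i\le j}P_{i,j}f_i$ with $|P_{i,j}|\lesssim y^{-(j-i)}$ (derived from the explicit form of $\As,\As^*$) on $\{y>1\}$, together with the expansion of item $(ii)$ on $\{y<1\}$; and for $(iv)$ the paper uses the radial Hardy inequality \eqref{eq:Hardy1} in place of your one-dimensional identity, then interpolates exactly as you describe. Your expectation about \eqref{eq:boundLinfq1} is correct: it is pure algebra on the exponents, using $|\sigma-\tfrac d2|+\eta=\Oc(1/L^2)$ and the bootstrap size of $\Es_\Bbbk$.

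There is, however, a genuine gap in your approach to item $(ii)$. You propose to ``choose the $c_i$ to match the formal Taylor series of $\vec q$'' and then control the remainder via Hardy plus a Sobolev embedding. This does not work as stated: a priori $\vec q$ is only known to satisfy weighted $L^2$ bounds from $(i)$, so it has no formal Taylor series to match, and neither \eqref{eq:genHardy} nor $\dot H^{d/2+\varepsilon}_{\mathrm{rad}}\hookrightarrow L^\infty$ produces the specific pointwise decay $|y^i\partial_y^i r_1|\lesssim y^{\Bbbk-d/2}\sqrt{\Es_\Bbbk}$ near the origin. The paper instead \emph{constructs} the expansion by integrating back from the top: starting from $(q_1)_\Bbbk\in L^2$, one inverts $\As^*$ and $\As$ step by step using the explicit formulas \eqref{eq:relaAL}. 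At each inversion of $\Ls$ a one-dimensional kernel $\mathrm{span}\{\Lambda Q,\Gamma\}$ appears; the singular element $\Gamma$ is excluded by the finiteness of $\int |(q_1)_m|^2/y^2$ from $(i)$, and the coefficient in front of $\Lambda Q$ (which becomes $c_i$) is fixed and bounded by evaluating at a point $a\in(\tfrac12,1)$ where $|(q_1)_m(a)|^2\lesssim\int_{y<1}|(q_1)_m|^2\lesssim\Es_\Bbbk$. The pointwise remainder bounds then follow by induction from Cauchy--Schwarz applied to the integral representation of $\Ls^{-1}$. This iterative-inversion mechanism is the missing idea; once you have it, the derivative bounds on $r_1$ follow from the chain $\As r_{m+1}=e_{m+1}$, $\Ls r_{m+1}=r_m$, etc., combined with the conversion formula mentioned above.
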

\begin{proof} $(i)$ The estimate \eqref{eq:qmbyE2k} directly follows from Lemma \ref{lemm:coerEk}. 

$(ii)$ Without loss of generality, we assume that $\hbar$ is an even integer so that $\Bbbk = L + \hbar + 1$ is also an even integer. By \eqref{eq:formTk}, the expansion \eqref{eq:expqat0} is equivalent to

\begin{equation}\label{eq:expanq1q2at0}
q_1 = \sum_{i = 1}^{\frac{\Bbbk}{2}}c_{2i}T_{\Bbbk - 2i} + r_1= \sum_{i = 1}^{\frac{\Bbbk}{2}}c_{2i}\phi_{\frac{\Bbbk}{2} - i} + r_1 , \quad q_2 = \sum_{i = 1}^{\frac{\Bbbk}{2} - 1}c_{2i+1}T_{\Bbbk - 2i - 1} + r_2=\sum_{i = 1}^{\frac{\Bbbk}{2} - 1}c_{2i+1}\phi_{\frac \Bbbk 2 - i - 1} + r_2,
\end{equation}
where we recall that $T_{2k} = T_{2k + 1} = \phi_k$ with $\phi_k$ being defined as in \eqref{def:phik}. We only deal with the expansion of $q_1$ because the same proof holds for $q_2$.  We claim that for $1 \leq m \leq \frac{\Bbbk}{2}$, $(q_1)_{\Bbbk - 2m}$ admits the Taylor expansion at the origin
\begin{equation}\label{eq:expandq2k}
(q_1)_{\Bbbk - 2m} = \sum_{i = 1}^{m} c_{i,m}\phi_{m - i} + (r_1)_{m},
\end{equation}
with the bounds 
$$|c_{i,m}| \lesssim \sqrt{\Es_{\Bbbk}},$$
$$\sum_{j = 0}^{2m-1}|\py^j (r_1)_{m}| \lesssim y^{2m - \frac{d}{2} - j}\sqrt{\Es_{\Bbbk}}, \quad \text{for}  \quad y < 1.$$
The expansion \eqref{eq:expqat0} for $q_1$ then follows from \eqref{eq:expandq2k} with $m = \frac{\Bbbk}{2}$.

We proceed by induction in $m$ for the proof of \eqref{eq:expandq2k}. For $m = 1$, we write from the definition \eqref{def:Astar} of $\As^*$,
$$(e_1)_1(y) =  (q_1)_{\Bbbk - 1}(y) = \frac{1}{y^{d-1}\Lambda Q}\int_0^y (q_1)_{\Bbbk} \Lambda Q x^{d-1}dx + \frac{d_1}{y^{d-1}\Lambda Q}.$$
Note from \eqref{eq:qmbyE2k} that $\int \frac{|(q_1)_{\Bbbk - 1}|^2}{y^2} \lesssim \Es_{\Bbbk}$ and from \eqref{eq:asymLamQ} that $\Lambda Q \sim y$ as $y \to 0$, we deduce that $d_1 = 0$. Using the Cauchy-Schwartz inequality, we derive the pointwise estimate
$$|(e_1)_1(y)| \leq \frac{1}{y^d} \left(\int_0^y |(q_1)_{\Bbbk}|^2 x^{d-1}dx\right)^\frac{1}{2}\left(\int_0^y x^2x^{d-1} dx\right)^\frac{1}{2} \lesssim y^{-\frac{d}{2} + 1}\sqrt{\Es_{\Bbbk}} \quad \text{for}\; y < 1.$$ 
We remark from \eqref{eq:qmbyE2k} that there exists $a \in (1/2,1)$ such that 
$$|(e_1)_1(a)| = |(q_1)_{\Bbbk - 1}(a)|^2 \lesssim \int_{y < 1} |(q_1)_{\Bbbk - 1}|^2 \Es_{\Bbbk}.$$
We then define 
$$(r_1)_1(y) = -\Lambda Q \int_a^y \frac{(e_1)_1}{\Lambda Q}dx,$$
and obtain from the pointwise estimate of $(e_1)_1$, 
$$|(r_1)_1(y)| \lesssim  y^{-\frac{d}{2} + 2}\sqrt{\Es_{\Bbbk}} \quad \text{for}\;\; y < 1.$$
By construction and the definition \eqref{def:As} of $\As$, we have 
$$\Ls (r_1)_1 = \As^*(q_1)_{\Bbbk-1} = (q_1)_{\Bbbk} = \Ls (q_1)_{\Bbbk - 2}.$$
Recall that $\text{span}(\Ls) = \{\Lambda Q, \Gamma\}$ where $\Gamma$ admits the singular behavior \eqref{eq:asymGamma}. From \eqref{eq:qmbyE2k}, we have $\int\frac{|(q_1)_{\Bbbk - 2}|^2}{y^2} \lesssim \Es_{\Bbbk} < +\infty$. This implies that there exists $c_{1,1} \in \Rb$ such that 
$$(q_1)_{\Bbbk - 2} = c_{1,1} \Lambda Q + (r_1)_1 = c_{1,1}\phi_0 + (r_1)_1.$$
Moreover, from \eqref{eq:qmbyE2k} there exists $a \in (1/2,1)$ such that
$$|(q_1)_{\Bbbk - 2}(a)|^2 \lesssim \int_{y < 1} |(q_1)_{\Bbbk - 2}|^2\lesssim \Es_{\Bbbk},$$
which follows
$$|c_{1,1}| \lesssim \sqrt{\Es_{2\Bbbk}}, \quad |(q_1)_{\Bbbk - 2}| \lesssim y^{-\frac{d}{2} + 2}\sqrt{\Es_{2\Bbbk}} \quad \text{for}\; y < 1.$$
Since $\As (r_1)_1 = (e_1)_1$, we then write from the definition \eqref{def:As} of $\As$, 
$$|\py (r_1)_1| \lesssim |(e_1)_1| + \left|\frac{(r_1)_1}{y}\right| \lesssim y^{-\frac{d}{2} + 1}\sqrt{\Es_{\Bbbk}} \quad \text{for}\; y < 1.$$
This concludes the proof of \eqref{eq:expandq2k} for $m = 1$.   

We now assume that \eqref{eq:expandq2k} holds for $j = 1, \cdots, m$  for some $m \geq 1$, and prove that \eqref{eq:expandq2k} holds for $j = 1, \cdots, m + 1$. The term $(r_1)_{m + 1}$ is built as follows:
$$(r_1)_{m + 1} = - \Lambda Q\int_a^y \frac{(e_1)_{m + 1}}{\Lambda Q} dx,$$
where $a \in (1/2, 1)$ and 
$$(e_1)_{m + 1} = \frac{1}{y^{d-1}\Lambda Q}\int_0^y (r_1)_{m} \Lambda Q x^{d-1}dx.$$
We now use the induction hypothesis to estimate
\begin{align*}
|(e_1)_{m + 1}| &= \left|\frac{1}{y^{d-1} \Lambda Q}\int_0^y (r_1)_{m} \Lambda Q x^{d-1}dx \right| \\
&\quad \lesssim \frac{1}{y^d}\sqrt{\Es_{\Bbbk}}\int_0^y x^{2m + \frac{d}{2}} dx \lesssim y^{2m - \frac{d}{2} + 1} \sqrt{\Es_{\Bbbk}}.
\end{align*}
Then, we have
\begin{align*}
|(r_1)_{m + 1}| = \left|\Lambda Q\int_a^y \frac{(e_1)_{m +1}}{\Lambda Q} dx\right| &\lesssim y^{2m - \frac{d}{2} + 2} \sqrt{\Es_\Bbbk}.
\end{align*}
By construction, we have 
$$\Ls (r_1)_{m+1} = r_{m}.$$
From the induction hypothesis and the definition \eqref{def:Tk} of $T_k$, we write
$$\Ls (q_1)_{\Bbbk - 2(m + 1)} = (q_1)_{\Bbbk - 2m} = \sum_{i = 1}^m c_{i,m}\phi_{m - i} + (r_1)_{m} = \sum_{i = 1}^mc_{i,m}\Ls \phi_{m + 1 - i} + \Ls r_{m + 1}.$$
The singularity \eqref{eq:asymGamma} of $\Gamma$ at the origin and the bound $\int_{y < 1} \frac{|(q_1)_{\Bbbk - 2(m+1)}|^2}{y^2} \lesssim \Es_{\Bbbk}$ implies that
$$(q_1)_{\Bbbk - 2(m + 1)} = \sum_{i=1}^{m} c_{i,m}\phi_{m +1 - i}+ c_{m + 1, m}\Lambda Q +  (r_1)_{m + 1}.$$
From \eqref{eq:qmbyE2k}, we see that there exists $a \in (1/2,1)$ such that 
$$|(q_1)_{\Bbbk - 2(m+1)}(a)|^2 \lesssim \int_{y < 1}|(q_1)_{\Bbbk - 2(m+1)}|^2 \lesssim \Es_{\Bbbk},$$
from which we derive the bound $|c_{m+1, m}| \lesssim \sqrt{\Es_{\Bbbk}}$. For the estimate on $\py^j (r_1)_{m+1}$, we note that by construction 
$$(r_1)_{m + 2}:= \As (r_1)_{m + 1} = (e_1)_{m+1}, \quad (r_1)_{m + 3}:= \Ls (r_1)_{m + 1} = (r_1)_{m},$$
$$(r_1)_{m + 4}:= \As \Ls (r_1)_{m + 1} = \As (r_1)_m = (e_1)_{m}, \quad (r_1)_{m + 5}:=\Ls^{2}(r_1)_{m + 1} = (r_1)_{m - 1}, \cdots$$
$$(r_1)_{3m + 1}:=\Ls^m (r_1)_{m + 1} = (r_1)_1, \quad  (r_1)_{3m + 2}:=\As\Ls^{m}(r_1)_{m + 1}= (e_1)_1.$$
A brute force computation using the definitions of $\As$ and $\As^*$ and the asymptotic behavior \eqref{eq:asympV}  ensure that for any function $f$, we have
\begin{equation}\label{eq:pyjf}
\py^j f = \sum_{i = 0}^j P_{i,j}f_i \quad \text{with} \quad |P_{i,j}| \lesssim \frac{1}{y^{j - i}}.
\end{equation}
Hence, we have for $0 \leq j \leq 2m + 1$ and $y < 1$,
\begin{align*}
|\py^j r_{m + 1}|&\lesssim \sum_{i = 0}^j \frac{|r_{m + 1 + i}|}{y^{j-i}}\lesssim \sqrt{\Es_{\Bbbk}}\sum_{i = 0}^j \frac{y^{2m + 2 - i - \frac{d}{2}}}{y^{j - i}} \lesssim y^{2m + 2 -\frac{d}{2} - j}\sqrt{\Es_{\Bbbk}}.
\end{align*}
This concludes the proof of \eqref{eq:expandq2k} as well as \eqref{eq:expqat0}.

$(iii)$ We use \eqref{eq:pyjf}, \eqref{eq:qmbyE2k} and the expansion \eqref{eq:expqat0} to estimate
\begin{align*}
&\sum_{i = 0}^{\Bbbk}\int \frac{|\py^i q_1|^2}{1 + y^{2\Bbbk - 2i}} + \sum_{i = 0}^{\Bbbk - 1}\int \frac{|\py^i q_2|^2}{1 + y^{2\Bbbk - 2i - 2}}\\
& \lesssim \Es_{\Bbbk} + \sum_{i = 0}^{\Bbbk - 1} \int_{y < 1} |\py^i q_1|^2 +  \sum_{i = 0}^{\Bbbk - 2} \int_{y < 1} |\py^i q_2|^2  + \sum_{i = 0}^{\Bbbk - 1}\int_{y > 1}\frac{|\py^i(q_1)|^2}{y^{2\Bbbk - 2i}} +\sum_{i = 0}^{\Bbbk - 2}\int_{y > 1}\frac{|\py^i(q_2)|^2}{y^{2\Bbbk - 2i - 2}}\\
&\lesssim \Es_{\Bbbk} + \sum_{i = 0}^{\Bbbk - 1} \sum_{j = 0}^i \int_{y > 1}\frac{|(q_1)_j|^2}{y^{2\Bbbk - 2j}} + \sum_{i = 0}^{\Bbbk - 2} \sum_{j = 0}^i \int_{y > 1}\frac{|(q_2)_j|^2}{y^{2\Bbbk - 2j - 2}} \lesssim \Es_{\Bbbk}, 
\end{align*}
which concludes the proof of \eqref{eq:weipykq}. The estimate \eqref{eq:Hkbound} simply follows from an interpolation. The estimate \eqref{eq:interBound} follows from \eqref{eq:Hardy0} and the interpolation \eqref{eq:Hkbound}. 

$(iv)$ We apply the Hardy inequality \eqref{eq:Hardy1} to $q_1$ with $\alpha = \sigma - 1$, the bound \eqref{eq:interBound} with $j = 1$ and $p  = \sigma  -1$, and the expansion \eqref{eq:expqat0} to find that 
$$\left\|\frac{q_1}{y^{\sigma - \frac{d}{2}}} \right\|^2_{L^\infty(y \geq 1)} \lesssim \int_{y \geq 1}\frac{|\py q_1|^2}{y^{2(\sigma - 1)}} + |q_1(1)|^2 \lesssim \Es_\sigma.$$
Similarly, we have 
$$\left\|\frac{q_1}{y^{\Bbbk - \frac{d}{2}}} \right\|^2_{L^\infty(y \geq 1)} \lesssim \int_{y \geq 1}\frac{|\py q_1|^2}{y^{2(\Bbbk - 1)}} + |q_1(1)|^2 \lesssim \Es_\Bbbk.$$
An interpolation of the two estimates yields the bound \eqref{eq:boundLinfq}. 

For $1 \leq j \leq \Bbbk - \frac{d}{2}$, we have from Sobolev and the bound \eqref{eq:Hkbound},
\begin{equation}\label{eq:boudLinq1k}
\|\nabla^j q_1\|^2_{L^\infty} + \|\nabla^{\frac d2 + j} q_1\|^2_{L^2} \lesssim  \Es_\sigma^{\frac{\Bbbk - j - \frac{d}{2}}{\Bbbk - \sigma}}\Es_\Bbbk^\frac{j + \frac{d}{2} - \sigma}{\Bbbk - \sigma}.
\end{equation}
We apply \eqref{eq:Hardy1} to $\py^{j}q_1$ with $\alpha = \Bbbk - j - 1$, then use \eqref{eq:interBound} and \eqref{eq:expqat0} to estimate
\begin{equation*}
\left\|\frac{\py^j q_1}{y^{\Bbbk - j - \frac{d}{2}}} \right\|^2_{L^\infty(y \geq 1)} \lesssim \int_{y \geq 1} \frac{|\py^{j + 1} q_1|^2}{y^{2\Bbbk - 2j - 2}} + |\py^{j}q_1(1)|^2 \lesssim \Es_\Bbbk.
\end{equation*}
We interpolate for $0 \leq p \leq \Bbbk - j - \frac{d}{2}$, 
\begin{align*}
\left\|\frac{\py^j q_1}{y^{p}} \right\|_{L^\infty(y \geq 1)} &\lesssim \Es_\sigma^{\frac{\Bbbk - j - p - \frac{d}{2}}{\Bbbk - \sigma}}\Es_\Bbbk^{\frac{j + \frac{d}{2} -  \sigma}{\Bbbk - \sigma}\left(1 - \frac{p}{\Bbbk - j - \frac d2}\right) + \frac{p}{\Bbbk - j - \frac d2}}.
\end{align*}
If $1 \leq j  + p\ll L$, then we have 
$$\frac{\Bbbk - j - p - \frac{d}{2}}{\Bbbk - \sigma} = 1 + \Oc\left(\frac{1}{L}\right).$$
Recall from \eqref{def:sigma} and \eqref{def:B0B1} that $|\sigma - d/2| + \eta = \Oc\left(\frac{1}{L^2}\right)$, we compute the exponent
\begin{align*}
&\left[\frac{j + \frac{d}{2} -  \sigma}{\Bbbk - \sigma}\left(1 - \frac{p}{\Bbbk - j - \frac d2}\right) + \frac{p}{\Bbbk - j - \frac d2}\right](2L + 2(1 - \delta)(1 + \eta))\\
& \quad = 2(j + p) + \frac{2\gamma(j + p)}{L} +  \Oc\left(\frac{1}{L^2}\right).
\end{align*}
This yields
$$\left\|\frac{\py^j q_1}{y^{\Bbbk - j - \frac{d}{2}}} \right\|^2_{L^\infty(y \geq 1)} \lesssim \Es_\sigma^{1 + \Oc\left(\frac 1L\right)}b_1^{2(j+p) + \frac{2\gamma(j+p)}{L} + \Oc\left(\frac{1}{L^2}\right)}.$$
This concludes the proof of \eqref{eq:boundLinfq2} and \eqref{eq:boundLinfq1} as well as the proof of Lemma \ref{lemm:interbounds}.
\end{proof}

For the estimates of the linear and commutator terms in derivation of the monotonicity formula \eqref{eq:Es2sLya}, we need the following Leibniz rule for $\Ls^k$ whose proof can be found in \cite{GINapde18}, Lemma C.1:

\begin{lemma}[Leibniz rule for $\Ls^k$] \label{lemm:LeibnizLk}  Let $\phi$ be a smooth function and $k \in \mathbb{N}$, we have 
\begin{equation}\label{eq:LeibnizLk}
\Ls^{k + 1}(\phi f) = \sum_{m = 0}^{k+1}f_{2m}\phi_{2k+2, 2m} + \sum_{m = 0}^k f_{2m + 1}\phi_{2k + 2, 2m + 1},
\end{equation}
and 
\begin{equation}\label{eq:LeibnizALk}
\As\Ls^{k}(\phi f) = \sum_{m = 0}^{k}f_{2m + 1}\phi_{2k+1, 2m + 1} + \sum_{m = 0}^k f_{2m}\phi_{2k+1, 2m},
\end{equation}
where\\
- for $k = 0$,
\begin{align*}
&\phi_{1,0} = -\py \phi, \quad \phi_{1,1} = \phi,\\
&\phi_{2,0} = -\py^2 \phi - \frac{d-1 +2V}{y}\py \phi, \quad \phi_{2,1}= 2\py \phi, \quad \phi_{2,2} = \phi,
\end{align*}
- for $k \geq 1$
\begin{align*}
&\phi_{2k + 1, 0} = -\py \phi_{2k,0},\\
&\phi_{2k + 1, 2i} = - \py \phi_{2k, 2i} - \phi_{2k, 2i - 1}, \quad 1 \leq i \leq k,\\
&\phi_{2k+1, 2i + 1}= \phi_{2k, 2i} + \frac{d-1 + 2V}{y}\phi_{2k, 2i + 1} - \py\phi_{2k, 2i+1}, \quad 0 \leq i \leq k-1,\\
&\phi_{2k + 1, 2k + 1} = \phi_{2k, 2k} = \phi,\\
& \quad\\
&\phi_{2k + 2, 0} = \py \phi_{2k+1,0} + \frac{d-1 + 2V}{y}\phi_{2k+1, 0},\\
&\phi_{2k + 2, 2i} = \phi_{2k + 1, 2i - 1} + \py \phi_{2k + 1, 2i} + \frac{d-1 + 2V}{y}\phi_{2k + 1, 2i},\quad 1 \leq i \leq k,\\
&\phi_{2k+2, 2i + 1}= -\phi_{2k + 1,2i} + \py\phi_{2k+1, 2i + 1}, \quad 0 \leq i \leq k,\\
&\phi_{2k + 2, 2k + 2} = \phi_{2k + 1, 2k + 1} = \phi.
\end{align*}
\end{lemma}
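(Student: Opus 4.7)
}
The plan is to prove the two formulas \eqref{eq:LeibnizLk} and \eqref{eq:LeibnizALk} simultaneously by a double induction on $k$, using the fact that $\As$ and $\As^*$ are first order differential operators so that they satisfy one-term Leibniz rules. Specifically, starting from the explicit formulas \eqref{def:As} and \eqref{def:Astar}, an immediate computation gives, for any smooth radial functions $\phi,g$:
\begin{equation}\label{eq:leibAAst}
\As(\phi g) = \phi \,\As g - \py \phi \cdot g, \qquad \As^*(\phi g) = \phi\,\As^* g + \py\phi\cdot g.
\end{equation}
The base cases ($k=0$ in \eqref{eq:LeibnizALk}, and $k=0,1$ in \eqref{eq:LeibnizLk}) are then obtained by a direct unfolding of these two identities together with $\Ls f = -\py^2 f - \tfrac{d-1}{y}\py f + \tfrac{Z}{y^2}f$; these checks will produce exactly the explicit coefficients $\phi_{1,\cdot}$ and $\phi_{2,\cdot}$ listed in the statement.

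For the inductive step, the key algebraic observation is that $\As$ and $\As^*$ act on the basis $(f_k)_{k\in\mathbb{N}}$ defined in \eqref{def:adapder} via
\begin{equation}\label{eq:Afplanid}
\As f_{2m}=f_{2m+1}, \qquad \As^* f_{2m+1}=f_{2m+2},
\end{equation}
while the \emph{cross} actions are not in the basis but can be reduced to it using $f_{2m+1}=-\py f_{2m}+\tfrac{V}{y}f_{2m}$ and $f_{2m+2}=\py f_{2m+1}+\tfrac{d-1+V}{y}f_{2m+1}$, which yield
\begin{equation}\label{eq:Afcrossid}
\As f_{2m+1} = -f_{2m+2}+\frac{d-1+2V}{y}f_{2m+1}, \qquad \As^* f_{2m} = -f_{2m+1}+\frac{d-1+2V}{y}f_{2m}.
\end{equation}
Assume \eqref{eq:LeibnizLk} holds at level $k$. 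Apply $\As$ termwise using \eqref{eq:leibAAst}; every term of the form $\phi_{2k,2m}f_{2m}$ becomes $\phi_{2k,2m}f_{2m+1}-\py\phi_{2k,2m}\cdot f_{2m}$, and every term $\phi_{2k,2m+1}f_{2m+1}$ becomes, via the second identity in \eqref{eq:Afcrossid}'s first equation, $\phi_{2k,2m+1}\big(-f_{2m+2}+\tfrac{d-1+2V}{y}f_{2m+1}\big)-\py\phi_{2k,2m+1}f_{2m+1}$. Regrouping by the index of $f_i$ and reading off the coefficients produces precisely the recursive definitions of $\phi_{2k+1,i}$ stated in the lemma. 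The step from \eqref{eq:LeibnizALk} at level $k$ to \eqref{eq:LeibnizLk} at level $k+1$ is entirely analogous: apply $\As^*$ termwise, use the second identity in \eqref{eq:leibAAst}, use \eqref{eq:Afplanid}--\eqref{eq:Afcrossid} to rewrite the cross actions, and regroup.

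The main (and only) technical care is bookkeeping: one must keep track of the boundary indices ($m=0$, $m=k$, $m=k+1$) in each sum, because the ranges in \eqref{eq:LeibnizLk} and \eqref{eq:LeibnizALk} differ by one between the even and odd families, and the cross-action identities \eqref{eq:Afcrossid} shift the parity. No analytic difficulty appears: the whole argument is purely algebraic, combining two elementary Leibniz rules with the recursion defining $(f_k)$. The recursive definitions of the $\phi_{k,i}$ given in the statement are in fact \emph{forced} by matching coefficients of the $f_i$ after each application of $\As$ or $\As^*$, so the lemma could equivalently be viewed as a definition together with a consistency check provided by the induction.
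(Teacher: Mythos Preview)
Your proposal is correct and is the natural approach: the paper itself does not give a proof of this lemma but simply refers the reader to Lemma~C.1 of \cite{GINapde18}, where an identical computation is carried out. Your inductive scheme based on the first-order Leibniz rules \eqref{eq:leibAAst} together with the cross-action identities \eqref{eq:Afcrossid} is exactly the expected argument, and your verification that regrouping by the index of $f_i$ reproduces the stated recurrences for $\phi_{2k+1,\cdot}$ (and symmetrically for $\phi_{2k+2,\cdot}$) is the full content of the proof.
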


\def\cprime{$'$}


\begin{thebibliography}{69}
\providecommand{\natexlab}[1]{#1}
\providecommand{\url}[1]{\texttt{#1}}
\expandafter\ifx\csname urlstyle\endcsname\relax
  \providecommand{\doi}[1]{doi: #1}\else
  \providecommand{\doi}{doi: \begingroup \urlstyle{rm}\Url}\fi

\bibitem[Biernat(2015)]{BIEnon2015}
P.~Biernat.
\newblock Non-self-similar blow-up in the heat flow for harmonic maps in higher
  dimensions.
\newblock \emph{Nonlinearity}, 28\penalty0 (1):\penalty0 167--185, 2015.
\newblock ISSN 0951-7715.
\newblock \doi{10.1088/0951-7715/28/1/167}.
\newblock URL \url{http://dx.doi.org/10.1088/0951-7715/28/1/167}.

\bibitem[Biernat and Seki(2016)]{BSarxiv2016}
P.~Biernat and Y.~Seki.
\newblock Type {II} blow-up mechanism for supercritical harmonic map heat flow.
\newblock \emph{arXiv:1601.01831}, 2016.
\newblock URL \url{http://arxiv.org/abs/1601.01831}.

\bibitem[Biernat et~al.(2017)Biernat, Bizo{\'n}, and Maliborski]{BBMnon17}
P.~Biernat, P.~Bizo{\'n}, and M.~Maliborski.
\newblock Threshold for blowup for equivariant wave maps in higher dimensions.
\newblock \emph{Nonlinearity}, 30\penalty0 (4):\penalty0 1513, 2017.
\newblock URL \url{http://stacks.iop.org/0951-7715/30/i=4/a=1513}.

\bibitem[Bizo\'n and Biernat(2015)]{BBcmp15}
P.~Bizo\'n and P.~Biernat.
\newblock Generic self-similar blowup for equivariant wave maps and
  {Y}ang-{M}ills fields in higher dimensions.
\newblock \emph{Comm. Math. Phys.}, 338\penalty0 (3):\penalty0 1443--1450,
  2015.
\newblock ISSN 0010-3616.
\newblock \doi{10.1007/s00220-015-2404-y}.
\newblock URL \url{http://dx.doi.org/10.1007/s00220-015-2404-y}.

\bibitem[Bizo\'n et~al.(2000)Bizo\'n, Chmaj, and Tabor]{BCTnon00}
P.~Bizo\'n, T.~Chmaj, and Z.~Tabor.
\newblock Dispersion and collapse of wave maps.
\newblock \emph{Nonlinearity}, 13\penalty0 (4):\penalty0 1411--1423, 2000.
\newblock ISSN 0951-7715.
\newblock \doi{10.1088/0951-7715/13/4/323}.
\newblock URL \url{http://dx.doi.org/10.1088/0951-7715/13/4/323}.

\bibitem[Bizo\'n et~al.(2001)Bizo\'n, Chmaj, and Tabor]{BCTnon01}
P.~Bizo\'n, T.~Chmaj, and Z.~Tabor.
\newblock Formation of singularities for equivariant {$(2+1)$}-dimensional wave
  maps into the 2-sphere.
\newblock \emph{Nonlinearity}, 14\penalty0 (5):\penalty0 1041--1053, 2001.
\newblock ISSN 0951-7715.
\newblock \doi{10.1088/0951-7715/14/5/308}.
\newblock URL \url{http://dx.doi.org/10.1088/0951-7715/14/5/308}.

\bibitem[Bressan(1990)]{Breiumj90}
A.~Bressan.
\newblock On the asymptotic shape of blow-up.
\newblock \emph{Indiana Univ. Math. J.}, 39\penalty0 (4):\penalty0 947--960,
  1990.
\newblock ISSN 0022-2518.
\newblock \doi{10.1512/iumj.1990.39.39045}.
\newblock URL \url{http://dx.doi.org/10.1512/iumj.1990.39.39045}.

\bibitem[Bressan(1992)]{Brejde92}
A.~Bressan.
\newblock Stable blow-up patterns.
\newblock \emph{J. Differential Equations}, 98\penalty0 (1):\penalty0 57--75,
  1992.
\newblock ISSN 0022-0396.
\newblock \doi{10.1016/0022-0396(92)90104-U}.
\newblock URL \url{http://dx.doi.org/10.1016/0022-0396(92)90104-U}.

\bibitem[Bricmont and Kupiainen(1994)]{BKnon94}
J.~Bricmont and A.~Kupiainen.
\newblock Universality in blow-up for nonlinear heat equations.
\newblock \emph{Nonlinearity}, 7\penalty0 (2):\penalty0 539--575, 1994.
\newblock ISSN 0951-7715.
\newblock URL \url{http://stacks.iop.org/0951-7715/7/539}.

\bibitem[C.(2016{\natexlab{a}})]{Car16}
Collot C.
\newblock Non radial type {II} blow up for the energy supercritical semilinear
  heat equation.
\newblock \emph{arXiv:1604.02856}, 2016{\natexlab{a}}.
\newblock URL \url{http://arxiv.org/abs/1604.02856}.

\bibitem[C.(2016{\natexlab{b}})]{Car161}
Collot C.
\newblock Type {II} blow up manifolds for the energy supercritical wave
  equation.
\newblock \emph{arXiv:1407.4525}, 2016{\natexlab{b}}.
\newblock URL \url{http://arxiv.org/abs/1407.4525}.

\bibitem[C\^arstea(2010)]{Ccmp10}
C\u at\u alin~I. C\^arstea.
\newblock A construction of blow up solutions for co-rotational wave maps.
\newblock \emph{Comm. Math. Phys.}, 300\penalty0 (2):\penalty0 487--528, 2010.
\newblock ISSN 0010-3616.
\newblock \doi{10.1007/s00220-010-1118-4}.
\newblock URL \url{http://dx.doi.org/10.1007/s00220-010-1118-4}.

\bibitem[Cazenave et~al.(1998)Cazenave, Shatah, and Tahvildar-Zadeh]{CSTihp98}
T.~Cazenave, J.~Shatah, and A.~S. Tahvildar-Zadeh.
\newblock Harmonic maps of the hyperbolic space and development of
  singularities in wave maps and {Y}ang-{M}ills fields.
\newblock \emph{Ann. Inst. H. Poincar\'e Phys. Th\'eor.}, 68\penalty0
  (3):\penalty0 315--349, 1998.
\newblock ISSN 0246-0211.
\newblock URL \url{http://www.numdam.org/item?id=AIHPA_1998__68_3_315_0}.

\bibitem[Chatzikaleas et~al.()Chatzikaleas, Donninger, and Glogi\'c]{CDGar17}
A.~Chatzikaleas, R.~Donninger, and I.~Glogi\'c.
\newblock On blowup of co-rotational wave maps in odd space dimensions.
\newblock \emph{arXiv:1701.05082}.
\newblock URL \url{https://arxiv.org/abs/1701.05082}.

\bibitem[Collot et~al.()Collot, Ghoul, and Masmoudi]{CGMarx18}
C.~Collot, T.E. Ghoul, and N.~Masmoudi.
\newblock Singularity formation for {B}urgers equation with transverse
  viscosity.
\newblock \emph{arXiv:1803.07826}.
\newblock URL \url{https://arxiv.org/abs/1803.07826}.

\bibitem[Costin et~al.(2016)Costin, Donninger, and Xia]{CDXnon16}
O.~Costin, R.~Donninger, and X.~Xia.
\newblock A proof for the mode stability of a self-similar wave map.
\newblock \emph{Nonlinearity}, 29\penalty0 (8):\penalty0 2451--2473, 2016.
\newblock ISSN 0951-7715.
\newblock \doi{10.1088/0951-7715/29/8/2451}.
\newblock URL \url{http://dx.doi.org/10.1088/0951-7715/29/8/2451}.

\bibitem[C{\^o}te and Zaag(2013)]{CZcpam13}
R.~C{\^o}te and H.~Zaag.
\newblock Construction of a multisoliton blowup solution to the semilinear wave
  equation in one space dimension.
\newblock \emph{Comm. Pure Appl. Math.}, 66\penalty0 (10):\penalty0 1541--1581,
  2013.
\newblock ISSN 0010-3640.
\newblock \doi{10.1002/cpa.21452}.
\newblock URL \url{http://dx.doi.org/10.1002/cpa.21452}.

\bibitem[C\^ote et~al.(2015{\natexlab{a}})C\^ote, Kenig, Lawrie, and
  Schlag]{CKLSajm15I}
R.~C\^ote, C.~E. Kenig, A.~Lawrie, and W.~Schlag.
\newblock Characterization of large energy solutions of the equivariant wave
  map problem: {I}.
\newblock \emph{Amer. J. Math.}, 137\penalty0 (1):\penalty0 139--207,
  2015{\natexlab{a}}.
\newblock ISSN 0002-9327.
\newblock \doi{10.1353/ajm.2015.0002}.
\newblock URL \url{http://dx.doi.org/10.1353/ajm.2015.0002}.

\bibitem[C\^ote et~al.(2015{\natexlab{b}})C\^ote, Kenig, Lawrie, and
  Schlag]{CKLSajm15II}
R.~C\^ote, C.~E. Kenig, A.~Lawrie, and W.~Schlag.
\newblock Characterization of large energy solutions of the equivariant wave
  map problem: {II}.
\newblock \emph{Amer. J. Math.}, 137\penalty0 (1):\penalty0 209--250,
  2015{\natexlab{b}}.
\newblock ISSN 0002-9327.
\newblock \doi{10.1353/ajm.2015.0003}.
\newblock URL \url{http://dx.doi.org/10.1353/ajm.2015.0003}.

\bibitem[Donninger(2011)]{Dcpam11}
R.~Donninger.
\newblock On stable self-similar blowup for equivariant wave maps.
\newblock \emph{Comm. Pure Appl. Math.}, 64\penalty0 (8):\penalty0 1095--1147,
  2011.
\newblock ISSN 0010-3640.
\newblock \doi{10.1002/cpa.20366}.
\newblock URL \url{http://dx.doi.org/10.1002/cpa.20366}.

\bibitem[Donninger et~al.(2012)Donninger, Sch\"orkhuber, and
  Aichelburg]{DSAihp12}
R.~Donninger, B.~Sch\"orkhuber, and P.~C. Aichelburg.
\newblock On stable self-similar blow up for equivariant wave maps: the
  linearized problem.
\newblock \emph{Ann. Henri Poincar\'e}, 13\penalty0 (1):\penalty0 103--144,
  2012.
\newblock ISSN 1424-0637.
\newblock \doi{10.1007/s00023-011-0125-0}.
\newblock URL \url{http://dx.doi.org/10.1007/s00023-011-0125-0}.

\bibitem[Donninger et~al.(2014)Donninger, Huang, Krieger, and
  Schlag]{DHKSmmj14}
R.~Donninger, M.~Huang, J.~Krieger, and W.~Schlag.
\newblock Exotic blowup solutions for the {$u^5$} focusing wave equation in
  {$\Bbb{R}^3$}.
\newblock \emph{Michigan Math. J.}, 63\penalty0 (3):\penalty0 451--501, 2014.
\newblock ISSN 0026-2285.
\newblock \doi{10.1307/mmj/1409932630}.
\newblock URL \url{https://doi.org/10.1307/mmj/1409932630}.

\bibitem[Duong et~al.(2018)Duong, Nguyen, and Zaag]{DNZtjm18}
G.~K. Duong, V.~T. Nguyen, and H.~Zaag.
\newblock Construction of a stable blowup solution with a prescribed behavior
  for a non-scaling invariant semilinear heat equation.
\newblock \emph{Tunisian Journal of Mathematics (to appear)}, 2018.
\newblock URL \url{https://arxiv.org/abs/1704.08580}.

\bibitem[Duyckaerts et~al.(2013)Duyckaerts, Kenig, and Merle]{DKMcjm13}
T.~Duyckaerts, C.~Kenig, and F.~Merle.
\newblock Classification of radial solutions of the focusing, energy-critical
  wave equation.
\newblock \emph{Camb. J. Math.}, 1\penalty0 (1):\penalty0 75--144, 2013.
\newblock ISSN 2168-0930.
\newblock \doi{10.4310/CJM.2013.v1.n1.a3}.
\newblock URL \url{http://dx.doi.org/10.4310/CJM.2013.v1.n1.a3}.

\bibitem[Ghoul and Masmoudi(2016)]{GMarx16}
T.~Ghoul and N.~Masmoudi.
\newblock Stability of infinite time blow up for the {P}atlak {K}eller {S}egel
  system.
\newblock \emph{arXiv:1610.00456}, 2016.

\bibitem[Ghoul et~al.(2017)Ghoul, Nguyen, and Zaag]{GNZjde17}
T.~Ghoul, V.~T. Nguyen, and H.~Zaag.
\newblock Blowup solutions for a nonlinear heat equation involving a critical
  power nonlinear gradient term.
\newblock \emph{J. Differential Equations}, 263\penalty0 (8):\penalty0 4517 --
  4564, 2017.
\newblock ISSN 0022-0396.
\newblock \doi{http://dx.doi.org/10.1016/j.jde.2017.05.023}.
\newblock URL
  \url{http://www.sciencedirect.com/science/article/pii/S0022039617302838}.

\bibitem[Ghoul et~al.(2018{\natexlab{a}})Ghoul, Ibrahim, and Nguyen]{GINapde18}
T.~Ghoul, S.~Ibrahim, and V.~T. Nguyen.
\newblock On the stability of type {II} blowup for the 1-corotational energy
  supercritical harmonic heat flow.
\newblock \emph{Anal. PDE (to appear)}, 2018{\natexlab{a}}.
\newblock URL \url{https://arxiv.org/abs/1611.08877}.

\bibitem[Ghoul et~al.(2018{\natexlab{b}})Ghoul, Nguyen, and Zaag]{GNZihp18}
T.~Ghoul, V.~T. Nguyen, and H.~Zaag.
\newblock Construction and stability of blowup solutions for a non-variational
  parabolic system.
\newblock \emph{Ann. Inst. H. Poincar\'e Anal. Non Lin\'eaire, to appear},
  2018{\natexlab{b}}.
\newblock URL \url{https://arxiv.org/abs/1610.09883}.

\bibitem[Ghoul et~al.(2018{\natexlab{c}})Ghoul, Nguyen, and Zaag]{GNZjde18}
T.~Ghoul, V.~T. Nguyen, and H.~Zaag.
\newblock Blowup solutions for a reaction-diffusion system with exponential
  nonlinearities.
\newblock \emph{J. Differential Equations, to appear}, 2018{\natexlab{c}}.
\newblock URL \url{https://arxiv.org/abs/1707.08447}.

\bibitem[Hillairet and Rapha{\"e}l(2012)]{HRapde12}
M.~Hillairet and P.~Rapha{\"e}l.
\newblock Smooth type {II} blow-up solutions to the four-dimensional
  energy-critical wave equation.
\newblock \emph{Anal. PDE}, 5\penalty0 (4):\penalty0 777--829, 2012.
\newblock ISSN 2157-5045.
\newblock \doi{10.2140/apde.2012.5.777}.
\newblock URL \url{http://dx.doi.org/10.2140/apde.2012.5.777}.

\bibitem[Isenberg and Liebling(2002)]{ILjmp02}
J.~Isenberg and Steven~L. Liebling.
\newblock Singularity formation in {$2+1$} wave maps.
\newblock \emph{J. Math. Phys.}, 43\penalty0 (1):\penalty0 678--683, 2002.
\newblock ISSN 0022-2488.
\newblock \doi{10.1063/1.1418717}.
\newblock URL \url{http://dx.doi.org/10.1063/1.1418717}.

\bibitem[Keel and Tao(1998)]{KTimrn98}
M.~Keel and T.~Tao.
\newblock Local and global well-posedness of wave maps on {$\bold R^{1+1}$} for
  rough data.
\newblock \emph{Internat. Math. Res. Notices}, \penalty0 (21):\penalty0
  1117--1156, 1998.
\newblock ISSN 1073-7928.
\newblock \doi{10.1155/S107379289800066X}.
\newblock URL \url{http://dx.doi.org/10.1155/S107379289800066X}.

\bibitem[Klainerman and Machedon(1995)]{KMdm95}
S.~Klainerman and M.~Machedon.
\newblock Smoothing estimates for null forms and applications.
\newblock \emph{Duke Math. J.}, 81\penalty0 (1):\penalty0 99--133 (1996), 1995.
\newblock ISSN 0012-7094.
\newblock \doi{10.1215/S0012-7094-95-08109-5}.
\newblock URL \url{http://dx.doi.org/10.1215/S0012-7094-95-08109-5}.
\newblock A celebration of John F. Nash, Jr.

\bibitem[Klainerman and Selberg(1997)]{KScpde97}
S.~Klainerman and S.~Selberg.
\newblock Remark on the optimal regularity for equations of wave maps type.
\newblock \emph{Comm. Partial Differential Equations}, 22\penalty0
  (5-6):\penalty0 901--918, 1997.
\newblock ISSN 0360-5302.
\newblock \doi{10.1080/03605309708821288}.
\newblock URL \url{http://dx.doi.org/10.1080/03605309708821288}.

\bibitem[Krieger(2008)]{Ksdg08}
J.~Krieger.
\newblock Global regularity and singularity development for wave maps.
\newblock In \emph{Surveys in differential geometry. {V}ol. {XII}. {G}eometric
  flows}, volume~12 of \emph{Surv. Differ. Geom.}, pages 167--201. Int. Press,
  Somerville, MA, 2008.
\newblock \doi{10.4310/SDG.2007.v12.n1.a5}.
\newblock URL \url{http://dx.doi.org/10.4310/SDG.2007.v12.n1.a5}.

\bibitem[Krieger et~al.(2008)Krieger, Schlag, and Tataru]{KSTim08}
J.~Krieger, W.~Schlag, and D.~Tataru.
\newblock Renormalization and blow up for charge one equivariant critical wave
  maps.
\newblock \emph{Invent. Math.}, 171\penalty0 (3):\penalty0 543--615, 2008.
\newblock ISSN 0020-9910.
\newblock \doi{10.1007/s00222-007-0089-3}.
\newblock URL \url{http://dx.doi.org/10.1007/s00222-007-0089-3}.

\bibitem[Krieger et~al.(2009{\natexlab{a}})Krieger, Schlag, and
  Tataru]{KSTam09}
J.~Krieger, W.~Schlag, and D.~Tataru.
\newblock Renormalization and blow up for the critical {Y}ang-{M}ills problem.
\newblock \emph{Adv. Math.}, 221\penalty0 (5):\penalty0 1445--1521,
  2009{\natexlab{a}}.
\newblock ISSN 0001-8708.
\newblock \doi{10.1016/j.aim.2009.02.017}.
\newblock URL \url{https://doi.org/10.1016/j.aim.2009.02.017}.

\bibitem[Krieger et~al.(2009{\natexlab{b}})Krieger, Schlag, and
  Tataru]{KSTduke09}
J.~Krieger, W.~Schlag, and D.~Tataru.
\newblock Slow blow-up solutions for the {$H^1(\Bbb R^3)$} critical focusing
  semilinear wave equation.
\newblock \emph{Duke Math. J.}, 147\penalty0 (1):\penalty0 1--53,
  2009{\natexlab{b}}.
\newblock ISSN 0012-7094.
\newblock \doi{10.1215/00127094-2009-005}.
\newblock URL \url{https://doi.org/10.1215/00127094-2009-005}.

\bibitem[Martel et~al.(2014)Martel, Merle, and Rapha{\"e}l]{MMRam14}
Y.~Martel, F.~Merle, and P.~Rapha{\"e}l.
\newblock Blow up for the critical generalized {K}orteweg--de {V}ries equation.
  {I}: {D}ynamics near the soliton.
\newblock \emph{Acta Math.}, 212\penalty0 (1):\penalty0 59--140, 2014.
\newblock ISSN 0001-5962.
\newblock \doi{10.1007/s11511-014-0109-2}.
\newblock URL \url{http://dx.doi.org/10.1007/s11511-014-0109-2}.

\bibitem[Martel et~al.(2015{\natexlab{a}})Martel, Merle, and
  Rapha{\"e}l]{MMRasp15}
Y.~Martel, F.~Merle, and P.~Rapha{\"e}l.
\newblock Blow up for the critical g{K}d{V} equation {III}: exotic regimes.
\newblock \emph{Ann. Sc. Norm. Super. Pisa Cl. Sci. (5)}, 14\penalty0
  (2):\penalty0 575--631, 2015{\natexlab{a}}.
\newblock ISSN 0391-173X.

\bibitem[Martel et~al.(2015{\natexlab{b}})Martel, Merle, and
  Rapha{\"e}l]{MMRjems15}
Y.~Martel, F.~Merle, and P.~Rapha{\"e}l.
\newblock Blow up for the critical g{K}d{V} equation. {II}: {M}inimal mass
  dynamics.
\newblock \emph{J. Eur. Math. Soc. (JEMS)}, 17\penalty0 (8):\penalty0
  1855--1925, 2015{\natexlab{b}}.
\newblock ISSN 1435-9855.
\newblock \doi{10.4171/JEMS/547}.
\newblock URL \url{http://dx.doi.org/10.4171/JEMS/547}.

\bibitem[Masmoudi and Zaag(2008)]{MZjfa08}
N.~Masmoudi and H.~Zaag.
\newblock Blow-up profile for the complex {G}inzburg-{L}andau equation.
\newblock \emph{J. Funct. Anal.}, 255\penalty0 (7):\penalty0 1613--1666, 2008.
\newblock ISSN 0022-1236.
\newblock \doi{10.1016/j.jfa.2008.03.008}.
\newblock URL \url{http://dx.doi.org/10.1016/j.jfa.2008.03.008}.

\bibitem[Merle and Raphael(2003)]{MRgfa03}
F.~Merle and P.~Raphael.
\newblock Sharp upper bound on the blow-up rate for the critical nonlinear
  {S}chr\"odinger equation.
\newblock \emph{Geom. Funct. Anal.}, 13\penalty0 (3):\penalty0 591--642, 2003.
\newblock ISSN 1016-443X.
\newblock \doi{10.1007/s00039-003-0424-9}.
\newblock URL \url{http://dx.doi.org/10.1007/s00039-003-0424-9}.

\bibitem[Merle and Raphael(2004)]{MRim04}
F.~Merle and P.~Raphael.
\newblock On universality of blow-up profile for {$L^2$} critical nonlinear
  {S}chr\"odinger equation.
\newblock \emph{Invent. Math.}, 156\penalty0 (3):\penalty0 565--672, 2004.
\newblock ISSN 0020-9910.
\newblock \doi{10.1007/s00222-003-0346-z}.
\newblock URL \url{http://dx.doi.org/10.1007/s00222-003-0346-z}.

\bibitem[Merle and Raphael(2005{\natexlab{a}})]{MRam05}
F.~Merle and P.~Raphael.
\newblock The blow-up dynamic and upper bound on the blow-up rate for critical
  nonlinear {S}chr\"odinger equation.
\newblock \emph{Ann. of Math. (2)}, 161\penalty0 (1):\penalty0 157--222,
  2005{\natexlab{a}}.
\newblock ISSN 0003-486X.
\newblock \doi{10.4007/annals.2005.161.157}.
\newblock URL \url{http://dx.doi.org/10.4007/annals.2005.161.157}.

\bibitem[Merle and Raphael(2005{\natexlab{b}})]{MRcmp05}
F.~Merle and P.~Raphael.
\newblock Profiles and quantization of the blow up mass for critical nonlinear
  {S}chr\"odinger equation.
\newblock \emph{Comm. Math. Phys.}, 253\penalty0 (3):\penalty0 675--704,
  2005{\natexlab{b}}.
\newblock ISSN 0010-3616.
\newblock \doi{10.1007/s00220-004-1198-0}.
\newblock URL \url{http://dx.doi.org/10.1007/s00220-004-1198-0}.

\bibitem[Merle and Zaag(1997)]{MZdm97}
F.~Merle and H.~Zaag.
\newblock Stability of the blow-up profile for equations of the type
  {$u_t=\Delta u+\vert u\vert ^{p-1}u$}.
\newblock \emph{Duke Math. J.}, 86\penalty0 (1):\penalty0 143--195, 1997.
\newblock ISSN 0012-7094.
\newblock \doi{10.1215/S0012-7094-97-08605-1}.
\newblock URL \url{http://dx.doi.org/10.1215/S0012-7094-97-08605-1}.

\bibitem[Merle et~al.(2013)Merle, Rapha{\"e}l, and Rodnianski]{MRRim13}
F.~Merle, P.~Rapha{\"e}l, and I.~Rodnianski.
\newblock Blowup dynamics for smooth data equivariant solutions to the critical
  {S}chr\"odinger map problem.
\newblock \emph{Invent. Math.}, 193\penalty0 (2):\penalty0 249--365, 2013.
\newblock ISSN 0020-9910.
\newblock \doi{10.1007/s00222-012-0427-y}.
\newblock URL \url{http://dx.doi.org/10.1007/s00222-012-0427-y}.

\bibitem[Merle et~al.(2015)Merle, Rapha{\"e}l, and Rodnianski]{MRRcjm15}
F.~Merle, P.~Rapha{\"e}l, and I.~Rodnianski.
\newblock Type {II} blow up for the energy supercritical {NLS}.
\newblock \emph{Camb. Jour. Math.}, 3\penalty0 (4):\penalty0 439 -- 617, 2015.
\newblock \doi{10.4310/CJM.2015.v3.n4.a1}.
\newblock URL \url{http://dx.doi.org/10.4310/CJM.2015.v3.n4.a1}.

\bibitem[Nguyen and Zaag(2016)]{NZsns16}
V.~T. Nguyen and H.~Zaag.
\newblock Construction of a stable blow-up solution for a class of strongly
  perturbed semilinear heat equations.
\newblock \emph{Ann. Scuola Norm. Sup. Pisa Cl. Sci.}, 16\penalty0
  (4):\penalty0 1275--1314, 2016.
\newblock \doi{10.2422/2036-2145.201412_001}.
\newblock URL \url{http://dx.doi.org/10.2422/2036-2145.201412_001}.

\bibitem[Nguyen and Zaag(2017)]{NZens16}
V.~T. Nguyen and H.~Zaag.
\newblock Finite degrees of freedom for the refined blow-up profile for a
  semilinear heat equation.
\newblock \emph{Ann. Scient. {\'E}c. Norm. Sup.}, 50\penalty0 (4):\penalty0
  1241--1282, 2017.
\newblock URL
  \url{http://smf4.emath.fr/Publications/AnnalesENS/4_50/html/ens_ann-sc_50_1241-1282.php}.

\bibitem[Nouaili and Zaag(2015)]{NZcpde15}
N.~Nouaili and H.~Zaag.
\newblock Profile for a simultaneously blowing up solution to a complex valued
  semilinear heat equation.
\newblock \emph{Comm. Partial Differential Equations}, 40\penalty0
  (7):\penalty0 1197--1217, 2015.
\newblock ISSN 0360-5302.
\newblock \doi{10.1080/03605302.2015.1018997}.
\newblock URL \url{http://dx.doi.org/10.1080/03605302.2015.1018997}.

\bibitem[Rapha{\"e}l and Rodnianski(2012)]{RRmihes12}
P.~Rapha{\"e}l and I.~Rodnianski.
\newblock Stable blow up dynamics for the critical co-rotational wave maps and
  equivariant {Y}ang-{M}ills problems.
\newblock \emph{Publ. Math. Inst. Hautes \'Etudes Sci.}, pages 1--122, 2012.
\newblock ISSN 0073-8301.
\newblock \doi{10.1007/s10240-011-0037-z}.
\newblock URL \url{http://dx.doi.org/10.1007/s10240-011-0037-z}.

\bibitem[Rapha{\"e}l and Schweyer(2013)]{RScpam13}
P.~Rapha{\"e}l and R.~Schweyer.
\newblock Stable blowup dynamics for the 1-corotational energy critical
  harmonic heat flow.
\newblock \emph{Comm. Pure Appl. Math.}, 66\penalty0 (3):\penalty0 414--480,
  2013.
\newblock ISSN 0010-3640.
\newblock \doi{10.1002/cpa.21435}.
\newblock URL \url{http://dx.doi.org/10.1002/cpa.21435}.

\bibitem[Rapha{\"e}l and Schweyer(2014{\natexlab{a}})]{RSapde2014}
P.~Rapha{\"e}l and R.~Schweyer.
\newblock Quantized slow blow-up dynamics for the corotational energy-critical
  harmonic heat flow.
\newblock \emph{Anal. PDE}, 7\penalty0 (8):\penalty0 1713--1805,
  2014{\natexlab{a}}.
\newblock ISSN 2157-5045.
\newblock \doi{10.2140/apde.2014.7.1713}.
\newblock URL \url{http://dx.doi.org/10.2140/apde.2014.7.1713}.

\bibitem[Rapha{\"e}l and Schweyer(2014{\natexlab{b}})]{RSma14}
P.~Rapha{\"e}l and R.~Schweyer.
\newblock On the stability of critical chemotactic aggregation.
\newblock \emph{Math. Ann.}, 359\penalty0 (1-2):\penalty0 267--377,
  2014{\natexlab{b}}.
\newblock ISSN 0025-5831.
\newblock \doi{10.1007/s00208-013-1002-6}.
\newblock URL \url{http://dx.doi.org/10.1007/s00208-013-1002-6}.

\bibitem[Rodnianski and Sterbenz(2010)]{RSam10}
I.~Rodnianski and J.~Sterbenz.
\newblock On the formation of singularities in the critical {${\rm O}(3)$}
  {$\sigma$}-model.
\newblock \emph{Ann. of Math. (2)}, 172\penalty0 (1):\penalty0 187--242, 2010.
\newblock ISSN 0003-486X.
\newblock \doi{10.4007/annals.2010.172.187}.
\newblock URL \url{http://dx.doi.org/10.4007/annals.2010.172.187}.

\bibitem[Schweyer(2012)]{Sjfa12}
R.~Schweyer.
\newblock Type {II} blow-up for the four dimensional energy critical semi
  linear heat equation.
\newblock \emph{J. Funct. Anal.}, 263\penalty0 (12):\penalty0 3922--3983, 2012.
\newblock ISSN 0022-1236.
\newblock \doi{10.1016/j.jfa.2012.09.015}.
\newblock URL \url{http://dx.doi.org/10.1016/j.jfa.2012.09.015}.

\bibitem[Shatah(1988)]{Scpam88}
J.~Shatah.
\newblock Weak solutions and development of singularities of the {${\rm
  SU}(2)$} {$\sigma$}-model.
\newblock \emph{Comm. Pure Appl. Math.}, 41\penalty0 (4):\penalty0 459--469,
  1988.
\newblock ISSN 0010-3640.
\newblock \doi{10.1002/cpa.3160410405}.
\newblock URL \url{http://dx.doi.org/10.1002/cpa.3160410405}.

\bibitem[Shatah and Struwe(1998)]{SSbook98}
J.~Shatah and M.~Struwe.
\newblock \emph{Geometric wave equations}, volume~2 of \emph{Courant Lecture
  Notes in Mathematics}.
\newblock New York University, Courant Institute of Mathematical Sciences, New
  York; American Mathematical Society, Providence, RI, 1998.
\newblock ISBN 0-9658703-1-6; 0-8218-2749-9.

\bibitem[Shatah and Struwe(2002)]{SSimrn02}
J.~Shatah and M.~Struwe.
\newblock The {C}auchy problem for wave maps.
\newblock \emph{Int. Math. Res. Not.}, \penalty0 (11):\penalty0 555--571, 2002.
\newblock ISSN 1073-7928.
\newblock \doi{10.1155/S1073792802109044}.
\newblock URL \url{http://dx.doi.org/10.1155/S1073792802109044}.

\bibitem[Shatah and Tahvildar-Zadeh(1994)]{SScpam94}
J.~Shatah and A.~Shadi Tahvildar-Zadeh.
\newblock On the {C}auchy problem for equivariant wave maps.
\newblock \emph{Comm. Pure Appl. Math.}, 47\penalty0 (5):\penalty0 719--754,
  1994.
\newblock ISSN 0010-3640.
\newblock \doi{10.1002/cpa.3160470507}.
\newblock URL \url{http://dx.doi.org/10.1002/cpa.3160470507}.

\bibitem[Struwe(1997)]{Sndea97}
M.~Struwe.
\newblock Wave maps.
\newblock In \emph{Nonlinear partial differential equations in geometry and
  physics ({K}noxville, {TN}, 1995)}, volume~29 of \emph{Progr. Nonlinear
  Differential Equations Appl.}, pages 113--153. Birkh\"auser, Basel, 1997.

\bibitem[Struwe(2002)]{Smz02}
M.~Struwe.
\newblock Radially symmetric wave maps from {$(1+2)$}-dimensional {M}inkowski
  space to the sphere.
\newblock \emph{Math. Z.}, 242\penalty0 (3):\penalty0 407--414, 2002.
\newblock ISSN 0025-5874.
\newblock \doi{10.1007/s002090100345}.
\newblock URL \url{http://dx.doi.org/10.1007/s002090100345}.

\bibitem[Struwe(2003)]{Scpam03}
Michael Struwe.
\newblock Equivariant wave maps in two space dimensions.
\newblock \emph{Comm. Pure Appl. Math.}, 56\penalty0 (7):\penalty0 815--823,
  2003.
\newblock ISSN 0010-3640.
\newblock \doi{10.1002/cpa.10074}.
\newblock URL \url{http://dx.doi.org/10.1002/cpa.10074}.
\newblock Dedicated to the memory of J\"urgen K. Moser.

\bibitem[Tataru(1998)]{Tcpde98}
D.~Tataru.
\newblock Local and global results for wave maps. {I}.
\newblock \emph{Comm. Partial Differential Equations}, 23\penalty0
  (9-10):\penalty0 1781--1793, 1998.
\newblock ISSN 0360-5302.
\newblock \doi{10.1080/03605309808821400}.
\newblock URL \url{http://dx.doi.org/10.1080/03605309808821400}.

\bibitem[Tataru(2001)]{Tajm01}
D.~Tataru.
\newblock On global existence and scattering for the wave maps equation.
\newblock \emph{Amer. J. Math.}, 123\penalty0 (1):\penalty0 37--77, 2001.
\newblock ISSN 0002-9327.
\newblock URL
  \url{http://muse.jhu.edu/journals/american_journal_of_mathematics/v123/123.1tataru.pdf}.

\bibitem[Turok and Spergel(1990)]{TSprl90}
N.~Turok and D.~Spergel.
\newblock Global texture and the microwave background.
\newblock \emph{Phys. Rev. Lett.}, 64:\penalty0 2736--2739, Jun 1990.
\newblock \doi{10.1103/PhysRevLett.64.2736}.
\newblock URL \url{http://link.aps.org/doi/10.1103/PhysRevLett.64.2736}.

\bibitem[Zaag(1998)]{ZAAihn98}
H.~Zaag.
\newblock Blow-up results for vector-valued nonlinear heat equations with no
  gradient structure.
\newblock \emph{Ann. Inst. H. Poincar{\'e} Anal. Non Lin{\'e}aire}, 15\penalty0
  (5):\penalty0 581--622, 1998.
\newblock ISSN 0294-1449.
\newblock \doi{10.1016/S0294-1449(98)80002-4}.
\newblock URL \url{http://dx.doi.org/10.1016/S0294-1449(98)80002-4}.

\end{thebibliography}

\end{document}